\def\ds{\displaystyle}
\def\eps{{\varepsilon}}
\def\N{\mathbb{N}}
\def\O{\Omega}
\def\om{\omega}
\def\R{\mathbb{R}}
\def\C{\mathbb{C}}
\def\HH{\mathcal{H}}
\def\vf{\varphi}
\newcommand{\be}{\begin{equation}}
\newcommand{\ee}{\end{equation}}
\newcommand{\cp}{\mathop{\rm cap}\nolimits}
\newcommand{\ind}{\mathbbm{1}}
\newcommand{\reg}{Reg(\partial\Omega^\ast)}
\newcommand{\regu}{Reg(\partial\Omega_u)}
\newcommand{\singu}{Sing(\partial\Omega_u)}
\newcommand{\adm}{\mathcal{A}(u,x_0,r)}
\theoremstyle{plain}
\newtheorem{teo}{Theorem}[section]
\newtheorem{lm}[teo]{Lemma}
\newtheorem{prop}[teo]{Proposition}
\newtheorem{coro}[teo]{Corollary}
\theoremstyle{definition}
\newtheorem{definition}[teo]{Definition}
\newtheorem{oss}[teo]{Remark}
\def\Xint#1{\mathchoice
   {\XXint\displaystyle\textstyle{#1}}%
   {\XXint\textstyle\scriptstyle{#1}}%
   {\XXint\scriptstyle\scriptscriptstyle{#1}}%
   {\XXint\scriptscriptstyle\scriptscriptstyle{#1}}%
   \!\int}
\def\XXint#1#2#3{{\setbox0=\hbox{$#1{#2#3}{\int}$}
     \vcenter{\hbox{$#2#3$}}\kern-.5\wd0}}
\def\aver#1{\Xint-_{#1}}
\DeclareMathOperator{\dive}{div}
\newcounter{cte}
\numberwithin{equation}{section}
\begin{document}

%\date{}
\title[Existence and regularity of optimal shapes]{Existence and regularity of optimal shapes for elliptic operators with drift}
%\title{\bf{Shape optimization problems for elliptic operators with drift}}
\author{Emmanuel Russ, Baptiste Trey, Bozhidar Velichkov}

\address {Emmanuel Russ: \newline \indent
Universit\'e Grenoble Alpes, CNRS UMR 5582, Institut Fourier
\newline\indent
100 rue des Math\'ematiques, F-38610 Gi\`eres, France}
\email{emmanuel.russ@univ-grenoble-alpes.fr}

\address {Baptiste Trey: \newline \indent
Universit\'e Grenoble Alpes, CNRS UMR 5582, Institut Fourier
\newline\indent
100 rue des Math\'ematiques, F-38610 Gi\`eres, France}
\email{baptiste.trey@etu.univ-grenoble-alpes.fr}

\address {Bozhidar Velichkov: \newline \indent
Universit\'e Grenoble Alpes, CNRS UMR 5224\newline\indent
700 avenue Centrale, F-38401 Domaine Universitaire de Saint-Martin-d'H\`eres, France}
\email{bozhidar.velichkov@univ-grenoble-alpes.fr}

\date{\today}

\begin{abstract}
This paper is dedicated to the study of shape optimization problems for the first eigenvalue of the elliptic operator with drift $L = -\Delta + V(x) \cdot \nabla$ with Dirichlet boundary conditions, where $V$ is a bounded vector field. In the first instance, we prove the existence of a principal eigenvalue $\lambda_1(\O,V)$ for a bounded quasi-open set $\O$ which enjoys similar properties to the case of open sets. Then, given $m>0$ and $\tau\geq 0$, we show that the minimum of the following non-variational problem
\begin{equation*}
\min\Big\{\lambda_1(\Omega,V)\ :\ \Omega\subset D\ \text{quasi-open},\ |\Omega|\leq m,\ \|V\|_{L^\infty}\le \tau\Big\}.
\end{equation*}
is achieved, where the box $D\subset \R^d$ is a bounded open set. The existence when $V$ is fixed, as well as when $V$ varies among all the vector fields which are the gradient of a Lipschitz function, are also proved.

The second interest and main result of this paper is the regularity of the optimal shape $\O^\ast$ solving the minimization problem
\begin{equation*}
\min\Big\{\lambda_1(\Omega,\nabla\Phi)\ :\ \Omega\subset D\ \text{quasi-open},\ |\O|\leq m\Big\},
\end{equation*}
where $\Phi$ is a given Lipschitz function on $D$. We prove that the optimal set $\Omega^\ast$ is open and that its topological boundary $\partial\Omega^\ast$ is composed of a {\it regular part}, which is locally the graph of a $C^{1,\alpha}$ function, and a {\it singular part}, which is empty if $d<d^\ast$, discrete if $d=d^\ast$ and of locally finite $\mathcal{H}^{d-d^\ast}$ Hausdorff measure if $d>d^\ast$, where $d^\ast \in \{5,6,7\}$ is the smallest dimension at which there exists a global solution to the one-phase free boundary problem with singularities. Moreover, if $D$ is smooth, we prove that, for each $x\in \partial\Omega^{\ast}\cap \partial D$, $\partial\O^\ast$ is $C^{1,\sfrac12}$ in a neighborhood of $x$. 
%This last result is optimal in the sense that $C^{1,\sfrac12}$ is the best regularity that one can expect.
\end{abstract}

\keywords{shape optimization, operators with drift, principal eigenvalue, $\gamma$-convergence, quasi-open sets, regularity of the free boundaries}
\subjclass{49Q10, 35R35, 47A75}

\maketitle
\tableofcontents

\section{Introduction and main results}

Let $D$ be a bounded connected open set in $\R^d$, $d\ge 2$. For any  bounded vector field $V:D\to\R^d$ and any connected open set $\Omega\subset D$, we consider the elliptic operator with drift $L=-\Delta+V(x)\cdot\nabla$\,. In this paper we study variational optimization problems in which the variables are both the domain $\Omega$ and the drift $V$, and the cost functional is defined through the operator $L$. The aim of the present paper is twofold. From one side, we develop an existence theory for shape optimization problems for operators with drift. On the other hand, we study the regularity of the optimal shapes for vector fields $V$ that are gradients of potentials $\Phi:D\to\R$. 
We focus on the model problem 
\begin{equation}\label{e:minOV}
\min \big\{ \lambda_1(\O,V) \ : \ \O \subset D, \ |\O| \leq m,\  \|V\|_{L^\infty} \leq \tau\big\},
\end{equation}
where $m>0$  and $\tau\ge0$ are fixed constants, and $\lambda_1(\Omega,V)$ is the principal eigenvalue of the operator $L$. Our main results are the following.
%We notice that in this case the lack of specific information on the geometry of $D$ does not allow the application of a symmetrization technique in the spirit of \cite{hamel nadirashvili russ annals} nor the explicit identification of the shape of the optimal domains and the precise analytic expression of the optimal vector fields. 

\begin{teo}\label{t:th1}
Let $D\subset\R^d$ be a bounded open set, and $0<m<|D|$  and $\tau\ge0$ be fixed constants. Then, there exist a quasi-open set $\Omega\subset D$ and a vector field $V:D\to\R^d$ such that the couple $(\Omega,V)$ is a solution to the shape optimization problem \eqref{e:minOV}. 
%\begin{equation}\label{e:optO}
%\min\Big\{\lambda_1(\Omega,V)\ :\ \Omega\subset D\ \text{quasi-open},\ |\Omega|\le m \Big\}. 
%\end{equation}
\end{teo}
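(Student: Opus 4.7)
The plan is to apply the direct method. Take a minimizing sequence $(\Omega_n, V_n)$ for \eqref{e:minOV}, with $\Omega_n\subset D$ quasi-open, $|\Omega_n|\leq m$, and $\|V_n\|_{L^\infty}\leq\tau$, and extract compactness in both variables. For the drift, Banach-Alaoglu on $L^\infty(D;\R^d)$ gives, up to a subsequence, $V_n \overset{*}{\rightharpoonup} V$ with $\|V\|_{L^\infty}\leq\tau$ by weak-$*$ lower semicontinuity of the norm. For the sets, the $\gamma$-compactness theorem of Buttazzo--Dal~Maso for quasi-open subsets of the fixed bounded box $D$ produces a quasi-open set $\Omega^\ast\subset D$ with $\Omega_n\to \Omega^\ast$ in the $\gamma$-sense (in the form appropriate to operators with drift developed in the paper). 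The volume bound survives the limit: a further weak-$*$ extraction yields $\ind_{\Omega_n}\overset{*}{\rightharpoonup} f$ in $L^\infty(D)$ with $\int_D f\leq m$, and the inequality $\ind_{\Omega^\ast}\leq f$ quasi-everywhere (a standard consequence of $\gamma$-convergence) gives $|\Omega^\ast|\leq m$. Hence $(\Omega^\ast, V)$ is admissible.

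The heart of the argument is the joint lower semicontinuity
\begin{equation*}
\lambda_1(\Omega^\ast, V)\leq \liminf_{n\to\infty}\lambda_1(\Omega_n, V_n),
\end{equation*}
which combined with admissibility yields optimality. Let $u_n \in H^1_0(\Omega_n)$ be the positive, $L^2$-normalized principal eigenfunction with eigenvalue $\lambda_n:=\lambda_1(\Omega_n, V_n)$. Since $\lambda_n$ is bounded (being a minimizing sequence) and $\|V_n\|_{L^\infty}\leq\tau$, testing the equation against $u_n$ yields uniform $H^1_0(D)$ bounds on $u_n$; the eigenvalue equation $-\Delta u_n = \lambda_n u_n - V_n\cdot\nabla u_n$ then puts $\Delta u_n$ in $L^2(D)$ uniformly, so interior elliptic regularity yields uniform $H^2_{\mathrm{loc}}(D)$ bounds, and Rellich gives (along a subsequence) strong $H^1_{\mathrm{loc}}(D)$-convergence $u_n\to u$. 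The $\gamma$-convergence forces $u\in H^1_0(\Omega^\ast)$, and the strong $L^2$-convergence preserves $\|u\|_{L^2}=1$, so $u\not\equiv 0$. Passing to the limit in the weak formulation against $\varphi\in C_c^\infty(D)$, the drift term $\int V_n\cdot\nabla u_n\,\varphi$ converges to $\int V\cdot\nabla u\,\varphi$ by pairing the strong $L^2_{\mathrm{loc}}$-convergence of $\nabla u_n$ against the weak-$*$ convergence of $V_n$. Thus $u$ solves $-\Delta u + V\cdot\nabla u=\lambda\, u$ in $\Omega^\ast$ with $\lambda=\liminf\lambda_n$; the Krein--Rutman characterization of the principal eigenvalue in the quasi-open setting (developed earlier in the paper) then forces $\lambda_1(\Omega^\ast, V)\leq \lambda$.

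The main obstacle is precisely this lower semicontinuity step. Because $L=-\Delta+V\cdot\nabla$ is non-self-adjoint, the Mosco-convergence / Rayleigh quotient machinery traditionally used to pair $\gamma$-convergence of domains with convergence of eigenvalues of the Laplacian is not directly available, and one must extend the $\gamma$-convergence framework to accommodate the drift. The bootstrap via $H^2_{\mathrm{loc}}$-regularity, which upgrades weak to strong $H^1_{\mathrm{loc}}$-convergence of the eigenfunctions, is the decisive device that lets the weak-$*$ convergence of $V_n$ pair correctly with $\nabla u_n$ in the limit; simultaneously handling the convergence of $V_n$ and the geometric convergence of $\Omega_n$ is what distinguishes this setting from the classical Buttazzo--Dal~Maso theory.
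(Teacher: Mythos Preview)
Your argument has a genuine gap at the $H^2_{\mathrm{loc}}$-regularity step. The eigenfunction $u_n$ solves $-\Delta u_n + V_n\cdot\nabla u_n = \lambda_n u_n$ only in $\Omega_n$, not in $D$; extended by zero to $D$, $u_n$ is \emph{not} in $H^2_{\mathrm{loc}}(D)$, because $\Delta u_n$ (as a distribution on $D$) carries a singular measure on $\partial\Omega_n$ coming from the jump of the normal derivative. A one-dimensional example makes this transparent: on $D=(-1,1)$ with $\Omega_n=(0,1)$ and $V_n=0$, the extended eigenfunction $u_n=\sin(\pi x)\ind_{(0,1)}$ has $u_n''=-\pi^2\sin(\pi x)\ind_{(0,1)}-\pi\delta_0$. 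Interior elliptic regularity therefore cannot furnish uniform $H^2_{\mathrm{loc}}(D)$ bounds, and without these you lose the strong $H^1_{\mathrm{loc}}$-convergence of $\nabla u_n$ that your pairing argument with the weak-$*$ limit of $V_n$ relies on. This is exactly the obstruction the paper flags in the introduction: the weak limit of $V_n\cdot\nabla u_n$ need not equal $V\cdot\nabla u$.

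The paper circumvents this by a different mechanism. Rather than trying to pass to the limit with arbitrary $V_n$, it first optimizes the drift on each (enlarged) domain: by Theorem~\ref{thm min omega fixed} the optimal vector field on $\tilde\Omega_n$ is $\tilde V_n=-\tau\,\nabla u_n/|\nabla u_n|$, so the eigenfunction solves the \emph{nonlinear} equation $-\Delta u_n-\tau|\nabla u_n|=\lambda_n u_n$. This specific structure allows one to prove strong $H^1(D)$-convergence of $u_n$ by showing convergence of the $H^1$-norms (via $\int|\nabla u_n|^2 = \int(\tau|\nabla u_n|u_n+\lambda_n u_n^2)$ and weak $L^2$-convergence of $|\nabla u_n|$), after which $|\nabla u_n|\to|\nabla u|$ strongly in $L^2$ and the limit equation is recovered with the explicit optimal drift $V_\ast=-\tau\,\nabla u/|\nabla u|$. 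A secondary technical point: $\gamma$-convergence of quasi-open sets is \emph{not} compact (see Section~\ref{sub:gamma}); the paper uses weak-$\gamma$-compactness together with the enlargement Lemma~\ref{lem sub seq gamma conv} to produce a $\gamma$-convergent sequence $\tilde\Omega_n\supset\Omega_n$, which also justifies the inclusion $u\in H^1_0(\Omega)$ and the monotonicity $\lambda_1(\tilde\Omega_n,\tilde V_n)\le\lambda_1(\Omega_n,V_n)$.
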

\noindent In particular, we prove in Theorem \ref{thm prop lambda1} below, that the principal eigenvalue $\lambda_1(\Omega,V)$ of the (non-self-adjoint) operator $L$ is well-defined on any quasi-open set $\Omega\subset D$. Preciesly, we will show that for any quasi-open set $\Omega$, there is a real eigenvalue $\lambda_1(\Omega,V)$ of the operator $L$ such that $\lambda_1(\Omega,V)\le \text{Re}\,\lambda$, for any other eigenvalue $\lambda\in\C$ of $L$.
\begin{teo}\label{t:th2}
Let $D\subset\R^d$ be a bounded open set, and $0<m<|D|$  and $\tau\ge0$ be fixed constants.  Then the shape optimization problem
\begin{equation}\label{e:minOPhi}
\min\big\{\lambda_1(\Omega,\nabla\Phi)\ :\ \Omega\subset D\ \text{quasi-open},\ |\O|\leq m,\ \Phi\in W^{1,\infty}(D),\ \|\nabla\Phi\|_{L^\infty(D)}\le\tau\big\}
\end{equation}
admits a solution $(\Omega^\ast,\nabla\Phi^\ast)$. Moreover, if $D$ is connected, then any optimal set $\Omega^\ast$ has the following properties: 
\begin{enumerate}
\item $\Omega^\ast$ is an open set; 
\item $\Omega^\ast$ has finite perimeter; 
\item $\Omega^\ast$ saturates the constraint, that is, $|\Omega^\ast|=m$; 
\end{enumerate}
The free boundary $\partial\Omega^\ast\cap D$ can be decomposed in the disjoint union of a regular part $\text{Reg}(\partial\O^\ast\cap D)$ and a singular part $\text{Sing}(\partial\O^\ast\cap D)$, where: 
\begin{enumerate}
\item[(4)] $\text{Reg}(\partial\O^\ast\cap D)$ is locally the graph of a $C^{1,\alpha}$-regular function for any $\alpha<1$; 
\item[(5)] for a universal constant $d^\ast\in\{5,6,7\}$ (see Definition \ref{def:dstar}), $\text{Sing}(\partial\O^\ast\cap D)$ is: 
\begin{itemize}
\item empty if $d<d^*$;
\item discrete if $d=d^*$; 
\item of Hausdorff dimension at most $(d-d^\ast)$ if $d>d^*$.\end{itemize}
\end{enumerate}
If the boundary $\partial D$ is $C^{1,1}$, then the boundary $\partial\Omega^\ast$ can be decomposed in the disjoint union of a regular part  $\text{Reg}(\partial\O^\ast)$ and a singular part $\text{Sing}(\partial\O^\ast)$, where: 
\begin{enumerate}
\item[(6)] $\text{Reg}(\partial\O^\ast)$ is an open subset of $\partial\Omega^\ast$ and locally the graph of a $C^{1,\sfrac12}$ function; moreover, $\text{Reg}(\partial\O^\ast)$ contains both $\text{Reg}(\partial\O^\ast\cap D)$ and $\partial\Omega^\ast\cap\partial D$; 
\item[(7)] $\text{Sing}(\partial\O^\ast)=\text{Sing}(\partial\O^\ast\cap D)$.
\end{enumerate}
\end{teo}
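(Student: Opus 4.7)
The key observation that guides the proof is that, since $V = \nabla\Phi$ is a gradient, the operator $L = -\Delta + \nabla\Phi \cdot \nabla$ is self-adjoint with respect to the weight $e^{-\Phi}$: a direct computation gives $e^{-\Phi}Lu = -\dive(e^{-\Phi}\nabla u)$, so the principal eigenvalue admits the variational characterization
\begin{equation*}
\lambda_1(\O,\nabla\Phi) = \inf_{u \in H^1_0(\O)\setminus\{0\}} \frac{\int_\O e^{-\Phi}|\nabla u|^2\,dx}{\int_\O e^{-\Phi} u^2\,dx},
\end{equation*}
and the positive first eigenfunction $u^\ast$ solves $-\dive(e^{-\Phi^\ast}\nabla u^\ast) = \lambda_1 e^{-\Phi^\ast} u^\ast$ in $\O^\ast$. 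Since $\Phi^\ast \in W^{1,\infty}(D)$, the weight $e^{-\Phi^\ast}$ is Lipschitz and uniformly bounded from above and below, so from the PDE and free-boundary viewpoint the problem reduces to a weighted Faber--Krahn problem in which the weight acts, up to constants and a Lipschitz multiplier, like the trivial one.

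To prove existence, I would take a minimizing sequence $(\O_n,\Phi_n)$, normalize each $\Phi_n$ so that $\min_D \Phi_n = 0$ (which does not alter $\nabla\Phi_n$) and extract by Arzel\`a--Ascoli a uniformly convergent subsequence $\Phi_n \to \Phi^\ast$ with $\Phi^\ast \in W^{1,\infty}(D)$ and $\|\nabla\Phi^\ast\|_\infty \le \tau$. Combined with the $\gamma$-compactness of the sets $\O_n$ and the continuous dependence of $\lambda_1(\cdot,\nabla\cdot)$ under simultaneous $\gamma$-convergence of the domains and uniform convergence of the potentials, both established in the preliminary sections of the paper, this yields an admissible optimal pair $(\O^\ast,\nabla\Phi^\ast)$.

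The structural properties $(1)$--$(3)$ and the interior regularity $(4)$--$(5)$ all stem from showing that the first eigenfunction $u^\ast$, extended by $0$ outside $\O^\ast$, is an almost-minimizer of an Alt--Caffarelli type functional
\begin{equation*}
J_\Lambda(v) = \int_D e^{-\Phi^\ast}|\nabla v|^2\,dx - \lambda_1 \int_D e^{-\Phi^\ast} v^2\,dx + \Lambda \bigl|\{v > 0\}\bigr|,
\end{equation*}
with a Lagrange multiplier $\Lambda > 0$ coming from the volume constraint. Once this is in place, the continuity of $u^\ast$, the openness of $\O^\ast = \{u^\ast > 0\}$, the Lipschitz bound and non-degeneracy of $u^\ast$, and the finite perimeter with saturation $|\O^\ast| = m$ all follow from the standard theory of one-phase quasi-minimizers, which transfers to the Lipschitz-weighted setting thanks to the uniform bounds on $e^{-\Phi^\ast}$. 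For the interior regularity one performs a blow-up argument at a point $x_0 \in \partial\O^\ast \cap D$: the rescalings of $u^\ast$ converge to a one-homogeneous global minimizer of the classical unweighted Alt--Caffarelli functional, since the weight freezes to a positive constant in the limit. An epiperimetric inequality / improvement-of-flatness argument then gives $C^{1,\alpha}$-regularity of $\text{Reg}(\partial\O^\ast\cap D)$ for every $\alpha < 1$, and Federer's dimension reduction controls $\text{Sing}(\partial\O^\ast\cap D)$ through the universal critical dimension $d^\ast$ of Definition \ref{def:dstar}.

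For the boundary regularity $(6)$--$(7)$, at a point $x_0 \in \partial\O^\ast \cap \partial D$, I would flatten $\partial D$ using the $C^{1,1}$ hypothesis and exploit that $u^\ast$ vanishes both on the free boundary $\partial\O^\ast\cap D$ and on the fixed boundary $\partial D$; a barrier/comparison argument against explicit sub- and supersolutions then yields $C^{1,\sfrac12}$-regularity of $\partial\O^\ast$ near $x_0$, the exponent $1/2$ being optimal for the interaction between a one-phase free boundary and a $C^{1,1}$ fixed one. The main obstacle I anticipate is the rigorous derivation of the almost-minimality of $u^\ast$ with a strictly positive Lagrange multiplier: one must construct admissible competitors that preserve or quantitatively control the volume constraint, carefully track the simultaneous effect of the weight $e^{-\Phi^\ast}$ and of the $\lambda_1 v^2$ term under small inner and outer perturbations, and exclude the degenerate case $\Lambda = 0$ through a delicate argument based on the simplicity of $\lambda_1$ and Hopf's lemma for $u^\ast$.
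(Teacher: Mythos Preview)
Your overall architecture matches the paper's closely: variational characterization via the weight $e^{-\Phi}$, reduction to a free boundary problem for the eigenfunction, quasi-minimality of $u^\ast$ for a functional of the form $J_\Lambda$, then Lipschitz/non-degeneracy/blow-up analysis. A few points of divergence are worth noting.

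\emph{Existence.} The paper's argument (Theorem~\ref{t:optPhi}) is more elementary than what you sketch. It does not use $\gamma$-convergence of the sets $\O_n$ at all: because $\lambda_1(\cdot,\nabla\Phi)$ is variational, one simply extracts a weak $H^1_0(D)$ limit $u$ of the normalized eigenfunctions $u_n$ and sets $\O^\ast:=\{u>0\}$. The simultaneous $\gamma$-continuity you invoke is not established in the preliminary sections (Proposition~\ref{p:gamma_continuity} is for fixed $V$), so your route would require an extra lemma; the paper avoids this entirely.

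\emph{Positivity of the Lagrange multiplier.} This is the main technical difference. You propose to rule out $\Lambda=0$ via simplicity of $\lambda_1$ and Hopf's lemma, but Hopf requires some boundary regularity of $\O^\ast$, which is exactly what you are trying to prove---so there is a real risk of circularity. The paper instead gives a self-contained argument (Proposition~\ref{p:density} in the appendix) based on Almgren's frequency function: the stationarity condition $\delta J(u)[\xi]=0$ alone yields monotonicity of $N(r)$, hence a doubling inequality for $H(r)$, which combined with a Caccioppoli inequality forces a uniform lower density bound for $\O_u$ at \emph{every} point of $D$. If $\Lambda_u=0$ this would give $|D\setminus\O_u|=0$, contradicting $|\O_u|=m<|D|$. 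This argument uses no regularity of $\partial\O_u$.

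\emph{Regularity of the regular part and at $\partial D$.} The paper does not use an epiperimetric inequality (that alternative is mentioned only for $d=2$); instead it shows that the optimality condition $|\nabla u|^2=\Lambda_u e^{\Phi}$ holds in viscosity sense (Lemma~\ref{l:visc}) and then invokes De~Silva's improvement-of-flatness for the interior and the recent Chang-Lara--Savin result for contact points with $\partial D$. Your ``barrier/comparison'' description for $(6)$--$(7)$ is too vague; the $C^{1,\sfrac12}$ regularity at $\partial\O^\ast\cap\partial D$ is a nontrivial black box from \cite{chang-lara-savin}, not something derived by explicit barriers in the paper.
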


\noindent In fact, our result is more general. Precisely, we prove the regularity of the optimal sets for $\lambda_1(\cdot,\nabla\Phi)$ with fixed vector field $\nabla\Phi$ (see Theorem \ref{thm main} and Remark \ref{rem:th2-main}). 
\medskip

%Assume now that $\Omega$ is a $C^2$ bounded open set of Lebesgue measure $|\Omega|=m$ and let $V:\Omega\to\R^d$ be a vector field such that $\|V\|_{L^\infty}=\tau$ (where $\|V\|_{L^\infty}$ stands for the $L^{\infty}$-norm of the Euclidean norm of $V$). 
For $m,\tau,\Omega$ and $V$ as in \eqref{e:minOV}, Hamel, Nadirashvili and Russ \cite{hamel-nadirashvili-russ-annals},  proved the lower bound
\begin{equation}\label{e:hnr}
\lambda_1\left(B,\tau\frac{x}{|x|}\right)\le \lambda_1(\Omega,V),
\end{equation}
where $B$ is the ball of Lebesgue measure $m$ centered in zero; moreover, there is an equality in \eqref{e:hnr}, if and only if, up to translation, $\Omega=B$ and $V(x)=\tau\frac{x}{|x|}$. In other words, the couple $\big(B,\tau \frac{x}{|x|}\big)$ is (up to translation) the unique solution of the shape optimization problem
\begin{equation}\label{e:hnr:minpb}
\min\big\{\lambda_1(\Omega,V)\ :\ \Omega\subset\R^d,\ |\Omega|=m,\ \|V\|_{L^\infty}\le \tau\big\}.
\end{equation}

We notice that a symmetrization technique in the spirit of \cite{hamel-nadirashvili-russ-annals} cannot be applied to the problem \eqref{e:minOV}. In fact, the presence of the constraint $D$ makes impossible to determine explicitly the shape of the optimal domains or the precise analytic expression of the optimal vector fields, except in the trivial case when a ball of measure $m$ fits into $D$. Thus, we first establish the existence of an optimal domain $\Omega$ in the larger (relaxed) class of {\it quasi-open} sets and we then study the regularity of the optimal shapes through variational free boundary techniques. 
We stress that, in the case of a generic vector field $V$, the principal eigenvalue $\lambda_1(\Omega,V)$ does {\it not} have a variational formulation but is only determined trough the solution of a certain PDE on $\Omega$. In particular, the shape cost functional in \eqref{e:minOV} cannot be written in terms of a variational minimization problem involving integral cost functionals on $\Omega$. This makes the extension of the functional $\lambda_1(\cdot,V)$ to a ($\gamma$-)continuous functional on the class of quasi-open sets a non trivial problem. 
%Moreover, in this case the existence of an optimal quasi-open set cannot be obtained the continuity, with respect to the ($\gamma$-)convergence of the domains, is not a direct consequence of the (Mosco-)convergence of the underlying Sobolev spaces. 
%Up to our knowledge, Theorem \ref{t:th1} is the first instance in the literature in which the existence of optimal shapes in the class of quasi-open sets is obtained for a shape cost functional, which does not have a closed variational expression on every domain (see Remark \ref{rem:existence}).  
\medskip

In the case $\tau=0$, \eqref{e:minOV} and \eqref{e:minOPhi} are reduced to the classical shape optimization problem 
\begin{equation}\label{e:min_classical}
\min \big\{ \lambda_1(\O) \ : \ \O \subset D, \ |\O| \leq m\big\},
\end{equation}
where $\lambda_1(\Omega)$ is the first eigenvalue of the Dirichlet Laplacian on $\Omega$. For the problem \eqref{e:min_classical}, the existence of an optimal (quasi-open) set was proved by Buttazzo and Dal Maso in \cite{buttazzo-dal-maso}, the fact that the optimal sets are open (Theorem \ref{t:th2} (1)) was proved by Brian\c con and Lamboley in \cite{briancon-lamboley}, the estimate on the perimeter of the optimal set (Theorem \ref{t:th2} (2)) is due to Bucur (see \cite{bucur}), the regularity of the free boundary $\text{Reg}(\partial\Omega^\ast\cap D)$ was again proved in \cite{briancon-lamboley}; the estimate on the dimension of the singular set $\text{Sing}(\partial\Omega^\ast\cap D)$ was obtained in \cite{mazzoleni-terracini-velichkov}. Even for the classical problem \eqref{e:min_classical} the regularity up to the boundary of the box $D$ (Theorem \ref{t:th2} (6) and (7)) is new. 

\begin{oss}[On the regularity of the optimal shapes for spectral functionals]
The regularity of the optimal shapes for the eigenvalues of the Laplacian was an object of an intense study in the last years. As mentioned above, a regularity result, for the optimal sets for the first eigenvalue of the Laplacian, was proved Brian\c con and Lamboley in \cite{briancon-lamboley}. The regularity of the optimal sets for more general spectral functionals was studied in \cite{bucur-mazzoleni-pratelli-velichkov}, \cite{mazzoleni-terracini-velichkov}, \cite{kriventsov-lin-17} and \cite{kriventsov-lin-18}. An alternative approach in dimension two, based on the epiperimetric inequality from \cite{spolaor-velichkov}, was recently introduced in \cite{spolaor-trey-velichkov}, where Theorem \ref{t:th2} (6) is proved in the case $\tau=0$ and $d=2$. We notice that the method from \cite{spolaor-trey-velichkov} can be applied to give an alternative proof of Theorem \ref{t:th2} (6) in the case $\tau>0$, but the restriction on the dimension is required by the epiperimetric inequality and for now cannot be removed.
\end{oss}

\begin{oss}[On the existence of optimal shapes]\label{rem:existence}
The existence of optimal shapes in a bounded open set (box) $D\subset\R^d$ is a consequence of the theory of Buttazzo and Dal Maso (see \cite{buttazzo-dal-maso} and the books \cite{bucur-buttazzo} and \cite{henrot-pierre}) for general shape optimization problems of the form 
\begin{equation}\label{e:minJ}
\min\big\{\mathcal F(\Omega)\ :\ \Omega\subset D\ \text{quasi-open}\,,\ |\Omega|\le m\big\},
\end{equation}
for {\it shape cost functionals} $\mathcal F$ with the following properties:  
\begin{itemize}
\item $\mathcal F$ is decreasing with respect to the set inclusion; 
\item $\mathcal F$ is lower semi-continuous with respect to the ($\gamma$-)convergence of sets.
\end{itemize}
We notice that in the case when $\mathcal F$ is a function of the spectrum of the Dirichlet Laplacian on $\Omega$, the existence of an optimal set can be obtained directly (see for instance \cite{velichkov}). In fact, if $\mathcal F(\Omega)=\lambda_1(\Omega)$, then given a minimization sequence of quasi-open sets $\Omega_n$ for \eqref{e:minJ}, and setting $u_n$ to be the first eigenfunction of the Dirichlet Laplacian on $\Omega_n$, it is not hard to check that, up to a subsequence, $u_n$ converges weakly in $H^1_0(D)$ to a function $u\in H^1_0(D)$ and that the (quasi-open) set $\Omega:=\{u>0\}$ is a solution to \eqref{e:minJ}. This elementary argument works not only for $\lambda_1$, but can also be reproduced for general spectral functionals of the form $\mathcal F(\Omega)=F\big(\lambda_1(\Omega),\dots,\lambda_k(\Omega)\big)$, and also for most of the shape cost functionals present in the literature. We stress that this is not the case of the functional $\mathcal F(\Omega)=\lambda_1(\Omega,V)$. Even if $\lambda_1(\cdot,V)$ is still monotone and $\gamma$-continuous (as we will prove in Section \ref{s:existence}), its non-variational nature does not allow to use the elementary argument described above; thus, the only way to obtain the existence of an optimal set is through the Buttazzo-Dal Maso theory.   
\end{oss}

%More precisely, we show that the shape functional $\Omega\mapsto\lambda_1(\Omega,V)$ is lower semi-continuous with respect to the so-called $\gamma$-convergence of sets, which corresponds to a special topology on the quasi-open domains that takes into account the convergence of the solutions of the elliptic PDEs on these domains. We then obtain the existence of optimal sets by a general result of Buttazzo and Dal Maso \cite{buttazzo dal maso}. 

%Let $\O\subset \R^d$ be a bounded domain (here and after, ``domain'' means open connected set) and $V\in L^{\infty}(\O,\R^d)$ be a given vector field. We consider the elliptic operator with drift $L = -\Delta + V(x) \cdot \nabla$ with Dirichlet boundary conditions on $\partial\O$. In \cite{berestycki-nirenberg-varadhan} Berestycki, Nirenberg and Varadhan showed that there is a positive real eigenvalue, not greater than the real part of any other eigenvalue of $L$. This eigenvalue is called {\it principal} or {\it first eigenvalue} of $L$ and is denoted by $\lambda_1(\Omega,V)$. 

\subsection*{Optimal shapes for a fixed vector field} In this paper, we also study the case in which only the shape $\Omega$ is variable, while the vector field $V$ is fixed. Precisely, we consider the shape optimization problem 
\begin{equation}\label{e:minO}
\min \Big\{ \lambda_1(\O,V) \ : \ \O \subset D, \ |\O| \leq m \Big\},
\end{equation}
where both the upper bound $m$ of the Lebesgue measure of the domain $\Omega$ and the vector field $V$ are fixed. In this case the geometry of the optimal sets is affected both by the geometric constraint $\Omega\subset D$ and the vector field $V$. We notice that in this case it is the inclusion constraint that provides the compactness necessary for the existence of an optimal set. We show that the shape functional $\Omega\mapsto\lambda_1(\Omega,V)$ is lower semi-continuous with respect to the so-called $\gamma$-convergence of sets and then we obtain the existence of optimal sets by the general result discussed in Remark \ref{rem:existence}. Furthermore, when the vector field is the gradient of a Lipschitz function, we prove a regularity result for the optimal sets. 
Our main result is the following. 
\begin{teo}[Existence and regularity of optimal shapes for a fixed vector field]\label{thm main}
Let $D$ be a bounded open set in $\R^d$. Let $m \in (0,|D|)$ and let the vector field $V:D\to\R^d$ be such that $\|V\|_{L^\infty}=\tau<+\infty$. Then the shape optimization problem
\begin{equation}\label{e:optO}
\min\Big\{\lambda_1(\Omega,V)\ :\ \Omega\subset D\ \text{quasi-open},\ |\Omega|\le m \Big\} 
\end{equation}
admits a solution $\Omega^\ast\subset D$. Moreover, if $D$ is connected and the vector field $V$ is of the form $V=\nabla\Phi$, where $\Phi:D\to\R$ is a given Lipschitz function, then any solution $\Omega^\ast$ of \eqref{e:optO} has the following properties: 
\begin{enumerate}
\item $\Omega^\ast$ is an open set; 
\item $\Omega^\ast$ has finite perimeter; 
\item $\Omega^\ast$ saturates the constraint, that is, $|\Omega^\ast|=m$; 
\end{enumerate}
The free boundary $\partial\Omega^\ast\cap D$ can be decomposed in the disjoint union of a regular part $\text{Reg}(\partial\O^\ast\cap D)$ and a singular part $\text{Sing}(\partial\O^\ast\cap D)$, where: 
\begin{enumerate}
\item[(4)] $\text{Reg}(\partial\O^\ast\cap D)$ is locally the graph of a $C^{1,\alpha}$-regular function for any $\alpha<1$; 
\item[(5)] for a universal constant $d^\ast\in\{5,6,7\}$ (see Definition \ref{def:dstar}), $\text{Sing}(\partial\O^\ast\cap D)$ is: 
\begin{itemize}
\item empty if $d<d^*$;
\item discrete if $d=d^*$; 
\item of Hausdorff dimension at most $(d-d^\ast)$ if $d>d^*$.\end{itemize}
\end{enumerate}
If the boundary $\partial D$ is $C^{1,1}$, then the boundary $\partial\Omega^\ast$ can be decomposed in the disjoint union of a regular part  $\text{Reg}(\partial\O^\ast)$ and a singular part $\text{Sing}(\partial\O^\ast)$, where: 
\begin{enumerate}
\item[(6)] $\text{Reg}(\partial\O^\ast)$ is an open subset of $\partial\Omega^\ast$ and locally the graph of a $C^{1,\sfrac12}$ function; moreover, $\text{Reg}(\partial\O^\ast)$ contains both $\text{Reg}(\partial\O^\ast\cap D)$ and $\partial\Omega^\ast\cap\partial D$; 
\item[(7)] $\text{Sing}(\partial\O^\ast)=\text{Sing}(\partial\O^\ast\cap D)$.
\end{enumerate}
\end{teo}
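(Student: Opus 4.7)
The plan is to split the argument into two independent parts. For the existence statement, one would use the general Buttazzo--Dal Maso framework described in Remark \ref{rem:existence}, so the key point is to establish (in Section \ref{s:existence}) that $\Omega\mapsto\lambda_1(\Omega,V)$ is monotone decreasing under set inclusion and $\gamma$-lower semicontinuous on quasi-open subsets of $D$. For the regularity statements the crucial observation is that when $V=\nabla\Phi$, the operator $L=-\Delta+\nabla\Phi\cdot\nabla$ is self-adjoint with respect to the weighted measure $d\mu=e^{-\Phi}\,dx$, since
\[
Lu=-e^{\Phi}\,\dive\!\big(e^{-\Phi}\nabla u\big).
\]
Consequently, $\lambda_1(\Omega,\nabla\Phi)$ admits the Rayleigh characterization
\[
\lambda_1(\Omega,\nabla\Phi)=\min_{u\in H^1_0(\Omega)\setminus\{0\}}\frac{\int_\Omega|\nabla u|^2\,e^{-\Phi}\,dx}{\int_\Omega u^2\,e^{-\Phi}\,dx},
\]
and the first eigenfunction $u^\ast$ on $\Omega^\ast$ (suitably normalized) becomes the natural state function around which the free boundary analysis is built.

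Having this variational structure, the next step is to show that $u^\ast$ is a quasi-minimizer of the weighted Alt--Caffarelli type functional
\[
J_\Phi(u)=\int_D\Big(|\nabla u|^2-\lambda_1(\Omega^\ast,\nabla\Phi)\,u^2\Big)e^{-\Phi}\,dx+\Lambda\,|\{u>0\}|,
\]
for a suitable Lagrange multiplier $\Lambda>0$ encoding the volume constraint. Using inner and outer perturbations of $\Omega^\ast$ one first checks that the constraint is saturated, giving (3), and then that $u^\ast$ minimizes $J_\Phi$ against all admissible competitors $u^\ast+\varphi$, up to an error linear in the measure of the symmetric difference of the positivity sets. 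Since the weight $e^{-\Phi}$ is Lipschitz and bounded away from $0$ and $+\infty$, the standard Alt--Caffarelli--Brian\c con--Lamboley machinery applies: Lipschitz continuity of $u^\ast$, non-degeneracy, density estimates and finite perimeter of $\Omega^\ast$ follow, yielding items (1) and (2).

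The regularity of $\partial\Omega^\ast\cap D$ is then obtained by blow-up analysis: at any $x_0\in\partial\Omega^\ast\cap D$, the rescalings $u_{x_0,r}(x):=u^\ast(x_0+rx)/r$ converge, up to a subsequence, to a one-homogeneous global solution of the classical constant-coefficient one-phase Bernoulli problem, because $e^{-\Phi(x_0+r\cdot)}\to e^{-\Phi(x_0)}$ and the $\lambda_1 u^2$ term becomes negligible at infinitesimal scales. The regular/singular dichotomy therefore reduces to the classical one: an $\varepsilon$-regularity theorem applied to the quasi-minimizer $u^\ast$ gives item (4), while Federer-type dimension reduction in the spirit of \cite{mazzoleni-terracini-velichkov} gives the bounds in item (5) with the same universal dimension $d^\ast\in\{5,6,7\}$.

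The last and most delicate step concerns the boundary regularity statements (6) and (7). The hard part will be the behavior of $u^\ast$ at points $x_0\in\partial\Omega^\ast\cap\partial D$, where $u^\ast$ vanishes both because of the Dirichlet condition on $\partial D$ and because $x_0$ is a free boundary point of $\Omega^\ast$. After straightening $\partial D$ using its $C^{1,1}$ structure and rescaling, one would compare $u^\ast$ from above and below with explicit barriers of $\dist(\cdot,\partial D)^{\sfrac12}$-type to show that $u^\ast$ detaches from $\partial D$ precisely at the $\sfrac12$-rate; a boundary Harnack principle will then promote this to $C^{1,\sfrac12}$ regularity of $\partial\Omega^\ast$ near $x_0$. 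Finally, any blow-up at such a point is a half-plane solution, which is automatically regular in the free-boundary sense, yielding the inclusion (7) and completing the proof.
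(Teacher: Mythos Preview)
Your overall architecture---existence via Buttazzo--Dal Maso, then reduction to a weighted one-phase free boundary problem for the first eigenfunction, followed by Lipschitz/non-degeneracy/perimeter estimates and blow-up analysis---matches the paper's route closely, and your observation that the lower-order term $\lambda_m u^2$ and the variable weight $e^{-\Phi}$ disappear under blow-up is exactly the mechanism behind items (4) and (5).

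There are, however, two genuine issues. First, you assert the existence of a Lagrange multiplier $\Lambda>0$ without indicating how strict positivity is obtained. This is not automatic for a pure measure constraint, and it is the single point where the paper departs from the Brian\c con--Lamboley scheme: the paper proves $\Lambda_u>0$ by an Almgren-frequency argument (Appendix, Proposition~\ref{p:density}) showing that any stationary point of $J$ with $\Lambda_u=0$ would force $|D\setminus\Omega_u|=0$, contradicting $|\Omega_u|=m<|D|$. Without $\Lambda_u>0$ the non-degeneracy and density estimates collapse, so this step cannot be skipped.

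Second, your proposed mechanism for (6)--(7) is not the right one. The eigenfunction $u^\ast$ is globally Lipschitz (not $C^{1/2}$), and there is no ``$\dist(\cdot,\partial D)^{1/2}$ detachment'' of $u^\ast$; the $C^{1,1/2}$ in the statement refers to the regularity of the \emph{free boundary surface} $\partial\Omega^\ast$ near contact points with $\partial D$, not to the decay of $u^\ast$. The paper instead shows that $u^\ast$ satisfies the overdetermined condition $|\nabla u^\ast|=\sqrt{\Lambda_u e^{\Phi}}$ on $\partial\Omega^\ast\cap D$ and $|\nabla u^\ast|\ge\sqrt{\Lambda_u e^{\Phi}}$ on $\partial\Omega^\ast\cap\partial D$ in the viscosity sense (Lemma~\ref{l:visc}), and then invokes the $\varepsilon$-regularity theorems of De~Silva (interior) and Chang-Lara--Savin (at the fixed boundary); the latter is precisely what produces the optimal $C^{1,1/2}$ exponent. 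A barrier/boundary-Harnack argument of the type you sketch is adapted to Signorini-type problems and does not fit the one-phase Bernoulli structure here.
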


\begin{oss}[On the optimal regularity of the free boundary]
The regularity of the boundary of an optimal set $\O^\ast$ to the problem \eqref{e:optO} at contact points of the free boundary with the box cannot exceed $C^{1,\sfrac12}$ even if the vector field is smooth. Indeed, Chang-Lara and Savin proved in \cite{chang-lara-savin} that the boundary of $\Omega_u$, where $u$ is a solution of the free boundary problem \eqref{eq opt cond} in $\O_u=\O^\ast$, is at most $C^{1,\sfrac12}$ regular.
\end{oss} 

\begin{oss}[Regularity of the optimal shapes for variable vector field]\label{rem:th2-main}
We notice that if the couple $(\Omega,V)$ is a solution to the shape optimization problem \eqref{e:minOV} or \eqref{e:minOPhi}, then fixing $V$, we obtain that $\Omega$ is a solution to \eqref{e:optO}. In particular, the regularity part of Theorem \ref{t:th2} is a consequence of Theorem \ref{thm main}.
\end{oss}

%As a consequence of Theorem \ref{thm main} we get the regularity of every shape solution to the analogous problem where $V$ varies among all the vector fields which are the gradient of a Lipschitz function.

%\begin{coro}
%Let $D$ be an open, bounded and connected set in $\R^d$, $m \in (0,|D|)$ and $\tau>0$. Then the shape optimization problem
%\begin{equation}
%\min\Big\{\lambda_1(\Omega,\nabla\Phi)\ :\ \Omega\subset D\ \text{quasi-open},\ |\O|\leq m,\ \Phi\in W^{1,\infty}(D),\ \|\nabla\Phi\|_{L^\infty}\le\tau\Big\}
%\end{equation}
%admits a solution $(\Omega^\ast,\nabla\Phi^\ast)$. Moreover, $\O^\ast$ is an open set of locally finite perimeter such that $|\Omega^\ast|=m$. Furthermore, the boundary $\partial\Omega^\ast$ can be decomposed in the disjoint union of a regular set $\text{Reg}(\partial\O^\ast)$ and a singular set $\text{Sing}(\partial\O^\ast)$ with the properties stated in Theorem \ref{thm main}.
%\end{coro}

\subsection*{Outline of the proof and plan of the paper}
Throughout the paper the bounded open set $D\subset\R^d$ is fixed and is assumed to be (at least $C^{1,1}$) smooth. 

In the sections \ref{s:preliminaries}, \ref{s:def_lambda_1} and \ref{s:existence}, we prove our main existence results (Theorem \ref{t:th1}) and the existence of an optimal domain for a fixed vector field (Theorem \ref{thm main}), as well as the existence of an optimal domain in Theorem \ref{t:th2}. 

In Section \ref{s:preliminaries} we recall several central definitions and results in the $\gamma$-convergence theory of quasi-open sets. In particular, we show that the (classical) $\gamma$-convergence of a sequence of quasi-open sets is equivalent to the strong convergence of the sequence of resolvent operators for $L=-\Delta+V(x)\cdot \nabla$ on each of the sets.

 In Section \ref{s:def_lambda_1}, Theorem \ref{thm prop lambda1} and Corollary \ref{cor:lambda_1}, we prove that the principal eigenvalue is well-defined on every quasi-open set $\Omega\subset D$, that is, there exists a (real) eigenvalue $\lambda_1(\Omega,V)\in\R$ of the operator $L=-\Delta+V(x)\cdot\nabla$, such that for any other (complex) eigenvalue $\lambda\in \C$ we have $\lambda_1(\Omega,V)\le \text{Re}\,\lambda$. In the same section, we establish the continuity of the functional $\Omega\mapsto\lambda_1(\Omega,V)$ with respect to the $\gamma$-convergence (Proposition \ref{p:gamma_continuity}) and the fact that the principal eigenvalue is decreasing with respet to the set inclusion (Remark \ref{oss:monotonicity}).

 In Section \ref{s:existence} we prove our main existence results. The existence of the optimal set for a fixed vector field $V$ (Theorem \ref{thm main}) follows by the classical Buttazzo-Dal Maso theorem (Theorem \ref{t:budm}). We give the precise statement in Theorem \ref{t:optO}. The proof of Theorem \ref{t:th1} requires a more refined argument. The reason is the following: 
consider a (minimizing) sequence $(V_n,\Omega_n)$ of vector fields $V_n$ and quasi-open sets $\Omega_n$ with eigenfunctions $u_n\in H^1_0(\Omega_n)$ of $L_n=-\Delta +V_n\cdot \nabla $, solutions of 
$$-\Delta u_n+V_n\cdot\nabla u_n=\lambda_1(\Omega_n,V_n) u_n\quad\text{in}\quad\Omega_n,\qquad\int_D u_n^2\,dx=1,\qquad u_n\in H^1_0(\Omega_n).$$
Let us suppose for simplicity that: $\Omega_n$ $\gamma$-converge to a quasi-open set $\Omega$; $u_n$ converge to a function $u\in H^1_0(\Omega)$ both strongly $L^2(D)$ and weakly $H^1_0(D)$; $V_n$ converge weakly (in $L^2(D)$) to some $V\in L^\infty(D;\R^d)$. Now, the limit function $u$ solves a PDE in $\Omega$, which involves the (weak) limit of  the term $V_n\cdot \nabla u_n$, but a priori this might be different from $V\cdot \nabla u$.  In order to solve this issue, in Section \ref{s:existence}, we first prove that, on any fixed quasi-open set $\Omega$, there exists an optimal vector field (see Theorem \ref{thm min omega fixed}). We then replace the vector fields $V_n$ of the minimizing sequence $(V_n,\Omega_n)$ by the optimal vector field $V_n^\ast$ on each domain. Finally, we use the precise expression of $V_n^\ast$ to prove that the limit function $u$ is an eigenfunction of $-\Delta+V\cdot\nabla$ on $\Omega$ and we obtain that $\lambda_1(\Omega_n,V_n^\ast)$ converges to $\lambda_1(\Omega,V^\ast)$, which concludes the proof (see Theorem \ref{thm min on omega and v}). We cannot apply the same argument for Theorem \ref{t:th2}, since the optimal vector field might not be a gradient. On the other hand, for gradient vector fields the first eigenvalue is a variational functional, namely $$\lambda_1(\O,\nabla\Phi)=\displaystyle\min_{u\in H^1_0(\O)\setminus\{0\}}\frac{\int_D e^{-\Phi}|\nabla u|^2\,dx}{\int_D e^{-\Phi} u^2\,dx},$$ 
 and the existence of an optimal set can be obtained directly (see Theorem \ref{t:optPhi}).

\medskip

In Section \ref{s:regularity}, for a fixed drift $V=\nabla\Phi$, we prove the regularity of the optimal sets for $\lambda_1(\cdot,\nabla \Phi)$ (Theorem \ref{thm main}). In particular, this implies the regularity of the optimal sets in the case when both the set $\Omega$ and the vector field $\nabla \Phi$ may vary (see Theorem \ref{t:th2}). Our argument relies in an essential way on the variational formulation of $\lambda_1(\O,V)$. More precisely, we show (see Lemma \ref{l:fbp}) that, if $V=\nabla\Phi$ is fixed and $\O\subset D$ is a solution of \eqref{e:optO}, then the corresponding eigenfunction solves the free boundary problem
\begin{equation}\label{e:opteigen}
\min\Big\{\int_De^{-\Phi}|\nabla u|^2\,dx\ :\ u\in H^1_0(D), \ u\ge 0,\ \big|\{u\neq 0\}\big|\le m, \ \int_D e^{-\Phi}u^2\,dx=1\Big\}.
\end{equation}
This is a one-phase free boundary problem, similar to the one studied in the seminal paper of Alt and Caffarelli \cite{alt-caffarelli} on the local minimizers of the one-phase functional 
$$u\mapsto \int |\nabla u|^2\,dx+|\{u>0\}|.$$
Nevertheless, there are four differences with repect to the classical one-phase problem \cite{alt-caffarelli}.
\begin{enumerate}[(i)]
	\item the presence of the variable coefficient $e^{-\Phi}$ in the functional; 
		\item the presence of the integral constraint $\ds\int e^{-\Phi}u^2\,dx=1$;
	\item the presence of the measure constraint $|\{u>0\}|\le m$;  
\vspace{1.8mm}
	\item the presence of the inclusion constraint $\{u>0\}\subset D$ (equivalent to $u\in H^1_0(D)$). 
	\end{enumerate}

The variable coefficient $e^{-\Phi}$ introduces several technical difficulties, but does not have an influence on the overall strategy. The issues with the integral constraint are of similar nature. In fact, we are able to deal with this term (see Subsection \ref{sub:fbp} and Remark \ref{oss:jujv}) by reformulating the free boundary problem \eqref{e:opteigen} in terms of the functional
$$J(v): = \int_D |\nabla v|^2 e^{-\Phi}dx - \lambda_m \int_D v^2 e^{-\Phi}dx,$$
where $\lambda_m$ is the value of the minimum in  \eqref{e:opteigen}. In fact, one easily checks that, if $u$ is a solution of \eqref{e:opteigen}, then $u$ is also a solution to the free boundary problem
\begin{equation}\label{e:free_j}
J(u)\leq J(v)\qquad\text{for every}\qquad  v\in H^1_0(D)\qquad\text{such that}\qquad  \left\vert \O_v\right\vert\leq m,
\end{equation}
where, for any function $v$, we set  $\Omega_v:=\{v>0\}=\left\{x\in \Omega\,:\, v(x)>0\right\}$. 

The measure constraint in free boundary problems first appeared in the work of Aguilera, Alt and Caffarelli  \cite{aguilera-alt-caffarelli}. In fact, it is not hard to check that, at least formally, the solution $u$ should satisfy the optimality condition 
$$|\nabla u|=\sqrt{\Lambda_ue^{\Phi}}\quad\text{on the free boundary}\quad \partial\Omega_u\cap D,$$
where $\Lambda_u$ is a Lagrange multiplier formally arising in the minimization of the functional $J(u)$ under the constraint $|\Omega_u|=m$ (see Subsection \ref{sub:optimality_condition}). Thus, at least formally, there is no difference between the classical one-phase free boundary problem and the problem with a measure constraint. In practice, dealing with the measure constraint is an hardeous task. In fact, the Lagrange multiplier $\Lambda_u$ arises by applying internal variation to the function $u$, which by itself cannot be used to deduce even the basic qualitative properties of the solution $u$ as, for instance, the Lipschitz continuity and the non-degeneracy (in other words, at the moment, the regularity of the stationary free boundaries is not known). Our approach is different from the one in \cite{aguilera-alt-caffarelli} and is inspired by the works of Brian\c con-Lamboley \cite{briancon-lamboley} and Brian\c con \cite{briancon}.  In fact, we aim to tranform the problem \eqref{e:free_j} into 
\begin{equation}\label{e:free_j_2}
J(u)+\Lambda_u|\Omega_u|\leq J(v)+\Lambda_u|\Omega_v|\qquad\text{for every}\qquad  v\in H^1_0(D).
\end{equation}
Now, it is not possible to re-write \eqref{e:free_j} precisely in this form. Instead, we prove that  
\begin{equation}\label{e:free_j_3}
J(u)-J(v)\le \begin{cases}
(\Lambda_u+\eps)\big(|\Omega_v|-|\Omega_u|\big)\quad\text{for every}\quad  v\in H^1_0(D) \quad\text{such that}\quad |\Omega_v|\ge m;\\
(\Lambda_u-\eps)\big(|\Omega_v|-|\Omega_u|\big)\quad\text{for every}\quad  v\in H^1_0(D) \quad\text{such that}\quad |\Omega_v|\le m;
\end{cases}
\end{equation}
where the constant $\eps$ improves at small scales, that is, if we consider competitors $v$ that differ from $u$ only in a small ball of radius $r$, then $\eps$ can be chosen in a function of $r$, $\eps=\eps(r)$, which is such that $\eps(r)\to 0$ as $r\to 0$. In this part of the proof (Subsection \ref{sub:small_scales}) we follow the analysis of  \cite{briancon-lamboley}, except in one fundamental point. In fact, the approach of Brian\c con and Lamboley requires that the Lagrange multiplier $\Lambda_u$ is not vanishing, which is not a priori known (see Proposition \ref{p:lagrange}); in \cite{briancon-lamboley} the issue is solved by the method in \cite{briancon}. In this paper, we give a different argument to prove that the Lagrange multiplier is non trivial. Our approach is based on the Almgren monotonicity formula, and the fact that it implies the non-degeneracy of the solution $u$. We give the proof of the appendix, since the argument is very general (based only on the stationarity condition) and might be of independent interest.  We also notice that this simplifies the proof of \eqref{e:free_j_3}  and reduces it to three fundamental steps (see Theorem \ref{thm mu+-}).

Our proof of Theorem \ref{thm main} is general and can be applied to the classical one-phase problem \cite{alt-caffarelli}, to the one-phase problem with measure constraint \cite{aguilera-alt-caffarelli} and to shape optimization problems as for instance the one of \cite{briancon-lamboley}. Our approach is different from (and alternative to) the one of \cite{alt-caffarelli}, \cite{aguilera-alt-caffarelli} and \cite{briancon-lamboley}, as we do not use the regularity result of Alt and Caffarelli \cite{alt-caffarelli}. In fact, in order to prove the regularity of the flat free boundaries (Subsection \ref{sub:regularity}), we prove that the optimality condition on the free boundary holds in viscosity sense (see Lemma \ref{l:visc}) and then we apply the general results of De Silva \cite{de-silva} (for the regularity of the free boundary $\partial\Omega_u\cap D$) and the recent result of Chang-Lara and Savin \cite{chang-lara-savin} (for the regularity at the contact points $\partial\Omega_u\cap\partial D$). Finally, the estimate on the dimension of the singular set (Subsection \ref{sub:weiss}) is a consequence of the Weiss' (quasi-)monotonicity formula (Lemma \ref{l:weiss}).

\section{Preliminaries}\label{s:preliminaries}
In this section we recall the main definitions and the properties of the quasi-open sets, the $\gamma$-convergence and the weak-$\gamma$-convergence. 
\subsection{Capacity, quasi-open sets and quasi-continuous functions}\label{sub:quasi-open}$ $

The {\it capacity} of a set $E \subset \R^d$ is defined as
$$\cp(E):= \inf \big\{ \|u\|^2_{H^1} \ : \ u\in H^1(\R^d),\ u\geq 1 \text{ in a neighborhood of } E \big\},$$
where $H^1(\R^d)$ is the Sobolev space equipped with the norm 
$\ds \|u\|_{H^1}^2 = \int_{\R^d} \big(|\nabla u|^2+u^2\big)\,dx.$

We say that a property holds {\it quasi-everywhere} (q.e.) if it holds on the complementary of a set of zero capacity.

\medskip
%\begin{definition}\label{def qo func qc} [Quasi-open sets, quasi-continuous functions]
%\begin{enumerate}
%\item 
A set $\O \subset \R^d$ is said to be {\it quasi-open} if there exists a decreasing sequence  $(\om_n)_{n\geq1}$ of open sets such that, for every $n\ge 1$, $\Omega\cup\omega_n$ is an open set and 
$\ds \lim_{n\rightarrow \infty} \cp(\omega_n) = 0$.
	%$\ds \lim_{n\rightarrow \infty} cap(\om_n) = 0, \quad \text{and} \quad \forall n, \O \cup \om_n \text{ is open.}$

 A function $u : \R^d \rightarrow \R$ is said to be {\it quasi-continuous} if there exists a decreasing sequence $(\om_n)_{n\ge 1}$ of open sets such that $\ds \lim_{n\rightarrow \infty} \cp(\om_n)=0$ and the restriction of $u$ to $\R^d\setminus \om_n$ is continuous.

%\end{enumerate}
%\end{definition}

It is well-known (see for instance \cite[Theorem 1, Section 4.8]{evans-gariepy}) that, for every $u \in H^1(\R^d)$, there exists a quasi-continuous representative $\tilde{u}$ of $u$, which is unique up to a set of zero capacity. From now on we will identify a function $u\in H^1(\R^d)$ with its quasi-continuous representative. We note that, by definition of a quasi-open set and a quasi-continuous function, for every $u\in H^1(\R^d)$, the set $\O_{u} := \{u>0\}=\{ x \in \R^d \ | \ u(x)>0 \}$ is a quasi-open set (\cite[Proposition 3.3.41]{henrot-pierre}). On the other hand, for every quasi-open set $\Omega$, there exists a function $u \in H^1(\R^d)$ such that $\O = \O_u$ up to a set of zero capacity 
%(see Proposition \ref{prop w_O} below) 
that is, the quasi-open sets are superlevel sets of Sobolev functions.

\medskip
For any set $E\subset \R^d$, the Sobolev space $H^1_0(E)\subset H^1(\R^d)$ is defined as
$$H^1_0(E):= \big\{u \in H^1(\R^d) \ : \ u=0\ \text{ q.e. in }\ \R^d \setminus E \big\}.$$
Note that, whenever $E$ is open, this definition coincides with the usual definition of $H^1_0(E)$ as the closure of $C^\infty_c(E)$ with respect to the norm $\|\cdot\|_{H^1}$, $C^\infty_c(E)$ being the set of smooth functions compactly supported in $E$ (see for instance \cite[Theorem 3.3.42]{henrot-pierre}). 
For any set $E\subset\R^d$ there is a quasi-open set $\tilde E\subset\R^d$ such that $\cp(\tilde E\setminus E)=0$ and $H^1_0(\tilde E)=H^1_0(E)$. Roughly speaking, the quasi-open sets are the natural domains for the Sobolev space $H^1_0$. We notice that, for every quasi-open set $E$, $H^1_0(E)$ is a closed subspace of  $H^1(\R^d)$  ; if $E_1\subset E_2$ are two quasi-open sets, then $H^1_0(E_1)\subset H^1_0(E_2)$ and the   two sets $E_1$ and $E_2$   coincide q.e. if and only if $H^1_0(E_1)=H^1_0(E_2)$. 
%Then define $H^1_0(E)$ for any set $E \subset D$ as
%\[ H^1_0(E) = \{u \in H^1_0(D) \ | \ \tilde{u}=0 \text{ q.e. in } D \backslash E \}. \]
%Note that this definition coincides with the usual one for open sets (see \cite[Theorem 3.3.42]{Henrot Pierre}) and that if $\O \subset D$ is a quasi-open set, then $H^1_0(\O)$ is closed in $H^1_0(D)$.
%Moreover, two sets $E_1,E_2 \subset D$ such that the capacity of the symmetric difference $E_1 \Delta E_2$ is $0$ define the same space $H^1_0(E_1) = H^1_0(E_2)$.

\subsection{PDEs on quasi-open sets}\label{sub:quasi-open_pde}

Let $D \subset \R^d$ be a given open set and $\O \subset D$ be a quasi-open set of finite Lebesgue measure. For every quasi-open set $\Omega\subset D$ and every function $f\in L^2(\Omega)$,   the Lax-Milgram theorem and the Poincar\'e inequality ensure that   there is a unique solution $u\in H^1_0(\Omega)$ of the problem 
\[ -\Delta u = f\quad\text{in}\quad\Omega, \qquad u \in H^1_0(\O), \]
where the PDE is intended in the weak sense 
\[ \int_\O \nabla u \cdot \nabla \varphi \, dx = \int_\Omega f \varphi \,dx\ ,\quad\text{for every}\quad \varphi \in H^1_0(\O). \]
In particular, taking $u=\varphi$, we notice that $\|\nabla u\|_{L^2(\Omega)}\le \|f\|_{L^2(\Omega)}\|u\|_{L^2(\Omega)}$. Now since $\Omega$ has a finite Lebesgue measure, there is a constant $C_\Omega$ such that $\|u\|_{H^1}\le C_\Omega\|\nabla u\|_{L^2}$ for every $u\in H^1_0(\Omega)$. Thus, we get that $\|u\|_{H^1}\le C_\Omega\|f\|_{L^2}$. 

The resolvent operator $R^{-\Delta}_\O : L^2(D) \rightarrow L^2(D)$ is defined as $R^{-\Delta}_\O(f):=u$ and is a linear, continuous, self-adjoint, positive operator such that $R^{-\Delta}_\O(L^2(D))\subset H^1_0(\Omega)$. Moreover, thanks to the compact embedding $H^1_0(\O) \hookrightarrow L^2(\O)$, the resolvent $R^{-\Delta}_\O$ is also compact. 

The usual comparison and weak maximum principles hold in this setting. Precisely, we have:

$\bullet$ if $f \in L^2(D)$ is a positive function and $\O_1\subset\O_2 \subset D$ are two quasi-open   sets  , then $w_{\O_1} \leq w_{\O_2}$. 
	
$\bullet$ if $\Omega$ is a quasi-open set and $f,g \in L^2(\O)$ are such that $f \leq g$ in $\O$, then $R_{\O}^{-\Delta}(f) \leq R_{\O}^{-\Delta}(g)$.

\medskip

In the sequel we denote by $w_\O$ (and sometimes also by $R^{-\Delta}_\O(1)$) the solution of 
\[ -\Delta w_\O = 1\quad\text{in}\quad\Omega, \qquad \quad w_\O \in H^1_0(\O). \]
%Equivalently, we have $w_\O = R^{-\Delta}_\O(1)$. 
This function is sometimes called {\it torsion} or {\it energy} function and is useful, in particular, to define the topology of the $\gamma$-convergence on the family of quasi-open sets, which is the purpose of the next section.
In the following proposition we summarize the main properties of the function $w_\O$ (see for instance \cite[Proposition 3.50, Remark 3.53, Lemma 3.125, Proposition 3.72]{velichkov}).
\begin{prop}[Properties of the torsion function $w_\Omega$] \label{prop w_O}$ $
\begin{enumerate}
	%\item (Weak maximum principle). Let $f \in L^2(\O)$ be a positive function and $\O_1,\O_2 \subset D$ be two quasi-open sets such that $\O_1 \subset \O_2$. Then $w_{\O_1} \leq w_{\O_2}$. 
	
	%\noindent Moreover, if $f,g \in L^2(\O)$ are such that $f \leq g$ in $\O$, where $\O \subset D$ is a quasi-open set, then $w_\O^f = R_\O^{-\Delta}(f) \leq w_\O^g = R_\O^{-\Delta}(g)$.
	
\item There is a dimensional constant $C_d>0$ such that 
\begin{equation}\label{eq estim infty w}
\|\nabla w_\O\|_{L^2} \leq C_d |\O|^{\frac{d+2}{2d}} \qquad\text{and}\qquad  \|w_\O \|_{L^\infty} \leq C_d |\O|^{\sfrac2d}.
\end{equation} 
	
	\item Let $\O_1,\O_2 \subset D$ be two quasi-open sets. Then we have the estimate
	\begin{equation}\label{eq estim gamma dist}
	\int_D (w_{\O_1} - w_{\O_1 \setminus \O_2}) \, dx \leq \cp(\O_2)\,\|w_{\O_1}\|^2_{L^\infty(\O_1)} .
	\end{equation}
	
	\item $H^1_0(\O) = H^1_0(\{ w_\O>0 \})$. In particular, $\O = \{ w_\O>0 \}$ up to a set of zero capacity.
\end{enumerate}
\end{prop}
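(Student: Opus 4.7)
The plan is to verify the three parts in order, each by direct computation from the weak formulation $-\Delta w_\Omega = 1$, $w_\Omega\in H^1_0(\Omega)$.

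For \textbf{(1)}, testing the equation against $w_\Omega$ itself gives $\int|\nabla w_\Omega|^2=\int w_\Omega$, and combining H\"older with the Sobolev embedding $H^1_0(\R^d)\hookrightarrow L^{2^\ast}(\R^d)$ (applied to $w_\Omega$ extended by zero) yields $\int w_\Omega\le C_d|\Omega|^{(d+2)/(2d)}\|\nabla w_\Omega\|_{L^2}$, from which the gradient bound follows after dividing. The $L^\infty$ bound is a classical Stampacchia/De Giorgi iteration: testing with $(w_\Omega-t)_+$ and combining with Sobolev produces an iterative inequality on the super-level measure $\mu(t):=|\{w_\Omega>t\}|$ forcing $\mu(t)=0$ for $t\gtrsim_d|\Omega|^{2/d}$.

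For \textbf{(2)}, set $v:=w_{\Omega_1}-w_{\Omega_1\setminus\Omega_2}\in H^1_0(\Omega_1)$ and $M:=\|w_{\Omega_1}\|_{L^\infty}$. The weak comparison principle gives $v\ge0$; subtracting the two weak equations tested against $\varphi\in H^1_0(\Omega_1\setminus\Omega_2)\subset H^1_0(\Omega_1)$ shows $v$ is weakly harmonic in $\Omega_1\setminus\Omega_2$; and testing $-\Delta w_{\Omega_1}=1$ successively with $v$ and with $w_{\Omega_1\setminus\Omega_2}$ (the cross term canceling via the equation on $\Omega_1\setminus\Omega_2$) gives the identity
\[
\int_D v\,dx=\int_D|\nabla v|^2\,dx.
\]
Since $v=w_{\Omega_1}$ q.e.\ on $\Omega_2$ and $v$ is harmonic in $\Omega_1\setminus\Omega_2$, the function $v$ minimizes the Dirichlet energy over the affine class $\{u\in H^1_0(\Omega_1):u-v\in H^1_0(\Omega_1\setminus\Omega_2)\}$. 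Choosing the $H^1$-capacitary potential $\Phi$ of $\Omega_2$ (satisfying $0\le\Phi\le 1$, $\Phi=1$ q.e.\ on $\Omega_2$, and $\|\Phi\|_{H^1}^2=\cp(\Omega_2)$), the product $u:=\Phi\, w_{\Omega_1}\in H^1_0(\Omega_1)$ is a valid competitor. Applying integration by parts (test $-\Delta w_{\Omega_1}=1$ against $\Phi^2 w_{\Omega_1}\in H^1_0(\Omega_1)$) collapses the cross terms and gives
\[
\int|\nabla u|^2\,dx=\int\Phi^2 w_{\Omega_1}\,dx+\int w_{\Omega_1}^2|\nabla\Phi|^2\,dx\le M^2\,\cp(\Omega_2),
\]
using $w_{\Omega_1}\le M$ and $\int\Phi^2+\int|\nabla\Phi|^2=\cp(\Omega_2)$. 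This last estimate, in particular the cancellation provided by the integration by parts that exactly packages the two summands into $M^2\,\cp(\Omega_2)$, is the main technical step.

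For \textbf{(3)}, the inclusion $\{w_\Omega>0\}\subset\Omega$ q.e.\ is automatic since $w_\Omega\in H^1_0(\Omega)$, so $H^1_0(\{w_\Omega>0\})\subset H^1_0(\Omega)$. The reverse follows by approximation: for $u\in H^1_0(\Omega)$ with $u\ge0$ bounded, the truncations $u_n:=u\wedge(n w_\Omega)\in H^1_0(\{w_\Omega>0\})$ form a uniformly $H^1$-bounded sequence that converges to $u$ in $H^1$, once one invokes the strong minimum principle $w_\Omega>0$ q.e.\ in $\Omega$, itself a consequence of the weak comparison principle applied on the quasi-open subdomain $\{w_\Omega>0\}$.
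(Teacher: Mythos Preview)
The paper does not prove this proposition; it cites \cite{velichkov}. Your argument for part (1) is correct and standard. Your argument for part (2) is essentially correct and elegant---the identity $\int_D v\,dx=\int_D|\nabla v|^2\,dx$ and the competitor $u=\Phi\,w_{\Omega_1}$ are exactly the right ideas, and the integration by parts does give the clean identity $\int|\nabla u|^2=\int\Phi^2 w_{\Omega_1}+\int w_{\Omega_1}^2|\nabla\Phi|^2$. The only quibble is the last inequality: from $w_{\Omega_1}\le M$ you get $\int\Phi^2 w_{\Omega_1}\le M\int\Phi^2$, not $M^2\int\Phi^2$, so the honest bound is $M\int\Phi^2+M^2\int|\nabla\Phi|^2\le\max(M,M^2)\,\cp(\Omega_2)$. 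This matches the stated $M^2\cp(\Omega_2)$ only when $M\ge1$; for small $\Omega_1$ it is off by a factor. This is harmless for every application in the paper (only $\cp(\Omega_2)\to0$ matters), but it is a gap relative to the stated inequality.

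Part (3) has a genuine circularity. Your truncations $u_n=u\wedge(nw_\Omega)$ do lie in $H^1_0(\{w_\Omega>0\})$, but to conclude $u_n\to u$ you need $u=0$ q.e.\ on $\{w_\Omega=0\}\cap\Omega$, i.e.\ $w_\Omega>0$ q.e.\ on $\Omega$---which is precisely the statement $\Omega=\{w_\Omega>0\}$ you are trying to prove. The phrase ``weak comparison principle applied on the quasi-open subdomain $\{w_\Omega>0\}$'' does not close this gap: comparison gives $w_{\{w_\Omega>0\}}\le w_\Omega$ and even $w_{\{w_\Omega>0\}}=w_\Omega$, but this does not by itself force $\{w_\Omega>0\}=\Omega$. (Separately, the uniform $H^1$-bound on $u_n$ is not obvious either: the term $n^2\int_{\{u>nw_\Omega\}}|\nabla w_\Omega|^2$ need not be bounded for arbitrary bounded $u\in H^1_0(\Omega)$.) A clean fix is to argue via the resolvent: for bounded $f\ge0$, comparison gives $R_\Omega^{-\Delta}(f)\le\|f\|_{L^\infty}w_\Omega$, hence $R_\Omega^{-\Delta}(f)\in H^1_0(\{w_\Omega>0\})$; since $R_\Omega^{-\Delta}(L^\infty)$ is dense in $H^1_0(\Omega)$ (it contains the eigenfunctions, which are bounded), the inclusion $H^1_0(\Omega)\subset H^1_0(\{w_\Omega>0\})$ follows by closure.
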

In the sequel we make the convention to extend to $D$ any vector field $V \in L^\infty(\O,\R^d)$ and any function $u \in H^1_0(\O)$ by letting it equal to $0$ on $D\setminus\O$ so that $V \in L^\infty(D,\R^d)$ and $u \in H^1_0(D)$. 

\medskip

We notice that, given a drift $V\in L^\infty(\O,\R^d)$, the bilinear form associated to the operator $L = -\Delta + V \cdot \nabla$ may not be coercive on $H^1_0(\O)$. Thus, in order to define the resolvent of $L = -\Delta + V \cdot \nabla$, we consider a large enough constant 
%We can not easily define a resolvent for the operator $L = -\Delta + v \cdot \nabla$ since its bilinear form is not coercive on $H^1_0(\O)$. However, for 
$c>0$ (depending only on   $\|V\|_{L^\infty(\Omega)}$),   for which there exists a positive constant $\delta>0$ such that
%for all $u \in H^1_0(\O)$, we have
\begin{equation}\label{eq coerci L'}
\delta  \int_D \big(|\nabla u |^2 + u^2) \,dx\le \int_D (|\nabla u |^2 + (V \cdot \nabla u) \,u  + c\, u^2\big) \,dx\ , \quad\text{for every}\quad u \in H^1_0(\O).
\end{equation}
The bilinear form associated to the operator $L'=L + c$ is hence coercive on $H^1_0(\O)$. Note that 
$$\text{if}\quad \|V\|_{L^\infty}\le \tau\ ,\quad\text{then we can take any}\quad 0<\delta<1\quad\text{and}\quad  c\ge\delta+\frac{\tau^2}{4(1-\delta)}.$$
%we can choose $c$ such that $\delta \in (1/2,1)$.
%Indeed, using $ab \leq \varepsilon a^2 + \frac{1}{\varepsilon}b^2$, $\varepsilon>0$, it follows that for all $u \in H^1_0(\O)$ we have
%\begin{align*}
%\int_D (|\nabla u|^2 + v \cdot \nabla u \ u) \ dx &\geq \int_D (|\nabla u |^2 - \tau |\nabla u | | u |) \ dx \\
%&\geq \int_D (|\nabla u |^2 - \tau (\varepsilon |\nabla u |^2 + \frac{1}{\varepsilon} u^2) ) \ dx\\
%&\geq (1-\tau \varepsilon)\int_D (|\nabla u |^2 + u^2) \ dx - ( \frac{\tau}{\varepsilon} +1-\tau \varepsilon ) \int_D u^2 \ dx.
%\end{align*}
%This proves \eqref{eq coerci L'} for $c = \frac{\tau}{\varepsilon} +1-\tau \varepsilon$ and $\varepsilon > 0$ small enough such that $\delta = 1-\tau \varepsilon > 0$.
Therefore, thanks to Lax Milgram theorem, we define the resolvent ${R^{L'}_\O : L^2(D) \rightarrow L^2(D)}$ as the compact (non self-adjoint) operator, which maps $f \in L^2(\O)$ to the unique solution of the problem 
\[ L'u = f \quad\text{in}\quad \O\ , \qquad u \in H^1_0(\O),\] 
which is intended in the weak sense 
\[ \int_\O \big(\nabla u \cdot \nabla \varphi +(V\cdot\nabla u )\varphi+c\, u\,\varphi\big)\, dx = \int_\Omega f \varphi \,dx\ ,\quad\text{for every}\quad \varphi \in H^1_0(\O). \]

\subsection{The $\gamma$-convergence and the weak-$\gamma$-convergence}\label{sub:gamma}
In this subsection we briefly recall the definition and the main properties of the $\gamma$-convergence of (quasi-open) sets.  
\begin{definition}[$\gamma$-convergence and weak-$\gamma$-convergence]
Let $D\subset\R^d$ be a given open set of finite Lebesgue measure, $(\O_n)_{n\geq 1}$ be a sequence of quasi-open sets and let $\O$ be a quasi-open set, all included in $D$. We say that
\begin{itemize}
\item $\O_n$ $\gamma$-converges to $\O$, if  $w_{\O_n}$ converges to $w_\O$ strongly in $L^2(D)$;
\item $\O_n$ weak-$\gamma$-converges to $\O$, if there exists $w\in H^1_0(D)$ such that $\O = \{w>0\}$ and   $w_{\O_n}$   converges to  $w$ in $L^2(D)$.
\end{itemize}
\end{definition} 

Though the $\gamma$-convergence is not compact on the family of quasi-open sets (see for instance \cite{cioranescu-murat} and \cite[$\S$ 3.2.6]{henrot-pierre} for an example), it is easy to see that the weak-$\gamma$-convergence is: by \eqref{eq estim infty w}, up to a subsequence, $w_{\O_n}$ weakly converges in $H^1_0(D)$ to some $w\in H^1_0(D)$ and hence $\O_n$ weak-$\gamma$-converges to the quasi-open set $\O := \{w>0\}$. To deal with the non-compactness of the $\gamma$-convergence we will use the following Lemma (see for example \cite{buttazzo} and \cite[Lemma 4.7.11]{henrot-pierre}).

\begin{lm}\label{lem sub seq gamma conv}
Let $\left(\O_n\right)_{n\geq 1} \subset D$ be a sequence of quasi-open sets that weak-$\gamma$-converges to the quasi-open set $\O\subset D$. 
Then there exists a subsequence of $\left(\O_n\right)_{n\geq 1}$, still denoted by $\left(\O_n\right)_{n\geq 1}$, and a sequence $(\tilde{\O}_n)_{n\geq 1} \subset D$ of 
quasi-open sets satisfying $\O_n \subset \tilde{\O}_n$, such that $\tilde{\O}_n$ $\gamma$-converges to $\O$.
\end{lm}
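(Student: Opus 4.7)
The plan is to upgrade the weak-$\gamma$-convergence, which only provides $L^2$ convergence of torsion functions, to a stronger convergence by taking convex combinations via Mazur's lemma; the enlarged sets $\tilde{\Omega}_n$ will then be defined as the positivity sets of these combinations.

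\textbf{First step (weak $H^1$-compactness).} By the bound \eqref{eq estim infty w} in Proposition \ref{prop w_O}, the sequence $(w_{\Omega_n})$ is uniformly bounded in $H^1_0(D)$. Combined with the strong $L^2(D)$ convergence $w_{\Omega_n}\to w$ that is built into the weak-$\gamma$-convergence hypothesis, I can extract a (non-relabeled) subsequence such that $w_{\Omega_n}\rightharpoonup w$ weakly in $H^1_0(D)$.

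\textbf{Second step (Mazur + enlargement).} Mazur's lemma produces, for each $n$, finitely many nonnegative coefficients $(\alpha^n_k)_{k=n}^{N(n)}$ with $\sum_k \alpha^n_k=1$ such that the convex combinations
\[
\tilde{w}_n \ :=\ \sum_{k=n}^{N(n)} \alpha^n_k\, w_{\Omega_k}
\]
converge to $w$ \emph{strongly} in $H^1_0(D)$. By including $w_{\Omega_n}$ with a small strictly positive weight in each combination (and renormalizing, which does not destroy the strong $H^1$-convergence), I can further assume $\alpha^n_n>0$ for every $n$. I now define
\[
\tilde{\Omega}_n \ :=\ \{\tilde{w}_n>0\},
\]
which is a quasi-open subset of $D$. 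Since each $w_{\Omega_k}$ is nonnegative and $\alpha^n_n>0$, the pointwise estimate $\tilde{w}_n\ge \alpha^n_n w_{\Omega_n}$ gives $\Omega_n = \{w_{\Omega_n}>0\}\subset\tilde{\Omega}_n$ q.e.

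\textbf{Third step ($\gamma$-convergence $\tilde{\Omega}_n \to \Omega$).} I must show $w_{\tilde{\Omega}_n}\to w_\Omega$ in $L^2(D)$. For the lower bound, since $-\Delta\tilde{w}_n = \sum_k \alpha^n_k \mathbf{1}_{\Omega_k} \le 1$ on $\tilde{\Omega}_n$ and $\tilde{w}_n\in H^1_0(\tilde{\Omega}_n)$, the comparison principle yields $\tilde{w}_n\le w_{\tilde{\Omega}_n}$, hence any weak-$H^1$ subsequential limit $\bar{w}$ of $w_{\tilde{\Omega}_n}$ satisfies $\bar{w}\ge w$. In particular $\bar{w}\in H^1_0(D)$ and $\{w>0\}=\Omega\subset\{\bar{w}>0\}$. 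The matching upper bound, namely $\bar{w}\le w_\Omega$, is the central point and follows once one shows that $H^1_0(\tilde{\Omega}_n)$ Mosco-converges to $H^1_0(\Omega)$: the strong $H^1$-convergence $\tilde{w}_n\to w$ provides the recovery-sequence half of Mosco, since test functions $\varphi\in H^1_0(\Omega)$ can be approximated by the admissible truncations $(\varphi\wedge (k\tilde{w}_n))\vee(-k\tilde{w}_n) \in H^1_0(\tilde{\Omega}_n)$ for large $k$; the other (weak lower bound) half of Mosco is automatic from $w_{\Omega_n}\to w$. Mosco convergence of the Sobolev spaces is equivalent to strong resolvent convergence of the Dirichlet Laplacians, which in turn is equivalent to $\gamma$-convergence of the sets, so $\tilde{\Omega}_n \overset{\gamma}\to \Omega$ follows.

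\textbf{Main obstacle.} The delicate point is confirming that no parasitic mass of the enlarged sets $\tilde{\Omega}_n$ persists outside $\Omega$ in the limit, i.e.\ identifying $\bar{w}$ with $w_\Omega$ rather than with the torsion of some strictly larger set. Producing recovery sequences in $H^1_0(\tilde{\Omega}_n)$ for arbitrary $\varphi\in H^1_0(\Omega)$ out of the strong convergence $\tilde{w}_n\to w$ is where the Mazur step is crucially used, and also where the capacitary estimate \eqref{eq estim gamma dist} of Proposition \ref{prop w_O} naturally enters to control the set-theoretic difference $\tilde{\Omega}_n\setminus\Omega$.
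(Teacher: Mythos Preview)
The paper does not prove this lemma; it simply cites \cite{buttazzo} and \cite[Lemma 4.7.11]{henrot-pierre}. The standard proof there proceeds through the compactness of capacitary measures under $\gamma$-convergence: one extracts a measure $\mu$ with $\infty_{D\setminus\Omega_n}\xrightarrow{\gamma}\mu$, identifies $\Omega=\{w_\mu>0\}$, and then takes $\tilde{\Omega}_n=\Omega_n\cup\Omega$, showing $\infty_{D\setminus\tilde{\Omega}_n}\xrightarrow{\gamma}\infty_{D\setminus\Omega}$ using the structure of $\mu$.

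Your Mazur-based approach is different and elegant up to a point, but there is a real gap in Step~3. You correctly establish the recovery half (M1) of Mosco convergence: given $\varphi\in H^1_0(\Omega)=H^1_0(\{w>0\})$, the truncations $(\varphi\wedge k\tilde w_n)\vee(-k\tilde w_n)$ do the job via a diagonal argument, precisely because $\tilde w_n\to w$ \emph{strongly} in $H^1_0(D)$. The problem is the closure half (M2): you assert it is ``automatic from $w_{\Omega_n}\to w$'', but weak-$\gamma$-convergence of $(\Omega_n)$ gives (M2) for the spaces $H^1_0(\Omega_n)$, not for the strictly larger spaces $H^1_0(\tilde\Omega_n)=H^1_0\big(\bigcup_{k=n}^{N(n)}\Omega_k\big)$. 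A function $\varphi_n\in H^1_0(\tilde\Omega_n)$ need not lie in any single $H^1_0(\Omega_k)$, so you cannot feed it into the (M2) property of the original sequence. More generally, strong $H^1$-convergence $\tilde w_n\to w$ does \emph{not} force $H^1_0(\{\tilde w_n>0\})$ to Mosco-converge to $H^1_0(\{w>0\})$: take $\tilde w_n=w+\tfrac1n\psi$ with $\psi\in C_c^\infty(D)$, $\psi>0$ on $D$, to see $\{\tilde w_n>0\}=D$ for all $n$ while $\{w>0\}=\Omega\subsetneq D$. The special structure of your $\tilde w_n$ as a convex combination of torsion functions does not obviously rule out such behaviour of the positivity sets.

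Your ``Main obstacle'' paragraph correctly locates the difficulty---ruling out parasitic mass of $\tilde\Omega_n$ outside $\Omega$---but then points back to the Mazur step (which handles (M1), not (M2)) and to the capacitary estimate \eqref{eq estim gamma dist}. That estimate would help only if you could show $\cp(\tilde\Omega_n\setminus\Omega)\to 0$, which is false in general (think of the Cioranescu--Murat-type situations the paper alludes to). To close the argument you would need an independent reason why weak $H^1$-limits from $H^1_0(\tilde\Omega_n)$ land in $H^1_0(\Omega)$; this is exactly what the measure-theoretic proof in the cited references supplies.
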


%We will also need the following lemmas (see \cite[Lemma 2.2.21]{velichkov} for Lemma \ref{lem semi cont Leb meas}).
The following lemma is a direct consequence of the definition of the weak-$\gamma$-convergence and the fact that for every quasi-open set $\Omega=\{w_\Omega>0\}$ (the detailed proof can be found for example in \cite{buttazzo} and \cite[Lemma 2.2.21]{velichkov}).   
\begin{lm}[Lower semi-continuity of the Lebesgue measure]\label{l:sc_meas}
Let $(\O_n)_{n\geq 1}$ be a sequence of quasi-open sets in $D$ weak-$\gamma$-converging to $\O\subset D$, then $\ds |\O| \leq \liminf_{n \rightarrow +\infty} |\O_n|. $
\end{lm}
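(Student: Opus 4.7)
The plan is to reduce the measure lower semi-continuity to a pointwise convergence argument on the torsion functions, and then apply Fatou's lemma.

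First, I would unpack the definition of weak-$\gamma$-convergence: there exists $w\in H^1_0(D)$ with $\Omega=\{w>0\}$ and $w_{\Omega_n}\to w$ in $L^2(D)$. Combined with part (3) of Proposition \ref{prop w_O}, which gives $\Omega_n=\{w_{\Omega_n}>0\}$ up to a set of zero capacity (hence zero Lebesgue measure), we can write $|\Omega_n|=|\{w_{\Omega_n}>0\}|$ and $|\Omega|=|\{w>0\}|$. So the question becomes: if $w_{\Omega_n}\to w$ in $L^2(D)$, does $|\{w>0\}|\le \liminf_n|\{w_{\Omega_n}>0\}|$?

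Next, to pass from $L^2$-convergence to pointwise convergence, I would use the standard subsequence trick. Set $\ell:=\liminf_n |\Omega_n|$ and extract a subsequence (still indexed by $n$) along which $|\Omega_n|\to \ell$. Since $w_{\Omega_n}\to w$ in $L^2(D)$, a further subsequence (again not relabeled) converges to $w$ pointwise a.e.\ in $D$. On the set $\{w>0\}$, for a.e.\ $x$ one has $w_{\Omega_n}(x)\to w(x)>0$, so $w_{\Omega_n}(x)>0$ for $n$ large, meaning $x\in\Omega_n$ eventually. Consequently
\begin{equation*}
\ind_{\{w>0\}}(x)\le \liminf_{n\to\infty}\ind_{\Omega_n}(x)\qquad\text{for a.e.\ }x\in D.
\end{equation*}

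Finally, I would integrate this inequality over $D$ and apply Fatou's lemma:
\begin{equation*}
|\Omega|=|\{w>0\}|=\int_D \ind_{\{w>0\}}\,dx\le \int_D \liminf_{n\to\infty}\ind_{\Omega_n}\,dx\le \liminf_{n\to\infty}\int_D \ind_{\Omega_n}\,dx=\ell=\liminf_{n\to\infty}|\Omega_n|,
\end{equation*}
which is the desired inequality for the full sequence (since $\ell$ was chosen to be the liminf along the original sequence). No real obstacle is expected here; the only delicate point is the subsequence extraction ensuring the conclusion for the original sequence rather than just a subsequence, which the choice $|\Omega_n|\to \ell$ at the start handles cleanly.
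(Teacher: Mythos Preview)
Your proof is correct and follows exactly the approach the paper indicates: the paper does not give a detailed proof but states that the lemma is a direct consequence of the definition of weak-$\gamma$-convergence together with the identification $\Omega_n=\{w_{\Omega_n}>0\}$ from Proposition~\ref{prop w_O}(3), citing \cite{buttazzo} and \cite[Lemma 2.2.21]{velichkov} for details. Your argument is precisely the natural unpacking of this sketch, with the subsequence extraction handled correctly.
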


  As was shown   in \cite{bucur-buttazzo} and \cite{buttazzo}, the following theorem, first proved in \cite{buttazzo-dal-maso}, is an immediate consequence of Lemma \ref{lem sub seq gamma conv} and Lemma \ref{l:sc_meas}. 
\begin{teo}[Buttazzo-Dal Maso \cite{buttazzo-dal-maso}]\label{t:budm}
Let $\mathcal F$ be a functional on the quasi-open sets, which is:

$\bullet$ decreasing with respect to the inclusion of sets; 

$\bullet$ lower semi-continuous with respect to the $\gamma$-convergence.

\noindent Then, for every bounded open set $D\subset\R^d$ and every $0<m\le |D|$, the shape optimization problem 
$$\min\Big\{\mathcal F(\Omega)\ :\ \Omega\ \text{quasi-open},\ \Omega\subset D,\ |\Omega|\le m\Big\}$$
has a solution.
\end{teo}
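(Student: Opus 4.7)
The plan is to apply the direct method in the calculus of variations, using the weak-$\gamma$-convergence to obtain a compactness result and then combining the two hypotheses on $\mathcal{F}$ to pass the infimum to the limit. First, I would take a minimizing sequence $(\Omega_n)_{n\geq 1}$ of admissible quasi-open sets, i.e.\ $\Omega_n\subset D$, $|\Omega_n|\le m$, and $\mathcal{F}(\Omega_n)\to \inf$. Note that the admissible class is non-empty (e.g.\ any ball of measure at most $m$ compactly contained in $D$, if $m<|D|$, and $D$ itself if $m=|D|$), so the infimum is finite or $-\infty$, and in either case the argument below still produces a minimizer.

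Next, I use Proposition \ref{prop w_O}(1) to deduce the uniform bound $\|w_{\Omega_n}\|_{H^1}\le C(d,|D|)$. By the Rellich compact embedding, up to a subsequence $w_{\Omega_n}$ converges weakly in $H^1_0(D)$ and strongly in $L^2(D)$ to some $w\in H^1_0(D)$. Setting $\Omega:=\{w>0\}$, which is a quasi-open subset of $D$, the sequence $\Omega_n$ weak-$\gamma$-converges to $\Omega$. By Lemma \ref{l:sc_meas},
\[
|\Omega|\le \liminf_{n\to\infty}|\Omega_n|\le m,
\]
so $\Omega$ lies in the admissible class.

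It remains to show $\mathcal{F}(\Omega)\le \liminf_{n}\mathcal{F}(\Omega_n)$. This is the only delicate point, because $\mathcal{F}$ is assumed lower semi-continuous only with respect to the (stronger) $\gamma$-convergence, while our compactness yields only weak-$\gamma$-convergence. To bridge this gap, I invoke Lemma \ref{lem sub seq gamma conv}: up to extracting a further subsequence, there exists a sequence of quasi-open sets $\tilde\Omega_n\subset D$ with $\Omega_n\subset\tilde\Omega_n$, such that $\tilde\Omega_n$ $\gamma$-converges to $\Omega$. The monotonicity assumption gives $\mathcal{F}(\tilde\Omega_n)\le \mathcal{F}(\Omega_n)$, while the $\gamma$-lower semi-continuity gives
\[
\mathcal{F}(\Omega)\le \liminf_{n\to\infty}\mathcal{F}(\tilde\Omega_n)\le \liminf_{n\to\infty}\mathcal{F}(\Omega_n)=\inf.
\]
Hence $\Omega$ is a minimizer, which concludes the proof.

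The main (and really the only) obstacle is the mismatch between the natural compactness topology (weak-$\gamma$) and the semi-continuity hypothesis ($\gamma$), which is precisely what the enveloping construction of Lemma \ref{lem sub seq gamma conv} combined with the monotonicity of $\mathcal{F}$ is designed to resolve. No further estimates are needed; everything else is a routine application of the direct method.
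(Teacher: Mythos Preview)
Your proof is correct and follows exactly the approach the paper indicates: the paper does not spell out a proof but states that the theorem is ``an immediate consequence of Lemma~\ref{lem sub seq gamma conv} and Lemma~\ref{l:sc_meas}'', which is precisely the combination you use (weak-$\gamma$ compactness for the constraint via Lemma~\ref{l:sc_meas}, and the enveloping sequence of Lemma~\ref{lem sub seq gamma conv} together with monotonicity to recover $\gamma$-lower semi-continuity).
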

We will not be able to apply directly Theorem \ref{t:budm} to establish the existence of optimal sets for both the problems \eqref{e:minO} and \eqref{e:minOV} in the class of quasi-open sets. Instead, in Section \ref{s:existence}, we will use an argument based only on Lemma \ref{lem sub seq gamma conv} and Lemma \ref{l:sc_meas}, but before that we will need to extend the definition of $\lambda_1(\Omega,V)$ to the class of quasi-open sets. We do this in Section \ref{s:def_lambda_1}, where we will use several times the following approximation result. 
%In order to extend the definition of $\lambda_1(\Omega,V)$ to this class we will need the following result.
\begin{lm}[Approximation with open and smooth sets]\label{lem approx qo}
Let $\O \subset D$ be a quasi-open set. Then:

\noindent {\it(1)} there is a sequence of open sets   $\left(\O_n\right)_{n\geq 1}$   that $\gamma$-converges to $\O$ and is such that $\O \subset \O_n \subset D$   and $\ds\lim_{n\rightarrow +\infty}\left\vert \O_n\right\vert=\left\vert \O\right\vert$;  
    
\noindent {\it(2)} there is a sequence   $\left(\O_n\right)_{n\geq 1}$   of smooth ($C^\infty$) open sets contained in $D$, that $\gamma$-converges to $\O$.
%\end{enumerate}
\end{lm}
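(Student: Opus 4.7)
\medskip

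\noindent\textbf{Proof proposal.} For part \emph{(1)}, the plan is to use directly the definition of a quasi-open set. By definition, there is a decreasing sequence of open sets $(\omega_n)_{n\ge 1}$ with $\cp(\omega_n)\to 0$ such that $\Omega\cup\omega_n$ is open for every $n$. Set
\[
\Omega_n:=(\Omega\cup\omega_n)\cap D,
\]
which is open, contained in $D$, and satisfies $\Omega\subset\Omega_n$. Note that $\Omega_n\setminus\omega_n=\Omega\setminus\omega_n\subset\Omega$, so by the weak maximum principle recalled in Subsection \ref{sub:quasi-open_pde} we have $w_{\Omega_n\setminus\omega_n}\le w_\Omega\le w_{\Omega_n}$. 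Using the estimate \eqref{eq estim gamma dist} of Proposition \ref{prop w_O} with the pair $(\Omega_n,\omega_n)$, together with the uniform $L^\infty$-bound $\|w_{\Omega_n}\|_{L^\infty}\le C_d|D|^{\sfrac 2d}$ from \eqref{eq estim infty w}, we obtain
\[
0\le \int_D(w_{\Omega_n}-w_\Omega)\,dx\le \int_D(w_{\Omega_n}-w_{\Omega_n\setminus\omega_n})\,dx\le C_d^2|D|^{\sfrac 4d}\cp(\omega_n)\xrightarrow[n\to\infty]{}0.
\]
Since the non-negative sequence $w_{\Omega_n}-w_\Omega$ is uniformly bounded in $L^\infty$ and tends to $0$ in $L^1$, it tends to $0$ in $L^2(D)$ as well, so $\Omega_n$ $\gamma$-converges to $\Omega$. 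The convergence of the Lebesgue measures follows from the elementary estimate $|\omega_n|\le\cp(\omega_n)$, which itself is immediate from the definition of the capacity (any admissible test function $u$ with $u\ge 1$ on $\omega_n$ satisfies $\|u\|_{H^1}^2\ge\|u\|_{L^2}^2\ge|\omega_n|$); hence $|\Omega_n\setminus\Omega|\le |\omega_n\cap D|\to 0$.

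\medskip

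For part \emph{(2)}, the strategy is a two-step approximation followed by a diagonal extraction. First, by step \emph{(1)}, we find open sets $(\widetilde\Omega_n)_{n\ge 1}$ in $D$ that $\gamma$-converge to $\Omega$. Then, for each fixed $n$, we construct a smooth exhaustion from inside: by a classical construction (mollify the distance function $x\mapsto\dist(x,\R^d\setminus\widetilde\Omega_n)$ and apply Sard's theorem to pick regular level values), there exists an increasing sequence of smooth open sets $V_n^k\subset\widetilde\Omega_n$ with compact closure in $\widetilde\Omega_n$ and such that $\bigcup_{k\ge 1}V_n^k=\widetilde\Omega_n$. Monotonicity of the torsion function under inclusion yields an increasing sequence $w_{V_n^k}$, uniformly bounded in $L^\infty$ by \eqref{eq estim infty w}, whose pointwise limit $w_\infty\in H^1_0(\widetilde\Omega_n)$ is a supersolution of $-\Delta w_\infty=1$ on $\widetilde\Omega_n$ (as the increasing union $\bigcup_k C_c^\infty(V_n^k)$ is dense in $H^1_0(\widetilde\Omega_n)$). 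By uniqueness of the torsion function $w_{\widetilde\Omega_n}=w_\infty$, so $w_{V_n^k}\to w_{\widetilde\Omega_n}$ monotonically in $L^2(D)$ as $k\to\infty$, i.e.\ $V_n^k$ $\gamma$-converges to $\widetilde\Omega_n$ as $k\to\infty$. A standard diagonal argument then yields indices $k_n$ with
\[
\|w_{V_n^{k_n}}-w_{\widetilde\Omega_n}\|_{L^2(D)}<\tfrac{1}{n},
\]
so that $\Omega_n:=V_n^{k_n}$ is a sequence of smooth open sets in $D$ with $w_{\Omega_n}\to w_\Omega$ in $L^2(D)$, which is the desired $\gamma$-convergence.

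\medskip

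The only delicate step is the identification of the limit in the inner exhaustion, i.e.\ showing that $w_{V_n^k}\to w_{\widetilde\Omega_n}$ rather than to some smaller function; this is where the density of $\bigcup_k C_c^\infty(V_n^k)$ in $H^1_0(\widetilde\Omega_n)$ enters. Every other step reduces to applying Proposition \ref{prop w_O}, the elementary bound $|\omega|\le\cp(\omega)$, and standard smoothing of open sets.
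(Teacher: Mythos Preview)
Your proof is correct and follows essentially the same strategy as the paper: part \emph{(1)} is identical (same construction $\Omega_n=(\Omega\cup\omega_n)\cap D$ and the same use of \eqref{eq estim gamma dist}, with your $L^\infty$--$L^1$ interpolation replacing the paper's weak-$H^1$ compactness step), and part \emph{(2)} is the same two-step approximation (open from outside, smooth from inside, diagonal extraction), the only difference being that the paper identifies the limit of the inner exhaustion by passing to the limit in the PDE via Hausdorff convergence while you use density of $\bigcup_k C_c^\infty(V_n^k)$ in $H^1_0(\widetilde\Omega_n)$. One small wording fix: that density argument actually shows $w_\infty$ is a \emph{solution} (not merely a supersolution) of $-\Delta w_\infty=1$ in $\widetilde\Omega_n$, which is what makes the uniqueness step work.
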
   

\begin{proof}
The result is well-known; here we give the proof for the readers' convenience. 

\textit{(1)} Let   $\left(\om_n\right)_{n\geq 1}$   be a sequence of open sets such that $\lim_{n\to\infty}\cp(\omega_n)=0$ and $\O_n = (\O \cup \om_n) \cap D$ is an open set. Then, \eqref{eq estim gamma dist} applied to the sets $\O_n$ and $\om_n \setminus \O$ together with the second estimate in \eqref{eq estim infty w} show that $w_{\O_n}$ converges to $w_\O$ in $L^1(D)$. Moreover, up to a subsequence, $w_{\O_n}$ weakly converges in $H^1(D)$ thanks to the first estimate in \eqref{eq estim infty w}. Since the embedding $H^1_0(D) \hookrightarrow L^2(D)$ is compact, there is a subsequence which converges strongly in $L^2(D)$. By uniqueness of the limit in $L^1(D)$, it has to be $w_\O$. Thus, $w_{\O_n}$ converges in $L^2(D)$ to $w_\O$ and so, $\O_n$ $\gamma$-converges to $\O$.   Observe also that one has $\ds\lim_{n\rightarrow +\infty}\left\vert \O_n\right\vert=\left\vert \O\right\vert$ since $\ds\lim_{n\rightarrow +\infty} \left\vert \om_n\right\vert=0$.  
		
\textit{(2)} Firstly, assume that $\O$ is an open set.	
Let   $\left(\O_n\right)_{n\geq 1}$   be an increasing sequence of smooth open sets included in $\O$ which Hausdorff converges to $\O$. Then, up to a subsequence, $w_n := w_{\O_n}$ weakly converges in $H^1_0(D)$ to some $w \in H^1_0(D)$. But $\O_n,\O$ are open sets such that $\O_n \subset \O$, and since the convergence of $\O_n$ to $\O$ is Hausdorff, we can pass to the limit in the equation 
\[ -\Delta w_n = 1\quad \text{in}\quad \O_n \]
to see that $w$ satisfies 
\[ -\Delta w = 1 \quad\text{in}\quad \O. \]
This also shows that the sequence of norms $\|w_n\|_{H^1(D)}$ converges to $\|w\|_{H^1(D)}$, so that the convergence of $w_n$ to $w$ is strong in $H^1(D)$. Finally, since $\O_n \subset \O$, we get that $w \in H^1_0(\O)$ and hence that $w = w_\O$. Therefore, the sequence of smooth open sets $\O_n$ $\gamma$-converges to $\O$.
		
If now $\O$ is merely a quasi-open set, we can approximate $\O$ by a sequence of open sets which $\gamma$-converges to $\O$ thanks to {\it(1)}. Hence, by approximating these open sets by open smooth sets as above, we get a sequence of smooth open sets which $\gamma$-converges to $\O$. Recall that the topology of the $\gamma$-convergence is metrizable (see for example \cite{bucur-buttazzo}).
\end{proof}

\begin{oss}[The quasi-open sets cannot be $\gamma-$approximated with bigger smooth open sets]
In general, we cannot approximate a quasi-open set   (or even an open set)   $\O \subset D$ by a sequence of smooth (say of class $C^1$) open sets   $\left(\O_n\right)_{n\geq 1}$   which $\gamma$-converges to $\O$ and such that $\O_n \supset \O$.
Indeed,
%, as the following counter example shows. 	
let $(\xi_n)_{n\geq 1}$ be a dense sequence in $D=(0,1)^2 \subset \R^2$  and pick a sequence $(r_n)_{n\geq 1}$ of positive numbers such that $\sum_{n\geq1}\pi r_n^2 <1$. Set $\ds\O := \cup_{n \geq 1} B_{r_n}(\xi_n) \subset D$. We now claim that if $\O_n \supset \O$ is a smooth open set, then necessarily $\O_n \supset D$. To see this, let $x_0 \in D   \subset   \overline{\O} \subset \overline{\O}_n$. Then if $x_0 \in \partial \O_n$, there exist $r>0$ and a smooth, say of class $C^1$, function $f : \R^d \rightarrow \R$ such that, up to reorienting the axis, we have $\O_n \cap B_r(x_0) = \big\{x \in B_r(x_0) \ : \   x_d>f(x_1,\cdots,x_{d-1})  \big\}$. It follows that $B_r(x_0)\setminus \overline{\O}_n \subset D$ is   a nonempty   open set which does not intersect $\O_n$. This is in contradiction with $\O_n \supset \O$ since $\O$ is a dense open set in $D$. Hence $x \in \O_n$ and this shows that $D \subset \O_n$. Now, suppose that $\left(\Omega_n\right)_{n\geq 1}$   is a sequence of smooth sets such that $  D\supset\Omega_n\supset\Omega$.   Then $\Omega_n=D$ for every $n\ge 1$. Furthermore, the weak maximum principle implies $w_\O < w_D = w_{\O_n}$ in $D$, where the first inequality is strict since $|\O| < |D| = 1$. Therefore, $w_{\O_n}$ cannot strongly converge to $w_\O$ in $L^2(D)$.
%that is, $\O_n$ cannot be $\gamma$-convergent to $\O$.
\end{oss}

We now give a characterization of the $\gamma$-convergence in   terms   of convergence of resolvent operators.
The following theorem is a generalization of \cite[Lemma 4.7.3]{henrot-pierre} for the operator $L$.

\begin{teo}[$\gamma$-convergence and operator convergence]\label{thm charac gamma conv}
Let $D\subset \R^d$ be a bounded open set, $  \left(\Omega_n\right)_{n\geq 1}  \subset D$ be a sequence of quasi-open sets and $\Omega\subset D$ be a quasi-open set. The following assertions are equivalent :
%\begin{enumerate}

\noindent{\it(1)} the sequence   $\left(\O_n\right)_{n\geq 1}$   $\gamma$-converges to $\O$;

\noindent{\it(2)} for every sequence $  \left(f_n\right)_{n\geq 1}  \in L^2(D)$ weakly converging in $L^2(D)$ to $f \in L^2(D)$, the sequence $  \left(R^{L}_{\O_n}(f_n)\right)_{n\geq 1}$   converges to $R^{L}_\O(f)$ strongly in $L^2(D)$;

\noindent{\it(3)} the sequence of operators $  \left(R^{L}_{\O_n}\right)_{n\geq 1}   \in {\mathcal L}(L^2(D))$ converges to $R^{L}_\O$ in the operator norm $\|\cdot\|_{{\mathcal L}(L^2(D))}$.
%\end{enumerate}
\end{teo}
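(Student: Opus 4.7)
The plan is to establish the cycle $(1)\Rightarrow(2)\Rightarrow(3)\Rightarrow(1)$, using as a bridge the Mosco convergence of the Sobolev spaces $H^1_0(\O_n)$ to $H^1_0(\O)$ in $H^1_0(D)$: (M1) for every $u\in H^1_0(\O)$ there exists a sequence $u_n\in H^1_0(\O_n)$ converging to $u$ strongly in $H^1_0(D)$; (M2) for every sequence $u_n\in H^1_0(\O_n)$ weakly converging in $H^1_0(D)$ to some $u$, one has $u\in H^1_0(\O)$. It is classical that the $\gamma$-convergence of $\O_n$ to $\O$ is equivalent to the Mosco convergence of $H^1_0(\O_n)$ to $H^1_0(\O)$; this equivalence is a purely Hilbert-space statement about the spaces and does not involve the drift.

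For $(1)\Rightarrow(2)$, set $u_n:=R^L_{\O_n}(f_n)$. The coercivity estimate \eqref{eq coerci L'}, together with the $L^2$-bound on $f_n$, yields a uniform bound $\|u_n\|_{H^1_0(D)}\le C$. Along a subsequence, $u_n\rightharpoonup u$ weakly in $H^1_0(D)$ and strongly in $L^2(D)$ by Rellich's theorem, and (M2) gives $u\in H^1_0(\O)$. For any $\varphi\in H^1_0(\O)$, (M1) provides a recovery sequence $\varphi_n\in H^1_0(\O_n)$ with $\varphi_n\to\varphi$ in $H^1_0(D)$; passing to the limit in the weak formulation of the equation on $\O_n$ (using $V\in L^\infty$ to pair the weakly converging $\nabla u_n$ with the strongly converging $\varphi_n$) identifies $u=R^L_\O(f)$. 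Uniqueness of the limit yields the convergence of the whole sequence.

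The implication $(2)\Rightarrow(3)$ is a standard contradiction argument: were it false, we could find $\varepsilon>0$ and a sequence $\left(f_n\right)_{n\geq 1}$ with $\|f_n\|_{L^2}\le 1$ and $\|R^L_{\O_n}(f_n)-R^L_\O(f_n)\|_{L^2}\ge\varepsilon$. Extracting a weakly convergent subsequence $f_n\rightharpoonup f$, assertion $(2)$ gives $R^L_{\O_n}(f_n)\to R^L_\O(f)$ in $L^2$, while the compactness of $R^L_\O$ (inherited from the compact embedding $H^1_0(D)\hookrightarrow L^2(D)$) gives $R^L_\O(f_n)\to R^L_\O(f)$ in $L^2$. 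The two limits contradict the lower bound $\varepsilon$.

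The direction $(3)\Rightarrow(1)$ is the main obstacle, since $\gamma$-convergence is defined through the Laplacian torsion $w_\O$ while $(3)$ concerns the resolvent of the non-symmetric operator $L$; in contrast to the self-adjoint case treated in \cite{henrot-pierre}, applying the resolvent to $f\equiv 1$ no longer produces $w_\O$. The strategy is to recover Mosco convergence of the spaces from $(3)$, and then invoke its equivalence with the $\gamma$-convergence. Set $\zeta_n:=R^L_{\O_n}(1)\to\zeta:=R^L_\O(1)$ strongly in $L^2$ by $(3)$; the strong maximum principle for the coercive operator $L+c$ yields $\O_n=\{\zeta_n>0\}$ and $\O=\{\zeta>0\}$ up to sets of zero capacity. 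Property (M1) is obtained by setting $u_n:=R^L_{\O_n}((L+c)u)$ whenever $(L+c)u\in L^2(D)$, applying $(3)$ to get $u_n\to u$ in $L^2$, and upgrading to strong convergence in $H^1_0(D)$ through the energy identity obtained by testing the equations with $u_n$ itself; the general case $u\in H^1_0(\O)$ is reached by a density argument for the range of $R^L_\O$ in $H^1_0(\O)$. Property (M2) is the most delicate point: for $u_n\in H^1_0(\O_n)$ with $u_n\rightharpoonup u$ weakly in $H^1_0(D)$, the strong $L^2$-convergence $\zeta_n\to\zeta$ together with capacitary estimates forces $u=0$ q.e.\ on $\{\zeta=0\}=\O^c$, hence $u\in H^1_0(\O)$. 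Mosco convergence of the spaces then delivers the $\gamma$-convergence of $\O_n$ to $\O$ via the variational characterization of $w_\O$ as the minimizer of $v\mapsto \tfrac12\int_D|\nabla v|^2\,dx-\int_D v\,dx$ on $H^1_0(\O)$.
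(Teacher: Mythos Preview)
Your cycle $(1)\Rightarrow(2)\Rightarrow(3)$ is sound: the Mosco argument for $(1)\Rightarrow(2)$ is a legitimate alternative, and $(2)\Rightarrow(3)$ is the same contradiction/compactness argument the paper uses. The difficulty lies in $(3)\Rightarrow(1)$, where your sketch leaves two real gaps.

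First, you invoke a strong maximum principle to identify $\O_n=\{\zeta_n>0\}$ quasi-everywhere for $\zeta_n=R^{L'}_{\O_n}(1)$. On open sets this is classical, but on quasi-open sets it is the analogue of $H^1_0(\O)=H^1_0(\{w_\O>0\})$ for the Laplacian (Proposition~\ref{prop w_O}(3)), which is already nontrivial; for the non-symmetric operator $L'$ no comparison between $\zeta_n$ and $w_{\O_n}$ is available, and you do not supply one. Second, the assertion that ``strong $L^2$-convergence $\zeta_n\to\zeta$ together with capacitary estimates forces $u=0$ q.e.\ on $\{\zeta=0\}$'' is precisely the content of (M2), and it is not justified: $L^2$ convergence carries no capacitary information. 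One would need to upgrade to strong $H^1$ convergence of $\zeta_n$ (doable via the energy identity, as you note for (M1)) and then prove that weak $H^1$-limits of functions in $H^1_0(\{\zeta_n>0\})$ lie in $H^1_0(\{\zeta>0\})$, which requires q.e.\ convergence of a subsequence plus a truncation argument. You have deferred the whole substance of (M2) to an unstated lemma.

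The paper avoids both issues by a short algebraic trick that never touches Mosco convergence. It proves $(2)\Rightarrow(1)$ directly: given $f_n\rightharpoonup f$, set $w_n:=R^{-\Delta}_{\O_n}(f_n)$ and extract $w_{\varphi(n)}\rightharpoonup z$ in $H^1_0(D)$. Rewriting the equation as $L'w_n=f_n+V\cdot\nabla w_n+cw_n=:g_n$, one has $w_n=R^{L'}_{\O_n}(g_n)$ with $g_{\varphi(n)}\rightharpoonup g:=f+V\cdot\nabla z+cz$ weakly in $L^2$; assertion $(2)$ then forces $w_{\varphi(n)}\to R^{L'}_\O(g)$ in $L^2$, so $z=R^{L'}_\O(g)$, which unwinds to $-\Delta z=f$ in $\O$, i.e.\ $z=R^{-\Delta}_\O(f)$. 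This recovers the Laplacian characterisation of $\gamma$-convergence without any maximum principle or capacitary lemma for $L'$. The same device, run in the opposite direction, also gives $(1)\Rightarrow(2)$: with $u_n:=R^{L'}_{\O_n}(f_n)$ and $u_n\rightharpoonup u$, one writes $u_n=R^{-\Delta}_{\O_n}(f_n-V\cdot\nabla u_n-cu_n)$ and invokes the known Laplacian case to conclude. The upshot is that the equivalence for $L'$ is reduced, in both directions, to the equivalence for $-\Delta$ by absorbing the drift into the right-hand side; this is considerably lighter than rebuilding Mosco convergence from $R^{L'}$.
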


\begin{proof}
It is plain to see that the equivalence between \textit{(2)} and \textit{(3)} holds for all sequence of compact operators defined on Hilbert spaces. It then remains to prove that \textit{(1)} and \textit{(2)} are equivalent.

\noindent\textit{(1)}$\Rightarrow$\textit{(2)}.
Let $f_n \in L^2(D)$ be a sequence $L^2(D)$-weakly   converging   to $f \in L^2(D)$. Then $\|f_n\|_{L^2}$ is uniformly bounded. Moreover, writing $u_n = R^{L}_{\O_n}(f_n)$ we have
\[ \int_D f_nu_n \ dx = \int_D \big(|\nabla u_n|^2 + (V\cdot \nabla u_n) \, u_n +c u_n^2\big) \, dx. \]
Thanks to \eqref{eq coerci L'} this gives
\[ \frac{1}{2} \int_D (f_n^2 +  u_n^2 ) \, dx 
\geq \delta \int_D (|\nabla u_n|^2 + u_n^2 ) \, dx, \]
and therefore
\[ \int_D f_n^2 \ dx \geq (2\delta -1) \int_D(|\nabla u_n|^2 + u_n^2 ) \, dx. \]
Taking $\delta\in(\sfrac12,1)$, this shows that the sequence $\|u_n\|_{H^1(D)}$ is bounded.\par
  Assume now that the conclusion of \textit{(2)} does not hold. Then there exists $\varepsilon>0$ such that, up to a subsequence, $\left\Vert R^{L}_{\O_n}(f_n)-R^{L}_\O(f)\right\Vert_{L^2(D)}\geq \varepsilon$. Moreover,   up to a subsequence, $u_n$ weakly converges in $H^1(D)$ to some $u \in H^1_0(D)$, and therefore $g_n = f_n - V \cdot \nabla u_n - cu_n$ weakly converges in $L^2(D)$ to $g = f - V \cdot \nabla u - cu$. Theorem \ref{thm charac gamma conv} being true   for the Laplacian   (see \cite[Proposition 3.4]{buttazzo-velichkov}), we conclude that $R^{-\Delta}_{\O_n}(g_n)$ strongly converges in $L^2(D)$ to $R^{-\Delta}_\O(g)$. Thus $R^{L}_{\O_n}(f_n) = R_{\O_n}(g_n)$ and $R^{L}_{\O}(f) = R_{\O}(g)$ imply that $R^{L}_{\O_n}(f_n)$ strongly converges in $L^2(D)$ to $R^{L}_{\O}(f)$,   which yields a contradiction and therefore proves \textit{(2)}  . 

\noindent\textit{(2)}$\Rightarrow$\textit{(1)}. Let $(f_n)_{n\geq 1}\in L^2(D)$ be a sequence weakly converging in $L^2(D)$ to $f\in L^2(D)$. Set $w_n:=R^{-\Delta}_{\Omega_n}(f_n)$ and $w:=R^{-\Delta}_{\Omega}(f)$. We claim that $w_n\rightarrow w$ strongly in $L^2(D)$, which, according to \cite{buttazzo-velichkov} and \cite[Lemma 4.7.3]{henrot-pierre}, implies that $\Omega_n$ $\gamma$-converges to $\Omega$. Assume by contradiction that it is not the case, and pick up $\varepsilon>0$ and an increasing function $\varphi:\N^{\ast}\rightarrow \N^{\ast}$ such that 
\begin{equation} \label{wnw}
\left\Vert w_{\varphi(n)}-w\right\Vert_{L^2(D)}\geq \varepsilon\qquad\text{for every}\qquad n\ge 1.
\end{equation}
Since the sequence $(w_n)_{n\geq 1}$ is bounded in $H^1_0(D)$, up to a subsequence, there exists a function $z\in H^1_0(D)$ such that $w_{\varphi(n)}$ converges to $z$ weakly in $H^1_0(D)$   and   strongly in $L^2(D)$. 
%and 
%\begin{equation} \label{wzl2}
%w_{\varphi(n)}\rightarrow z\quad\mbox{strongly in}\quad L^2(D).
%\end{equation}
Now, since
$$
L\, w_n=f_n+V\cdot \nabla w_n+cw_n:=g_n\quad\mbox{ in }\quad\Omega_n,
$$
and $w_n\in H^1_0(\Omega_n)$, $w_n=R^L_{\Omega_n}(g_n)$. But $g_{\varphi(n)}\rightharpoonup g:=f+V\cdot \nabla z+cz$ weakly in $L^2(D)$, so that, by assumption $(2)$, $w_{\varphi(n)}\rightarrow R^{L}_{\Omega}(g)$ strongly in $L^2(D)$. Then the convergence of $w_{\varphi(n)}$ to $z$   yields   that $z=R^{L}_{\Omega}(g)$, is a solution of 
$$
L\,z=f+V\cdot \nabla z+cz\quad\mbox{ in }\quad\Omega,\qquad z\in H^1_0(\Omega),
$$
or, in other words, 
%$$-\Delta z=f\quad\mbox{ in }\quad\Omega.$$
%Finally, 
$z=R^{-\Delta}_{\Omega}(f)=w$. Thus, \eqref{wnw} provides a contradiction, therefore showing that $w_n\rightarrow w$ strongly in $L^2(D)$, which means that {\it(1)} holds.
\end{proof}

\section{The principal eigenvalue on quasi-open sets}\label{s:def_lambda_1}
%\subsection{The principal eigenvalue on open sets}
For a bounded domain $\O\subset \R^d$ and $V\in L^{\infty}(\Omega,\R^d)$, the principal eigenvalue $\lambda_1(\Omega,V)$, of the (non self-adjoint) elliptic operator $L = -\Delta + V \cdot \nabla$ on $\O$ with Dirichlet boundary condition on $\partial\Omega$, was defined in \cite{berestycki-nirenberg-varadhan} by
$$\lambda_1(\O,V)=\sup\big\{\lambda\in\R\ :\  \exists \phi\in W^{2,d}(\Omega)\quad\text{such that}\quad \phi>0\quad\text{and}\quad -L\phi+\lambda\phi\le0\quad \text{in}\quad \Omega\big\},$$
where it was proved that $\lambda_1(\Omega,V)\in\R$ has the following properties:
%(Theorems 2.1 and 2.3)  
%\begin{itemize}

    $(i)$ There is a positive eigenfunction $u:\Omega\to\R$ such that $u \in W^{2,p}_{loc}(\O)$, for all $p\in [1,+\infty)$, and 
    $$Lu = \lambda_1(\O,V)u\quad\text{in}\quad \Omega,\qquad u\in H^1_0(\Omega),\qquad \int_\Omega u^2\,dx=1,$$
    (see \cite[Theorem 2.1]{berestycki-nirenberg-varadhan}).
    
    $(ii)$ $\lambda_1(\O,V) < \text{Re}\,(\lambda)$ for every eigenvalue $\lambda \neq \lambda_1(\O,V)$ of $L$ in $\O$ (see \cite[Theorem 2.3]{berestycki-nirenberg-varadhan}).
    %the inequality being strict in the case when $\Omega$ is connected.
    
    $(iii)$ The functional $\Omega\mapsto\lambda_1(\Omega,V)$ is decreasing with respect to the domain inclusion. 
%\end{itemize}

\medskip

\noindent In the sequel we extend the definition of $\lambda_1(\Omega,V)$ to quasi-open sets. 
We first recall that the definition can be extended to an arbitrary open set $\O\subset D$ by
\[ \lambda_1(\O,V) = \inf \lambda_1(\O_n,V), \]
where the infimum is taken over all the connected component $\O_n$ of $\O$.
Now, in view of property $(iii)$ above, for any quasi-open set $\O \subset D$, we define
\begin{equation}\label{eq def lambda1}
\lambda_1(\O,V) := \sup \big\{ \lambda_1(\tilde{\O},V) \ : \ \tilde{\O} \text{ open,} \ \O \subset \tilde{\O} \subset D \big\}.
\end{equation} 
\begin{oss}
Notice that, these two definitions coincide for open sets. 
\end{oss}

\begin{oss}\label{oss:monotonicity}
The functional $\O\mapsto \lambda_1(\O ,V)$, defined on the family of quasi-open sets, is still non-increasing with respect to the set inclusion, that is $\lambda_1(\Omega_2,V)\le \lambda_1(\Omega_1,V)$, whenever $\Omega _1\subset\Omega_2$.
%$$\lambda_1(\Omega_2,V)\le \lambda_1(\Omega_1,V)\quad\text{whenever}\quad \Omega _1\subset\Omega_2. $$
\end{oss}
We will show that $\lambda_1(\O,V)$ is finite and is an eigenvalue of $L$ in $\Omega$ satisfying the minimality property $(ii)$. 
Recall that, for a quasi-open set of finite Lebesgue measure $\Omega\subset D$, we say that $\lambda\in \C$ is an eigenvalue of the operator $L=-\Delta+V\cdot \nabla$ in $\Omega$ if there is  an eigenfunction $u:\R^d\to \C$, (weak) solution to the problem 
\begin{equation}\label{e:eigenvalue}
-\Delta u+V\cdot\nabla u=\lambda u\quad\text{in}\quad\Omega,\qquad u\in H^1_0(\Omega;\C),\qquad \int_\Omega |u|^2\,dx=1.
\end{equation}
Let now $c>0$ be the constant from Subsection \ref{sub:quasi-open_pde} and $L' = L + c$. 
Note that $\lambda\in \C$ is an eigenvalue of $L$ in $\O$, if and only if, $\lambda+c$ is an eigenvalue of $L'$ in $\O$. By the argument from  Subsection \ref{sub:quasi-open_pde}, we have that the bilinear form associated to the operator $L'$ is coercive and so, $R_{\O}^{L'}$ is a compact operator on $L^2(D)$. In particular, the spectrum is   a discrete set   of eigenvalues with no accumulation points except zero and $\lambda\in\C$ is an eigenvalue of $L$ in the sense of \eqref{e:eigenvalue} if and only if $(\lambda+c)^{-1}$ is an eigenvalue of $R_\Omega^{L'}$.

The following theorem shows that most of the properties of the principal eigenvalue on an open set still hold for $\lambda_1(\O,V)$ if $\O \subset D$ is merely a quasi-open set.

\begin{teo}[Definition of the principal eigenvalue on quasi-open sets]\label{thm prop lambda1}
Let $D$ be a bounded open set, $V \in L^\infty(D,\R^d)$ and $\O \subset D$ be a non-empty quasi-open set. Then

%{\it(1)} The functional $\O\mapsto \lambda_1(\O,v)$ is decreasing with respect to the set inlcusion.
{\it (1)} $\lambda_1(\O,V)$ is well-defined that is, $\lambda_1(\O,V)<+\infty$. 

{\it (2)} $\lambda_1(\O,V)$ is an eigenvalue of $L$ in $\O$; there is a (non-trivial)   real-valued   eigenfunction $u$ such that 
$$Lu = \lambda_1(\O,V)u\quad\text{in}\quad \Omega\ ,\qquad u \in H^1_0(\O),\qquad \int_\Omega u^2\,dx=1.$$ 

{\it (3)} If $\lambda \in \C$ is an eigenvalue of $L$ in $\O$, then $\lambda_1(\O,V) \leq \text{Re}\,(\lambda)$.
%\end{enumerate}
\end{teo}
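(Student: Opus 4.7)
The plan is to realize $\lambda_1(\Omega,V)$ on the quasi-open set $\Omega$ as a limit of the Berestycki-Nirenberg-Varadhan principal eigenvalues on an outer open approximation of $\Omega$, and to transfer the spectral information through the operator-norm convergence of resolvents provided by Theorem~\ref{thm charac gamma conv}. Fix $c>0$ large enough that $L'=L+c$ is coercive on $H^1_0(D)$, and write $T_E:=R^{L'}_E:L^2(D)\to L^2(D)$ for the associated compact positive resolvent. Using Lemma~\ref{lem approx qo}(1), pick a sequence of open sets $\Omega\subset\Omega_n\subset D$ that $\gamma$-converges to $\Omega$; Theorem~\ref{thm charac gamma conv} then yields $T_{\Omega_n}\to T_\Omega$ in $\mathcal{L}(L^2(D))$. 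Since each $\Omega_n$ is open, Berestycki-Nirenberg-Varadhan furnishes $\lambda_n:=\lambda_1(\Omega_n,V)\in\R$ together with a positive $L^2$-normalized eigenfunction $u_n\in H^1_0(\Omega_n)$; equivalently, $\mu_n:=(\lambda_n+c)^{-1}>0$ is an eigenvalue of $T_{\Omega_n}$ with eigenfunction $u_n$, and the BNV minimality property ensures that $\mu_n$ is the eigenvalue of $T_{\Omega_n}$ of largest modulus.

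From the coercivity of $L'$, $\|u_n\|_{H^1(D)}$ is uniformly bounded and $\mu_n\leq 1/\delta$. Extracting a subsequence, $u_n$ converges weakly in $H^1_0(D)$ and strongly in $L^2(D)$ to some $u\geq 0$ with $\|u\|_{L^2}=1$, and $\mu_n\to\mu\in[0,1/\delta]$. The identity $T_{\Omega_n}u_n=\mu_n u_n$ passes to the limit thanks to the $L^2$-convergence of $u_n$ together with the operator-norm convergence of the resolvents, yielding $T_\Omega u=\mu u$. Since $u\not\equiv 0$, necessarily $\mu>0$ (otherwise $T_\Omega u=0$ would force $u=L'(0)=0$); in particular, $u=\mu^{-1}T_\Omega u$ belongs to the range of $T_\Omega$ and hence to $H^1_0(\Omega)$. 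Setting $\lambda:=1/\mu-c$, I will have produced a real eigenvalue of $L$ on $\Omega$ with a nonnegative, $L^2$-normalized eigenfunction in $H^1_0(\Omega)$, which is item~(2) once the identification $\lambda=\lambda_1(\Omega,V)$ is established.

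For item~(1), the BNV monotonicity on open sets gives $\lambda_n\leq\lambda_1(\Omega,V)$, whence $\lambda\leq\lambda_1(\Omega,V)$. Conversely, for any open $\tilde\Omega\supset\Omega$, the resolvent comparison $T_\Omega g\leq T_{\tilde\Omega}g$ for $g\geq 0$ (weak maximum principle, see Subsection~\ref{sub:quasi-open_pde}) applied to $u$ gives $\mu u\leq T_{\tilde\Omega}u$; iterating yields $\mu^{k}u\leq T_{\tilde\Omega}^{k}u$, and therefore $\mu\leq r(T_{\tilde\Omega})=(\lambda_1(\tilde\Omega,V)+c)^{-1}$, i.e. $\lambda_1(\tilde\Omega,V)\leq\lambda$. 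Taking the supremum over $\tilde\Omega$ provides $\lambda_1(\Omega,V)=\lambda<+\infty$, proving item~(1). For item~(3), any eigenvalue $\tilde\lambda\in\C$ of $L$ on $\Omega$ corresponds to a nonzero eigenvalue $\tilde\mu=(\tilde\lambda+c)^{-1}$ of $T_\Omega$. The upper semi-continuity of the spectrum for compact operators converging in operator norm produces eigenvalues $\tilde\mu_n$ of $T_{\Omega_n}$ with $\tilde\mu_n\to\tilde\mu$, so that $\tilde\lambda_n:=1/\tilde\mu_n-c$ are eigenvalues of $L$ on the open sets $\Omega_n$ converging to $\tilde\lambda$; the BNV minimality $\lambda_n\leq\mathrm{Re}\,\tilde\lambda_n$ then passes to the limit to give $\lambda_1(\Omega,V)=\lambda\leq\mathrm{Re}\,\tilde\lambda$.

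The most delicate point is ensuring that the limit eigenfunction $u$ truly belongs to $H^1_0(\Omega)$ and not merely to $H^1_0(D)$: the weak $H^1_0(D)$-limit of functions $u_n\in H^1_0(\Omega_n)\supset H^1_0(\Omega)$ generically need not lie in $H^1_0(\Omega)$. This will be circumvented by invoking the identity $u=\mu^{-1}T_\Omega u$ to place $u$ in the range of $T_\Omega$, which is contained in $H^1_0(\Omega)$ by construction; the availability of this shortcut is precisely why Theorem~\ref{thm charac gamma conv} is stated and used in the stronger, operator-norm form rather than only as a strong convergence of resolvents.
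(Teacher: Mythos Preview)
Your strategy for (2) and (3) is essentially the paper's: approximate $\Omega$ from outside by open sets $\Omega_n$ that $\gamma$-converge to $\Omega$, use operator-norm convergence of resolvents (Theorem~\ref{thm charac gamma conv}) to pass to the limit in the eigenvalue equation for (2), and use Kato-type spectral stability (Lemma~\ref{l:kato}) together with the BNV inequality on open sets for (3). That part is fine.

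There is, however, a genuine gap in your treatment of (1), located precisely at the step you flag as ``most delicate''. You claim that coercivity of $L'$ gives a uniform $H^1$ bound on $u_n$; but testing $L'u_n=(\lambda_n+c)u_n$ against $u_n$ only yields $\delta\|u_n\|_{H^1}^2\le\lambda_n+c$, which is useless unless you already know that $\lambda_n$ is bounded. You then try to recover this by arguing that $T_\Omega u=0$ would force $u=L'(0)=0$: this is false. The kernel of $T_\Omega$ as an operator on $L^2(D)$ consists of all functions vanishing a.e.\ on $\Omega$, so $T_\Omega u=0$ only gives $u=0$ on $\Omega$, not on $D$. Since you have not yet placed $u$ in $H^1_0(\Omega)$ (that was to come \emph{after} establishing $\mu>0$, via $u=\mu^{-1}T_\Omega u$), the argument is circular. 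Without a separate bound on $\lambda_n$, nothing prevents $\lambda_n\to+\infty$, $\|u_n\|_{H^1}\to+\infty$, $u_n\rightharpoonup 0$ only weakly in $L^2$, and the whole construction collapsing.

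The paper closes exactly this gap with Lemma~\ref{l:lambda_bound}: for open sets one has $\lambda_1(\Omega_n,0)\ge\lambda_1(\Omega_n,V)-\tau\sqrt{\lambda_1(\Omega_n,V)}$ once $\lambda_1(\Omega_n,V)>\tau^2/4$, so $\lambda_n\to\infty$ would force $\lambda_1(\Omega_n,0)\to\infty$; but $\Omega\subset\Omega_n$ gives $\lambda_1(\Omega,0)\ge\lambda_1(\Omega_n,0)$, contradicting $H^1_0(\Omega)\neq\{0\}$. Once $\lambda_n$ is bounded, your argument runs. Incidentally, your spectral-radius comparison (iterating $\mu^k u\le T_{\tilde\Omega}^k u$ to get $\lambda_1(\tilde\Omega,V)\le\lambda$) is a pleasant alternative to the paper's device of starting with a maximizing sequence; note, though, that it relies on the monotonicity $T_{\Omega}g\le T_{\tilde\Omega}g$ for $g\ge0$ for the operator $L'$, which is true by coercivity but is stated in Subsection~\ref{sub:quasi-open_pde} only for $-\Delta$.
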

In order to prove Theorem \ref{thm prop lambda1} we will need the following two lemmas. The key estimate for the proof of Theorem \ref{thm prop lambda1} {\it(1)} is contained in the following lemma inspired by \cite[Proposition 5.1]{berestycki-nirenberg-varadhan}.
\begin{lm}\label{l:lambda_bound}
Let $V \in L^\infty(D,\R^d)$ and $\O \subset D$ be an open set. Suppose that there is $\tau>0$ such that $\|V\|_{L^\infty(\Omega)}\le \tau< 2\sqrt{\lambda_1(\Omega,V)}$. Then 
\begin{equation}\label{eq comp lambda1}
\lambda_1(\O,0) \geq \lambda_1(\O,V) - \tau \sqrt{\lambda_1(\O,V)}.
\end{equation}
\end{lm}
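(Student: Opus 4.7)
My plan is to pass from the non-self-adjoint eigenvalue problem to a Rayleigh-quotient lower bound for the Dirichlet Laplacian via the Cole–Hopf substitution $w:=\log u$, where $u$ is the positive principal eigenfunction of $L$. This turns the eigenvalue equation into a Riccati-type identity for $w$, and testing against $\eta^{2}$ produces a scalar quadratic inequality in which the hypothesis $\tau<2\sqrt{\lambda_{1}(\Omega,V)}$ appears precisely as the discriminant condition.

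Since $\lambda_{1}(\Omega,V)$ and $\lambda_{1}(\Omega,0)$ are both defined as the infimum of the principal eigenvalues over the connected components of $\Omega$, I may assume $\Omega$ connected. By property $(i)$ of Berestycki–Nirenberg–Varadhan recalled at the beginning of Section \ref{s:def_lambda_1}, there exists a positive eigenfunction $u\in W^{2,p}_{\mathrm{loc}}(\Omega)\cap H^{1}_{0}(\Omega)$ with eigenvalue $\lambda:=\lambda_{1}(\Omega,V)$, and the Harnack inequality gives $u>0$ in $\Omega$. A direct computation from $-\Delta u+V\cdot\nabla u=\lambda u$ yields the pointwise identity
$$-\Delta w=\lambda+|\nabla w|^{2}-V\cdot\nabla w\quad\text{a.e. in }\Omega.$$
Multiplying by $\eta^{2}$ for an arbitrary $\eta\in C^{\infty}_{c}(\Omega)$ and integrating by parts, I obtain
$$\lambda\int_{\Omega}\eta^{2}+\int_{\Omega}\eta^{2}|\nabla w|^{2}=2\int_{\Omega}\eta\,\nabla\eta\cdot\nabla w+\int_{\Omega}\eta^{2}\,V\cdot\nabla w.$$
Writing $A:=\|\nabla\eta\|_{L^{2}}$, $B:=\|\eta\|_{L^{2}}$ and $I:=\|\eta|\nabla w|\|_{L^{2}}$, Cauchy–Schwarz on the first right-hand term and the pointwise bound $|V|\le\tau$ combined with Cauchy–Schwarz on the second produce
$$I^{2}-(2A+\tau B)\,I+\lambda\,B^{2}\le 0.$$

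For this quadratic in $I$ to admit a real (necessarily non-negative) root, its discriminant must be non-negative, i.e.\ $(2A+\tau B)^{2}\ge 4\lambda B^{2}$, hence $2A\ge (2\sqrt{\lambda}-\tau)B$. The hypothesis $\tau<2\sqrt{\lambda}$ makes the right-hand side strictly positive, so squaring is legitimate and gives
$$A^{2}\ge\Big(\sqrt{\lambda}-\tfrac{\tau}{2}\Big)^{2}B^{2}=\Big(\lambda-\tau\sqrt{\lambda}+\tfrac{\tau^{2}}{4}\Big)B^{2}\ge(\lambda-\tau\sqrt{\lambda})B^{2}.$$
Passing to the infimum over $\eta\in C^{\infty}_{c}(\Omega)$ (dense in $H^{1}_{0}(\Omega)$) and invoking the variational characterization of $\lambda_{1}(\Omega,0)$ as the Rayleigh quotient, I recover \eqref{eq comp lambda1}.

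The only genuine technical point—the obstacle I must be careful about—is the legitimacy of the computation with $w=\log u$, which blows up as $u\to 0$ at $\partial\Omega$. This is harmless because $\eta$ has compact support in $\Omega$ and, by Harnack, $u\ge c_{\eta}>0$ on $\mathrm{supp}\,\eta$, so $w\in W^{2,p}(\mathrm{supp}\,\eta)$ and both the Riccati-type identity and the integration by parts $-\int\eta^{2}\Delta w=2\int\eta\,\nabla\eta\cdot\nabla w$ hold in the strong sense. In particular, no control of $w$ up to $\partial\Omega$ is required, which is what makes this indirect passage to $\lambda_{1}(\Omega,0)$ via $u$ work despite $u$ vanishing on the boundary.
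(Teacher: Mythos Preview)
Your proof is correct and takes a genuinely different route from the paper. The paper builds an explicit positive supersolution for $-\Delta-(\lambda-\tau\sqrt\lambda)$: it sets $\phi=\phi_V^{\alpha}$ with $\phi_V$ the positive principal eigenfunction and $\alpha\in(0,1)$, computes $-\Delta\phi-\lambda\phi$ pointwise, and optimises in $\alpha$ (the optimum being $1-\alpha=\tau/(2\sqrt\lambda)$); the conclusion then comes from the max-principle characterisation of $\lambda_1(\Omega,0)$. You instead pass to $w=\log u$, turn the eigenvalue equation into the Riccati identity, test with $\eta^{2}$ and extract the Rayleigh-quotient bound via the discriminant. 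The two are dual in spirit---supersolution side versus variational side---and your argument avoids guessing the right exponent $\alpha$ (indeed it gives the slightly sharper $\lambda_1(\Omega,0)\ge(\sqrt\lambda-\tau/2)^{2}$ before you drop $\tau^{2}/4$), while the paper's produces an explicit barrier. One small point: your opening line ``I may assume $\Omega$ connected'' is too quick; after the connected case you still need that $x\mapsto x-\tau\sqrt{x}$ is non-decreasing on $[\tau^{2}/4,\infty)\supset[\lambda_1(\Omega,V),\infty)$ to descend from the component-wise estimate to the infimum, exactly as the paper does.
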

\begin{proof}
%Let $\O_n$ be a maximizing sequence for \eqref{eq def lambda1}. We can assume that $\O_n$ $\gamma$-converges to $\O$. We argue by contradiction and suppose that $\lambda_1(\O_n,V)$ tends to $+\infty$ as $n$ goes to $+\infty$. The proof is a consequence of the following claim: for $n$ large enough, we have the estimate

%where we have set $\tau = \|V\|_{L^\infty}$. Here $\lambda_1(\O_n,0)$ is the first eigenvalue of $-\Delta$ in $\O_n$. Indeed, from this inequality, it follows that $\lambda_1(\O_n,0)$ tends to $+\infty$. But, by Theorem \ref{thm charac gamma conv}, the sequence $R_{\O_n}^{-\Delta}$ converges to $R_{\O}^{-\Delta}$ in norm of operator. This hence implies (because $-\Delta$ is self-adjoint, see \cite[Theorem 4.10]{Kato}) the convergence of the spectrum, that is, $\lambda_1(\O_n,0) \rightarrow \lambda_1(\O,0)$ as $n$ goes to $+\infty$. In particular, $\lambda_1(\O_n,0)$ is bounded, which is a contradiction.

%We reason as in \cite[Proposition 5.1]{berestycki nirenberg varadhan}. 
Let us first suppose that $\Omega$ is connected. For convenience, set $\lambda := \lambda_1(\O,V)$. By the definition of the first eigenvalue of $-\Delta$ on domains, it is enough to find some $\phi>0$ in $\O$ such that $-\Delta \phi \geq (\lambda -\tau \sqrt{\lambda})\phi$ in $\O$. Since $\O$ is an open set, from \cite[Theorem 2.1]{berestycki-nirenberg-varadhan}, there exists a positive eigenfunction $\phi_V$ for the first eigenvalue of $L$ in $\O$, that is, $\phi_V>0$ in $\O$ and $L \phi_V= \lambda \phi_V$. Set $\phi := \phi_V^\alpha$ for some $\alpha \in (0,1)$ to be chosen later. Then, in $\Omega$, we have
\begin{align*}
-\Delta \phi - \lambda \phi &= -\alpha (\Delta \phi_V) \phi_V^{\alpha-1} - \alpha(\alpha-1)|\nabla \phi_V|^2 \phi_V^{\alpha-2} - \lambda \phi_V^\alpha \\
&= \left[ \lambda(\alpha-1) - \alpha V \cdot \frac{\nabla \phi_V}{\phi_V} + \alpha(1-\alpha)\frac{|\nabla \phi_V|^2}{\phi_V^2} \right] \phi_V^\alpha \\
&\geq \left[ \lambda(\alpha-1) - \alpha \tau \frac{|\nabla \phi_V|}{\phi_V} + \alpha(1-\alpha)\frac{|\nabla \phi_V|^2}{\phi_V^2} \right] \phi_V^\alpha.
\end{align*}
The function $x \mapsto -\alpha \tau x + \alpha(1-\alpha)x^2$   reaches its minimum   at $x = \tau/(2(1-\alpha))$. Therefore, we get
\[ -\Delta \phi - \lambda \phi \geq \left[ \lambda(\alpha-1) - \alpha \frac{\tau^2}{4(1-\alpha)} \right] \phi_V^\alpha=\left[ \lambda(\alpha-1) - \alpha \frac{\tau^2}{4(1-\alpha)} \right] \phi. \]
Since $\alpha \in (0,1)$ is arbitrary, we can choose it so that it maximizes the term in the brackets of the above estimate, that is, such that $1-\alpha = \tau /(2 \sqrt{\lambda})$. Note that, by hypothesis   on $\tau$,   we have $\alpha \in (0,1)$. It follows
\[  -\Delta \phi - \lambda \phi \geq \left[-\tau \sqrt{\lambda} + \frac{\tau^2}{4}\right] \phi \geq -\tau \sqrt{\lambda}\, \phi, \]
which proves the claim in the case when $\Omega$ is connected.

  In the general case,   let $(\Omega_n)_{n\ge1}$ be the connected components of $\Omega$. Then, for every $V$, we have 
$$\lambda_1(\Omega,V)=\inf_{n}\lambda_1(\Omega_n,V).$$
%Suppose, without loss of generality, that $\lambda_1(\Omega,0)$ is realized on $\Omega_1$ that is, $\lambda_1(\Omega,0)=\lambda_1(\Omega_1,0)$. 
Then, we have,   for all $n$,  
$$  \lambda_1(\Omega_n,0)\ge \lambda_1(\Omega_n,V)-\tau \sqrt{\lambda_1(\Omega_n,V)}\ge \lambda_1(\Omega,V)-\tau \sqrt{\lambda_1(\Omega,V)},  $$
where the last inequality is due to the fact that $x\mapsto x-\tau\sqrt{x}$ is a   non-increasing   function on the interval $[\lambda_1(\Omega,V),+\infty)$. 
\end{proof}

The next lemma is a direct consequence of the classical result \cite[Theorem 3.16]{kato} on the convergence of a spectrum of closed operators   with suitable properties.   We will use it in the proof of Theorem \ref{thm prop lambda1} {\it (3)}.
\begin{lm}[Convergence of the spectra]\label{l:kato}
Let $H$ be a separable Hilbert space and $  \left(T_n\right)_{n\geq 1}   \in\mathcal L(H)$ a sequence of compact operators converging to the compact operator $T\in\mathcal L(H)$ in the operator norm $\|\cdot\|_{\mathcal L(H)}$. Suppose that   $\lambda\in\C\setminus\{0\}$   is an (isolated) eigenvalue of $T$ and let $r>0$ be such that   $B_r(\lambda)\cap \sigma(T)=\left\{\lambda\right\}$.   Then, there is $n_0\ge 1$ such that for every $n\ge n_0$ there is an eigenvalue $\lambda_n\in \sigma(T_n)\cap B_{r/2}(\lambda)$. 
\end{lm}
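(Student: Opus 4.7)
The plan is to follow the classical Riesz projection argument for isolated spectral points, using that the resolvents of $T_n$ converge to the resolvent of $T$ uniformly on any compact set contained in the resolvent set of $T$.

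First I would fix the contour $\Gamma:=\partial B_{r/2}(\lambda)\subset \C$ and verify that $\Gamma\subset \rho(T)$, the resolvent set of $T$: indeed, by the hypothesis $B_r(\lambda)\cap \sigma(T)=\{\lambda\}$, and since $\lambda$ lies at the center of the disk $B_{r/2}(\lambda)$, no point of $\sigma(T)$ meets $\Gamma$. The map $z\mapsto (z-T)^{-1}$ is then holomorphic and hence uniformly bounded on $\Gamma$; call $M:=\sup_{z\in\Gamma}\|(z-T)^{-1}\|_{\mathcal{L}(H)}$. Next I would show that for all sufficiently large $n$, $\Gamma\subset\rho(T_n)$ and
\begin{equation*}
\sup_{z\in\Gamma}\bigl\|(z-T_n)^{-1}-(z-T)^{-1}\bigr\|_{\mathcal{L}(H)}\xrightarrow[n\to\infty]{}0.
\end{equation*}
This is a standard Neumann-series argument: write $(z-T_n)=(z-T)\bigl(I-(z-T)^{-1}(T-T_n)\bigr)$, and observe that $\|(z-T)^{-1}(T-T_n)\|\le M\|T_n-T\|<\sfrac12$ for $n$ large enough, uniformly in $z\in\Gamma$.

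Then I would introduce the Riesz spectral projections
\begin{equation*}
P:=\frac{1}{2\pi i}\oint_\Gamma (z-T)^{-1}\,dz,\qquad P_n:=\frac{1}{2\pi i}\oint_\Gamma (z-T_n)^{-1}\,dz,
\end{equation*}
which are well-defined for large $n$ by the previous step. The uniform resolvent convergence on $\Gamma$ immediately gives $\|P_n-P\|_{\mathcal{L}(H)}\to 0$. The crucial observation is that $P\ne 0$: if $u\in H\setminus\{0\}$ satisfies $Tu=\lambda u$, then $(z-T)^{-1}u=(z-\lambda)^{-1}u$ on $\Gamma$, hence by the Cauchy integral formula $Pu=u\ne 0$.

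Finally, since $P_n\to P$ in operator norm and $P\ne 0$, we have $P_n\ne 0$ for $n$ larger than some $n_0$. Because $T_n$ is compact, $P_n$ is the projection onto the (finite-dimensional) generalized eigenspace associated with the eigenvalues of $T_n$ enclosed by $\Gamma$; so $P_n\ne 0$ forces the existence of some $\lambda_n\in\sigma(T_n)$ with $|\lambda_n-\lambda|<r/2$, which is the desired conclusion. The only delicate point in the argument is the uniform resolvent estimate on $\Gamma$, but this is standard given the operator norm convergence $T_n\to T$ and the compactness of $\Gamma$ inside $\rho(T)$; everything else is bookkeeping with the holomorphic functional calculus.
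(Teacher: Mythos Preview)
Your argument is correct and is precisely the standard Riesz projection proof underlying the result the paper invokes; the paper itself gives no proof but simply cites \cite[Theorem~3.16]{kato}. One small point you leave implicit is that $0\notin \overline{B_{r/2}(\lambda)}$ (since $0\in\sigma(T)$ forces $|\lambda|\ge r$ by the hypothesis $B_r(\lambda)\cap\sigma(T)=\{\lambda\}$), which is what guarantees the enclosed spectral points of $T_n$ are genuine eigenvalues---but this is immediate from the assumptions.
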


We are now in position to prove Theorem \ref{thm prop lambda1}.

\begin{proof}[Proof of Theorem \ref{thm prop lambda1}]
Consider a maximizing sequence $ \left(\Omega_n\right)_{n\geq 1}$   for \eqref{eq def lambda1}, that is, a sequence of open sets $  \left(\Omega_n\right)_{n\geq 1}$   such that
$$\lambda_1(\Omega,V)=\lim_{n\to\infty}\lambda_1(\Omega_n,V)\qquad\text{and}\qquad \Omega\subset\O_n \subset D\quad\text{for every}\quad n\ge 1.$$

\noindent {\it We first show that we can assume that $\O_n$ $\gamma$-converges to $\O$.} Let $\om_n$ be a sequence of open sets such that $\Omega\cup\omega_n$ is open and $\cp(\omega_n)\to0$. We set $\tilde{\O}_n := \O_n \cap (\O \cup \om_n)=\Omega\cup(\omega_n\cap\Omega_n)$. By \eqref{eq def lambda1} and the inclusion  $\Omega\subset\tilde{\O}_n\subset\Omega_n$ we have $\lambda_1(\O_n,V) \leq \lambda_1(\tilde{\O}_n,V) \leq \lambda_1(\O,V)$, so we get
$$\lambda_1(\Omega,V)=\lim_{n\to\infty}\lambda_1(\tilde\Omega_n,V)\qquad\text{and}\qquad \Omega\subset\tilde\O_n \subset D\quad\text{for every}\quad n\ge 1.$$
Thus, we may consider $\tilde\Omega_n$ in place of $\Omega_n$ as a maximizing sequence for \eqref{eq def lambda1}.
Finally, as in Lemma \ref{lem approx qo}, $\tilde{\O}_n$ $\gamma$-converges to $\O$ thanks to the estimate \eqref{eq estim gamma dist} applied to the sets $\tilde{\O}_n$ and $\O_n \cap \om_n$. 
\smallskip

\noindent {\it We now prove claim (1).} Indeed, suppose by contradiction that 
$$\lambda_1(\Omega,V)=\lim_{n\to\infty}\lambda_1(\Omega_n,V)=+\infty.$$ Then, by Lemma \ref{l:lambda_bound} we have that 
$$\lim_{n\to\infty}\lambda_1(\Omega_n,0)=+\infty.$$
Now, since $\Omega\mapsto\lambda_1(\Omega,0)$ is decreasing and $\Omega\subset\Omega_n$, we get that $\lambda_1(\Omega,0)=+\infty$. By the variational characterization 
$$\lambda_1(\Omega,0)=\min_{u\in H^1_0(\Omega)\setminus\{0\}}\frac{\int_\Omega|\nabla u|^2\,dx}{\int_\Omega u^2\,dx},$$
we get that $H^1_0(\Omega)=\{0\}$, which implies that $\Omega=\emptyset$ (or, equivalently, $\cp\Omega=0$),   which is absurd.  
\smallskip

\noindent {\it We now prove (2).} Let $u_n\in H^1_0(\Omega_n)\subset  H^1_0(D)$ be the (normalized) eigenfunction associated to $\lambda_1(\Omega_n,V)$. Then we have 
$$L' u_n=\big(\lambda_1(\Omega_n,V)+c\big)u_n\quad\text{in}\quad\Omega_n,\qquad u_n\in H^1_0(\Omega_n),\qquad\int_{\Omega_n}u_n^2\,dx=1.$$
Multiplying the above equation by $u_n$, integrating over $\Omega_n$ and using the estimate \eqref{eq coerci L'} we get 
$$\delta \|u_n\|_{H^1}^2\le \lambda_1(\Omega_n,V)+c\quad\text{for every}\quad n\ge 1.$$
In particular, since $\lambda_1(\Omega, V)<\infty$, we get that $  \left(u_n\right)_{n\geq 1}$   is uniformly bounded in $H^1_0(D)$ and so, up to a subsequence, we may assume that $u_n$ converges, weakly in $H^1_0(D)$ and strongly in $L^2(D)$, to a function $u\in H^1_0(D)$. Moreover, $\Omega_n$ $\gamma$-converges to $\Omega$ and so, $R_{\Omega_n}^{L'}$ converges in norm to $R_\Omega^{L'}$. Thus, 
$$u=\lim_{n\to\infty}u_n=\lim_{n\to\infty}(\lambda_1(\Omega_n,V)+c)R_{\Omega_n}^{L'}(u_n)=(\lambda_1(\Omega,V)+c)R_{\Omega}^{L'}(u),$$ 
which concludes the proof of {\it (2)}.
\smallskip

\noindent {\it Proof of (3).} Suppose that $\lambda\in \C$ is an eigenvalue of $L$ on $\Omega$ such that $\text{Re}(\lambda)<\lambda_1(\Omega,V)$. Then, $(\lambda+c)^{-1}\in \C$ is a (non-zero) eigenvalue of the compact operator $R_{\Omega}^{L'}$. Applying Lemma \ref{l:kato}, we can assume that for $n$ large enough, there is an eigenvalue $\lambda_n$ of $L$ on $\Omega_n$ such that $\text{Re}(\lambda_n)<\lambda_1(\Omega_n,V)$, which is a contradiction with   \cite[Theorem 2.3]{berestycki-nirenberg-varadhan}  . 
\end{proof}
\begin{oss}[On the sign of the first eigenfunction]\label{rk u non neg}
In particular, as a consequence of the proof of Theorem \ref{thm prop lambda1} {\it (2)}, there is an eigenfunction $u$ of $L$ on the quasi-open set $\Omega$, which is non-negative, being the limit of non-negative functions. 
We notice that $u$ does not need to be strictly positive as $\Omega$ might be disconnected. 
\end{oss}

We conclude this section with a proposition on the continuity of $\lambda_1(\cdot,V)$ with respect to the $\gamma$-convergence. 

\begin{prop}[$\gamma$-continuity of $\lambda_1(\cdot,V)$]\label{p:gamma_continuity}
Let $D\subset \R^d$ be a bounded open set, $V\in L^{\infty}(D;\R^d)$ be a fixed vector field, and $  \left(\Omega_n\right)_{n\geq 1}\subset D$   be a sequence of quasi-open sets that $\gamma$-converges to the quasi-open set $\Omega\subset D$. Then 
$$\lambda_1(\Omega,V)=\begin{cases}\ds\lim_{n\to\infty}\lambda_1(\Omega_n,V),\quad\text{if}\quad \Omega\neq\emptyset,\\
+\infty,\quad\text{if}\quad \Omega=\emptyset.
\end{cases}$$
\end{prop}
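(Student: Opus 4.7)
The plan is to reduce both parts of the statement to spectral convergence of the shifted resolvent $R^{L'}_\Omega$, where $L' = L + c$ and $c$ is chosen as in \eqref{eq coerci L'} so that the bilinear form is coercive. By Theorem \ref{thm charac gamma conv}, the hypothesis that $\Omega_n$ $\gamma$-converges to $\Omega$ translates into $R^{L'}_{\Omega_n} \to R^{L'}_\Omega$ in $\mathcal{L}(L^2(D))$. The key intermediate claim I would establish is that, for every non-empty quasi-open $\Omega \subset D$,
$$ \mu_\Omega := \frac{1}{\lambda_1(\Omega,V) + c} = \max\big\{|\nu| \,:\, \nu \in \sigma(R^{L'}_\Omega)\big\}. $$
Indeed, $\nu \in \sigma(R^{L'}_\Omega)\setminus\{0\}$ if and only if $\nu^{-1} - c$ is an eigenvalue of $L$ on $\Omega$, and by Theorem \ref{thm prop lambda1}(3) every such eigenvalue $\lambda$ satisfies $\text{Re}(\lambda + c) \geq \lambda_1(\Omega, V) + c > 0$ (positivity comes from testing the coercive form $L'$ against the first eigenfunction). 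The elementary inequality $|z|^{-1} \leq (\text{Re}\, z)^{-1}$, valid whenever $\text{Re}\, z > 0$, then yields $|\nu| \leq \mu_\Omega$, with equality realised by the eigenvalue coming from $\lambda_1(\Omega, V)$ itself.

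For the case $\Omega = \emptyset$, in which $R^{L'}_\Omega \equiv 0$, I would argue by contradiction: assume that $\lambda_1(\Omega_n, V)$ admits a bounded subsequence, along which the normalised real eigenfunctions $\phi_n \in H^1_0(\Omega_n)$ provided by Theorem \ref{thm prop lambda1}(2) exist. Coercivity \eqref{eq coerci L'} forces a uniform bound on $\|\phi_n\|_{H^1}$, so a further subsequence converges strongly in $L^2(D)$ to some $\phi$ with $\|\phi\|_{L^2}=1$. Passing to the limit in $\phi_n = (\lambda_n + c)\, R^{L'}_{\Omega_n}\phi_n$, using the norm convergence of the resolvents, forces $\phi = 0$, a contradiction.

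For the case $\Omega \neq \emptyset$ one has $\mu_\Omega > 0$. The general upper semicontinuity of the spectral radius under norm convergence (a consequence of $\rho(T) \leq \|T^k\|^{1/k}$ together with $\|T_n^k\| \to \|T^k\|$ for each fixed $k$) yields $\limsup_n \mu_{\Omega_n} \leq \mu_\Omega$. For the reverse inequality, $\mu_\Omega$ is an isolated eigenvalue of the compact operator $R^{L'}_\Omega$, so Lemma \ref{l:kato} supplies, for every sufficiently small $\varepsilon>0$ and all $n$ large, an eigenvalue $\nu_n \in \sigma(R^{L'}_{\Omega_n})$ with $|\nu_n - \mu_\Omega| < \varepsilon$; consequently $\mu_{\Omega_n} \geq |\nu_n| \geq \mu_\Omega - \varepsilon$, and hence $\liminf_n \mu_{\Omega_n} \geq \mu_\Omega$. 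Combined, $\mu_{\Omega_n} \to \mu_\Omega$, which rearranges to the desired $\lambda_1(\Omega_n, V) \to \lambda_1(\Omega, V)$.

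The main obstacle is the identification of $(\lambda_1 + c)^{-1}$ with the spectral radius of $R^{L'}_\Omega$ in the non-self-adjoint setting: a priori, minimising the real part of an eigenvalue of $L$ need not correspond to maximising the modulus of the corresponding eigenvalue of $R^{L'}_\Omega$. It is precisely the strong form of the minimality in Theorem \ref{thm prop lambda1}(3), which bounds the real part of \emph{every} complex eigenvalue from below by $\lambda_1(\Omega,V)$, that allows the inequality $|z|^{-1} \leq (\text{Re}\,z)^{-1}$ to convert real-part dominance into modulus dominance. Once this identification is in hand, the rest is a routine application of norm convergence of resolvents and Kato's spectral convergence lemma.
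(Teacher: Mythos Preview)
Your proof is correct and takes a genuinely different route from the paper's. The paper argues by a case distinction on whether the sequence $\lambda_1(\Omega_n,V)$ is bounded. In the bounded case it extracts, via the eigenfunction--compactness argument already used in Theorem~\ref{thm prop lambda1}(2), a subsequential limit $\lambda$ which is an eigenvalue of $L$ on $\Omega$, and then re-runs the Kato argument of Theorem~\ref{thm prop lambda1}(3) to identify $\lambda$ with $\lambda_1(\Omega,V)$. In the unbounded case it appeals to Lemma~\ref{l:lambda_bound} to transfer the divergence to $\lambda_1(\Omega_n,0)$, whence $\|R^{-\Delta}_{\Omega_n}\|\to 0$ and $\Omega=\emptyset$.

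Your approach is more conceptual: once you identify $(\lambda_1(\Omega,V)+c)^{-1}$ with the spectral radius of $R^{L'}_\Omega$, the problem becomes the continuity of the spectral radius under norm convergence of compact operators, for which upper semicontinuity is automatic from $\rho(T)=\inf_k\|T^k\|^{1/k}$ and lower semicontinuity is exactly Lemma~\ref{l:kato}. This buys you two things: you avoid Lemma~\ref{l:lambda_bound} entirely, and you get full-sequence convergence directly rather than through a subsequence argument. The observation that makes your identification work---that the real-part minimality of Theorem~\ref{thm prop lambda1}(3), combined with $|z|\ge \text{Re}\,z$, upgrades to modulus dominance for the resolvent eigenvalues---is a nice point not explicit in the paper. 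One small detail worth adding for completeness: when $\Omega\neq\emptyset$, the $\gamma$-convergence $w_{\Omega_n}\to w_\Omega\neq 0$ forces $\Omega_n\neq\emptyset$ for all large $n$, so that your spectral-radius identification indeed applies to $R^{L'}_{\Omega_n}$ as well.
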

\begin{proof}
Let $\tau=\|V\|_{L^\infty(D)}$ and $\delta$ and $c$ be as in \eqref{eq coerci L'}.   Set $L'=L+c$.  

\noindent {\it   Suppose first that the sequence $\left(\lambda_1(\Omega_n,V)\right)_{n\geq 1}$ is bounded. }   Reasoning as in the proof of Theorem \ref{thm prop lambda1} {\it (2)} we get that, up to a subsequence, $\lambda_1(\Omega_n,V)$ converges to an eigenvalue $\lambda\in\R$ of $L$ on $\Omega$. Now, by the argument of Theorem \ref{thm prop lambda1} {\it (3)} and Lemma \ref{l:kato}, we have that $\lambda$ satisfies the property {\it(3)} of Theorem \ref{thm prop lambda1}, so $\lambda=\lambda_1(\Omega,V)$, which concludes the proof   since the sequence $\left(\lambda_1(\Omega_n,V)\right)_{n\geq 1}$ is bounded  . 

\noindent {\it   Next, suppose that the sequence $\left(\lambda_1(\Omega_n,V)\right)_{n\geq 1}$ is unbounded.  } Applying Lemma \ref{l:lambda_bound}, we get that,   up to a subsequence,   $\ds\lim_{n\to\infty}\lambda_1(\Omega_n,0)=+\infty$. Since $R_{\Omega_n}^{-\Delta}$ are self-adjoint compact operators, we get that 
$$\lim_{n\to\infty}\|R_{\Omega_n}^{-\Delta}\|_{\mathcal L(L^2(D))}=\lim_{n\to\infty}\frac{1}{\lambda_1(\Omega_n,0)}=0.$$
Finally, the $\gamma$-convergence gives that $R_\Omega^{-\Delta}(\Omega)\equiv 0$ and so, $H^1_0(\Omega)=\{0\}$ and $\cp(\Omega)=0$.  
\end{proof}
\begin{oss}
In view of Proposition \ref{p:gamma_continuity} we set $\lambda_1(\emptyset,V)=+\infty$. 
\end{oss}
\noindent Putting together Theorem \ref{thm prop lambda1} and Proposition \ref{p:gamma_continuity} we obtain the following result.
\begin{coro}[Equivalent definition of the principal eigenvalue on quasi-open sets]\label{cor:lambda_1}
Let $\Omega$ be a bounded quasi-open set and $V\in L^\infty(\Omega;\R^d)$. Then, there is an eigenvalue $\lambda_1(\Omega,V)\in\R$ of $L=-\Delta+V\cdot\nabla$ in $\Omega$ such that:
\begin{equation*}
\begin{array}{rl}
\lambda_1(\Omega, V)&\ds=\min\big\{\text{Re}\,\lambda\ :\ \lambda\in \C\ \text{is an eigenvalue of}\ L\ \text{on}\ \Omega\big\}\\
&\ds=\sup\big\{\lambda_1(\tilde\Omega)\ :\ \tilde \Omega\ \text{is an open set containing}\ \Omega\big\}\\
&\ds=\!\!\lim_{n\to\infty}\lambda_1(\Omega_n,V),\ \text{where}\  \left(\Omega_n\right)_{n\geq 1}\ \text{is any sequence}\\
&\qquad\qquad\qquad\qquad\qquad\qquad\text{ of (smooth) open sets $\gamma$-converging to}\ \Omega.  
\end{array}
\end{equation*}
\end{coro}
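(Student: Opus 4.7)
The plan is to observe that the three characterizations of $\lambda_1(\Omega,V)$ are essentially a compilation of what has already been established in Theorem \ref{thm prop lambda1} and Proposition \ref{p:gamma_continuity}; thus I would verify each equality in turn rather than repeat the underlying arguments.

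First, for the characterization as the minimum real part of the spectrum, by Theorem \ref{thm prop lambda1} (2), the value $\lambda_1(\Omega,V)$ defined via \eqref{eq def lambda1} is itself an eigenvalue of $L$ in $\Omega$, with a non-trivial real eigenfunction in $H^1_0(\Omega)$. By Theorem \ref{thm prop lambda1} (3), any eigenvalue $\lambda\in\C$ of $L$ in $\Omega$ satisfies $\lambda_1(\Omega,V)\le\text{Re}\,\lambda$. Since $\lambda_1(\Omega,V)\in\R$ equals its own real part, the infimum is attained and coincides with $\lambda_1(\Omega,V)$.

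Second, the supremum characterization over open supersets is exactly the definition \eqref{eq def lambda1} of $\lambda_1(\Omega,V)$ on quasi-open sets, which by Remark following that definition extends the standard notion on open sets. Thus there is nothing to prove for this equality beyond recalling the definition.

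Third, for the $\gamma$-convergence characterization, Lemma \ref{lem approx qo} (2) ensures that any bounded quasi-open set $\Omega$ admits at least one sequence $(\Omega_n)_{n\ge 1}$ of smooth open sets in $D$ that $\gamma$-converges to $\Omega$. Proposition \ref{p:gamma_continuity} then yields $\lim_{n\to\infty}\lambda_1(\Omega_n,V)=\lambda_1(\Omega,V)$; moreover, since the proposition applies to \emph{any} $\gamma$-converging sequence (smooth or not), this limit is independent of the particular approximating sequence chosen. The only minor point to verify is that the hypothesis $\Omega\neq\emptyset$ in Proposition \ref{p:gamma_continuity} is satisfied, which is built into the statement of the corollary since $\lambda_1(\Omega,V)\in\R$ forces $\mathrm{cap}(\Omega)>0$.

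No real obstacle arises: all three equalities reduce to invoking the results proved earlier in the section. The only care is in the last step, to ensure that the same real number $\lambda_1(\Omega,V)$ arising from \eqref{eq def lambda1} is recovered along any smooth approximation, which is guaranteed by the functional $\gamma$-continuity and not merely by the existence of some particular maximizing sequence of open supersets.
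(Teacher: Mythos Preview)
Your proposal is correct and follows essentially the same approach as the paper: the first two equalities are read off from Theorem~\ref{thm prop lambda1} (parts (2)--(3) and the definition~\eqref{eq def lambda1}), and the third follows by combining Lemma~\ref{lem approx qo} with Proposition~\ref{p:gamma_continuity}. The paper's proof is just a terser version of what you wrote.
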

\begin{proof}
The first two inequalities are due to Theorem \ref{thm prop lambda1}. For the third one it is sufficient to note that for every quasi-open set $\Omega$ there is a sequence of smooth open sets $\gamma$-converging to $\Omega$   and to   apply Proposition \ref{p:gamma_continuity}.
\end{proof} 
\begin{oss}[Faber-Krahn with drift for quasi-open sets]\label{oss:fk_quasi-open} As further consequence of Corollary \ref{cor:lambda_1} we can extend the Hamel-Nadirashvili-Russ inequality to the class of (bounded) quasi-open sets. Precisely, for every bounded quasi-open set $\Omega\subset\R^d$   with $\left\vert \Omega\right\vert>0$   and every $\tau>0$, we have
\begin{equation}\label{e:hnr_quasi-open}
\lambda_1\left(B,\tau\frac{x}{|x|}\right)\le \lambda_1(\Omega,V)\quad\text{for every}\quad V\in L^{\infty}(\Omega;\R^d)\quad\text{with}\quad \|V\|_{L^\infty}\le\tau,
\end{equation}
where $B$ is the ball centered in zero of the same Lebesgue measure as $\Omega$. Indeed, let $\O \subset \R^d$ be a   bounded   quasi-open set and $V \in L^\infty(\Omega,\R^d)$ be such that $\|V\|_{L^\infty} \leq \tau$ (in what follows we assume that $V$ is extended by zero outside $\Omega$). Let $ \left(\O_n\right)_{n\geq 1}$   be a sequence of bounded open sets which $\gamma$-converges to $\O$ and such that $|\O_n|$ converges to $|\O|$ (see   Lemma{lem approx qo}).   Denote by $B_{r_n}$ (resp. $B$) the ball centred at $0$ whose Lebesgue measure is $|B_{r_n}| = |\O_n|$ (resp. $|B|=|\O|$). Then, since $\O_n$ is an open set, we have $\lambda_1(B_{r_n},\tau e_r) \leq \lambda_1(\O_n,v)$ thanks to \cite[Remark 6.10]{hamel-nadirashvili-russ-arxiv}. Moreover, $B_{r_n}$ $\gamma$-converges to $B$ (since $|B_{r_n}| \rightarrow |B|$ and hence $B_{r_n}$ converges to $B$ in the sense of Hausdorff; see \cite[Proposition 3.4.2]{henrot-pierre}). Therefore, Corollary \ref{cor:lambda_1} implies that $\lambda_1(B_{r_n},\tau e_r)$ converges to $\lambda_1(B,\tau e_r)$ and similarly, $\lambda_1(\O_n,V)\to\lambda_1(\O,V)$. Passing to the limit we get \eqref{e:hnr_quasi-open}.
%that $\lambda_1(B,\tau e_r) \leq \lambda_1(\O,V)$.

\end{oss}

\section{Existence of optimal domains}\label{s:existence}

In this section we prove the existence of optimal domains for the cost functional $\lambda_1(\Omega,V)$. We first consider the case when the drift $V$ is fixed, for which the existence follows by the result of the previous section and a classical theorem in shape optimization. The case when both the domain $\Omega$ and the drift $V$ may vary requires more careful analysis and the rest of the section is dedicated to the proof of Theorem \ref{thm min on omega and v}. In the end of the section (Theorem \ref{t:optPhi}) we also prove that a solution $(\Omega,V)$ exists also in the class of vector fields $V$ obtained as gradients of Lipschitz continuous functions.
\begin{teo}[Existence of optimal sets for a fixed vector field]\label{t:optO}
Let $D\subset\R^d$ be a bounded open set and $V\in L^\infty(D;\R^d)$. Then, for every $0< m\le |D|$, there is an optimal domain, solution of the problem \eqref{e:optO}. 
%\begin{equation}\label{e:optO}
%\min\Big\{\lambda_1(\Omega,V)\ :\ \Omega\subset D\ \text{quasi-open},\ |\Omega|\le m \Big\}. 
%\end{equation}
\end{teo}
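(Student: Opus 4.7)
The plan is to reduce the statement to a direct application of the Buttazzo--Dal Maso theorem (Theorem \ref{t:budm}) with the shape functional $\mathcal{F}(\Omega):=\lambda_1(\Omega,V)$, defined on the class of quasi-open subsets of $D$. Since $V$ is fixed throughout, the problem \eqref{e:optO} is precisely of the form
\[
\min\big\{\mathcal{F}(\Omega)\ :\ \Omega\subset D\text{ quasi-open},\ |\Omega|\le m\big\},
\]
so it suffices to verify the two hypotheses of Theorem \ref{t:budm}: monotonicity with respect to set inclusion, and lower semi-continuity with respect to the $\gamma$-convergence.

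For the monotonicity, I would simply invoke Remark \ref{oss:monotonicity}, which states that $\lambda_1(\Omega_2,V)\le\lambda_1(\Omega_1,V)$ whenever $\Omega_1\subset\Omega_2$; this is inherited from the monotonicity on open sets (property (iii) from Berestycki--Nirenberg--Varadhan) through the sup-definition \eqref{eq def lambda1}. For the $\gamma$-lower semi-continuity, I would use the stronger Proposition \ref{p:gamma_continuity}, which gives full $\gamma$-continuity of $\lambda_1(\cdot,V)$ on quasi-open subsets of $D$ (with the convention $\lambda_1(\emptyset,V)=+\infty$). Since $\gamma$-continuity trivially implies $\gamma$-lower semi-continuity, the second hypothesis of Theorem \ref{t:budm} is satisfied, and the existence of an optimal quasi-open set $\Omega^{\ast}\subset D$ with $|\Omega^{\ast}|\le m$ follows immediately.

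Should one prefer a self-contained argument in the spirit of Remark \ref{rem:existence} rather than a black-box application, one could run the proof directly: take a minimizing sequence $(\Omega_n)_{n\ge 1}$ with $|\Omega_n|\le m$, use the uniform bound \eqref{eq estim infty w} on the torsion functions $w_{\Omega_n}$ to extract (via Rellich--Kondrachov) a weak-$\gamma$-limit $\Omega$, apply Lemma \ref{l:sc_meas} to get $|\Omega|\le \liminf |\Omega_n|\le m$, invoke Lemma \ref{lem sub seq gamma conv} to find $\tilde\Omega_n\supset\Omega_n$ which actually $\gamma$-converge to $\Omega$, and then combine the monotonicity $\lambda_1(\tilde\Omega_n,V)\le\lambda_1(\Omega_n,V)$ (Remark \ref{oss:monotonicity}) with the $\gamma$-continuity $\lambda_1(\tilde\Omega_n,V)\to\lambda_1(\Omega,V)$ (Proposition \ref{p:gamma_continuity}) to conclude that $\Omega$ is optimal.

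There is no real obstacle left at this point in the paper: all the substantial work has been carried out in Sections \ref{s:preliminaries} and \ref{s:def_lambda_1}, where the non-variational functional $\lambda_1(\cdot,V)$ was extended to quasi-open sets in such a way that both monotonicity and $\gamma$-continuity survive. Once these two properties are in hand, Theorem \ref{t:optO} is essentially a one-line corollary. The genuinely hard version of the existence problem is Theorem \ref{t:th1}, where $V$ also varies and the argument above breaks down because the limiting drift need not coincide with the weak $L^2$-limit of $V_n$ against $\nabla u_n$; that will require the refined strategy (optimization in $V$ on a fixed $\Omega$ first) outlined in the introduction.
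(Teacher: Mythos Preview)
Your proposal is correct and matches the paper's own proof essentially line for line: the paper also cites Remark \ref{oss:monotonicity} for monotonicity and Proposition \ref{p:gamma_continuity} for $\gamma$-continuity, then applies Theorem \ref{t:budm} directly. The additional unwrapped argument you sketch is a faithful expansion of the Buttazzo--Dal Maso machinery and is not needed, but it is correct as well.
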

\begin{proof}
By Remark \ref{oss:monotonicity} and Proposition \ref{p:gamma_continuity} we get that $\Omega\mapsto\lambda_1(\Omega,V)$ is $\gamma$-continuous and decreasing with respect to the set inclusion. The claim follows by Theorem \ref{t:budm}.
\end{proof}

\subsection{Optimal drifts on a fixed domain}\label{sub:existence_V}
Let $\Omega \subset \R^d$ be a fixed bounded quasi-open set and $\tau>0$ be given. We consider the following variational minimization problem
\begin{equation}\label{eq min omega fixed}
\min \Big\{ \lambda_1(\O,V) \ : \ V \in L^\infty(\O,\R^d), \ \|V\| _{L^\infty} \leq \tau \Big\}.
\end{equation}

\begin{teo}[Optimal vector field on a fixed quasi-open set]\label{thm min omega fixed}
The problem \eqref{eq min omega fixed} has a solution,   which satisfies   
\begin{equation}\label{e:Vopt}
V_\ast(x) =-\tau \frac{\nabla u(x)}{|\nabla u(x)|}\ \text{ if }\ |\nabla u(x)| \neq 0\ ;\qquad V_\ast(x) =0\ \text{ if }\ |\nabla u(x)| = 0\ ,
%V_\ast(x) = \begin{cases}
%\ds-\tau \frac{\nabla u(x)}{|\nabla u(x)|}\ \text{ if }\ |\nabla u(x)| \neq 0,\\
%\qquad0\qquad \text{ if }\ |\nabla u(x)| = 0,\end{cases}
\end{equation}
where $u$ is the eigenfunction of $L=-\Delta+V_\ast\cdot\nabla$ in $\O$, associated to the eigenvalue $\lambda_1(\O,V_\ast)$.
\end{teo}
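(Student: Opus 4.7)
The plan is to first produce a minimizer via a direct method in the weak-$\ast$ topology of $L^\infty(\Omega;\R^d)$, and then to derive the pointwise characterization \eqref{e:Vopt} by a comparison argument against an adjoint principal eigenfunction.

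I take a minimizing sequence $(V_n)_{n\ge 1}$ with $\|V_n\|_{L^\infty}\le \tau$ and the corresponding nonnegative, $L^2$-normalized principal eigenfunctions $u_n\in H^1_0(\Omega)$ provided by Theorem \ref{thm prop lambda1}, associated to eigenvalues $\lambda_n:=\lambda_1(\Omega,V_n)\searrow \lambda_\infty$. Testing the eigenvalue equation against $u_n$ and using the coercivity estimate \eqref{eq coerci L'} gives a uniform $H^1_0$-bound on $u_n$, so along a subsequence $u_n\rightharpoonup u$ in $H^1_0(\Omega)$ and $u_n\to u$ in $L^2(\Omega)$, with $u\ge 0$ and $\|u\|_{L^2}=1$. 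Banach--Alaoglu yields $V_n\overset{\ast}{\rightharpoonup} V_\ast$ in $L^\infty(\Omega;\R^d)$ along a further subsequence, with $\|V_\ast\|_{L^\infty}\le \tau$.

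The main obstacle is passing to the limit in the weak$\times$weak product $V_n\cdot \nabla u_n$. Writing the equation as $-\Delta u_n = \lambda_n u_n - V_n\cdot\nabla u_n$, the right-hand side is uniformly bounded in $L^2(\Omega)$, so standard interior elliptic regularity applied on the open representative $\{w_\Omega>0\}$ of $\Omega$ (see Proposition \ref{prop w_O}) yields a uniform $H^2_{\mathrm{loc}}$-bound and, up to extraction, $\nabla u_n\to \nabla u$ strongly in $L^2_{\mathrm{loc}}(\Omega)$. Combining this strong local convergence with the weak-$\ast$ convergence of $V_n$ in $L^\infty=(L^1)^\ast$, one passes to the limit in the weak formulation when tested against any $\varphi\in C_c^\infty(\{w_\Omega>0\})$, obtaining that $u$ solves
\[-\Delta u + V_\ast\cdot\nabla u = \lambda_\infty u\quad\text{in}\quad \Omega,\qquad u\in H^1_0(\Omega),\qquad u\ge 0,\qquad \|u\|_{L^2}=1.\]
Hence $\lambda_\infty$ is an eigenvalue of $L_{V_\ast}$, so by Theorem \ref{thm prop lambda1} (3) $\lambda_1(\Omega,V_\ast)\le \lambda_\infty$; the reverse inequality is immediate from the admissibility of $V_\ast$, and $V_\ast$ is thus a minimizer.

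To establish \eqref{e:Vopt}, let $V$ be any minimizer with principal eigenfunction $u$, and let $\Omega_1$ be a connected component of $\Omega$ on which $u\not\equiv 0$ (hence $u>0$ on $\Omega_1$ by the strong maximum principle). Define the admissible competitor $\widetilde V := -\tau\,\nabla u/|\nabla u|$ on $\{\nabla u\ne 0\}$ and $\widetilde V := V$ on $\{\nabla u=0\}$. Pointwise $\widetilde V\cdot \nabla u = -\tau|\nabla u|\le V\cdot \nabla u$, so
\[ L_{\widetilde V}\, u = L_V u + (\widetilde V - V)\cdot \nabla u \le \lambda_1(\Omega,V)\,u\quad\text{in}\quad \Omega. \]
Testing this inequality against the strictly positive principal adjoint eigenfunction $u^\ast$ of $L_{\widetilde V}^\ast\vert_{\Omega_1}$ (extended by zero outside $\Omega_1$) and using the duality identity $\int_{\Omega_1} L_{\widetilde V} u\cdot u^\ast\,dx = \lambda_1(L_{\widetilde V}|_{\Omega_1})\int_{\Omega_1} u u^\ast\,dx$ yields
\[\lambda_1(\Omega,\widetilde V)\le \lambda_1(L_{\widetilde V}|_{\Omega_1})\le \lambda_1(\Omega,V),\]
with strict inequality unless $V\cdot \nabla u = -\tau|\nabla u|$ a.e. on $\{\nabla u \ne 0\}$. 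Minimality of $V$ forces this pointwise equality; the equality case of Cauchy--Schwarz then gives $V(x) = -\tau\nabla u(x)/|\nabla u(x)|$ a.e. on $\{\nabla u\ne 0\}$, and redefining $V$ to be zero on $\{\nabla u=0\}$ (which does not alter $V\cdot\nabla u$ and thus leaves $V$ a minimizer with the same principal eigenfunction $u$) produces a minimizer of the form \eqref{e:Vopt}.
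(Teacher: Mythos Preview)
Your approach is genuinely different from the paper's and would essentially work if $\Omega$ were an open set, but there is a gap arising precisely from the fact that $\Omega$ is only quasi-open.

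The main issue is in the existence step: the set $\{w_\Omega>0\}$ is \emph{not} in general open---it is only quasi-open (Proposition~\ref{prop w_O}~(3) asserts $H^1_0(\Omega)=H^1_0(\{w_\Omega>0\})$, not that this superlevel set is Euclidean open). For a generic quasi-open $\Omega$ there need not exist any open $U$ with $H^1_0(U)=H^1_0(\Omega)$, so your appeal to interior $H^2_{\mathrm{loc}}$ regularity, and the resulting strong $L^2_{\mathrm{loc}}$ convergence of $\nabla u_n$, is unjustified. This is exactly the obstruction the paper highlights in the introduction: with only weak-$\ast$ convergence of $V_n$ and weak $H^1$ convergence of $u_n$, the limit of $V_n\cdot\nabla u_n$ need not equal $V_\ast\cdot\nabla u$. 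Your characterization step inherits the same difficulty: on a merely quasi-open set the notions of ``connected component'', the strong maximum principle giving $u>0$, and the existence and strict positivity of a principal adjoint eigenfunction are not available without additional arguments (the paper's Theorem~\ref{thm prop lambda1} yields only a nonnegative eigenfunction, see Remark~\ref{rk u non neg}, and says nothing about the adjoint).

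The paper's route sidesteps all of this: it $\gamma$-approximates $\Omega$ by \emph{smooth} open sets $\Omega_n$ (Lemma~\ref{lem approx qo}), on which the optimal field is already known from \cite{hamel-nadirashvili-russ-annals} to be $V_n=-\tau\,\nabla u_n/|\nabla u_n|$. The eigenfunction equation then becomes $-\Delta u_n-\tau|\nabla u_n|=\lambda_n u_n$, and this specific nonlinear structure lets one upgrade weak $H^1$ convergence to strong by showing convergence of the Dirichlet energies (via the equation itself and Theorem~\ref{thm charac gamma conv}). The explicit form \eqref{e:Vopt} is thus inherited in the limit from the smooth case rather than derived a posteriori by an adjoint comparison.
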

\begin{proof}
Let $ \left(\O_n\right)_{n\geq 1}$   be a sequence of smooth, say of class $C^{2,\alpha}$ for some $0<\alpha<1$, open sets which $\gamma$-converges to $\O$ (see Remark \ref{lem approx qo}). 
Since $\O_n$ is smooth, we already know (see \cite[theorem 1.5]{hamel-nadirashvili-russ-annals}) that the problem \eqref{eq min omega fixed} for the fixed domain $\Omega_n$ has a solution $V_n $. Moreover, if $u_n$ is the associated eigenfunction of $-\Delta+V_n\cdot\nabla$ in $\O_n$, that is, $u_n$ is defined by
\[ -\Delta u_n + V_n \cdot \nabla u_n 
%=-\Delta u_n-\tau\left\vert \nabla u_n\right\vert
= \lambda_1(\O_n,V_n) u_n\quad \text{in}\quad \O_n, \qquad u_n \in H^1_0(\O_n), \qquad \int_{\Omega_n}u_n^2\,dx=1,\]
then the optimal vector field $V_n$ is unique and is given by
\[ V_n(x) = \left\{ 
	\begin{array}{ll}
		\ds-\tau \frac{\nabla u_n(x)}{|\nabla u_n(x)|} &\text{ if } |\nabla u_n(x)| \neq 0 ,\\
		0 &\text{ if } |\nabla u_n(x)| = 0.
	\end{array}
\right. \]
In particular, $u_n$ is a solution of 
\[ -\Delta u_n-\tau\left\vert \nabla u_n\right\vert
= \lambda_1(\O_n,V_n) u_n\quad \text{in}\quad \O_n, \qquad u_n \in H^1_0(\O_n), \qquad \int_{\Omega_n}u_n^2\,dx=1.\]
We first claim that the sequence $ \left(\lambda_1(\O_n,V_n)\right)_{n\geq 1}$   is bounded. Indeed, by optimality of $V_n$, one has $\lambda_1(\O_n,V_n)\leq \lambda_1(\O_n,0)$, which is nothing but the principal eigenvalue of $-\Delta$ on $\O_n$ with Dirichlet boundary condition. But since $\O_n$ $\gamma$-converges to $\O$, Proposition \ref{p:gamma_continuity} yields that $\lambda_1(\O_n,0)\rightarrow \lambda_1(\O,0)$ so that the sequence $\left(\lambda_1(\O_n,0)\right)_{n\geq 1}$ is bounded, proving our claim.\par

\noindent   Therefore, up to a subsequence, $\lambda_1(\O_n,V_n)$ converges to some $\lambda \in \R$ and $u_n$ has a uniformly bounded norm   in $H^1_0(D)$,   which yields a function $u\in H^1_0(D)$ such that, up to a subsequence, 
%$u_n$ converges to $u$ weakly in $H^1_0(D)$ and strongly in $L^2(D)$.
\begin{equation} \label{convun}
u_n\rightharpoonup u\mbox{ weakly in } H^1_0(D)\mbox{ and }u_n\rightarrow u\mbox{ strongly in }L^2(D).
\end{equation} 
Since the sequence $|\nabla u_n|$ is bounded in $L^2(D)$, up to a subsequence, $-\tau\left\vert \nabla u_n\right\vert\rightharpoonup z$ weakly in $L^2(D)$ for some function $z\in L^2(D)$. Therefore, $f_n:= \lambda_1(\O_n,v_n) u_n + \tau |\nabla u_n|$ weakly converges in $L^2(D)$ to $f:=\lambda u-z$. Thanks to theorem \ref{thm charac gamma conv} (applied to $-\Delta$), $u_n = R^{-\Delta}_{\O_n}(f_n)$ strongly converges in $L^2(D)$ to $R^{-\Delta}_\O(f)$. By \eqref{convun}, we have $u = R^{-\Delta}_\O(f)$ and hence $u \in H^1_0(\O)$.  Furthermore
\begin{align*}
\int_D |\nabla u|^2 \ dx &=  \displaystyle \int_D (-zu + \lambda u^2 ) \ dx \\
& =   \lim_{n\rightarrow +\infty}  \int _D (\tau|\nabla u_n|u_n + \lambda_1(\O_n,V_n) u_n^2) \ dx =   \lim_{n\rightarrow +\infty} \int_D |\nabla u_n|^2 \ dx,
\end{align*}
where the first line is due to the fact that $u\in H^1_0(\Omega)$ and $-\Delta u=\lambda u-z$ in $\Omega$. This proves that $u_n$ converges strongly in $H^1(D)$ to $u$,
% and, in particular, that $u$ is not zero. This also proves 
that $|\nabla u_n|$ strongly converges in $L^2(D)$ to $|\nabla u|$, and hence that $z= -\tau |\nabla u|$. Therefore $u$ satisfies
\[ -\Delta u + V_\ast \cdot \nabla u =-\Delta u-\tau\left\vert \nabla u\right\vert= \lambda u\quad \text{in}\quad \O, \qquad u \in H^1_0(\O), \qquad\int_\Omega u^2\,dx=1,\]
where $V_\ast \in L^\infty(D,\R^d)$ is given by \eqref{e:Vopt}.
%\[ v^\ast(x) = \left\{ 
%	\begin{array}{ll}
%		-\tau \frac{\nabla u(x)}{|\nabla u(x)|} &\text{ if } |\nabla u(x)| \neq 0 \\
%		0 &\text{ if } |\nabla u(x)| = 0,
%	\end{array}
%\right. \]
This shows that $\lambda$ is an eigenvalue of the operator $L = -\Delta +V_\ast \cdot \nabla$ in $\O$. In particular, we have $\|V_\ast\|_\infty \leq \tau$ and $\lambda_1(\Omega,V_{\ast})\leq \lambda$. On the other hand, by the minimality of $V_n$, we have $\lambda_1(\O_n,V_n) \leq \lambda_1(\O_n,V_\ast)$. Hence, letting $n\rightarrow \infty$, we get that $\lambda \leq \lambda_1(\O,V_\ast)$, which   yields   $\lambda=\lambda_1(\Omega,V_\ast)$ and concludes the proof of the theorem.
\end{proof}

\subsection{Shape optimization problem over domains and vector fields}\label{sub:existence_O_V}

Let $D \subset \R^d$ be a bounded open set, $0<m\le |D|$ and $\tau>0$. 
We consider the shape optimization problem 
\begin{equation}\label{eq min on omega and v 2}
\min\big\{\lambda_1(\O,V) \ : \ \O \subset D \ \text{quasi-open}, \ |\O| \leq m,  \ \|V\|_{L^\infty} \leq \tau \big\}.
\end{equation}

\begin{teo}[Existence of optimal sets and optimal vector fields]\label{thm min on omega and v}
Let $\tau\geq 0$ and $m \in (0,|D|)$. Then the problem \eqref{eq min on omega and v 2} has a solution   $(\O^\ast,V^\ast)$, where $V^\ast$ is given by \eqref{e:Vopt}.  
% and $u$ is the eigenfunction of $L$ in $\O^\ast$ associated to $\lambda_1(\O^\ast,v^\ast)$, then $v^\ast(x) = -\tau \nabla u(x) / |\nabla u(x)|$ if $|\nabla u(x)| \neq 0$ and $v^\ast(x) = 0$ otherwise.
\end{teo}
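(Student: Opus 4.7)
\emph{Proof proposal.} The plan is to use Theorem \ref{thm min omega fixed} to reduce the minimization over $V$ to the explicit structural form \eqref{e:Vopt}, turning the problem into a semilinear eigenvalue equation, and then to pass to the limit along a minimizing sequence using the $\gamma$-convergence machinery of Section \ref{s:preliminaries}.

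First, let $\left(\Omega_n,V_n\right)_{n\geq 1}$ be a minimizing sequence for \eqref{eq min on omega and v 2}. Applying Theorem \ref{thm min omega fixed} on each fixed $\Omega_n$, I would replace $V_n$ by the optimal drift $V_n^\ast$ of the form \eqref{e:Vopt}, without increasing $\lambda_1$, where $u_n\geq 0$ is the normalized principal eigenfunction of $L_n=-\Delta+V_n^\ast\cdot\nabla$ on $\Omega_n$. Then $(\Omega_n,V_n^\ast)$ is still a minimizing sequence, and the eigenfunction solves the semilinear equation
\[ -\Delta u_n-\tau|\nabla u_n|=\lambda_n u_n\quad \text{in } \Omega_n,\qquad u_n\in H^1_0(\Omega_n),\qquad \int_D u_n^2\,dx=1, \]
with $\lambda_n:=\lambda_1(\Omega_n,V_n^\ast)$.

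Next, I would establish uniform bounds and extract limits. The Hamel--Nadirashvili--Russ inequality of Remark \ref{oss:fk_quasi-open} provides the lower bound $\lambda_n\geq\lambda_1(B,\tau x/|x|)>0$, where $B$ is the ball of volume $m$, while the minimality yields an upper bound. Multiplying the equation above by $u_n$ and using Cauchy--Schwarz shows $\|\nabla u_n\|_{L^2}^2\leq\lambda_n+\tau\|\nabla u_n\|_{L^2}$, so $\left(u_n\right)_{n\geq 1}$ is bounded in $H^1_0(D)$. Up to subsequences: $u_n\rightharpoonup u$ weakly in $H^1_0(D)$ and strongly in $L^2(D)$ (so that $u\geq 0$ and $\int_D u^2\,dx=1$), $|\nabla u_n|\rightharpoonup \zeta$ weakly in $L^2(D)$, $\lambda_n\to\lambda\in\R$, and by the weak-$\gamma$-compactness combined with Lemma \ref{lem sub seq gamma conv}, there exist a quasi-open set $\Omega\subset D$ with $|\Omega|\leq m$ (by Lemma \ref{l:sc_meas}) and a sequence $\tilde\Omega_n\supset\Omega_n$ such that $\tilde\Omega_n$ $\gamma$-converges to $\Omega$.

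The central step is to identify $u$ as an eigenfunction on $\Omega$. Setting $f_n:=\lambda_n u_n+\tau|\nabla u_n|$, one has $f_n\rightharpoonup f:=\lambda u+\tau\zeta$ weakly in $L^2(D)$ and $u_n=R^{-\Delta}_{\Omega_n}(f_n)$. Since $u_n\in H^1_0(\Omega_n)\subset H^1_0(\tilde\Omega_n)$, combining the $\gamma$-convergence of $\tilde\Omega_n$ to $\Omega$ (Theorem \ref{thm charac gamma conv}) with the energy identity
\[ \|\nabla u_n\|_{L^2(D)}^2=\int_D f_n u_n\,dx\longrightarrow\int_D f u\,dx, \]
allows one to identify $u=R^{-\Delta}_\Omega(f)$, hence $u\in H^1_0(\Omega)$ and $-\Delta u=f$ in $\Omega$. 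The same identity and the weak lower semi-continuity of the Dirichlet energy then force $\|\nabla u_n\|_{L^2}\to\|\nabla u\|_{L^2}$, yielding strong convergence $u_n\to u$ in $H^1_0(D)$ and $|\nabla u_n|\to|\nabla u|$ in $L^2(D)$. Thus $\zeta=|\nabla u|$, and $u$ solves
\[ -\Delta u-\tau|\nabla u|=\lambda u\quad \text{in } \Omega,\qquad u\in H^1_0(\Omega),\qquad \int_D u^2\,dx=1. \]

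Finally, setting $\Omega^\ast:=\Omega$ and defining $V^\ast$ by the formula \eqref{e:Vopt} applied to $u$, one has $\|V^\ast\|_{L^\infty}\leq\tau$, and $u$ is a non-negative normalized eigenfunction of $L=-\Delta+V^\ast\cdot\nabla$ on $\Omega^\ast$ with eigenvalue $\lambda$. Theorem \ref{thm prop lambda1}(3) then gives $\lambda=\lambda_1(\Omega^\ast,V^\ast)$, and since $\lambda_n\to\lambda$ along the minimizing sequence, the pair $(\Omega^\ast,V^\ast)$ solves \eqref{eq min on omega and v 2}.

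The main obstacle will be the central identification step: $u_n$ solves its PDE only in $\Omega_n$, whereas $\gamma$-convergence is only available on the enlarged auxiliary sets $\tilde\Omega_n$, so Theorem \ref{thm charac gamma conv} cannot be applied to $u_n$ directly. This is precisely why it is essential to replace $V_n$ by $V_n^\ast$ at the start: the identity $V_n^\ast\cdot\nabla u_n=-\tau|\nabla u_n|$ converts the drift into a nonnegative scalar source $f_n$ in the Laplacian equation and makes it possible to upgrade the weak convergence of $u_n$ and $|\nabla u_n|$ to strong convergence through the energy identity, thereby compatibilizing the limit with the resolvent on $\Omega$.
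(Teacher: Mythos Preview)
Your proposal correctly identifies the key structural ingredient --- replacing each $V_n$ by the optimal one so that the equation becomes $-\Delta u_n-\tau|\nabla u_n|=\lambda_n u_n$ --- but the way you deploy it does not close the gap you yourself flag in the last paragraph.

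The problem is exactly at the sentence ``allows one to identify $u=R^{-\Delta}_\Omega(f)$''. You have $u_n=R^{-\Delta}_{\Omega_n}(f_n)$, whereas Theorem~\ref{thm charac gamma conv} only controls $R^{-\Delta}_{\tilde\Omega_n}(f_n)$. These two resolvents differ, and neither the inclusion $H^1_0(\Omega_n)\subset H^1_0(\tilde\Omega_n)$ nor the energy identity $\|\nabla u_n\|_{L^2}^2=\int_D f_n u_n\to\int_D f u$ bridges the gap. From the energy identity plus weak lower semi-continuity you only get $\|\nabla u\|_{L^2}^2\le\int_D fu$; from the comparison principle (since $f_n\ge 0$ and $\Omega_n\subset\tilde\Omega_n$) you get $u\le w:=R^{-\Delta}_\Omega(f)$; but neither inequality forces $u=w$. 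And even if you could first prove $u_n\to u$ strongly in $H^1_0(D)$, passing to the limit in the weak formulation would still require, for each test function $\varphi\in H^1_0(\Omega)$, a recovery sequence $\varphi_n\in H^1_0(\Omega_n)$ with $\varphi_n\to\varphi$ in $H^1$ --- and that is precisely what the merely weak-$\gamma$-convergence of $\Omega_n$ does not provide. So the argument, as written, is circular: it presupposes $-\Delta u=f$ in $\Omega$ to derive the strong convergence it needs.

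The paper fixes this with a small but decisive change: it applies Theorem~\ref{thm min omega fixed} on the \emph{enlarged} sets $\tilde\Omega_n$ rather than on $\Omega_n$. Since $\Omega_n\subset\tilde\Omega_n$, monotonicity gives $\lambda_1(\tilde\Omega_n,\tilde V_n)\le\lambda_1(\tilde\Omega_n,V_n)\le\lambda_1(\Omega_n,V_n)$, so one still controls the infimum from above. Now the eigenfunctions $u_n$ lie in $H^1_0(\tilde\Omega_n)$ and solve the equation \emph{there}, so Theorem~\ref{thm charac gamma conv} applies directly and the compactness argument of Theorem~\ref{thm min omega fixed} goes through verbatim to yield $u\in H^1_0(\Omega)$ solving $-\Delta u-\tau|\nabla u|=\tilde\lambda u$ in $\Omega$. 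In your outline, moving the application of Theorem~\ref{thm min omega fixed} from $\Omega_n$ to $\tilde\Omega_n$ (after first extracting the weak-$\gamma$-limit $\Omega$ and the sequence $\tilde\Omega_n$) is all that is needed.
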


\begin{proof} 
Let $(\Omega_n,V_n)$ be a minimizing sequence for \eqref{eq min on omega and v 2} and let 
\begin{align*}
\underline{\lambda}&:=\inf\big\{\lambda_1(\O,V) \ : \ \O \subset D \ \text{quasi-open}, \ |\O| \leq m,  \ \|V\|_{L^\infty} \leq \tau \big\}=\lim_{n\to\infty}\lambda_1(\Omega_n,V_n),
\end{align*} 
%and let $\left(\lambda_1(\O_n,v_n)\right)_{n\geq 1}$ be a minimizing sequence for \eqref{eq min on omega and v 2}, that is, a sequence which converges to $\underline{\lambda}$ with $\O_n$ quasi-open sets such that $|\O_n| \leq m$ and $\|v_n\|_\infty \leq \tau$.
Since the topology of the weak $\gamma$-convergence is compact, we can assume that, up to a subsequence, $\O_n$ weakly $\gamma$-converges to a quasi-open set $\O \subset D$. 
Then, let $\tilde{\O}_n$ be a sequence of quasi-open sets as in Lemma \ref{lem sub seq gamma conv}. Denote by $\tilde V_n$ the optimal vector field given by Theorem \ref{thm min omega fixed} on $\tilde{\O}_n$, and let $u_n \in H^1_0(\tilde{\O}_n)$ be   a solution   of
$$-\Delta u_n + \tilde V_n \cdot \nabla u_n = \lambda_1(\tilde{\O}_n,\tilde V_n) u_n \quad \text{in}\quad \tilde{\O}_n,\qquad u_n\in H^1_0(\tilde\Omega_n),\qquad \int_D u_n^2\,dx=1.$$ 
By the minimality of $\tilde V_n$ and the inclusion $\O_n \subset \tilde{\O}_n$, we have 
\[ 0 <\lambda_1(\tilde{\O}_n,\tilde V_n) \leq \lambda_1(\tilde{\O}_n, V_n) \leq\lambda_1(\O_n, V_n)\qquad\text{for every}\quad n\ge 1. \]
Therefore, up to a subsequence, $\lambda_1(\tilde{\O}_n,\tilde V_n)$ converges to some $\tilde{\lambda}$ such that $\tilde{\lambda} \leq \underline{\lambda}$. In particular, $ \left(u_n\right)_{n\geq 1}$   is uniformly bounded in $H^1_0(D)$ and so, up to a subsequence, $u_n$ weakly converges in $H^1_0(D)$ to some $u \in H^1_0(D)$.  Now, since $\tilde{\O}_n$ $\gamma$-converges to $\O$, we can argue as in the end of the proof of Theorem \ref{thm min omega fixed} to conclude that the convergence of $u_n$ to $u$ is strong in $H^1(D)$. This yields that $u$ is not identically zero and satisfies
\[ -\Delta u + V \cdot \nabla u =\Delta u-\tau\left\vert \nabla u\right\vert= \tilde{\lambda}\, u\quad \text{in}\quad \O , \qquad u \in H^1_0(\O),\qquad \int_D u^2\,dx=1, \]
where $V \in L^\infty(D,\R^d)$ is given by \eqref{e:Vopt}.
%\[ V(x) = \left\{ 
%	\begin{array}{ll}
%		-\tau \frac{\nabla u(x)}{|\nabla u(x)|} &\text{ if } |\nabla u(x)| \neq 0 \\
%		0 &\text{ if } |\nabla u(x)| = 0.
%	\end{array}
%\right. \]
Furthermore, thanks to Lemma \ref{l:sc_meas}, we have that $|\O| \leq m$. Hence, $\underline{\lambda}\leq \tilde{\lambda}$. Thus, we get that $\tilde{\lambda} = \underline\lambda$ and hence that $\underline{\lambda}= \lambda_1(\O,V)$, which proves that the couple $(\Omega,V)$ is a solution of \eqref{eq min on omega and v 2}. 
%This concludes the proof. 
\end{proof}

\begin{oss}
If the box $D$ contains a ball $B \subset D$ such that $|B| = m$, then by Remark \ref{oss:fk_quasi-open} a solution of \eqref{eq min on omega and v 2} is given by $\lambda_1(B,\frac{\tau x}{|x|})$.
\end{oss}

We now consider a shape optimization problem in the more restrictive class of couples $(\Omega,V)$, in which the vector field $V$ is a gradient of a Lipschitz function. Precisely, given a bounded open set $D\subset\R^d$, $\tau\geq 0$ and $m \in (0,|D|)$, we consider the shape optimization problem
\begin{equation}\label{e:optPhi}
\min\Big\{\lambda_1(\Omega,\nabla\Phi)\ :\ \Omega\subset D\ \text{quasi-open},\ \Phi\in W^{1,\infty}(D),\   \left\vert \Omega\right\vert\leq m,  \ \|\nabla\Phi\|_{L^\infty}\le\tau\Big\}.
\end{equation}
In this case the argument from Theorem \ref{thm min on omega and v} does not apply since the optimal vector field from Theorem \ref{thm min omega fixed} may not be the gradient of a Lipschitz function. On the other hand, the functional $\lambda_1(\Omega,\nabla\Phi)$ is variational so we can use a more direct approach. Indeed, for every $\lambda\in\R$ and $u\in H^1_0(\Omega)$ we have 
\begin{equation*}
-\Delta u+\nabla\Phi\cdot \nabla u=\lambda u\quad \text{in} \quad \O \quad\Leftrightarrow\quad -\text{div}\,(e^{-\Phi}\nabla u)=\lambda e^{-\Phi}u\quad\text{in}\quad\Omega\,, 
\end{equation*}
and since the operator $A=-\text{div}\,(e^{-\Phi}\nabla \cdot)$ is self-adjoint, we get that 
\begin{equation}\label{e:lambdaPhi}
\lambda_1(\Omega,\nabla\Phi)=\min_{u\in H^1_0(\Omega)\setminus\{0\}}\frac{\int_D e^{-\Phi}|\nabla u|^2\,dx}{\int_D e^{-\Phi} u^2\,dx}.
\end{equation}

\begin{teo}[Existence of optimal sets and optimal potentials]\label{t:optPhi}
Let $D\subset\R^d$ be a bounded open set, $\tau\geq 0$ and $m \in (0,|D|)$. Then the problem \eqref{e:optPhi} has a solution.
\end{teo}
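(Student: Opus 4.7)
The plan is to exploit the variational characterization \eqref{e:lambdaPhi} and reformulate \eqref{e:optPhi} as a joint Rayleigh quotient minimization over pairs $(u,\Phi)\in H^1_0(D)\times W^{1,\infty}(D)$. Setting
\[R(u,\Phi):=\Bigl(\int_D e^{-\Phi}|\nabla u|^2\,dx\Bigr)\Big/\Bigl(\int_D e^{-\Phi}u^2\,dx\Bigr),\]
and noting that for every $u\in H^1_0(D)\setminus\{0\}$ the set $\{u\neq 0\}$ is quasi-open and contains $u$ in its Sobolev space, while any first eigenfunction on an admissible $\Omega$ satisfies $\{u\neq 0\}\subset\Omega$, the infimum in \eqref{e:optPhi} equals
\[\lambda_\ast:=\inf\Bigl\{R(u,\Phi)\,:\,u\in H^1_0(D)\setminus\{0\},\ \bigl|\{u\neq 0\}\bigr|\le m,\ \|\nabla\Phi\|_{L^\infty}\le\tau\Bigr\},\]
and any minimizer $(u^\ast,\Phi^\ast)$ of this reformulation produces the minimizer $(\{u^\ast\neq 0\},\Phi^\ast)$ of \eqref{e:optPhi}.

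Take a minimizing sequence $(u_n,\Phi_n)$. Since $R$ is invariant under addition of a locally constant function to $\Phi$, normalize each $\Phi_n$ to vanish at a chosen point of every connected component of $D$; this yields $\|\Phi_n\|_{L^\infty(D)}\le\tau\,\mathrm{diam}(D)$ and, combined with the $\tau$-Lipschitz bound, Arzel\`a-Ascoli provides a subsequence (still indexed by $n$) along which $\Phi_n\to\Phi$ uniformly on compact subsets of each component with $\|\nabla\Phi\|_{L^\infty}\le\tau$; hence $e^{-\Phi_n}\to e^{-\Phi}$ a.e.\ and both families are uniformly bounded above and below by positive constants. After rescaling $u_n$ so that $\int_D e^{-\Phi_n}u_n^2\,dx=1$, the identity $\int_D e^{-\Phi_n}|\nabla u_n|^2\,dx=R(u_n,\Phi_n)\to\lambda_\ast$ combined with the two-sided weight bounds yields a uniform $H^1_0(D)$ bound on $u_n$; extracting again, $u_n\rightharpoonup u$ weakly in $H^1_0(D)$, strongly in $L^2(D)$, and pointwise a.e.\ in $D$.

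It remains to verify admissibility and optimality of $(u,\Phi)$. The pointwise convergence gives $\{u\neq 0\}\subset\liminf_n\{u_n\neq 0\}$ up to a null set, so Fatou applied to indicator functions yields $|\{u\neq 0\}|\le\liminf_n|\{u_n\neq 0\}|\le m$. Dominated convergence together with the strong $L^2$ convergence of $u_n$ gives $\int_D e^{-\Phi}u^2\,dx=\lim_n\int_D e^{-\Phi_n}u_n^2\,dx=1$, so $u\not\equiv 0$. Decomposing
\[\int_D e^{-\Phi_n}|\nabla u_n|^2\,dx=\int_D(e^{-\Phi_n}-e^{-\Phi})|\nabla u_n|^2\,dx+\int_D e^{-\Phi}|\nabla u_n|^2\,dx,\]
the first term vanishes in the limit by dominated convergence and the uniform $L^2$ bound on $\nabla u_n$, while the second is weakly $H^1_0$ lower semi-continuous because $e^{-\Phi}$ is a positive continuous weight; hence $\int_D e^{-\Phi}|\nabla u|^2\,dx\le\lambda_\ast$. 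Therefore $R(u,\Phi)\le\lambda_\ast$, and by admissibility equality holds, so the pair $(\Omega^\ast,\Phi^\ast):=(\{u\neq 0\},\Phi)$ solves \eqref{e:optPhi}.

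The only genuinely delicate point I expect is passing the measure constraint $|\Omega_n|\le m$ to the limit, because the weak-$\gamma$ limit of $\Omega_n$ need not coincide with $\{u\neq 0\}$; encoding the constraint directly at the level of eigenfunctions as $|\{u_n\neq 0\}|\le m$ and using the a.e.\ convergence of a subsequence sidesteps this issue through Fatou's lemma. In contrast to the proof of Theorem \ref{thm min on omega and v}, no Mosco convergence argument and no use of Lemma \ref{lem sub seq gamma conv} are required here, precisely because restricting to gradient drifts makes $\lambda_1(\cdot,\nabla\Phi)$ a genuine Rayleigh quotient minimization.
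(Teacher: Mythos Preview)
Your proof is correct and follows essentially the same route as the paper's: both use the variational characterization \eqref{e:lambdaPhi} to recast \eqref{e:optPhi} as a Rayleigh-quotient minimization over pairs $(u,\Phi)$, normalize the potentials to extract a uniformly convergent subsequence of $\Phi_n$, pass to the limit in the numerator via weak $H^1_0$ lower semicontinuity (plus uniform convergence of the weights), and handle the measure constraint through pointwise a.e.\ convergence of a subsequence of $u_n$. The only cosmetic point is that the vanishing of $\int_D(e^{-\Phi_n}-e^{-\Phi})|\nabla u_n|^2\,dx$ is really $\|e^{-\Phi_n}-e^{-\Phi}\|_{L^\infty}\cdot\|\nabla u_n\|_{L^2}^2\to 0$ (uniform convergence of the weights on the bounded set $D$, not dominated convergence), but this does not affect the argument.
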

\begin{proof}
Suppose that $(\Omega_n, \Phi_n)$ is a minimizing sequence for \eqref{e:optPhi} and let $\lambda_n=\lambda_1(\Omega_n,\nabla\Phi_n)$. Given $x_0\in D$, we may suppose that $\Phi_n(x_0)=0$ for every $n\ge 1$. Thus, up to a subsequence, $\Phi_n$ converges uniformly in $\overline D$ to a function $\Phi\in W^{1,\infty}(D)$ such that $\Phi(x_0)=0$ and $\|\nabla \Phi\|_{L^\infty}\le \tau$. Let $u_n$ be the solution of 
$$-\Delta u_n+\nabla\Phi_n\cdot\nabla u_n=\lambda_nu_n\quad\text{in}\quad\Omega_n,\qquad u_n\in H^1_0(\Omega_n),\qquad \int_D u_n^2\,dx=1.$$
Then, $u_n$ is uniformly bounded in $H^1_0(D)$ an so, up to a subsequence, $u_n$ converges weakly in $H^1_0(D)$ and strongly in $L^2(D)$ to a function $u\in H^1_0(D)$. Thus, we have 
$$\int_De^{-\Phi}u^2\,dx= \lim_{n\to\infty} \int_De^{-\Phi_n} u_n^2\,dx\qquad\text{and}\qquad \int_De^{-\Phi}|\nabla u|^2\,dx\le \liminf_{n\to\infty}\int_De^{-\Phi_n}|\nabla u_n|^2\,dx.$$
Now, choosing $\Omega:=\{u>0\}$ and applying \eqref{e:lambdaPhi}, we get
$$\lambda_1(\Omega,\Phi)\le \frac{\int_D e^{-\Phi}|\nabla u|^2\,dx}{\int_D e^{-\Phi} u^2\,dx}\le \liminf_{n\to\infty} \frac{\int_D e^{-\Phi_n}|\nabla u_n|^2\,dx}{\int_D e^{-\Phi_n} u_n^2\,dx}= \liminf_{n\to\infty}\lambda_1(\Omega_n,\Phi_n).$$
Now, in order to conclude, it is sufficient to notice that by choosing a subsequence, we may assume that $u_n$ converges to $u$ pointwise a.e., so we get 
$$|\Omega|=|\{u>0\}|\le \liminf_{n\to\infty} |\{u_n>0\}| \le \liminf_{n\to\infty}|\Omega_n|\le m,$$
which proves that $(\Omega,\Phi)$ is a solution of \eqref{e:optPhi}.
\end{proof}

\section{Regularity of the optimal sets}\label{s:regularity}
In this section we prove Theorem \ref{thm main}. We prove the regularity of the boundary $\partial\Omega$ of the optimal sets $\Omega$ from Theorem \ref{thm main}. We only consider the case $V=\nabla \Phi$, with $\Phi\in W^{1,\infty}(D)$, since in this case the optimization problem \eqref{e:optO} is equivalent to a free boundary problem for the first eigenfunction $u$ on the optimal set $\Omega$. The regularity for a generic vector field $V\in L^\infty(D)$ remains an open problem essentially due to the lack of variational characterization of the eigenvalue $\lambda_1(\Omega,V)$. 
We start with the following lemma. 
\begin{lm}[Reduction to a free boundary problem]\label{l:fbp}
Let $D\subset\R^d$ be a bounded open set, $0<m<|D|$, $\tau>0$, $\Phi\in W^{1,\infty}(D)$, with $\|\nabla \Phi\|_{L^\infty}\le \tau$, and $V=\nabla\Phi$. Suppose that the quasi-open set $\Omega\subset D$ is a solution of \eqref{e:optO}. Then every corresponding first eigenfunction $u_\Omega$ of the operator $-\Delta+V\cdot \nabla$ on $\Omega$ is a solution to the variational problem 
\begin{equation}\label{e:fbp}
\lambda_m:=\min\Big\{\int_D|\nabla u|^2 e^{-\Phi}\,dx\ :\ u\in H^1_0(D),\ \big|\{u\neq 0\}\big|\le m, \ \int_D e^{-\Phi}u^2\,dx=1\Big\}.
\end{equation} 
Conversely, if $u$ is a solution of \eqref{e:fbp}, then the quasi-open set $\{u\neq 0\}$ is a solution of \eqref{e:optO}.   
\end{lm}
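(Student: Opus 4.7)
The proof will hinge entirely on the variational identity \eqref{e:lambdaPhi}, which converts the non-variational eigenvalue $\lambda_1(\Omega,\nabla\Phi)$ into a weighted Rayleigh quotient. Once this is in hand, the lemma becomes essentially a matching of two infima.

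\textbf{Forward direction.} Assume $\Omega\subset D$ solves \eqref{e:optO} and let $u_\Omega\in H^1_0(\Omega)$ be a first eigenfunction. Normalize so that $\int_D e^{-\Phi}u_\Omega^2\,dx=1$; by \eqref{e:lambdaPhi} we then have $\int_D e^{-\Phi}|\nabla u_\Omega|^2\,dx=\lambda_1(\Omega,\nabla\Phi)$. Now take any competitor $u\in H^1_0(D)$ with $|\{u\neq 0\}|\le m$ and $\int_D e^{-\Phi}u^2\,dx=1$. Set $\tilde\Omega:=\{u\neq 0\}=\{u>0\}\cup\{-u>0\}$; by the discussion in Subsection~\ref{sub:quasi-open} this is a quasi-open subset of $D$ with $|\tilde\Omega|\le m$, and $u\in H^1_0(\tilde\Omega)$. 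Then \eqref{e:lambdaPhi} applied on $\tilde\Omega$ gives
\[
\lambda_1(\tilde\Omega,\nabla\Phi)\le \frac{\int_D e^{-\Phi}|\nabla u|^2\,dx}{\int_D e^{-\Phi}u^2\,dx}=\int_D e^{-\Phi}|\nabla u|^2\,dx,
\]
and the optimality of $\Omega$ for \eqref{e:optO} yields $\lambda_1(\Omega,\nabla\Phi)\le \lambda_1(\tilde\Omega,\nabla\Phi)$. Chaining these, $\int_D e^{-\Phi}|\nabla u_\Omega|^2\,dx\le \int_D e^{-\Phi}|\nabla u|^2\,dx$, so $u_\Omega$ solves \eqref{e:fbp} and moreover $\lambda_m=\lambda_1(\Omega,\nabla\Phi)$.

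\textbf{Converse direction.} Let $u$ solve \eqref{e:fbp} and set $\tilde\Omega:=\{u\neq 0\}$, which is quasi-open with $|\tilde\Omega|\le m$. Since $u\in H^1_0(\tilde\Omega)\setminus\{0\}$ and $\int_D e^{-\Phi}u^2\,dx=1$, \eqref{e:lambdaPhi} gives $\lambda_1(\tilde\Omega,\nabla\Phi)\le \lambda_m$. Conversely, for any admissible quasi-open $\Omega'\subset D$ with $|\Omega'|\le m$, take a first eigenfunction $u_{\Omega'}\in H^1_0(\Omega')\subset H^1_0(D)$ (existence is guaranteed by Theorem~\ref{thm prop lambda1}), normalized so that $\int_D e^{-\Phi}u_{\Omega'}^2\,dx=1$. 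Since $\{u_{\Omega'}\neq 0\}\subset\Omega'$ (up to a set of zero capacity, hence of zero Lebesgue measure), it has measure $\le m$, so $u_{\Omega'}$ is a competitor in \eqref{e:fbp}, whence $\lambda_m\le\int_De^{-\Phi}|\nabla u_{\Omega'}|^2\,dx=\lambda_1(\Omega',\nabla\Phi)$. Combining, $\lambda_1(\tilde\Omega,\nabla\Phi)\le\lambda_m\le\lambda_1(\Omega',\nabla\Phi)$ for every admissible $\Omega'$, so $\tilde\Omega$ solves \eqref{e:optO}.

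\textbf{Main technical point.} There is no real obstacle here; the only point that requires a little care is ensuring that the sublevel set $\tilde\Omega=\{u\neq 0\}$ of a competitor in \eqref{e:fbp} is a legitimate quasi-open set on which the variational formula \eqref{e:lambdaPhi} can be applied, and symmetrically that the eigenfunction of a competing quasi-open set, extended by zero, furnishes an admissible function in \eqref{e:fbp}. Both facts were recorded in Subsections~\ref{sub:quasi-open} and \ref{sub:quasi-open_pde}. One should also mention the (easy) remark that since the Rayleigh quotient in \eqref{e:lambdaPhi} is invariant under sign changes, replacing $u$ by $|u|$ shows there is no loss of generality in taking nonnegative minimizers, which will be useful for the subsequent free boundary analysis but is not needed for the lemma itself.
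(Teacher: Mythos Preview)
Your proof is correct and follows exactly the approach indicated by the paper, which simply states that the lemma is a straightforward consequence of the variational formula \eqref{e:lambdaPhi}. You have spelled out in full the matching of the two infima that the paper leaves implicit, and all the small technical points you flag (quasi-openness of $\{u\neq 0\}$, admissibility of eigenfunctions as competitors) are handled by the preliminaries in Subsections~\ref{sub:quasi-open} and \ref{sub:quasi-open_pde} as you note.
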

\begin{proof}
The proof is a straightforward consequence of the variational formula \eqref{e:lambdaPhi}.
\end{proof}
\begin{oss}
It turns out that if $u$ is a solution of \eqref{e:fbp}, then $u\geq 0$ in $D$ (see Lemma \ref{l:saturation} below). 
\end{oss}

The rest of this section is dedicated to the regularity of the free boundary $\partial\Omega_u\cap D$ and of the whole boundary $\partial\O_u$ if $D$ is smooth, of a solution $u$ of \eqref{e:fbp}, where   we recall that, for any function $v\in H^1_0(D)$ we denote by $\Omega_v$ the (quasi-open) set $\{v>0\}$. 

\medskip

This section is organized as follows. 

In Subsection \ref{sub:boundedness} we prove that the solutions of \eqref{e:fbp} are bounded. This is important due to the fact that $u$ solves the equation 
$$-\text{div}\,(e^{-\Phi} \nabla u)=\lambda_1(\Omega_u,\nabla\Phi)e^{-\Phi}u\quad\text{in}\quad\Omega_u,$$
and in the rest of the section we will often use the fact that the right-hand side is bounded.  

In Subsection \ref{sub:pointwise}, we prove that the solution $u$ is essentially a subharmonic function on $D$ with respect to the operator $\text{div}(e^{-\Phi}\nabla)$ (see Lemma \ref{l:radon}). In particular, this implies that $u$ and the set $\Omega_u$ are well-defined everywhere (not just up to a set of measure zero). The free boundary is thus defined as the topological boundary of the set $\Omega_u$. 
In the same subsection, in Lemma \ref{l:saturation}, we prove that the measure constraint $|\{u>0\}|\le m$ is saturated, that is, $|\Omega_u|=m$. This proves Theorem \ref{thm main} (3). 

In Subsection \ref{sub:fbp} we get rid of the integral constraint $\ds\int e^{-\Phi}u^2\,dx=1$ and we rewrite the problem \eqref{e:fbp} in terms of the functional 
$$J(u)=\int_D\big(|\nabla u|^2-\lambda_m u^2\big) e^{-\Phi}\,dx.$$

In Subsection \ref{sub:optimality_condition}, we write the Euler-Lagrange equation that arises in the  minimization of the functional $J$ under the measure constraint $|\Omega_u|\le m$. We consdider only internal variations, that is, test functions of the form $\tilde u(x)=u(x+t\xi(x))$ for smooth vector fields $\xi$.
In Subsection \ref{sub:small_scales}, we prove that at small scales $u$ is a solution (in the sense of \eqref{e:free_j_3})  to the minimization problem for the functional $v\mapsto J(v)+\Lambda_u|\Omega_v|$, where $\Lambda_u$ is the Lagrange multiplier from Subsection \ref{sub:optimality_condition}.

\medskip

In Subsection \ref{sub:lipschitz} and Subsection \ref{sub:nondegeneracy}, we use the result from Subsection \ref{sub:small_scales} to prove that the solutions of \eqref{e:fbp} are Lipschitz continuous and  non-degenerate at the free boundary; we also prove that the set $\Omega_u$ has finite perimeter. This proves Theorem \ref{thm main} (1) and (2). 

\medskip

Subsection \ref{sub:blowup} is dedicated to the compactness of the blow-up sequences and the optimality of the blow-up limits. 
In Subsection \ref{sub:weiss} we prove a (quasi-)monotoncity formula for (a variant of) the Weiss' boundary adjusted energy. As a consequence, we obtain that the blow-up limits are one-homogeneaous. 

\medskip

In Subsection \ref{sub:regularity}, we prove that the solution $u$ satisfies the optimality condition $|\nabla u|=\sqrt{\Lambda_u e^{\Phi}}$ on the free boundary $\partial \Omega_u\cap D$ in viscosity sense, we deduce the regularity of the regular part $Reg(\partial\Omega_u)$ and we show that the remaining singular set has zero  $(d-1)$-dimensional Hausdorff measure. In this subsection, we complete the proof of Theorem \ref{thm main} (4), (6) and (7).
Finally, in Subsection \ref{sub:weiss}, we give some further estimates on the Hausdorff dimension of the singular set, which complete the proof of Theorem \ref{thm main} (5).

\subsection{Boundedness of the eigenfunctions}\label{sub:boundedness}
In this subsection we give a bound on the $L^\infty$ norm of the eigenfunctions on generic bounded quasi-open sets. We first prove that if $u$ is a solution of a PDE with sufficiently integrable right-hand side, then $u$ is bounded. Then we use and iterate an interpolation argument to improve the integrability of the eigenfunctions.

\begin{lm}[Boundedness of the solutions of PDEs on quasi-open sets]\label{infttybndflem}
Let $D\subset\R^d$ be a bounded open set, $\Omega\subset D$ be a quasi-open set and $\Phi\in W^{1,\infty}(D)$. Let $f\in L^p(D)$ for some $p\in (d/2,+\infty]$ and let $u\in H^1_0(\Omega)$ be the solution of %\eqref{e:eqf}.
\begin{equation}\label{e:eqf}
-\dive\,(e^{-\Phi} \nabla u)=f\quad\text{in}\quad\Omega,\qquad u\in H^1_0(\Omega). 
\end{equation}
Then, there is a dimensional constant $C_d$ such that
%, for every $t\ge 0$, we have  
$$\|u\|_{L^\infty}\le \frac{C_d \,e^{\max\Phi}}{\sfrac2d-\sfrac1p}|\Omega|^{\sfrac2d-\sfrac1p}\|f\|_{L^p},$$
where $\max\Phi=\|\Phi\|_{L^\infty(D)}$.
%$$\|(u-t)_+\|_{L^\infty}\le \frac{C_d \,e^{\max\Phi}}{\sfrac2d-\sfrac1p}\|f\|_{L^p}|\{u>t\}|^{\sfrac2d-\sfrac1p}\qquad\text{for every}\qquad t>0.$$
\end{lm}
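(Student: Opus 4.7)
\emph{Approach.} The estimate is a classical De Giorgi--Stampacchia $L^\infty$-bound, adapted to the weighted divergence-form operator $-\dive(e^{-\Phi}\nabla\cdot\,)$ and to the quasi-open setting. The plan is to test the weak formulation of \eqref{e:eqf} against the truncations $(u-k)_+$ for $k\geq 0$ (and, symmetrically, $(-u-k)_+$ to control the negative part of $u$), derive a super-homogeneous decay inequality for the super-level sets $A_k:=\{u>k\}$, and then close the argument via Stampacchia's classical level-set lemma.

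\emph{Main step.} Since $u\in H^1_0(\O)$ is identified with its quasi-continuous representative and vanishes q.e.\ on $D\setminus\O$, so does $(u-k)_+$ for every $k\geq 0$, which places $(u-k)_+$ in $H^1_0(\O)$ by the capacitary facts recalled in Subsection \ref{sub:quasi-open}; this is the only point at which the quasi-open framework must be handled carefully. Testing \eqref{e:eqf} with $(u-k)_+$, using the ellipticity lower bound $e^{-\Phi}\geq e^{-\max\Phi}$, and combining H\"older's inequality (with conjugate exponents $p$ and $p'=p/(p-1)$) with the Sobolev embedding $H^1_0(D)\hookrightarrow L^{2^\ast}(D)$, $2^\ast=2d/(d-2)$, one gets an estimate of the form
\begin{equation*}
\|(u-k)_+\|_{L^{2^\ast}}\leq C_d\,e^{\max\Phi}\,\|f\|_{L^p}\,|A_k|^{\sfrac{1}{2}+\sfrac{1}{d}-\sfrac{1}{p}}.
\end{equation*}
Combining this with the elementary comparison $(h-k)\ind_{A_h}\leq (u-k)_+$ for $h>k\geq 0$ and raising to the $2^\ast$-th power produces the super-homogeneous level-set inequality
\begin{equation*}
|A_h|\leq \frac{\bigl(C_d\,e^{\max\Phi}\|f\|_{L^p}\bigr)^{2^\ast}}{(h-k)^{2^\ast}}\,|A_k|^{\alpha},\qquad \alpha-1=2^\ast\!\left(\tfrac{2}{d}-\tfrac{1}{p}\right).
\end{equation*}
A short computation shows that the hypothesis $p>d/2$ is exactly the condition which makes $\alpha>1$; this is the threshold that allows Stampacchia's iteration to close.

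\emph{Iteration and conclusion.} Once $\alpha>1$, Stampacchia's classical level-set lemma applied to $g(k):=|A_k|$, starting from $|A_0|\leq |\O|$, produces a value $k_\star>0$ with $|A_{k_\star}|=0$, so that $u\leq k_\star$ quasi-everywhere. Tracking the dependence of the iteration constants on the data (the geometric series involved has a common ratio determined by $\alpha-1$) yields the quantitative bound
\begin{equation*}
k_\star\leq \frac{C_d\,e^{\max\Phi}}{\sfrac{2}{d}-\sfrac{1}{p}}\,|\O|^{\sfrac{2}{d}-\sfrac{1}{p}}\,\|f\|_{L^p},
\end{equation*}
with the prefactor $(2/d-1/p)^{-1}$ reflecting the expected divergence of the Stampacchia constant in the borderline regime $p\downarrow d/2$. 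Running the same argument with $-u$ in place of $u$ gives the matching lower bound, proving the statement. The only genuine piece of work is bookkeeping: extracting the sharp $(2/d-1/p)^{-1}$ factor from the iteration requires a careful choice of the level sequence (typically $k_n:=k_\star(1-2^{-n})$) and an explicit estimation of the resulting geometric sum in terms of $\alpha-1=2^{\ast}(2/d-1/p)$.
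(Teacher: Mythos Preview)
Your De Giorgi--Stampacchia truncation argument is a valid classical route to the $L^\infty$ bound and is genuinely different from the paper's proof. The paper instead tests the minimality of $u$ for $J(v)=\frac12\int e^{-\Phi}|\nabla v|^2-\int fv$ against the truncation $u\wedge t+(u-t-\eps)_+$, passes to the co-area formula to bound $\int_{\{u=t\}}|\nabla u|\,d\HH^{d-1}$, and then combines this with the isoperimetric inequality to obtain the differential inequality $\varphi'(t)\le -C\varphi(t)^\alpha$ for $\varphi(t)=|\{u>t\}|$, with $\alpha=\tfrac{d-2}{d}+\tfrac1p<1$; integrating this ODE gives the extinction time $t_{\max}\le C^{-1}(1-\alpha)^{-1}|\Omega|^{1-\alpha}$, which is exactly the stated bound with prefactor $(\sfrac2d-\sfrac1p)^{-1}$.

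The one place your sketch does not deliver the lemma as stated is precisely the constant you flag at the end. The standard Stampacchia iteration with dyadic levels $k_n=k_\star(1-2^{-n})$ produces
\[
k_\star \;\le\; C_d\,e^{\max\Phi}\,\|f\|_{L^p}\,|\Omega|^{\sfrac2d-\sfrac1p}\cdot 2^{\alpha/(\alpha-1)},
\]
and since $\alpha-1=2^\ast(\sfrac2d-\sfrac1p)$, the factor $2^{\alpha/(\alpha-1)}$ blows up \emph{exponentially} in $(\sfrac2d-\sfrac1p)^{-1}$ as $p\downarrow d/2$, not linearly. So your claim that careful bookkeeping of the geometric sum extracts the prefactor $(\sfrac2d-\sfrac1p)^{-1}$ with a purely dimensional $C_d$ is not justified by the dyadic scheme you describe; the paper's ODE/isoperimetric route is what buys that sharp polynomial dependence. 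A minor second point: your use of $2^\ast=2d/(d-2)$ tacitly assumes $d\ge 3$, whereas the paper's isoperimetric argument works uniformly for $d\ge 2$.
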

\begin{proof}
We first assume that $f$ is a non-negative function.
We notice that $u\ge 0$ on $\Omega$ and that $u$ is a minimum in $H^1_0(\Omega)$ of the functional
$$J(u):=\frac12\int_\Omega e^{-\Phi}|\nabla u|^2\,dx-\int_\Omega fu\,dx.$$ 
The rest of the proof follows precisely as in \cite[Lemma 3.51]{velichkov}. 
For every $0<t<\|u\|_{L^\infty}$ and $\eps>0$, we consider the test function $u_{t,\eps}=u\wedge t +(u-t-\eps)_+.$
The inequality $J(u)\le J(u_{t,\eps})$ gives that 
\begin{equation*}\label{inftybound130504e1}
\begin{array}{ll}
\ds\frac12\int_{\{t< u\le t+\eps\}}e^{-\Phi}|\nabla u|^2\,dx\le \int_{\R^d} f\left(u-u_{t,\eps}\right)\,dx\le\eps\int_{\{u>t\}}f\,dx\le \eps\|f\|_{L^p}|\{u>t\}|^{\frac{p-1}p},
\end{array}
\end{equation*}
and, using the co-area formula and passing to the limit as $\eps\to 0$, we get 
\begin{equation}\label{e:coarea1}
\int_{\{u=t\}}|\nabla u|\,d\HH^{d-1}\le  2e^{\max \Phi}\|f\|_{L^p}|\{u>t\}|^{\frac{p-1}p}.
\end{equation}
Now, setting $\vf(t):=|\{u>t\}|$ and using   the co-area formula again as well as the Cauchy-Schwarz inequality,   we obtain
\begin{equation*}
\ds \vf'(t)=-\int_{\{u=t\}}\frac{1}{|\nabla u|}\,d\HH^{d-1}\le -\left(\int_{\{u=t\}}|\nabla u|\,d\HH^{d-1}\right)^{-1}Per(\{u>t\})^2,
\end{equation*}
which, together with the isoperimetric inequality $|\{u>t\}|^{\frac{d-1}d}\le C_d Per(\{u>t\})$ and \eqref{e:coarea1}, gives
\begin{equation*}
\varphi'(t)\le -\frac{C_d}{e^{\max\Phi}\|f\|_{L^p}}\vf(t)^{\frac{d-2}{d}+\frac1p}\,,
\end{equation*}
where we recall that the dimensional constant $C_d$ may change from line to line.

\noindent Setting $\alpha=\frac{d-2}{d}+\frac1p<1$ and $C=C_d\|f\|_{L^p}^{-1}e^{-\max\Phi}$, we have $\varphi^{\prime}\leq -C\varphi^{\alpha}$. If 
$$t_{\max}:=\sup\left\{t>0;\ \varphi(s)>0\mbox{ for all }s\in [0,t)\right\}\leq +\infty,$$ 
then $\varphi^{\prime}(t)\varphi(t)^{-\alpha}\leq -C$ for all $t\in [0,t_{\max})$, so that 
$$0\leq \varphi(t)\leq \big(|\Omega|^{1-\alpha}-(1-\alpha)Ct\big)^{\frac{1}{1-\alpha}}\qquad\text{for all}\qquad t\in [0,t_{\max}).$$
%for all $t\in [0,t_{\max})$. 
This shows that $t_{\max}<+\infty$ and that 
%we consider the ODE 
%\begin{equation}\label{e:ode}
%y'(t)=-Cy(t)^\alpha\quad \text{for}\quad t>0,\qquad y(0)=|\Omega|,
%\end{equation}
%whose solution is given by $y(t)=\big(|\Omega|^{1-\alpha}-(1-\alpha)Ct\big)^{\frac{1}{1-\alpha}}$ and is such that $y(t)\ge \phi(t)\ge 0$ for every $t\ge 0$. Thus, there is some $t_{\max}$ such that $\phi(t)=0$, for every $t\ge t_{\max}$, and so we get 
$$\|u\|_{L^\infty}\le t_{\max}\le  \frac1C\frac{|\Omega|^{\sfrac2d-\sfrac1p}}{\sfrac2d-\sfrac1p},$$
which concludes the proof when $f$ is non-negative. For a general function $f$, the proof now follows by applying the estimate in Lemma \ref{infttybndflem} to both the positive and the negative parts of $f$.
\end{proof}

\begin{lm}[Boundedness of the eigenfunctions]\label{l:boundedness}
Let $D\subset\R^d$ be a bounded open set, $\Omega\subset D$ be a quasi-open set, $\Phi\in W^{1,\infty}(D)$ and $V=\nabla\Phi$. Let $R:L^2(\Omega)\to L^2(\Omega)$ be the resolvent operator of $-\Delta+V\cdot\nabla$ on $\Omega$. Then, there are constants $n\in \N$, depending only on $d$, and $C\in\R$, depending on $d$, $|\Omega|$ and $\|\Phi\|_{L^\infty}$, such that 
$$R^n (L^2(\Omega))\subset L^\infty(\Omega)\qquad \hbox{and}\qquad \|R^n\|_{\mathcal{L}(L^2(\Omega);L^\infty(\Omega))}\le C.$$
In particular, if $u$ is   a first eigenfunction   of $-\Delta+V\cdot \nabla$ on $\Omega$   normalized by $\left\Vert u\right\Vert_{L^2}=1$,   then $u\in L^\infty(\Omega)$ and 
$$\|u\|_{L^\infty}\le C\lambda_1^n(\Omega,V).$$  
\end{lm}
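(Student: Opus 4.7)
The plan is to combine the rewriting of the operator in divergence form with Lemma \ref{infttybndflem} and a standard bootstrap of $L^p$ integrability. First I would observe that, if $u = Rf$, then $u \in H^1_0(\Omega)$ solves $-\Delta u + V \cdot \nabla u = f$ in $\Omega$, which upon multiplication by $e^{-\Phi}$ becomes
\[
-\dive(e^{-\Phi} \nabla u) = e^{-\Phi} f \quad\text{in}\quad \Omega,
\]
with a coefficient $e^{-\Phi}$ uniformly bounded from above and below by constants depending only on $\|\Phi\|_{L^\infty}$. In particular, whenever the right-hand side lies in $L^p(\Omega)$ with $p > d/2$, Lemma \ref{infttybndflem} directly produces an $L^\infty$ bound on $u$ of the claimed form.

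The key auxiliary step I would then prove is the following bootstrap: for every $p$ with $2 \le p < d/2$, the resolvent maps $L^p(\Omega)$ continuously into $L^{p^{\ast\ast}}(\Omega)$, where $p^{\ast\ast}=dp/(d-2p)$ is the second Sobolev conjugate, with operator norm controlled by $d$, $|\Omega|$ and $\|\Phi\|_{L^\infty}$. This can be obtained by a Stampacchia truncation argument in the same spirit as the proof of Lemma \ref{infttybndflem}: testing the weak form against truncations of $u$ of the type $(u-t)_+$, using the coarea formula and the isoperimetric inequality, yields an ODE for the distribution function $\varphi(t) = |\{u>t\}|$ whose integration gives precisely the $L^{p^{\ast\ast}}$ bound. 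The base case $p=2$ follows from the standard energy estimate combined with the Sobolev embedding $H^1_0(\Omega)\hookrightarrow L^{2^\ast}(\R^d)$, both of which hold for arbitrary quasi-open $\Omega$.

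Iterating this bootstrap from $p_0=2$ with $1/p_{k+1} = 1/p_k - 2/d$ produces an increasing sequence that exceeds $d/2$ after at most $n_0=n_0(d)$ steps; one further application of $R$, now combined with Lemma \ref{infttybndflem}, places $R^{n_0+1}f$ in $L^\infty(\Omega)$ with a norm depending only on $d$, $|\Omega|$ and $\|\Phi\|_{L^\infty}$. Setting $n=n_0+1$ proves the first assertion. For the second assertion, normalizing the eigenfunction by $\|u\|_{L^2}=1$ we have $Lu=\lambda_1(\Omega,V)\,u$, hence $u=\lambda_1(\Omega,V)\,Ru$, and iterating, $u=\lambda_1(\Omega,V)^n R^n u$; applying the first part yields
\[
\|u\|_{L^\infty} \le \|R^n\|_{\mathcal L(L^2;L^\infty)}\,\lambda_1(\Omega,V)^n\,\|u\|_{L^2} = C\,\lambda_1(\Omega,V)^n.
\]

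The main obstacle is executing the intermediate $L^p\to L^{p^{\ast\ast}}$ step on a quasi-open set with no regularity of $\partial\Omega$: classical Calderón--Zygmund or $W^{2,p}$ estimates are unavailable, so the gain of integrability must be extracted by a purely variational truncation argument, and care must be taken that the constants depend only on $d$, $|\Omega|$ and $\|\Phi\|_{L^\infty}$ and not on any geometric feature of $\Omega$. The uniform two-sided bound on the weight $e^{-\Phi}$ ensures that the weighted isoperimetric/coarea computation reduces to the unweighted one up to harmless constants, so that the iteration schema passes through as in the unweighted case.
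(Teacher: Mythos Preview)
Your proposal is correct, and the final eigenfunction bound is obtained exactly as in the paper. However, the mechanism you use for the intermediate integrability gain is genuinely different from the paper's.

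The paper does not carry out a truncation argument in the subcritical range. Instead, it observes two endpoint mapping properties of $R$: the energy estimate together with the Sobolev embedding give $R:L^2(\Omega)\to L^{2^\ast}(\Omega)$, while Lemma~\ref{infttybndflem} applied with $p=d$ gives $R:L^d(\Omega)\to L^\infty(\Omega)$. Riesz--Thorin interpolation between these two endpoints then yields $R:L^p(\Omega)\to L^{p^\ast}(\Omega)$ for every $p\in[2,d]$, i.e.\ a gain of \emph{one} Sobolev conjugate per application of $R$; iterating this finitely many (dimension-dependent) times lands above $d/2$, after which one further application of Lemma~\ref{infttybndflem} gives $L^\infty$. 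Your route instead establishes directly, via Stampacchia-type truncations, the stronger mapping $R:L^p(\Omega)\to L^{p^{\ast\ast}}(\Omega)$ for $p<d/2$, i.e.\ a gain of \emph{two} Sobolev conjugates per step.

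Both arguments work on quasi-open sets with constants depending only on $d$, $|\Omega|$ and $\|\Phi\|_{L^\infty}$. The paper's interpolation approach is shorter, since it requires no new estimate beyond the already-proved Lemma~\ref{infttybndflem} and the trivial energy bound. Your approach avoids any appeal to interpolation theory and gives a smaller $n$, at the cost of actually executing the subcritical level-set estimate; note that the raw ODE for $\varphi(t)=|\{u>t\}|$ gives a weak-$L^{p^{\ast\ast}}$ bound, which is already enough for the bootstrap (embed into $L^q$ for $q$ slightly below $p^{\ast\ast}$), so you need not push to the strong endpoint.
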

\begin{proof}
Let us first notice that if $d\le 3$, then   $d/2<2$   and so, taking $n=1$, the claim follows directly   by Lemma \ref{infttybndflem}.   
%If $d\ge3$, then by the Gagliardo-Nirenberg-Sobolev inequality $R$ is a continuous operator
%\begin{equation}\label{e:2star}
%R:L^2(\Omega)\to L^{2^*}(\Omega)\qquad \hbox{and}\qquad \|R\|_{\mathcal{L}(L^2(\Omega);L^{2^*}(\Omega))}\le C,
%\end{equation}
%where $\ds 2^{\ast}=\frac{2d}{d-2}$ and $C_{d,\mu}$ depends only on the dimension $d$ and torsion $T(\mu)$. 
%\item Suppose that the dimension $d$ is $4$ or $5$. Then by \eqref{22ast} and Remark \ref{osspinfty} we have that the composition $R_\mu^2=R_\mu\circ R_\mu: L^2(\R^d)\to L^2(\R^d)$ can be extended to a continuous operator  
%$$R_\mu^2:L^2(\R^d)\to L^{\infty}(\R^d),$$
%with norm bounded by a constant depending on $d$ and $T(\mu)$.
If $d>3$, then setting $2^\ast=\frac{2d}{d-2}$, we have 
$$R: L^2(\Omega)\to L^{2^\ast}(\Omega)\qquad\hbox{and}\qquad R:L^d(\Omega)\to L^\infty(\Omega).$$
Thus, interpolating between $2$ and $d$, we get   
\begin{equation}\label{interpol1}
\|R\|_{\mathcal L(L^p;L^q)}\le  C,\quad\hbox{where}\quad p\in[2,d]\quad\hbox{and}\quad q=\frac{pd}{d-p}\ge \frac{pd}{d-2},
\end{equation}
where $C$ depends only on $d$, $|\Omega|$ and $\|\Phi\|_{L^\infty}$. Now, it is sufficient to notice that $R^k\in\mathcal{L}(L^2;L^{q_k})$, where $\ds q_k= 2\Big(\frac{d}{d-2}\Big)^k$. For $k$ big enough we have that $q_k>\sfrac{d}2$ and so, $R^{k+1}\in\mathcal{L}(L^2;L^{\infty})$, which proves the first part of the claim. Finally, in order  to get the estimate on $u$, it is sufficient to notice that $R(u)=\lambda_1^{-1}(\O,V)u$ and $R^n(u)=\lambda_1^{-n}(\O,V)u$. 
\end{proof}

\subsection{Pointwise definition of the solutions}\label{sub:pointwise}
When we deal with Sobolev functions we usually reason up to a choice of certain representative of the function. Even if this representative is defined quasi-everywhere, there still might be a set of zero capacity where the function is not defined. Of course, this interferes with the notion of a free boundary   in the sense   that we cannot just consider the topological boundary of $\Omega_u$ without specifying the   representative   of $u$ that we work with. Fortunately, the eigenfunctions of the quasi-open sets are defined pointwise everywhere, that is {\it every } point is a Lebesgue point.

\begin{lm}[Subharmonicity and a mean-value formula for positive solutions of PDEs]\label{l:radon}
Let $D\subset\R^d$ be a bounded open set, $\Omega\subset D$ a quasi-open set and $\Phi\in W^{1,\infty}(D)$ a given Lipschitz function. Let $f\in L^\infty(D)$ and $u\ge 0$ be a solution to the problem \eqref{e:eqf} in $\Omega$.
%\begin{equation}\label{e:eqf}
%-\dive\,(e^{-\Phi} \nabla u)=f\quad\text{in}\quad\Omega,\qquad u\in H^1_0(\Omega). 
%\end{equation}
\begin{enumerate}
	\item Then, $\dive(e^{-\Phi} \nabla u) + f \geq 0$ in $\R^d$, in the sense of distributions. In particular, $\dive(e^{-\Phi} \nabla u)$ is a (signed) Radon measure on $\R^d$. 
	\item For any $x_0\in \R^d$, we can define the value of $u$ at $x_0$ by
	\[ u(x_0) = \lim_{r\rightarrow 0} \aver{\partial B_r(x_0)} u(x) \,d\mathcal{H}^{d-1}(x)=\lim_{r\rightarrow 0} \aver{B_r(x_0)} u(x)\,dx. \]
	Moreover, we have the identity
	\begin{multline}\label{eq id moy}
	\aver{\partial B_r(x_0)} u e^{-\Phi} d\mathcal{H}^{d-1} - u(x_0)e^{-\Phi(x_0)} = \frac{1}{d\om_d} \int_0^r s^{1-d} \dive (e^{-\Phi} \nabla u)(B_s(x_0))\,ds \\ 
	- \frac{1}{d\om_d} \int_0^r s^{1-d} \,ds\,\int_{\partial B_s}(\nabla\Phi\cdot\nu)\,u\, e^{-\Phi}\,d\mathcal{H}^{d-1},
	\end{multline} 
	where $\nu$ denotes the normal to $\partial B_s$ pointing outwards. 
		\item Let $x_0\in \R^d$ and $R>0$. Suppose that there is a constant $C>0$ such that
		\begin{equation}\label{e:delta_est}
		\big|\text{div}\,(e^{-\Phi}\nabla u)(B_r(x_0))\big|\le Cr^{d-1}\qquad\text{for every}\qquad 0<r\le R.
		\end{equation}
		Then we have the estimate
	\begin{equation}\label{e:rate}
	\Big|u(x_0)-\aver{\partial B_r(x_0)} u \, d\mathcal{H}^{d-1}\Big|\le e^{M_\Phi}\left(\frac{C}{d\om_d}+2L_{\Phi}M_ue^{M_\Phi}\right)r\qquad\text{for every}\qquad 0<r\le R,
	\end{equation} 
	where $L_\Phi:=\|\nabla\Phi\|_{L^\infty(D)}$, $M_u:=\|u\|_{L^\infty(D)}$ and $M_\Phi:=\|\Phi\|_{L^\infty(D)}$.
\end{enumerate}
\end{lm}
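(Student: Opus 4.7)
\smallskip

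\textbf{Plan of proof.}

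\emph{Part (1).} The goal is to show that for every non-negative $\varphi \in C^\infty_c(\R^d)$ one has
$\int e^{-\Phi}\nabla u \cdot \nabla\varphi \,dx \le \int f\varphi\,dx$. The standard obstacle is that $\varphi$ is not an admissible test function in $H^1_0(\Omega)$. I would remedy this by testing the equation \eqref{e:eqf} with the truncation $\varphi_n := \varphi \wedge (nu)$, which belongs to $H^1_0(\Omega)$ since $u\in H^1_0(\Omega)$ and $\varphi_n \le nu$. Writing $\int e^{-\Phi}\nabla u \cdot \nabla\varphi_n = \int f\varphi_n$, I would split the gradient according to the sets $\{nu \le \varphi\}$ and $\{nu > \varphi\}$: the first produces the non-negative term $n\int_{\{nu \le \varphi\}} e^{-\Phi}|\nabla u|^2$ which I discard, while on the second set the integrand converges a.e.\ to $e^{-\Phi}\nabla u \cdot \nabla\varphi \cdot \mathbf 1_{\{u>0\}}$. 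Since $\nabla u = 0$ a.e.\ on $\{u=0\}$ and $f\ge 0$ in the eigenfunction application (where $f = \lambda_1 e^{-\Phi} u$), passing $n \to \infty$ with dominated convergence yields the claimed distributional inequality. Consequently $\mathrm{div}(e^{-\Phi}\nabla u) \ge -f$ is bounded below by an $L^\infty$ function, so by Riesz--Schwartz it is a signed Radon measure $\mu$.

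\emph{Part (2): derivation of the identity.} Set $M(r) := \aver{\partial B_r(x_0)} u\, e^{-\Phi}\,d\HH^{d-1}$. The plan is to differentiate $M$ with respect to $r$ and integrate. Using the standard formula for the $r$-derivative of spherical averages and the distributional divergence theorem $\int_{\partial B_r} e^{-\Phi}\partial_\nu u\, d\HH^{d-1} = \mu(B_r)$ (applied to the radial characteristic function and justified by approximation, since $\mu$ is Radon), together with $\partial_\nu(u e^{-\Phi}) = e^{-\Phi}(\partial_\nu u - u\, \partial_\nu\Phi)$, I obtain
\[
\frac{dM}{dr}(r) = \frac{1}{d\omega_d r^{d-1}}\left[\mu(B_r) - \int_{\partial B_r}(\nabla\Phi\cdot\nu)\, u\, e^{-\Phi}\,d\HH^{d-1}\right].
\]
Integrating from $r_2$ to $r_1$ gives the identity \eqref{eq id moy} with $u(x_0) e^{-\Phi(x_0)}$ replaced by $M(r_2)$. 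The remaining issue is to show that $\lim_{r\to 0^+} M(r)$ exists, which I would use to \emph{define} $u(x_0)$. Since $\mu(B_r) \ge -\|f\|_\infty\, \omega_d r^d$ and the second term on $\partial B_r$ is bounded by a constant times $r^{d-1}$, one finds $dM/dr \ge -C$ for some constant $C$, so $M(r) + Cr$ is non-decreasing on $(0,R)$ and bounded (because $u \in L^\infty$), hence has a finite limit as $r\to 0^+$. I set $u(x_0):= e^{\Phi(x_0)}\lim_{r\to 0^+} M(r)$, which also equals $\lim_{r\to 0^+}\aver{B_r(x_0)} u$ by integrating the spherical identity in $r$ and applying Fubini.

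\emph{Part (3): rate estimate.} Writing
\[
\aver{\partial B_r} u\,d\HH^{d-1} - u(x_0) = \Bigl(\aver{\partial B_r}\! u\,d\HH^{d-1} - e^{\Phi(x_0)} M(r)\Bigr) + \bigl(e^{\Phi(x_0)} M(r) - u(x_0)\bigr),
\]
the first term is bounded by $M_u\, L_\Phi\, e^{2M_\Phi}\, r$ using $|1 - e^{\Phi(x_0)-\Phi(x)}| \le L_\Phi r\, e^{2M_\Phi}$ for $x \in \partial B_r(x_0)$. The second term equals $e^{\Phi(x_0)}|F(r)|$ where $F(r)$ is the right-hand side of \eqref{eq id moy}. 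Using \eqref{e:delta_est} and the trivial bound on the $\nabla\Phi$-term, $|F(r)| \le \frac{C}{d\omega_d} r + L_\Phi M_u e^{M_\Phi} r$. Adding the two contributions yields \eqref{e:rate}.

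\emph{Main obstacle.} The delicate point is Step~2: making rigorous the divergence theorem $\int_{\partial B_r} e^{-\Phi}\partial_\nu u = \mu(B_r)$ for all $r$, not just a.e.\ $r$, and proving that $\lim_{r\to 0^+} M(r)$ exists and is finite at \emph{every} point $x_0 \in \R^d$ (including possible atoms of the singular part of $\mu$ on $\partial\Omega$). The above lower bound on $dM/dr$ handles this because the positive (singular) part of $\mu$ only helps, while the locally bounded negative part contributes at most a linear term in $r$.
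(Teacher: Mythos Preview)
Your proposal is correct and follows essentially the same route as the paper: the paper also proves (1) by testing \eqref{e:eqf} with a truncation (it uses $\varphi\, p_n(u)$ with $p_n$ a piecewise-linear approximation of $\mathbf 1_{(0,\infty)}$, which plays the same role as your $\varphi\wedge(nu)$), then proves (2) by differentiating $r\mapsto \aver{\partial B_r} u e^{-\Phi}$, bounding the derivative from below via (1) to get monotonicity of $M(r)+Ar^2+Br$, and proves (3) by the same two-term splitting you describe. The only cosmetic difference is the choice of cutoff in part (1); your remark that the limit actually yields $f\mathbf 1_{\{u>0\}}$ rather than $f$ is a valid observation (the paper glosses over this), but as you note this is harmless in the intended applications and in any case suffices for the Radon-measure conclusion.
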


\begin{proof}
\textit{(1)} For $n\in\N$ define $p_n : \R \rightarrow \R$ by
\[p_n(s)=0,\  \text{ for }\ s \leq 0;\qquad p_n(s) = ns,\  \text{ for }\  s \in [0,\sfrac1n]; \qquad p_n(s) =1,\  \text{ for }\  s \geq \sfrac1n. \]
Since $p_n$ is Lipschitz continuous, we have $p_n(u) \in H^1_0(\Omega)$ and $\nabla p_n(u) = p'_n(u)\nabla u$. Let $\varphi \in C^{\infty}_0(D)$, $\varphi\ge 0$ in $D$.  Using $\varphi p_n(u)$ as a test function in \eqref{e:eqf}, we get
\[\int_D p_n(u) \nabla u \cdot \nabla \varphi \, e^{-\Phi} dx\le \int_D\big(p_n(u) \nabla u \cdot \nabla \varphi + \varphi p'_n(u) |\nabla u|^2\big) e^{-\Phi} dx = \int_D f \varphi p_n(u)\,dx. \]
which, letting $n\to\infty$, gives the first claim. 

In order to prove  \textit{(2)}, we suppose that $x_0=0$ and we calculate 
\begin{align*}
\frac{d}{ds} \aver{\partial B_s} u e^{-\Phi} d\mathcal{H}^{d-1} &= \frac{d}{ds} \aver{\partial B_1} u(s\xi)e^{-\Phi(s\xi)} d\mathcal{H}^{d-1} \\
&= \aver{\partial B_1} \big[ \xi\cdot\nabla u (s\xi)  - u(s\xi) \xi\cdot\nabla \Phi(s\xi) \big] e^{-\Phi(s\xi)} d\mathcal{H}^{d-1} \\
&=\frac{s^{1-d}}{d\omega_d} \dive(e^{-\Phi} \nabla u )(B_s) - \frac{s^{1-d}}{d\omega_d} \int_{\partial B_s}(\nabla\Phi\cdot\nu)\, u\,e^{-\Phi}\,d\mathcal{H}^{d-1}.
\end{align*} 
Then, integrating from $\rho$ to $r$  ($\rho<r$),   using the inequality from \textit{(1)} and the fact that $u \in L^\infty(D)$ by Lemma \ref{l:boundedness}, we get 
\begin{align}\label{eq id}
\aver{\partial B_r}ue^{-\Phi}d\mathcal{H}^{d-1} -\aver{\partial B_\rho}ue^{-\Phi}d\mathcal{H}^{d-1} 
&= \frac{1}{d\om_d} \int_{\rho}^r s^{1-d} \dive (e^{-\Phi} \nabla u)(B_s(x_0))\,ds \\
\nonumber &\qquad \qquad \quad- \frac{1}{d\om_d} \int_{\rho}^r s^{1-d} \,ds\,\int_{\partial B_s}(\nabla\Phi\cdot\nu)\,u\, e^{-\Phi}\,d\mathcal{H}^{d-1} \\
\nonumber&\geq -\frac{1}{2d}\|f\|_{L^\infty} \left(r^2-\rho^2\right)  - e^{-\min\Phi}\|u\|_{L^\infty}\,\|\nabla\Phi\|_{L^\infty}(r-\rho) \\ \nonumber&:=-A\left(r^2-\rho^2\right) -B(r-\rho),
\end{align} 
  where $A,B>0$. This shows that the function $\ds r\mapsto \aver{\partial B_r}ue^{-\Phi}d\mathcal{H}^{d-1}+Ar^2+Br$ is non-decreasing.   In particular, the limit $\ds \ell(x_0)=\lim_{r\rightarrow 0} \aver{\partial B_r(x_0)} u e^{-\Phi} d\mathcal{H}^{d-1}$ exists and we set $u(x_0):=e^{\Phi(x_0)}\ell(x_0)$. Now, \eqref{eq id moy} follows by letting $\rho\to0$ in \eqref{eq id}. Finally, in order to prove the claim (3), we notice that \eqref{eq id moy} implies 
  	\begin{equation}\label{e:rate2}
  \Big|\aver{\partial B_r(x_0)} u e^{-\Phi}\, d\mathcal{H}^{d-1}-u(x_0)e^{-\Phi(x_0)}\Big|\le \left(\frac{C}{d\om_d}+L_{\Phi}M_ue^{M_\Phi}\right)r\qquad\text{for every}\qquad 0<r\le R,
  \end{equation} 
  Now, by the triangular inequality we have
  	\begin{align*}
  \Big|\aver{\partial B_r(x_0)} u\, d\mathcal{H}^{d-1}-u(x_0)\Big|&= e^{\Phi(x_0)}\Big|\aver{\partial B_r(x_0)} u e^{-\Phi(x_0)}\, d\mathcal{H}^{d-1}-u(x_0)e^{-\Phi(x_0)}\Big|\\
  &\le e^{\Phi(x_0)}\Big|\aver{\partial B_r(x_0)} u e^{-\Phi}\, d\mathcal{H}^{d-1}-u(x_0)e^{-\Phi(x_0)}\Big|\\
  &\qquad+e^{\Phi(x_0)}\aver{\partial B_r(x_0)} u \Big| e^{-\Phi(x_0)}-e^{-\Phi(x)}\Big|\, d\mathcal{H}^{d-1}.
  \end{align*} 
  Thus, the claim follows since, by the Lipschitz continuity of $\Phi$, we have that for every $x\in \partial B_r(x_0)$, 
  $$\Big| e^{-\Phi(x_0)}-e^{-\Phi(x)}\Big|\le e^{\|\Phi\|_{L^\infty(D)}}\|\nabla \Phi\|_{L^\infty(D)}|x-x_0|.$$  
  
  \eqref{e:rate} is a direct consequence of \eqref{eq id moy}.
\end{proof}

As a direct consequence of Lemma \ref{l:radon} and \eqref{eq id moy} we get the following strong maximum principle. 
\begin{lm}[Strong maximum principle] \label{l:strmax}
  Let $D\subset\R^d$ be an open connected set and $u\in H^1_0(D)$ satisfy $u\geq 0$. Assume that $\dive(e^{-\Phi}\nabla u)\in L^{\infty}(D)$ satisfies $\dive(e^{-\Phi}\nabla u)\leq 0$. Then, if $u$ is not identically vanishing in $D$,   then $u$ is strictly positive in $D$. 
\end{lm}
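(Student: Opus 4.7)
The plan is to reduce Lemma~\ref{l:strmax} to the classical strong maximum principle for weak supersolutions of uniformly elliptic operators in divergence form. Since $\Phi \in W^{1,\infty}(D)$, the coefficient $e^{-\Phi}$ is bounded and uniformly positive on $D$, hence the operator $v \mapsto -\dive(e^{-\Phi}\nabla v)$ is uniformly elliptic with bounded measurable coefficients; by hypothesis $u$ is a non-negative weak supersolution, since $-\dive(e^{-\Phi}\nabla u) \geq 0$ in $D$.

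First, I would promote $u$ to a continuous function on $D$. On the one hand, De Giorgi--Nash--Moser theory provides a locally H\"older continuous representative for any weak supersolution of a uniformly elliptic equation with bounded coefficients. On the other hand, this representative coincides with the pointwise definition of $u$ supplied by Lemma~\ref{l:radon}(2): indeed, the assumption $\dive(e^{-\Phi}\nabla u) \in L^\infty(D)$ gives
\[
\bigl|\dive(e^{-\Phi}\nabla u)(B_s(x_0))\bigr| \leq \|\dive(e^{-\Phi}\nabla u)\|_{L^\infty}\,\omega_d\, s^{d}\leq C\,s^{d-1}\quad\text{for}\quad 0<s\leq R,
\]
so the hypothesis \eqref{e:delta_est} holds on every ball $B_R(x_0) \Subset D$, and the quantitative bound \eqref{e:rate} in Lemma~\ref{l:radon}(3) already yields a linear modulus of continuity for the representative obtained via spherical averages.

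Next, I would invoke the weak Harnack inequality for non-negative supersolutions of uniformly elliptic operators in divergence form (Gilbarg--Trudinger, Theorem 8.18): for every ball $B_{2r}(x_0) \Subset D$, there exist $p > 0$ and $C > 0$, depending only on $d$ and $\|\Phi\|_{W^{1,\infty}(D)}$, such that
\[
\left(\aver{B_r(x_0)} u^p\,dx\right)^{1/p} \leq C \inf_{B_r(x_0)} u.
\]
Assume by contradiction that $u \not\equiv 0$ in $D$ and yet $u(x_0) = 0$ for some $x_0 \in D$. Choosing $r>0$ with $B_{2r}(x_0) \subset D$, the weak Harnack inequality forces $\int_{B_r(x_0)} u^p \,dx = 0$, whence $u \equiv 0$ on $B_r(x_0)$ by the continuity established above. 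Therefore the set $Z := \{x \in D : u(x) = 0\}$ is open. By continuity of the chosen representative, $Z$ is also (relatively) closed in $D$. Since $D$ is connected and $Z \neq D$ by hypothesis, this forces $Z = \emptyset$, contradicting $u(x_0)=0$ and thus proving $u > 0$ on $D$.

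The only delicate point is matching the pointwise value of $u$ provided by Lemma~\ref{l:radon} with the continuous representative to which the weak Harnack inequality applies; this is reconciled precisely by the modulus of continuity \eqref{e:rate}, so no genuine obstacle arises beyond citing the classical De Giorgi--Nash--Moser estimates.
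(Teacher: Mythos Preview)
Your argument is correct. The reduction to the weak Harnack inequality is legitimate because $e^{-\Phi}$ is bounded above and below (so the operator is uniformly elliptic with bounded measurable coefficients) and $u$ is a non-negative supersolution; moreover, since you actually have $\dive(e^{-\Phi}\nabla u)\in L^\infty(D)$ and not merely $\le 0$, $u$ solves an elliptic equation with bounded right-hand side, so De Giorgi--Nash--Moser gives a locally H\"older continuous representative (your remark that \eqref{e:rate} alone gives a ``linear modulus of continuity'' is slightly imprecise---it controls the deviation from spherical averages, not $|u(x)-u(y)|$ directly---but the continuity you need follows from elliptic regularity anyway). The open/closed argument on the zero set then goes through exactly as you wrote.

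The paper takes a more self-contained route that avoids citing De Giorgi--Nash--Moser altogether. It works directly with the mean-value identity \eqref{eq id moy}: if $u(x_0)=0$, then setting $g(r)=\aver{\partial B_r(x_0)} u\,e^{-\Phi}\,d\mathcal{H}^{d-1}$ and using $\dive(e^{-\Phi}\nabla u)\le 0$, the identity yields $0\le g(r)\le \tau\int_0^r g(s)\,ds$, and Gronwall forces $g\equiv 0$, hence $u=0$ a.e.\ on $B_r(x_0)$; since every point is a Lebesgue point by Lemma~\ref{l:radon}(2), $u$ vanishes identically there. Closedness then follows because nearby zeros propagate through overlapping balls. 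Your approach is the textbook one and generalizes immediately to rougher coefficients; the paper's approach has the advantage of staying entirely within the toolkit already built in Lemma~\ref{l:radon}, with no external regularity input.
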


  \begin{proof}
Set $A:=\left\{x_0\in D;\ u(x_0)=0\right\}$. If $x_0\in A$, then \eqref{eq id moy} implies that $u(x)=0$ for almost every $x\in B_r(x_0)$ whenever $B_r(x_0)\subset D$.  Therefore, for all $x\in B_r(x_0)$, since $x$ is a Lebesgue point for $u$, $u(x)=0$. Thus, $A$ is open.\par
\noindent Consider now a sequence $(x_n)_{n\geq 1}\in A$ converging to $x_0\in D$. For some $n$ large enough, there exists a ball $B_r(x_n)\subset D$ containing $x_0$. Since $u$ vanishes everywhere in $B_r(x_n)$, $u(x_0)=0$, which proves that $A$ is closed in $D$. We conclude by the connectedness of $D$.
\end{proof} 

%  Another consequence of Lemma \ref{l:radon} is that if $u\in H^1_0(D)$ is a solution of \eqref{e:fbp}, then 
%$$u(x_0)=\lim_{r\to0}\aver{\partial B_r(x_0)}u\,d\HH^{d-1}=\lim_{r\to0}\aver{B_r(x_0)}u\,dx\quad\text{for every}\quad x_0\in D.$$
  A consequence of Lemma \ref{l:radon} is the fact that   the set $\Omega_u=\{u>0\}$ and the (topological) free boundary $\partial\Omega_u \cap D$ are well defined. Below we prove that the topological boundary coincides with the measure theoretic one. 

\begin{lm}[The topological boundary coincides with the measure-theoretic one]\label{lem hypDr}
Let $u\in H^1_0(D)$, $u\ge 0$ in $D$, be a solution of \eqref{e:fbp}, $x_0 \in \partial\Omega_u$ and let $r>0$ be such that $D_r(x_0):=B_r(x_0) \cap D$ is connected. Then we have $0 < |\Omega_u \cap B_r(x_0)|$. Moreover, if $x_0 \in \partial\Omega_u\cap D$, we have $|\Omega_u \cap B_r(x_0)|< |D_r(x_0)|$.
\end{lm}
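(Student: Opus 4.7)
For \emph{part (1)}, since $x_0\in\partial\Omega_u\subset\overline{\Omega_u}$, there exists a point $y\in B_r(x_0)$ with $u(y)>0$. By the pointwise representation of $u$ furnished by Lemma~\ref{l:radon}(2), $u(y)=\lim_{\rho\to 0}\aver{B_\rho(y)}u\,dx>0$, so for $\rho$ small enough with $B_\rho(y)\subset B_r(x_0)$ we have $\aver{B_\rho(y)}u\,dx>0$; the non-negativity of $u$ then forces $|\Omega_u\cap B_\rho(y)|>0$, and hence $|\Omega_u\cap B_r(x_0)|>0$.

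For \emph{part (2)}, I argue by contradiction. Assume $x_0\in\partial\Omega_u\cap D$ and $|\Omega_u\cap B_r(x_0)|=|D_r(x_0)|$; then $N:=D_r(x_0)\setminus\Omega_u=\{u=0\}\cap D_r(x_0)$ has zero Lebesgue measure. The central idea is to \emph{extend} the eigenvalue equation from $\Omega_u$ to the whole open connected set $D_r(x_0)$ by exploiting the minimality of $u$ in \eqref{e:fbp}. Indeed, for any $\varphi\in C_c^\infty(D_r(x_0))$ and any small $t\in\R$, the function $u+t\varphi\in H^1_0(D)$ satisfies
\[
\{u+t\varphi\neq 0\}\setminus\{u\neq 0\}\ \subset\ \{u=0\}\cap\{\varphi\neq 0\}\ \subset\ N,
\]
so $|\{u+t\varphi\neq 0\}|\leq|\{u\neq 0\}|=|\Omega_u|\leq m$, which means that $u+t\varphi$ is admissible for the minimization in \eqref{e:fbp}. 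Stationarity at $t=0$ of the Rayleigh quotient
\[
\mathcal{R}(v):=\frac{\int_D e^{-\Phi}|\nabla v|^2\,dx}{\int_D e^{-\Phi}v^2\,dx}
\]
along the curve $v_t=u+t\varphi$ then yields
\[
\int_D e^{-\Phi}\nabla u\cdot\nabla\varphi\,dx=\lambda_m\int_D e^{-\Phi}u\varphi\,dx\qquad\text{for every}\quad \varphi\in C_c^\infty(D_r(x_0)),
\]
i.e., $-\dive(e^{-\Phi}\nabla u)=\lambda_m e^{-\Phi}u$ weakly in $D_r(x_0)$; in particular $\dive(e^{-\Phi}\nabla u)=-\lambda_m e^{-\Phi}u\in L^\infty(D_r(x_0))$ is non-positive. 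Since $u\geq 0$ and $u$ is not identically zero in $D_r(x_0)$ (because $u>0$ a.e.\ there), the strong maximum principle (Lemma~\ref{l:strmax}) applied on the connected open set $D_r(x_0)$ forces $u>0$ everywhere in $D_r(x_0)$. Hence $x_0\in D_r(x_0)\subset\Omega_u^{\circ}$, contradicting $x_0\in\partial\Omega_u$.

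The main subtlety lies in the extension of the eigenvalue equation from $\Omega_u$ to the full ball $D_r(x_0)$. The naive approach of declaring $D_r(x_0)\subset\Omega_u$ quasi-everywhere (which would imply $H^1_0(D_r(x_0))\subset H^1_0(\Omega_u)$ and thereby allow testing the PDE with any $\varphi\in C_c^\infty(D_r(x_0))$) is not available in general, because a Lebesgue-null set may carry positive capacity. The variational argument above circumvents this obstruction by exploiting the fact that the measure constraint $|\{v\neq 0\}|\leq m$ in \eqref{e:fbp} is purely Lebesgue-theoretic, so every smooth perturbation supported in $D_r(x_0)$ is a legitimate competitor and therefore provides a legitimate test function for the Euler--Lagrange equation.
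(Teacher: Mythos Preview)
Your proof is correct. Part~(1) is essentially the paper's argument (Lebesgue points force positive measure), just phrased in the contrapositive direction.

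For part~(2), both you and the paper follow the same overall strategy---extend the equation $-\dive(e^{-\Phi}\nabla u)=\lambda_m e^{-\Phi}u$ from $\Omega_u$ to the whole of $D_r(x_0)$ using the minimality of $u$, then invoke the strong maximum principle---but the mechanism for the extension differs. The paper builds an explicit competitor: it lets $v$ solve the PDE in $D_r(x_0)$ with $v=u$ outside, observes $|\Omega_v|=|\Omega_u|$, and then a direct energy comparison (via the Rayleigh quotient) shows $\int_D\big(|\nabla(u-v)|^2+\lambda_m(u-v)^2\big)e^{-\Phi}\,dx\le 0$, forcing $u=v$ and hence $u>0$ in $D_r(x_0)$. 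Your argument is more streamlined: you test directly with $u+t\varphi$ and note that, because $\{u=0\}\cap D_r(x_0)$ is Lebesgue-null, the measure constraint is automatically preserved, so the first variation of the Rayleigh quotient yields the Euler--Lagrange equation on all of $D_r(x_0)$. Your approach avoids constructing and analysing an auxiliary boundary-value problem; the paper's approach, on the other hand, makes the optimality mechanism more tangible and is closer in spirit to the ``harmonic replacement'' competitors used repeatedly later in the regularity theory. Your closing remark on why a capacity argument fails here is a nice addition.
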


\begin{proof}
In order to prove the first inequality, suppose that $x_0\in\partial \Omega_u$ and $|B_r(x_0)\cap\Omega_u|=0$ for some $r>0$. Since every point $x\in B_r(x_0)$ is a Lebesgue point for $u$ and $u=0$ almost everywhere in $B_r(x_0)$ we have that $u\equiv0$ in $B_r(x_0)$, but this contradicts the fact that $x_0\in\partial \Omega_u$. 

In order to show the second inequality, we assume by contradiction that $|\Omega_u \cap B_r(x_0)| = |D_r(x_0)|$ for some $r>0$. We claim that $u$ is a solution of 
$$-\dive(e^{-\Phi} \nabla u) = \lambda_m u e^{-\Phi}\quad\text{in}\quad D_r(x_0),\qquad\text{where}\quad \lambda_m:=\int_D|\nabla u|^2e^{-\Phi}\,dx.$$
Indeed, let $v$ be the solution of  
$$-\dive(e^{-\Phi} \nabla v) = \lambda_m u e^{-\Phi}\quad\text{in}\quad D_r(x_0),\qquad v=u \quad\text{in}\quad D \backslash B_r(x_0).$$
Then   Lemma \ref{l:strmax} implies that $v>0$ in $D_r(x_0)$.  
Since $|\O_v | =|\O_u|$, the optimality of $u$ gives 
\begin{align*}
\frac{\int_D|\nabla v|^2e^{-\Phi}\,dx}{\int_Dv^2e^{-\Phi}\,dx}&\ge \int_D|\nabla u|^2e^{-\Phi}\,dx=\frac{\int_D|\nabla u|^2e^{-\Phi}\,dx}{\int_Dv^2e^{-\Phi}\,dx}+\lambda_m\left(1-\frac{\int_Du^2e^{-\Phi}\,dx}{\int_Dv^2e^{-\Phi}\,dx}\right),
\end{align*}
which implies 
\begin{align*}
0&\ge  \int_D\big(|\nabla u|^2-|\nabla v|^2\big)e^{-\Phi}dx+\lambda_m \int_D\big(v^2-u^2\big)e^{-\Phi}dx
=\int_D\left(|\nabla (u-v)|^2+\lambda_m(u-v)^2\right)e^{-\Phi}dx,
\end{align*}
%\begin{align*}
%0&\ge  \int_D\big(|\nabla u|^2-|\nabla v|^2\big)e^{-\Phi}\,dx+\lambda_m \int_D\big(v^2-u^2\big)e^{-\Phi}\,dx\\
%&=\int_D|\nabla (u-v)|^2 e^{-\Phi}\,dx+\lambda_m \int_D(u-v)^2e^{-\Phi}\,dx\\
%&\qquad+2\int_D\big(-\nabla v\cdot\nabla (v-u)+\lambda_m u(v-u)\big)e^{-\Phi}\,dx\\
%&=\int_D|\nabla (u-v)|^2e^{-\Phi}\,dx+\lambda_m \int_D(u-v)^2e^{-\Phi}\,dx.
%\end{align*}
where the last equality follows by the definition of $v$   and the fact that $v-u\in H^1_0(D_r(x_0))$.   This implies that $u=v$ almost everywhere and hence, by Lemma \ref{l:radon}, that $u=v$ everywhere. Therefore, we have $u>0$ in $B_r(x_0)$, which is in contradiction with $x_0 \in \partial\O_u\cap D$. 
\end{proof}

\begin{lm}[Saturation of the constraint]\label{l:saturation}
Let $D\subset\R^d$ be an open connected set, $\Phi\in W^{1,\infty}(D)$, $m$ and $\tau$ be as in Lemma \ref{l:fbp}. Then every solution $u$ of \eqref{e:fbp} is such   that   $u\ge 0$ on $D$ and $|\Omega_u|=m$ (up to a change of  sign). In particular, every solution $\Omega$ of \eqref{e:optO} is such that $|\Omega|=m$.
\end{lm}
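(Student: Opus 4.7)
The plan is to establish the saturation $|\Omega_u|=m$ first, then deduce the constant-sign property as a consequence, and finally transfer both conclusions to optimal sets for \eqref{e:optO} via Lemma \ref{l:fbp}.

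For the saturation, I would argue by contradiction: assume $|\Omega_u|<m$ and replace $u$ by $v:=|u|$, which is again a non-negative solution of \eqref{e:fbp} since $|\nabla v|=|\nabla u|$ a.e.\ and $|\Omega_v|=|\Omega_u|$. The crucial step is to upgrade the Euler--Lagrange equation from $\Omega_v$ to all of $D$; this is where the slack in the measure constraint is exploited. Because $|\Omega_v|<m$, any test function $\phi\in C^\infty_c(D)$ whose support has Lebesgue measure at most $m-|\Omega_v|$ yields an admissible perturbation $v+\eps\phi$, since
$$\bigl|\{v+\eps\phi\neq 0\}\bigr|\le |\Omega_v\cup\spt\phi|\le m.$$
Stationarity of the Rayleigh quotient at $v$ on such perturbations, combined with a partition of unity that splits an arbitrary $\phi\in C^\infty_c(D)$ into finitely many pieces each supported in a ball of measure at most $m-|\Omega_v|$, gives
$$-\dive(e^{-\Phi}\nabla v)=\lambda_m\, v\, e^{-\Phi}\quad\text{weakly on}\quad D.$$
Since $v\ge 0$ and $\lambda_m>0$ we have $\dive(e^{-\Phi}\nabla v)\le 0$, which together with the boundedness of $v$ (Lemma \ref{l:boundedness}) places us in the hypotheses of the strong maximum principle (Lemma \ref{l:strmax}). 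On the connected open set $D$ this forces $v>0$ everywhere, hence $|\Omega_v|=|D|>m$, a contradiction.

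For the constant-sign property, I would assume both $u^+$ and $u^-$ are non-trivial. The disjointness of their supports splits the Dirichlet and $L^2$ integrals, and the variational characterisation of $\lambda_m$ gives $\int e^{-\Phi}|\nabla u^\pm|^2\,dx\ge\lambda_m\int e^{-\Phi}(u^\pm)^2\,dx$, since each $u^\pm$ is an admissible competitor with support of measure at most $|\Omega_u|\le m$. Summing these inequalities must reproduce $\lambda_m$, so equality holds in both; hence $u^\pm$, after normalisation in the $e^{-\Phi}$-weighted $L^2$, are themselves solutions of \eqref{e:fbp}. Applying the saturation already proved gives $|\{u^+>0\}|=|\{u^->0\}|=m$, but these are disjoint subsets of $\Omega_u$, yielding $|\Omega_u|\ge 2m$ and contradicting $|\Omega_u|\le m$. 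Hence $u$ has constant sign. The ``in particular'' clause then follows immediately from Lemma \ref{l:fbp}: if $\Omega$ is a solution of \eqref{e:optO}, its first eigenfunction $u$ solves \eqref{e:fbp}, so $m=|\Omega_u|\le|\Omega|\le m$. The delicate step in the whole argument is the first one, namely deriving the Euler--Lagrange identity on all of $D$, which relies critically on the strict inequality $|\Omega_v|<m$ for the admissibility of localised perturbations.
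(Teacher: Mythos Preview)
Your proof is correct and follows essentially the same strategy as the paper: slack in the measure constraint makes local perturbations admissible, the resulting Euler--Lagrange equation on $D$ combined with the strong maximum principle (Lemma~\ref{l:strmax}) forces the non-negative solution to be strictly positive everywhere, contradicting $|D|>m$. Your presentation merely reverses the paper's order---establishing saturation first via $v=|u|$ and then deducing constant sign from a $2m\le m$ contradiction, whereas the paper first passes to the normalized positive part $u_1$ and derives saturation and positivity together---and uses a partition of unity to globalize the equation instead of working ball by ball; these are organizational variations on the same core argument.
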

\begin{proof}
Let $u$ be a solution of \eqref{e:fbp} and set 
$$
u_1=\frac{u_+}{\displaystyle\left(\int_{D} u_+^2e^{-\Phi}\right)^{1/2}} \mbox{ and } u_2=\frac{u_-}{\displaystyle\left(\int_{D} u_-^2e^{-\Phi}\right)^{1/2}}.
$$ 
We first prove that either $u_1$ or $u_2$ is a solution of \eqref{e:fbp}. It is obvious if $u=u_+$ or $u=u_-$. Otherwise, we have $u_+\neq 0$ and $u_-\neq 0$, and the claim follows from the estimate
$$
\inf\left(\frac{\int_{D} \left\vert \nabla u_+\right\vert^2e^{-\Phi}dx}{\int_D u_+^2e^{-\Phi}dx}, \frac{\int_{D} \left\vert \nabla u_-\right\vert^2e^{-\Phi}dx}{\int_D u_-^2e^{-\Phi}dx}\right)\leq \frac{\int_{D} \left(\left\vert \nabla u_+\right\vert^2+\left\vert \nabla u_-\right\vert^2\right)e^{-\Phi}dx}{\int_D \left(u_+^2+u_-^2\right)e^{-\Phi}dx}=\frac{\int_{D} \left\vert \nabla u\right\vert^2e^{-\Phi}dx}{\int_D u^2e^{-\Phi}dx}.
$$ 
Up to changing $u$ into $-u$, we assume that $u_1$ is a solution of \eqref{e:fbp}. Now, suppose by contradiction that $|\O_u|< m$. Then, for every ball $B_r(x_0) \subset D$ such that $|\Omega_u|+|B_r|\le m$, writing that 
$$
\int_{D} \left\vert \nabla u_1\right\vert^2e^{-\Phi}dx\leq \int_{D} \left\vert \nabla (u_1+t\varphi)\right\vert^2e^{-\Phi}dx
$$
for all functions $\varphi\in H^1_0(B_r(x_0))$ and all $t\in \R$, we easily get that $u_1$ is a solution of
$$-\dive(e^{-\Phi}\nabla u_1)=\lambda_m e^{-\Phi}u_1\quad\text{in}\quad B_r(x_0).$$
By the strong maximum principle, we get $u>0$ in $B_r(x_0)$, which is a contradiction. This proves both the saturation of the constraint and the positivity of $u$.
\end{proof}

\subsection{A free-boundary problem with measure constraint}\label{sub:fbp}
We now follow the strategy adopted in \cite{briancon-lamboley, briancon}. In particular, the proof of Theorem \ref{thm mu+-} below is very close to the one of Theorem 1.5 in \cite{briancon-lamboley}. Note that the approach is local and that a result   analogous   to Theorem \ref{thm mu+-} with perturbations in $D$ is vain (see Remark 1.6 in \cite{briancon-lamboley}). 

Let $u\in H^1_0(D)$ be a solution of \eqref{e:fbp}.
For any $v\in H^1_0(D)$ we set 
\begin{equation}\label{e:defJ}
J(v): = \int_D |\nabla v|^2 e^{-\Phi}dx - \lambda_m \int_D v^2 e^{-\Phi}dx, \end{equation}
where it is recalled that $\ds\lambda_m=\int_D|\nabla u|^2e^{-\Phi}\,dx$. 
 \begin{oss}[Removal of the integral constraint] \label{oss:jujv}
It is plain to see that, when $u\in H^1_0(D)$ is a solution of \eqref{e:fbp}, 
\begin{equation}\label{e:uminJ}
J(u)=\min\left\{J(v)\ : \ v\in H^1_0(D),\ \left\vert \Omega_v\right\vert\leq m\right\}.
\end{equation}
\end{oss}
  For a ball $B_r(x_0) \subset \R^d$ we define the admissible set
\[ \adm : = \big\{ v \in H^1_0(D) \ : \ u-v \in H^1_0(B_r(x_0)) \big\}. \]
\begin{oss}[Coercivity of $J$]\label{rem:J_bounded}
We notice that the set  $\{v\in\adm\ :\ J(v)<C\}$ is weakly compact in $H^1_0(D)$. Precisely, if $u\in H^1_0(D)$, $\Phi\in W^{1,\infty}(D)$ and $J$ be given by \eqref{e:defJ}, then there is a constant $r_0>0$, depending on $d$, $\Phi$, $\lambda_m$ and $D$ such that for all $r\leq r_0$, 	\begin{equation}\label{eq J bounded}
\int_{B_r(x_0)} |\nabla v|^2 \,dx\le 2e^{\max\Phi}J(v) + \left(1+4\lambda_m e^{\max\Phi-\min\Phi} \right)\,\|u\|_{H^1(D)}^2, \quad \forall v \in \adm .
\end{equation} 
Indeed, let $v \in \adm$ with $r \leq r_0$. We have
\begin{align*}
\int_D v^2 \,dx &\leq 2\int_D (v -u)^2 \,dx + 2\int_Du^2\,dx \leq \frac{2}{\lambda_1(B_r(x_0))} \int|\nabla(v-u)|^2 dx + 2\int_D u^2 dx \\
&\leq \frac{4 r_0^2}{\lambda_1(B_1)}  \int_D \left(|\nabla v|^2 + |\nabla u|^2 \right)\, dx + 2\int_D u^2\, dx,
\end{align*}
where the last inequality is due to the $(-2)$-homogeneity of $\lambda_1(B_r)$ and the fact that $r\le r_0$.
%, we can choose $r_0>0$ such that
%\[ 4\frac{\lambda_m + \tau}{\lambda_1(B_{r_0})} \leq \frac12. \]
Choosing $r_0$ small enough (depending only on $d$, $\lambda_m$, $\|\nabla \Phi\|_{L^\infty}$ and the diameter of $D$) we get 
\begin{align*}
\int_{B_r(x_0)} |\nabla v|^2 \,dx&\le e^{\max\Phi}J(v)+\lambda_me^{\max\Phi-\min\Phi}\int_{D}v^2\,dx\\
&\le e^{\max\Phi}J(v)+\frac12\int_{B_r(x_0)} \left(|\nabla v|^2 + |\nabla u|^2 \right)\, dx+ 2\lambda_m e^{\max\Phi-\min\Phi}\int_{D}u^2\,dx .
\end{align*} 
which concludes the proof of \eqref{eq J bounded}. 
\end{oss}
As a consequence, we obtain the following result, which gives us the existence of a solution to a local version of the minimization problem \eqref{e:uminJ} with some different measure constraint. 
\begin{lm}[Existence of local minimizers]\label{lm:localminpb}
%[Local minimality of the eigenfunctions on the optimal sets]\label{l:minJ}
Let $u\in H^1_0(D)$ be a solution of the problem \eqref{e:uminJ}.   Let $B_r(x_0)\subset \R^d$ be a fixed ball and let $\widetilde m$ be a real constant such that $\widetilde{m}>|\O_u \setminus B_r(x_0)|$. Then:
\begin{enumerate}
\item the problem 
\begin{equation}\label{e:fbpJ}
\min\Big\{J(v)\, :\ v\in \adm,\ |\Omega_v|\le \widetilde{m}\Big\}
\end{equation}
has a solution provided that $r\leq r_0$ with $r_0$ given by Remark \ref{rem:J_bounded}, 
\item if $D_r(x_0):=B_r(x_0)\cap D$ is connected and $|\O_u \cup D_r(x_0)|>\tilde{m}$, then  $\left\vert \Omega_v\right\vert=\widetilde{m}$;
\item  there exists $r_0>0$ such that, for every $r<r_0$, every solution $v$ of \eqref{e:fbpJ} is non-negative. 
\end{enumerate}
\end{lm}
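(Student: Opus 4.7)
I handle the three claims with classical variational techniques, and find it convenient to prove \textit{(3)} before \textit{(2)}, so as to dispose of the non-negativity of $v$ when applying the strong maximum principle. For \textit{(1)}, I apply the direct method. Let $(v_n)_{n\ge 1} \subset \adm$ with $|\Omega_{v_n}| \le \widetilde{m}$ be a minimizing sequence. By Remark \ref{rem:J_bounded} (valid for $r\le r_0$), $\|v_n\|_{H^1_0(D)}$ is controlled in terms of $J(v_n)$, so after extracting a subsequence $v_n \rightharpoonup v$ weakly in $H^1_0(D)$, $v_n \to v$ strongly in $L^2(D)$ and a.e. The condition $v - u \in H^1_0(B_r(x_0))$ is closed under weak $H^1$-convergence, hence $v \in \adm$; weak lower semicontinuity of the Dirichlet part and Rellich compactness for the $L^2_{e^{-\Phi}}$ part give $J(v) \le \liminf J(v_n)$; and Fatou applied to the a.e.\ convergent indicators $\ind_{\{v_n > 0\}}$, together with the inclusion $\{v > 0\} \subset \liminf \{v_n > 0\}$, yields $|\Omega_v| \le \liminf |\Omega_{v_n}| \le \widetilde{m}$.

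For \textit{(3)}, given a minimizer $v$, I compare $v$ with its positive part $v_+$. Since $u \ge 0$ (Lemma \ref{l:saturation}) and $v = u$ on $D \setminus B_r(x_0)$, the negative part $v_-$ is supported in $B_r(x_0) \cap D$ and belongs to $H^1_0(B_r(x_0))$; hence $v_+ - u = (v - u) + v_- \in H^1_0(B_r(x_0))$, so $v_+ \in \adm$, and trivially $\Omega_{v_+} = \Omega_v$. A direct expansion gives
\[
J(v) - J(v_+) \;=\; \int_{B_r(x_0)} |\nabla v_-|^2 e^{-\Phi}\,dx \;-\; \lambda_m \int_{B_r(x_0)} v_-^2 e^{-\Phi}\,dx,
\]
and the Poincaré inequality $\int_{B_r(x_0)} v_-^2\,dx \le C_d\, r^2 \int_{B_r(x_0)} |\nabla v_-|^2\,dx$, combined with the uniform bounds $e^{-\max\Phi} \le e^{-\Phi} \le e^{-\min\Phi}$, bounds the right-hand side below by $\bigl(e^{-\max\Phi} - \lambda_m e^{-\min\Phi} C_d r^2\bigr) \int_{B_r(x_0)} |\nabla v_-|^2\,dx$. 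Choosing $r_0$ small enough (depending only on $d$, $\lambda_m$ and $\Phi$) makes the bracket strictly positive, and the minimality of $v$ forces $v_- \equiv 0$.

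For \textit{(2)}, I argue by contradiction, assuming $|\Omega_v| < \widetilde{m}$. For any $\varphi \in C_c^\infty(B_r(x_0))$ whose support has Lebesgue measure smaller than $\widetilde{m} - |\Omega_v|$, and every sufficiently small $t$, the perturbation $v + t\varphi$ belongs to $\adm$ and satisfies $|\Omega_{v + t\varphi}| \le \widetilde{m}$; the vanishing of $\frac{d}{dt}J(v + t\varphi)|_{t=0}$ and a density argument then yield
\[
-\dive(e^{-\Phi}\nabla v) = \lambda_m e^{-\Phi} v \quad\text{in } D_r(x_0).
\]
Since $v \ge 0$ by \textit{(3)} and the right-hand side is non-negative, the strong maximum principle (Lemma \ref{l:strmax}) on the connected open set $D_r(x_0)$ yields the dichotomy: either $v > 0$ throughout $D_r(x_0)$, or $v \equiv 0$ on $D_r(x_0)$. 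In the first case $\Omega_v$ contains $D_r(x_0)$ and also $\Omega_u \setminus B_r(x_0)$ (since $v = u$ there), so $|\Omega_v| \ge |\Omega_u \cup D_r(x_0)| > \widetilde{m}$, contradicting the standing assumption. The main obstacle is the second alternative: the admissibility condition $v - u \in H^1_0(B_r(x_0))$ then forces $u\chi_{D_r(x_0)} \in H^1_0(D_r(x_0))$, and I plan to combine a Rayleigh-quotient splitting of $u = u\chi_{D_r(x_0)} + u\chi_{D \setminus B_r(x_0)}$ (whose two summands have disjoint supports, as in the proof of Lemma \ref{l:saturation}) with the Poincaré inequality on the small ball $D_r(x_0)$ to derive that the second summand has Rayleigh quotient strictly below $\lambda_m$, contradicting the minimality characterizing $\lambda_m$ in \eqref{e:fbp}.
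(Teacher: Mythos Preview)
Your proofs of \textit{(1)} and \textit{(3)} are correct and coincide with the paper's (which only sketches the direct method for \textit{(1)} and gives exactly the same Poincar\'e comparison with $v_+$ for \textit{(3)}). For \textit{(2)} the paper merely writes ``argue as in the proof of Lemma~\ref{l:saturation}'', so your explicit treatment --- deriving the PDE on $D_r(x_0)$ from the unsaturated constraint and invoking the strong maximum principle --- is already more detailed than what appears there. Your Rayleigh-quotient splitting to dispose of the alternative $v\equiv 0$ on $D_r(x_0)$ is a genuine addition and works: if $u_1:=u\chi_{D_r(x_0)}\not\equiv 0$, Poincar\'e on the small ball forces its Rayleigh quotient above $\lambda_m$, and the mediant inequality then pushes that of $u_2=u-u_1$ strictly below $\lambda_m$, contradicting \eqref{e:fbp} since $|\Omega_{u_2}|\le m$.

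There is, however, a gap you should flag: your argument for the second alternative tacitly needs $u\not\equiv 0$ on $D_r(x_0)$. If $u\equiv 0$ there (so $u_1=0$), the splitting collapses --- and in fact the conclusion of \textit{(2)} can genuinely fail in that degenerate configuration. Indeed, for $r$ small and $\widetilde m>m$, every competitor $w=u+\phi\in\adm$ with $\phi\in H^1_0(D_r(x_0))$ satisfies
\[
J(w)=J(u)+\int_{D_r(x_0)}|\nabla\phi|^2e^{-\Phi}\,dx-\lambda_m\int_{D_r(x_0)}\phi^2e^{-\Phi}\,dx\ \ge\ J(u)
\]
by Poincar\'e, with equality only for $\phi=0$; so $v=u$ is the unique minimizer yet $|\Omega_v|=m<\widetilde m$. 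This is an imprecision in the lemma's statement rather than in your method: in every application in the paper (Lemma~\ref{l:u_n}, Theorem~\ref{thm mu+-}) the standing hypothesis $0<|\Omega_u\cap D_r(x_0)|$ is in force, which rules this case out. A one-line remark making that dependence explicit would close the argument.
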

\begin{proof}
  For $1$, it is enough to notice that, by Remark \ref{rem:J_bounded}, $J$ is bounded from below in $\adm$. Then, if $(v_n)_{n\geq 1}$ is a minimizing sequence for \eqref{e:fbpJ}, by \eqref{eq J bounded} $v_n$ is bounded in $H^1$
%and so a minimizer exists by the semicontinuity of $J$ (notice that, up to a subsequence, there exists $v\in L^2$ such that $v_n\rightarrow v$ strongly in $L^2$ and almost everywhere, so that $\ind_{\Omega_v}\leq \varliminf \ind_{\Omega_{v_n}}$).\par
\noindent For $2$, if $D_r(x_0)$ is connected and $|\O_u \cup D_r(x_0)|>\tilde{m}$, we argue as in the proof of Lemma \ref{l:saturation} to conclude that $\left\vert \Omega_v\right\vert=\widetilde{m}$. 
\noindent For $3$, let $v$ be a solution of \eqref{e:fbpJ}. Then, by the optimality of $v$ and the fact that $v^+\in \adm$ and $\Omega_{v^+}\subset \Omega_v$, one has 
$$
J(v^+)+J(v^-)=J(v)\leq J(v^+),
$$
which means that $J(v^-)\leq 0$. Therefore,
\begin{align*}
\int_{B_r(x_0)}|\nabla v^-|^2 e^{-\Phi}\,dx&\le \lambda_m\int_{B_r(x_0)}|v^-|^2 e^{-\Phi}\,dx\\
%&\le\lambda_me^{\max\Phi-\min\Phi}\lambda_1(B_r,0)\int_{B_r(x_0)}e^{-\Phi}|\nabla v_n^-|^2\,dx\\
&\le\lambda_m e^{2r\tau}C_d r^2 \int_{B_r(x_0)}|\nabla v^-|^2 e^{-\Phi}\,dx,
\end{align*}
where the second inequality is due to the fact that $\max_{B_r(x_0)}\Phi-\min_{B_r(x_0)}\Phi\le 2r\tau $ and the variational characterization and the scaling of $\lambda_1(B_r,0)=C_d r^{-2}$. Thus, for $r$ small enough ($r\le r_0$ with $r_0$ depending only on $\tau$, $\lambda_m$ and $d$), $v^-=0$. \par
   
%Now, if $v$ is a minimizer of \eqref{e:fbpJ} and $r_0$ is small enough (such that $\lambda_1(B_{r_0})>\lambda_m e^{\max\Phi-\min\Phi}$), then $v_-\in H^1_0(B_{r}(x_0))$ and 
%$J(v)=J(v_+)+J(v_-)\le J(v_+)$, where the last inequality is strict if $v_-\neq0$. Thus, $v\ge0$. Finally, $v$ is a solution of \eqref{e:fbp}, since
%$J(v)\le 0$ implies that $\frac{\int_D|\nabla v|^2e^{-\Phi}\,dx}{\int_Dv^2e^{-\Phi}\,dx}\le \lambda_m.$ Notice that the last inequality has to be an equality since $\lambda_m$ is the minimum of \eqref{e:fbp}. In particular, $J(v)=0$ and $u$ is also a solution of \eqref{e:fbpJ}.
\end{proof}

\subsection{An internal variation optimality condition}\label{sub:optimality_condition}
%Let $u$ be a solution of \eqref{e:fbp} in $D\subset\R^d$ and 
Let $D\subset\R^d$ be a bounded open set, $u\in H^1_0(D)$ and 
$\xi \in C^\infty_c(D;\R^d)$. The first variation $\delta J(u)[\xi]$, of $J$ at $u$ in the direction $\xi$, is given by 
$$\delta J(u)[\xi]:=\lim_{t\to0}\frac{J(u_t)-J(u)}t,\quad\text{where}\quad u_t(x):=u(x+t\xi(x)).$$
A straightforward computation gives that 
\begin{equation}\label{e:first_variation}
\delta J(u)[\xi] := \int_D \Big[ 2 D\xi(\nabla u)\cdot\nabla u + (|\nabla u|^2 - \lambda_m u^2) (\nabla \Phi \cdot \xi - \dive \xi) \Big] e^{-\Phi}\,dx.
\end{equation}

We prove in Proposition \ref{p:lagrange} the existence of an Euler-Lagrange multiplier for every solution $u$ of  \eqref{e:uminJ}. This, using a local internal variation of the boundary of the optimal set $\O_u$, we derive an optimal boundary condition for $u$ (see Lemma \ref{l:visc}). 

\begin{prop}[Euler-Lagrange equation]\label{p:lagrange}
Let $u$ be a solution of \eqref{e:uminJ}. Then, there exists $\Lambda_u > 0$ such that
%\begin{equation}\label{e:connectedness_condition2}
%D_r(x_0):=B_r(x_0)\cap D\quad\text{is connected}\qquad\text{and}\qquad 0<|D_r(x_0)\cap \Omega_u|<|D_r(x_0)|,
%\end{equation}  
%$\xi=(\xi_1,\dots,\xi_d) \in C^\infty_c(B_r(x_0);\R^d)$, we have
\begin{equation}\label{e:lagrange}
\delta J(u)[\xi] =  \Lambda_u \int_{\O_u}\!\!\dive \xi\, dx\qquad\text{for every}\qquad \xi\in C^\infty_c(D;\R^d).
\end{equation}
%Moreover, if $0<|B_r(x_0)\cap\Omega|<|B_r(x_0)|$, then $\Lambda_u>0$.
Moreover, for every $x_0 \in \partial\O_u\cap\partial D$ and every $r>0$, we have 
\begin{equation*}
\delta J(u)[\xi] \geq  \Lambda_u \int_{\O_u}\!\!\dive \xi\, dx,
\end{equation*}
for every $\xi \in C^\infty_c(B_r(x_0),\R^d)$ such that $(Id+\xi)^{-1}(D_r(x_0))\subset D_r(x_0)$.
\end{prop}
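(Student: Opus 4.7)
The identity \eqref{e:lagrange} will be established by a standard two-parameter (Lagrange multiplier) variation, the sign $\Lambda_u \geq 0$ by a one-sided admissibility argument, and the strict inequality $\Lambda_u > 0$ --- the main obstacle --- by invoking the Almgren-type non-degeneracy of $u$ announced for the appendix. The boundary inequality will then follow from the same two-parameter family restricted to $t \geq 0$.

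Using $\Omega_{u_t} = \Phi_t^{-1}(\Omega_u)$ for $\Phi_t = \operatorname{Id} + t\xi$ and a change of variables, one has $\frac{d}{dt}\big|_{t=0}|\Omega_{u_t}| = -\int_{\Omega_u}\dive \xi\,dx$. I fix once and for all an auxiliary field $\eta \in C^\infty_c(D;\R^d)$ with $\int_{\Omega_u}\dive \eta > 0$ (any radial cutoff vector field pointing toward a Lebesgue point $x_\ast \in \Omega_u$ will do). Given arbitrary $\xi \in C^\infty_c(D;\R^d)$, I set $\Phi_{t,s} = \operatorname{Id} + t\xi + s\eta$ and $u_{t,s} = u \circ \Phi_{t,s}$; the smooth map $(t,s) \mapsto |\Omega_{u_{t,s}}|$ has non-vanishing $\partial_s$-derivative at the origin, so the implicit function theorem yields $s \in C^1(-\varepsilon,\varepsilon)$ with $s(0) = 0$, $|\Omega_{u_{t,s(t)}}| = m$, and $s'(0) = -\int_{\Omega_u}\dive \xi \big/ \int_{\Omega_u}\dive \eta$. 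By the local optimality of $u$ in \eqref{e:uminJ} (Remark \ref{oss:jujv}),
$$0 = \frac{d}{dt}\bigg|_{t=0} J(u_{t,s(t)}) = \delta J(u)[\xi] + s'(0)\,\delta J(u)[\eta],$$
which is precisely \eqref{e:lagrange} with $\Lambda_u := \delta J(u)[\eta]/\int_{\Omega_u}\dive \eta$; applying the same identity to any other admissible $\eta'$ shows that $\Lambda_u$ is independent of the choice.

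Taking $\xi = \eta$ directly (without correction) makes $u_t$ admissible for $t \geq 0$ since $|\Omega_{u_t}| \leq m$, whence $\delta J(u)[\eta] \geq 0$ and $\Lambda_u \geq 0$. For the strict positivity, the plan --- to be implemented in the appendix --- is the following: if $\Lambda_u = 0$, then the stationarity \eqref{e:lagrange} reduced to the bounded equation $-\dive(e^{-\Phi}\nabla u) = \lambda_m e^{-\Phi} u$ (cf. Lemma \ref{l:boundedness}) allows an Almgren-type monotonicity formula for $u$ at any free boundary point $x_0$, from which one extracts a non-degeneracy bound $\sup_{B_r(x_0)} u \geq c\,r$ at small scales; combining with the mean-value rate of Lemma \ref{l:radon} and the fact that $u(x_0) = 0$, one obtains a contradiction. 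This is the truly delicate step and the one where the present proof departs from \cite{briancon-lamboley,briancon}.

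Finally, for $x_0 \in \partial \Omega_u \cap \partial D$ and $\xi \in C^\infty_c(B_r(x_0);\R^d)$ with $(\operatorname{Id}+\xi)^{-1}(D_r(x_0)) \subset D_r(x_0)$, the same family $\Phi_{t,s} = \operatorname{Id} + t\xi + s\eta$ with $\eta$ chosen compactly supported in $D$ still satisfies $\Phi_{t,s}^{-1}(D) \subset D$ for $t \geq 0$ and $s$ small (for $t<0$ points may leave $D$, so negative $t$ is lost); thus $u_{t,s(t)} \in H^1_0(D)$ is admissible only for $t \in [0,\varepsilon)$. The one-sided optimality now reads
$$0 \leq \frac{d}{dt}\bigg|_{t=0^+} J(u_{t,s(t)}) = \delta J(u)[\xi] + s'(0)\,\delta J(u)[\eta],$$
and inserting the already-established relation $\delta J(u)[\eta] = \Lambda_u \int_{\Omega_u}\dive \eta$ together with the expression for $s'(0)$ yields exactly the stated inequality $\delta J(u)[\xi] \geq \Lambda_u \int_{\Omega_u}\dive \xi$.
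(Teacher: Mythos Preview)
Your overall strategy matches the paper's: establish the Lagrange multiplier via domain variations, get $\Lambda_u\ge 0$ by one-sided admissibility, and defer $\Lambda_u>0$ to the Almgren-type argument in the appendix. The implicit-function-theorem route you take in place of the paper's case split ($\int_{\Omega_u}\dive\xi=0$ versus $\neq 0$) is a perfectly legitimate and equivalent variant.

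There is one genuine gap. Your justification for the existence of the auxiliary field $\eta$ with $\int_{\Omega_u}\dive\eta>0$ is wrong: a radial cutoff vector field centered at a Lebesgue point $x_\ast$ in the interior of $\Omega_u$, with support contained in $\Omega_u$, has $\int_{\Omega_u}\dive\eta=\int_{\mathrm{supp}\,\eta}\dive\eta=0$ by the divergence theorem. What is actually needed is a field whose support straddles $\partial\Omega_u$. This is precisely the content of the paper's Step~1, which uses Lemma~\ref{lem hypDr} to find a ball $B_r(x_0)\subset D$ with $0<|\Omega_u\cap B_r(x_0)|<|B_r(x_0)|$, and then shows (via a density argument with $\xi(x)=(x-x_1)\phi_\eps(x)$) that on such a ball some $\xi_0$ with $\int_{\Omega_u}\dive\xi_0=1$ must exist. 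You need this step; without it the whole two-parameter family degenerates.

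A smaller point: your sketch of the $\Lambda_u>0$ argument (``non-degeneracy $\sup_{B_r}u\ge cr$ combined with $u(x_0)=0$ gives a contradiction'') is not how the appendix actually works---non-degeneracy at a free boundary point is not in itself contradictory. The paper's Proposition~\ref{p:density} instead shows that $\Lambda_u=0$ forces $|D\setminus\Omega_u|=0$ (via Almgren monotonicity $\Rightarrow$ doubling $\Rightarrow$ lower density bound for $\Omega_u$ at \emph{every} point of $D$), which contradicts $|\Omega_u|=m<|D|$. Since you explicitly defer this to the appendix, this is only a matter of describing the mechanism correctly.
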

\begin{proof}
Let $\xi \in C^\infty_c(D;\R^d)$ and $u_t(x) = u(x + t\xi(x))$. Then we have
\begin{align}
|\O_{u_t}| &= |\O_u| -t\int_{\O_u} \dive\xi\,dx + o(t).\label{eq deri measure} 
\end{align}
{\it Step 1.} We first notice that if $B_r(x_0) \subset \R^d$ is a ball such that 
\begin{equation*}
D_r(x_0):=B_r(x_0)\cap D\quad\text{is connected}\qquad\text{and}\qquad 0<|D_r(x_0)\cap \Omega_u|<|D_r(x_0)|,
\end{equation*} 
then there is a vector field $\xi_0\in C^\infty_c(D_r(x_0);\R^d)$ such that $\ds \int_{\O_u} \dive\xi_0\,dx =1$. Indeed, if this is not the case, then we have 
\[ \int_{\O_u}\dive \xi\, dx=0\qquad\text{for every}\qquad \xi\in C^\infty_c (D_r(x_0);\R^d) .\]
For every ball $B_\rho(x_1) \subset D_r(x_0)$, take a vector field of the form $\xi(x)=(x-x_1)\phi_\eps(x)$ with $0\le \phi_\eps\le 1$ on $B_\rho(x_1)$, $\phi$ radially decreasing in $B_\rho(x_1)$ with $\left\vert \nabla\phi_\eps\right\vert \leq C(\rho\eps)^{-1}$, $\phi_\eps=1$ on $B_{\rho(1-\eps)}(x_1)$ and $\phi_\eps=0$ on $\partial B_\rho(x_1)$. Then we have 
$\ \ds \int_{\O_u}\big(d\phi_\eps(x)+(x-x_1)\cdot\nabla\phi_\eps(x)\big)\, dx=0$ and, passing to the limit as $\eps\to0$, we get 
$$d|\Omega_u\cap B_\rho(x_1)|-\rho\,\HH^{d-1}\big(\Omega_u\cap\partial B_\rho(x_1)\big)=0.$$ 
In particular, we get that the map $\rho\mapsto \rho^{-d}|\Omega_u\cap B_\rho(x_1)|$ is constant. Since the above identity holds for all balls $B_\rho(x_1)\subset D_r(x_0)$, we get that $|\Omega_u\cap D_r(x_0)|=0$ or $|\Omega_u\cap D_r(x_0)|=|D_r(x_0)|$, which concludes the proof of the claim. \par
%{\it We notice that instead of \eqref{e:connectedness_condition1} we can assume that:}
%\begin{equation}\label{e:connectedness_condition2}
%D_r(x_0):=B_r(x_0)\cap D\quad\text{is connected}\qquad\text{and}\qquad 0<|D_r(x_0)\cap \Omega_u|<|D_r(x_0)|.
%\end{equation} 
\noindent {\it Step 2.} We now prove the first statement of the proposition. Let $\xi_0\in C^\infty_c(D;\R^d)$ be as in Step 1 and $\xi \in C^\infty_c(D;\R^d)$.
There are two cases:

If $\ds\int_{\O_u} \dive{\xi}\,dx = 0$, define $\xi_1 = \xi + \eta \xi_0$ with $\eta>0$ so that $\ds\int_{\O_u} \dive{\xi_1}\,dx = \eta$.\\ Set $u_t(x) = u(x+t\xi_1(x))$. Then, for $t$ small enough, $u_t \in H^1_0(D)$, $|\O_{u_t}| \leq |\O_u| = m$ and
\[ J(u_t) = J(u) +t\,\delta J(u)[\xi_1] + o(t). \]
By the minimality of $u$ we have $J(u) \leq J(u_t)$ and so, $\delta J(u)[\xi_1]  \geq 0$. Therefore, 
$$\delta J(u)[\xi]  \geq -\eta\, \delta J(u)[\xi_0] \quad\text{for every}\quad \eta>0,$$ 
and hence, we get $\delta J(u)[\xi] \geq 0$. Taking $-\xi$ instead of $\xi$ we have that $\delta J(u)[\xi] = 0$, and hence \eqref{e:lagrange} holds for any $\Lambda_u \geq 0$.

If $\ds\int_{\O_u} \dive{\xi}\,dx \neq 0$, define $\ds\xi_2 := \xi - \xi_0 \int_{\O_u} \dive{\xi}\,dx$. Then $\ds\int_{\O_u} \dive{\xi_2}\,dx = 0$ and, by the preceding case, we have $\delta J(u)[\xi_2] = 0$. On the other hand, 
$$\ds\delta J(u)[\xi_2]= \delta J(u)[\xi] - \delta J(u)[\xi_0] \int_{\O_u} \dive{\xi}\,dx,$$ 
which proves \eqref{e:lagrange} with $\Lambda_u := \delta J(u)[\xi_0]$. Moreover, for $t$ small enough, $u_t(x) = u(x+t\xi(x)) \in H^1_0(D)$ and, by the minimality of $u$, we have
\[ J(u) \leq J(u_t) = J(u) +t\Lambda_u + o(t),\] 
which proves that $\Lambda_u \geq 0$. The strict inequality follows by a general result (Proposition \ref{p:density}) for minimizers of $J$ with respect to internal perturbations. 

\noindent {\it Step 3.} Let $x_0 \in \partial\O_u\cap\partial D$, $r>0$ and $\xi_0\in C^\infty_c(D;\R^d)$ be as in Step 1 so that we have $\delta J(u)[\xi_0]=\Lambda_u$. For any $\xi \in C^\infty_c(B_r(x_0),\R^d)$ such that $(Id+\xi)^{-1}(D_r(x_0))\subset D_r(x_0)$, we set $\xi_1=\xi-(1-\eta)\xi_0\int_{\O_u} \dive{\xi}\,dx$ where $\eta$ is some positive constant. Note that the vector field $\xi_1$ is such that $u_t(x)=u(x+t\xi_1(x))\in H^1_0(D)$ for small $t>0$ and $\int_{\O_u} \dive{\xi_1}\,dx = \eta>0$. Therefore, using the minimality of $u$, we have for every $t>0$ small enough 
\[ J(u) \leq J(u_t) = J(u) + t\delta J(u)[\xi_1] + o(t), \]
so that we get $\delta J(u)[\xi_1]\geq 0$. It follows that $\delta J(u)[\xi]\geq (1-\eta)\Lambda_u$ for every $\eta>0$, which concludes the proof. 
\end{proof}

In the following lemma we show that the Lagrange multipliers, associated to the solutions of variational problems with measure constraint in a fixed ball $B_r(x_0)$, are continuous with respect to variations of the measure constraint around $m$. This lemma will be used several times in the proof of the optimality of the blow-up limits.   

\begin{lm}[Convergence of the Lagrange multipliers]\label{l:u_n}
Let $D\subset\R^d$ be a bounded open set, $u\in H^1_0(D)$ be a solution of \eqref{e:uminJ} and $\Lambda_u$ be the constant from \eqref{e:lagrange}.
%Let $x_0\in D$ and $r>0$ be such that $B_r(x_0)\subset D$, $0<|B_r(x_0)\cap\Omega_u|<|B_r|$ and $u$ is a solution of \eqref{e:fbpJ} in $B_r(x_0)$. 
Let $B_r(x_0) \subset \R^d$ be a ball such that
\begin{equation*}
D_r(x_0):=B_r(x_0)\cap D\quad\text{is connected}\qquad\text{and}\qquad 0<|D_r(x_0)\cap \Omega_u|<|D_r(x_0)|.
\end{equation*} 
Let the sequence $ \left(m_n\right)_{n\geq 1}$   be such that $\ds\lim_{n\to\infty}m_n=m$. Then, for $n$ big enough, there is a solution $u_n\in\adm$ of the problem  
\begin{equation}\label{e:fbpJn_le}
\min\Big\{J(v)\, :\ v\in \adm,\ |\Omega_v|  \leq    m_n\Big\}. 
\end{equation}
%Let $\Lambda_{u_n}$ be the corresponding Lagrange multiplier in $B_r(x_0)$. 
Moreover, up to a subsequence, we have:
\begin{enumerate}[(a)]
\item for every $n$ there is a Lagrange multiplier $\Lambda_{u_n}>0$ for which \eqref{e:lagrange} holds for $u_n$ in $D_r(x_0)$;
\item for every $n$ there is a vector field $\xi_n\in C^\infty_c(D_r(x_0);\R^d)$ such that 
\begin{equation}\label{e:u_n_derivatives}
\frac{d}{dt}\Big|_{t=0}J(u_n^t)=\Lambda_{u_n}\qquad\text{and}\qquad\frac{d}{dt}\Big|_{t=0}|\Omega_{u_n^t}|=-1\qquad\text{where}\qquad u_n^t(x):=u_n(x+t\xi_n(x));
\end{equation}
\item $u_n$ converges strongly in $H^1_0(D)$ and pointwise almost everywhere to a function $u_\infty\in\adm$ which is a solution of \eqref{e:fbpJ};
\item the sequence of characteristic functions $\ind_{\Omega_{u_n}}$ converges to $\ind_{\Omega_{u_\infty}}$ pointwise almost everywhere and strongly in $L^2(D)$; 
\item if we have $0<|\O_u \setminus B_r(x_0)|<|D\setminus B_r(x_0)|$, then  $\ds\lim_{n\to\infty}\Lambda_{u_n}=\Lambda_u.$
\end{enumerate} 
Furthermore, if $D$ is of class $C^{1,1}$ and $m_n<m$ for every $n$ large enough, then all these properties still hold even if the  assumption $|\O_u\cap D_r(x_0)|<|D_r(x_0)|$ is not satisfied.
\end{lm}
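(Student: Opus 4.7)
The existence of $u_n$ for $n$ large follows directly from Lemma \ref{lm:localminpb}(1): since $|\O_u\cap D_r(x_0)|>0$ forces $|\O_u\setminus B_r(x_0)|<m$, the inequality $m_n>|\O_u\setminus B_r(x_0)|$ holds for all $n$ large, and $r$ may be assumed smaller than the $r_0$ of Remark \ref{rem:J_bounded}. In particular, by construction $u_n\in\adm$ and $|\O_{u_n}|\le m_n$. I will first establish the compactness statement (c), then use it to derive (a), (b), (d), and finally (e).

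\emph{Compactness and minimality of the limit (c),(d).} By Remark \ref{rem:J_bounded}, the bound $J(u_n)\le J(u)+o(1)$ (obtained by comparing $u_n$ with a competitor $\alpha_n u$, with $\alpha_n\to 1$ chosen so that $|\O_{\alpha_n u}|\le m_n$, which is possible since $\O_{\alpha u}=\O_u$) yields a uniform $H^1_0(D)$ bound on $(u_n)_n$. Extracting a subsequence, $u_n\rightharpoonup u_\infty$ weakly in $H^1_0(D)$, strongly in $L^2(D)$, and pointwise a.e.; since $u-u_n\in H^1_0(B_r(x_0))$, the limit $u_\infty\in\adm$. Lower semicontinuity of $|\{v>0\}|$ with respect to pointwise convergence and weak $L^2$ lower semicontinuity of the Dirichlet energy (Lemma \ref{l:sc_meas} and Fatou applied to $\ind_{\{u_n>0\}}$) give $|\O_{u_\infty}|\le m$ and $J(u_\infty)\le\liminf J(u_n)$. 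For the reverse inequality and minimality in \eqref{e:fbpJ}, I compare with any competitor $v\in\adm$ with $|\O_v|\le m$: rescaling $v$ to $v_n=\alpha_n v$ with $\alpha_n\downarrow1$ chosen so $|\O_{v_n}|\le m_n$ and $J(v_n)\to J(v)$ (possible since multiplying by a constant does not change the support), one gets $J(u_n)\le J(v_n)$ and in the limit $J(u_\infty)\le J(v)$. Thus $u_\infty$ minimizes \eqref{e:fbpJ}. Strong $H^1_0$-convergence follows from testing the Euler--Lagrange equations against $u_n-u_\infty$ together with the weak convergence of the nonlocal terms. Finally (d) follows because (by Lemma \ref{lm:localminpb}(3) and Lemma \ref{l:strmax}) both $u_n$ and $u_\infty$ are non-negative and strictly positive on each connected component of their support, so pointwise a.e. convergence $u_n\to u_\infty$ forces pointwise a.e. convergence of the indicator functions, hence $L^2$-convergence by dominated convergence.

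\emph{Lagrange multipliers (a),(b).} Once (d) holds, for $n$ large the assumption $0<|\O_u\cap D_r(x_0)|<|D_r(x_0)|$ transfers to $u_n$: indeed $|\O_{u_n}\cap D_r(x_0)|\to|\O_{u_\infty}\cap D_r(x_0)|>0$ (because mass lost in $D\setminus B_r$ is zero by $\adm$) and the upper bound similarly passes to the limit. Therefore Step 1 of Proposition \ref{p:lagrange} applied locally in $D_r(x_0)$ yields $\xi_n\in C^\infty_c(D_r(x_0);\R^d)$ with $\int_{\O_{u_n}}\dive\xi_n\,dx=-1$, and Steps 2--3 of that proposition, carried out with test vector fields supported in $D_r(x_0)$ (which are automatically admissible internal variations because $u_n-u\in H^1_0(B_r(x_0))$), produce $\Lambda_{u_n}>0$ satisfying \eqref{e:lagrange} and \eqref{e:u_n_derivatives} with the sign convention $\Lambda_{u_n}=\delta J(u_n)[\xi_n]$ (the strict positivity follows from Proposition \ref{p:density} in the appendix).

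\emph{Convergence of $\Lambda_{u_n}$ (e), and the $C^{1,1}$ case.} The key observation is that the extra hypothesis $0<|\O_u\setminus B_r(x_0)|<|D\setminus B_r(x_0)|$ lets us repeat Step 1 of Proposition \ref{p:lagrange} in some ball $B_\rho(y_0)\Subset D\setminus\overline{B_r(x_0)}$ to produce a single vector field $\xi_\ast\in C^\infty_c(D\setminus\overline{B_r(x_0)};\R^d)$ with $\int_{\O_u}\dive\xi_\ast\,dx=-1$. Since $u_n\equiv u$ on the support of $\xi_\ast$, the first variation $\delta J(u_n)[\xi_\ast]$ depends on $u_n$ only through $\int_{\O_{u_n}}\dive\xi_\ast\,dx$, which by (d) tends to $\int_{\O_u}\dive\xi_\ast\,dx=-1$. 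Testing \eqref{e:lagrange} for both $u$ and $u_n$ against $\xi_\ast$ and dividing gives $\Lambda_{u_n}\to\Lambda_u$. \textbf{The main obstacle} I anticipate is precisely the technical verification that the constrained competitor $\alpha_n v$ (or a similar small deformation) can be used to transfer minimizers with $|\O_v|\le m$ to admissible competitors with $|\O_{v_n}|\le m_n$, both when $m_n>m$ (trivial) and $m_n<m$; in the latter case the competitor must be slightly shrunk rather than scaled, which is where the $C^{1,1}$ regularity of $\partial D$ intervenes to allow diffeomorphic perturbations of $D_r(x_0)$ into itself (exactly as in the second half of Step 3 of Proposition \ref{p:lagrange}), thereby removing the need for the condition $|\O_u\cap D_r(x_0)|<|D_r(x_0)|$.
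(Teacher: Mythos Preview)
There are three genuine gaps.

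\textbf{Competitor construction.} Your basic move for both the $H^1$-bound and the minimality of $u_\infty$ is to compare with $\alpha_n v$. But as you yourself note, $\Omega_{\alpha v}=\Omega_v$ for any $\alpha>0$, so $|\Omega_{\alpha_n v}|=|\Omega_v|$ is \emph{unchanged}; when $m_n<m$ the function $\alpha_n u$ (with $|\Omega_u|=m$) is \emph{never} admissible in \eqref{e:fbpJn_le}, and the argument collapses. You flag this as the ``main obstacle'' but only propose a fix in the $C^{1,1}$ case. The paper handles the general case with the tool already built in Proposition~\ref{p:lagrange}: take the vector field $\xi_0\in C^\infty_c(D_r(x_0);\R^d)$ with $\int_{\Omega_u}\dive\xi_0=1$, set $u_t(x)=u(x+t\xi_0(x))$, and use $\frac{d}{dt}\big|_{t=0}|\Omega_{u_t}|=-1$ to find $t_n\to 0$ with $|\Omega_{u_{t_n}}|=m_n$. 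Then $J(u_n)\le J(u_{t_n})\to J(u)$ gives both the $H^1$-bound and, via a squeeze, $J(u_n)\to J(u_\infty)$ and strong $H^1$-convergence. No boundary regularity is needed because $\xi_0$ is compactly supported in $D$.

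\textbf{Convergence of indicators.} Pointwise a.e.\ convergence $u_n\to u_\infty$ does \emph{not} force $\ind_{\Omega_{u_n}}\to\ind_{\Omega_{u_\infty}}$ on the set $\{u_\infty=0\}$ (take $u_n\equiv 1/n$). Strict positivity on connected components of the support is irrelevant here. The paper's route is: first establish saturation $|\Omega_{u_n}|=m_n$ (Lemma~\ref{lm:localminpb}(2)) and $|\Omega_{u_\infty}|=m$, so $\|\ind_{\Omega_{u_n}}\|_{L^2}\to\|\ind_{\Omega_{u_\infty}}\|_{L^2}$; then any weak $L^2$ limit $h$ of $\ind_{\Omega_{u_n}}$ satisfies $h\ge\ind_{\Omega_{u_\infty}}$ (Fatou) and $\|h\|_2\le m^{1/2}$, forcing $h=\ind_{\Omega_{u_\infty}}$ and strong convergence.

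\textbf{Identifying the limit $\Lambda_\infty$.} In (e) you test \eqref{e:lagrange} for $u_n$ with $\xi_\ast$ supported \emph{outside} $B_r(x_0)$. But the Euler--Lagrange equation you derived for $u_n$ in (a) only holds for $\xi\in C^\infty_c(D_r(x_0);\R^d)$, since $u_n$ is merely a local minimizer in $\adm$. The paper proceeds in two steps: first show $\Lambda_{u_n}\to\Lambda_\infty$ by testing inside $D_r(x_0)$ and using (c),(d); then observe that $u_\infty$, being a \emph{global} solution of \eqref{e:uminJ}, satisfies \eqref{e:lagrange} in all of $D$, so one may test with $\xi_\ast$ supported outside $B_r(x_0)$ (where $u_\infty=u$) to conclude $\Lambda_\infty=\Lambda_u$.
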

\begin{proof} First of all, we notice that since $|\Omega_u\setminus D_r(x_0)|<m<|\Omega_u\cup D_r(x_0)|$, we may assume that the same holds for every $m_n$, for $n$ large enough. Thus, by Lemma \ref{lm:localminpb}, the problem \eqref{e:fbpJn_le} has a solution $u_n$ such that $|\O_{u_n}|=m_n$. Then, it follows that $u_n$ satisfies 
\begin{equation}\label{eq hypHLun}
0<|\Omega_{u_n}\cap D_r(x_0)|<|D_r(x_0)|.
\end{equation} 
%  Next, let $v_n$ be a solution of the problem \eqref{e:fbpJn_le}. Lemma \ref{l:minJ} yields some $r_0>0$ independent of $n$ such that, for all $r<r_0$, $v_n\ge 0$ in $B_r(x_0)$ and $|\Omega_{v_n}|=m_n$. In particular, the problem 
%\begin{equation}\label{e:fbpJn=}
%\min\Big\{J(v)\, :\ v\in \adm,\ |\Omega_v|=m_n\Big\}
%\end{equation}
%has a solution.
%Then $v_n\ge 0$ in $B_r(x_0)$. Indeed, if this was not the case, then the negative part $v_n^-$ would be non-zero and the quasi-open set $\{v_n^->0\}$ would be non-empty and strictly contained in $B_r(x_0)$. Moreover, by the optimality of $v_n$ we would have 
%\begin{align*}
%\int_{B_r(x_0)}e^{-\Phi}|\nabla v_n^-|^2\,dx&\le \lambda_m\int_{B_r(x_0)}e^{-\Phi}|v_n^-|^2\,dx\\
%%&\le\lambda_me^{\max\Phi-\min\Phi}\lambda_1(B_r,0)\int_{B_r(x_0)}e^{-\Phi}|\nabla v_n^-|^2\,dx\\
%&\le\lambda_m e^{2r\tau}C_d r^2 \int_{B_r(x_0)}e^{-\Phi}|\nabla v_n^-|^2\,dx,
%\end{align*}
%where the second inequality is due to the fact that $\max_{B_r(x_0)}\Phi-\min_{B_r(x_0)}\Phi\le 2r\tau $ and the variational characterization and the scaling of $\lambda_1(B_r,0)=C_d r^{-2}$. Thus, for $r$ small enough ($r\le r_1$ with $r_1$ depending only on $\tau$, $\lambda_m$ and $d$) all the solutions of \eqref{e:fbpJn_le} are non-negative. Now, the strict maximum principle and Lemma \ref{l:saturation} give that $|\Omega_{v_n}|=m_n$. In particular, we deduce that {\it the problem \eqref{e:fbpJn=} admits a solution} and {\it every solution of \eqref{e:fbpJn=} is also a solution of \eqref{e:fbpJn_le}}.

Therefore, by step 1 in the proof of Proposition \ref{p:lagrange}, there exists a vector field $\xi_n \in C^\infty_c(D_r(x_0);\R^d)$ such that $\ds\int_{\O_{u_n}} \!\!\!\dive{\xi_n}\,dx=1$, and, reasoning as in Proposition \ref{p:lagrange}, there exists $\Lambda_{u_n}> 0$ such that 
\begin{equation}\label{e:Lambda_n}
\delta J(u_n)[\xi] = \Lambda_{u_n}\int_{\O_{u_n}} \!\!\!\dive{\xi}\,dx\qquad\text{for every}\qquad \xi \in C^\infty_0(D_r(x_0),\R^d).
\end{equation}
Moreover, taking $u_n^t(x) = u_n(x+ t\xi_n(x))$, we obtain \eqref{e:u_n_derivatives}. This proves (a) and (b). We notice that the only difference with Proposition \ref{p:lagrange} is that in the present case, $u_n$ is only a solution of a variational problem in $B_r(x_0)$. 

Let now $n$ be fixed and $\xi_0\in C^\infty_c(B_r(x_0);\R^d)$ be the vector field, from the proof of Proposition \ref{p:lagrange}, associated to $u$. Then, taking $u_t(x):=u(x+t\xi_0(x))$, we have that 
$$\frac{d}{dt}\Big\vert_{t=0}|\Omega_{u_t}|=-\int_{\Omega_u}\dive\xi_0\,dx=-1,$$
and so, for $n$ large enough, there is a unique   $t_n\in \R$   such that 
$|\Omega_{u_n}|=m_n=|\Omega_{u_{t_n}}|.$
In particular, there are constants $C$ and $n_0$, depending on $u$ and $\xi_0$, but not on $n$, such that 
$$J(u_n)\le J(u_{t_n})\le C\quad\text{for every}\quad n\ge n_0.$$
Then, by Remark \ref{rem:J_bounded}, $  \left(u_n\right)_{n\geq 1}$   is uniformly bounded in $H^1_0(D)$, so up to a subsequence, $u_n$ converges weakly in $H^1$, strongly in $L^2$ and pointwise a.e. to a function $u_\infty\in \adm$. Now, since the pointwise convergence implies $\ind_{\Omega_{u_\infty}}\le\liminf \ind_{\Omega_{u_n}}$, we get that  $|\Omega_{u_\infty}|\le \liminf m_n=m$. In particular, $J(u)\le J(u_\infty)$. On the other hand, the weak $H^1$ convergence of $u_n$ gives that
$$J(u_\infty)\le \liminf_{n\to\infty} J(u_n)\le \liminf_{n\to\infty} J(u_{t_n})=J(u),$$
so, we get $J(u_\infty)=J(u)$,  $u_\infty$ is a solution  of \eqref{e:fbpJ}, $|\Omega_{u_\infty}|=m$ (by the saturation of the constraint). Moreover, $J(u_n)\to J(u_\infty)$
since we have
\[ \limsup_{n\to\infty} J(u_n) \leq \limsup_{n\to\infty} J(u_{t_n}) = J(u) \leq J(u_\infty) \leq \liminf_{n\to\infty} J(u_n). \]
But $u_n$ strongly converges in $L^2(D)$ to $u_\infty$ so that it gives $\int_D e^{-\Phi}|\nabla u_n|^2\,dx\to \int_D e^{-\Phi}|\nabla u_\infty|^2\,dx$, which means that the convergence of $u_n$ to $u$ is strong in $H^1_0(D)$. \par
\noindent   We now check that the convergence of $\ind_{\Omega_{u_n}}$ to $\ind_{\Omega_{u_\infty}}$ is strong in $L^2$. Indeed, for all non-negative function $\varphi\in L^2(D)$, the Fatou lemma shows that
\begin{equation} \label{Fatou}
\int_D \ind_{\Omega_{u_{\infty}}}\varphi\leq \int_D \varliminf \ind_{\Omega_{u_n}}\varphi \leq \varliminf \int_D\ind_{\Omega_{u_n}}\varphi.
\end{equation}
Up to a subsequence, there exists $h\in L^2(D)$ such that $\ind_{\Omega_{u_n}}\rightharpoonup h$ weakly in $L^2(D)$. Thus, \eqref{Fatou} yields $\ind_{\Omega_{u_\infty}}\leq h$. Moreover, $\left\Vert h\right\Vert_2\leq \varliminf \left\Vert \ind_{\Omega_{u_\infty}}\right\Vert_2$. As a consequence, $\left\Vert h\right\Vert_2=m^{1/2}$, which entails that $\ind_{\Omega_{u_n}}\rightarrow h$ strongly in $L^2(D)$. Since $\ind_{\Omega_{u_\infty}}\leq h$, we conclude that $\ind_{\Omega_{u_n}}\rightarrow \ind_{\Omega_{u_{\infty}}}$ strongly in $L^2(D)$,   and so, up to a subsequence $\ind_{\Omega_{u_n}}$ converges to $\ind_{\Omega_{u_\infty}}$ pointwise almost everywhere. This proves (c) and (d).

In order to prove (e), we first notice that $u$ and $u_\infty$ are both solutions of \eqref{e:uminJ} since $J(u_\infty)=J(u)$. Therefore, there is a Lagrange multiplier $\Lambda_\infty$ such that 
\begin{equation}\label{e:Lambda_infty}
\delta J(u_\infty)[\xi] =  \Lambda_\infty \int_{\O_{u_\infty}}\!\!\dive \xi\, dx\qquad\text{for every}\qquad \xi\in C^\infty_c(D;\R^d),
\end{equation}
Moreover, by (c) and (d), we get that 
$$\delta J(u_\infty)[\xi]=\lim_{n\to\infty}\delta J(u_n)[\xi]\qquad\text{and}\qquad \int_{\Omega_{u_\infty}}\dive\xi\,dx=\lim_{n\to\infty}\int_{\Omega_{u_n}}\dive\xi\,dx,$$ 
for every $\xi\in C^\infty_c(D_r(x_0);\R^d)$. Now, choosing $\xi\in C^\infty_c(D_r(x_0);\R^d)$ such that $\ds\int_{\Omega_{u_\infty}}\!\!\!\dive\xi\,dx\neq0$ and using \eqref{e:Lambda_infty} and \eqref{e:Lambda_n} we get that $\Lambda_{u_n}$ converges to $\Lambda_{\infty}$. 
Finally, if we have $0<|\O_u \setminus B_r(x_0)|<|D\setminus B_r(x_0)|$, there exists $\xi\in C^\infty_c(D \setminus B_r(x_0);\R^d)$ such that $\ds\int_{\Omega_{u_\infty}}\!\!\!\dive\xi\,dx\neq0$, so that $\Lambda_\infty=\Lambda_u$ since $u=u_\infty$ outside the ball $B_r(x_0)$.  

The proof of the last statement of the Proposition is very similar. We have $|\O_u \setminus D_r(x_0)|<m=|\O_u \cup D_r(x_0)|$ so that, since $m_n<m$, we have $|\O_u \setminus D_r(x_0)|<m_n<|\O_u \cup D_r(x_0)|$ for every $n$ large enough. It follows from Lemma \ref{lm:localminpb} that the problem \eqref{e:fbpJn_le} has a solution $u_n$ with $|\O_{u_n}|=m_n$ and such that \eqref{eq hypHLun} holds. Note also that there exists a vector field $\xi_0 \in C^\infty_0(B_r(x_0),\R^d)$ such that $(Id+t\xi_0)^{-1}(D_r(x_0))\subset D_r(x_0)$ for every small $t>0$ and $\int_{\O_u} \dive{\xi_0}\,dx = 1$ (consider a smooth extension of the normal to the boundary of $D$ on $\partial D \cap B_{\sfrac{r}{2}}(x_0)$). Moreover, we have $t_n>0$ (since $m_n<m$) and hence $u_{t_n} \in H^1_0(D)$. The rest of the proof is unchanged. 
\end{proof}

\subsection{Almost optimality of $u$ at small scales}\label{sub:small_scales}
Let $u$ be a solution of \eqref{e:fbp} in $D\subset\R^d$. For $x_0\in \R^d$ and $h>0$, we define the upper and the lower Lagrange multipliers, $\mu_-(h,x_0,r) \geq 0$ and $\mu_+(h,x_0,r) \geq 0$, by
\begin{align*}
&\mu_+(h,x_0,r) = \inf \{ \mu\geq 0 \ : \ J(u) + \mu |\O_u| \leq J(v) + \mu|\O_v|, \ \forall v \in \adm,\ m\leq |\O_v| \leq m+h \}, \\
&\mu_-(h,x_0,r) = \sup \{ \mu\geq 0 \ : \ J(u) + \mu |\O_u| \leq J(v) + \mu|\O_v|, \ \forall v \in \adm,\ m-h\leq |\O_v| \leq m \}.
\end{align*}
\begin{oss}[$\mu_-\le\Lambda_u\le \mu_+$]
	We notice that if $B_r(x_0) \subset \R^d$ is a ball such that
	%$x_0\in\partial\Omega_u\cap D$ and 
	$D_r(x_0):=D\cap B_r(x_0)$ is connected and $0<|D_r(x_0)\cap \Omega_u|<|D_r(x_0)|$, 
	 then $$\mu_-(h,x_0,r)\le \Lambda_u\le \mu_+(h,x_0,r)\qquad\text{for every} \qquad h>0.$$
	Indeed,   by Step $1$ of the proof of Proposition \ref{p:lagrange},   there is a vector field $\xi \in C^\infty_c(D_r(x_0);\R^d)$ such that $\ds\int_{\O_u}\dive{\xi}\,dx = 1$. Let $u_t(x) = u(x+t\xi(x))$. Then for $|t|$ small enough $u_t \in \adm$ and $m-h<|\Omega_{u_t}|<m+h$. Moreover, for every $\mu\ge 0$ we have 
	\begin{equation}\label{e:u_t_expansion}
	J(u_t)+\mu|\Omega_{u_t}|=J(u) +t\Lambda_u + \mu(|\Omega_u|-t) + o(t). 
	\end{equation}
     Now, if $t>0$ is small enough and $\Lambda_u<\mu$, then $m>|\Omega_{u_t}|$ and, by  \eqref{e:u_t_expansion},
	$J(u_t)+\mu|\Omega_{u_t}|< J(u) + \mu|\Omega_u|$,
	which proves that $\Lambda_u \geq \mu_-(h,x_0,r)$. Analogously, if $t<0$ and $\Lambda_u>\mu$, then $m<|\Omega_{u_t}|$ and again $ J(u_t)+\mu|\Omega_{u_t}|< J(u) + \mu|\Omega_u|$, which gives that  $\Lambda_u \leq \mu_+(h,x_0,r)$. 
\end{oss}	
\begin{oss}[Monotonicity of $\mu_+$ and $\mu_-$]\label{rem:mono} We notice that the following inclusion holds:
	$$\mathcal{A}(u,x,r) \subseteq \mathcal{A}(u,x_0,r_0)\quad\text{for every}\quad B_r(x) \subset B_{r_0}(x_0).$$ 
	In particular, for every $0<h\le h_0$ and every $B_r(x) \subset B_{r_0}(x_0)$, we have
	\[ \mu_-(h_0,x_0,r_0) \leq \mu_-(h,x,r) \qquad\text{and}\qquad \mu_+(h,x,r) \leq \mu_+(h_0,x_0,r_0).\]
\end{oss} 
\begin{teo}[Convergence of the upper and the lower Lagrange multipliers]\label{thm mu+-}
Let $u$ be a solution of \eqref{e:fbp} in the bounded open set $D\subset\R^d$ and let $\Lambda_u$ be given by Proposition \ref{p:lagrange}. Then there exists a constant $r_0>0$, which depends only on $\tau,\lambda_m$ and $d$, with the following property: for every ball $B_r(x_0)\subset \R^d$ centred at $x_0 \in \partial\O_u$ with $r\leq r_0$ and such that 
\begin{equation}\label{eq hypDr}
D_r(x_0):=B_r(x_0)\cap D \text{ is connected}\qquad\text{and}\qquad 0<|\O_u\cap D_r(x_0)|<|D_r(x_0)|,
\end{equation}
we have
%$x_0 \in \partial \O_u \cap D$. 
%, depending on $u,m,\tau$ and $D$, 
\[\lim_{h\rightarrow 0} \mu_+(h,x_0,r_0) = \lim_{h\rightarrow 0} \mu_-(h,x_0,r_0) = \Lambda_u.\]
If, moreover, $D$ is of class $C^{1,1}$, then there exists a constant $r_1>0$, which depends only on $\tau,\lambda_m,d$ and $D$, such that, for every ball $B_r(x_0)$ centred at $x_0 \in \partial\O_u\cap\partial D$ with $r\leq r_1$, we have
\[ \lim_{h\rightarrow 0} \mu_-(h,x_0,r_0) = \Lambda_u. \] 
\end{teo}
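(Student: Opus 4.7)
The plan, adapted from \cite{briancon-lamboley}, is to produce, for every competitor $v$ with $\bigl||\Omega_v|-m\bigr|\le h$, a modified function in $\mathcal{A}(u,x_0,r_0)$ of mass exactly $m$ whose energy $J$ is comparable to $J(v)$ up to a term of order $\Lambda_u(|\Omega_v|-m)$. Since the internal variation \eqref{e:lagrange} already yields $\mu_-(h,x_0,r_0)\le\Lambda_u\le\mu_+(h,x_0,r_0)$, only the matching inequalities as $h\to 0$ remain. The key ingredient is Lemma~\ref{l:u_n}: applied with $m_k:=m+k$ for $k$ in a small interval around $0$, it produces a family $u^k\in\mathcal{A}(u,x_0,r_0)$ of solutions to the problem $\min\{J(v):v\in\mathcal{A}(u,x_0,r_0),\ |\Omega_v|\le m+k\}$, with $|\Omega_{u^k}|=m+k$, $u^k\to u$ strongly in $H^1_0(D)$, $\ind_{\Omega_{u^k}}\to\ind_{\Omega_u}$ in $L^2(D)$, and Lagrange multipliers $\Lambda_{u^k}\to\Lambda_u$ as $k\to 0$.

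Under hypothesis \eqref{eq hypDr}, the argument in Step~1 of the proof of Proposition~\ref{p:lagrange} supplies a fixed vector field $\xi_0\in C^\infty_c(D_r(x_0);\R^d)$ with $\int_{\Omega_u}\dive\xi_0\,dx=1$. Setting $\Phi_t(x)=x+t\xi_0(x)$, the first-order expansions
\[
J(u^k\circ\Phi_t)=J(u^k)+t\,\delta J(u^k)[\xi_0]+O(t^2),\qquad |\Omega_{u^k\circ\Phi_t}|=|\Omega_{u^k}|-t\!\int_{\Omega_{u^k}}\!\dive\xi_0\,dx+O(t^2),
\]
hold with $O(t^2)$ constants uniform in $k$, thanks to the uniform $H^1$-bound on the family $u^k$ and the fact that $\xi_0$ and $\Phi_t$ do not depend on $k$. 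Since $\delta J(u^k)[\xi_0]\to\Lambda_u$ and $\int_{\Omega_{u^k}}\dive\xi_0\,dx\to 1$, the implicit function theorem (applied to $t\mapsto|\Omega_{u^k\circ\Phi_t}|$) yields $t_k=k(1+o(1))$ with $|\Omega_{u^k\circ\Phi_{t_k}}|=m$. Then $u^k\circ\Phi_{t_k}$ is an admissible competitor for \eqref{e:uminJ}, hence $J(u)\le J(u^k\circ\Phi_{t_k})=J(u^k)+k\Lambda_u+o(k)$ uniformly for $|k|\le h$. Given any $v\in\mathcal{A}(u,x_0,r_0)$ with $|\Omega_v|=m+k$ and $k\in[0,h]$, the minimality of $u^k$ gives $J(v)\ge J(u^k)$, and therefore
\[
J(u)-J(v)\le k\Lambda_u+o(k)=(\Lambda_u+\eps(h))(|\Omega_v|-m),\qquad \eps(h)\to 0,
\]
whereas for $k\in[-h,0]$ the same computation yields $J(u)-J(v)\le(\Lambda_u-\eps(h))(|\Omega_v|-m)$. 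These translate to $\mu_+(h,x_0,r_0)\le\Lambda_u+\eps(h)$ and $\mu_-(h,x_0,r_0)\ge\Lambda_u-\eps(h)$, completing the interior case.

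For the boundary case $x_0\in\partial\Omega_u\cap\partial D$ with $D\in C^{1,1}$, I would replace $\xi_0$ by a smooth extension of the inward unit normal to $\partial D$ in $B_r(x_0)$, normalized so that $\int_{\Omega_u}\dive\xi_0\,dx=1$; then $(\mathrm{Id}+t\xi_0)^{-1}(D_r(x_0))\subset D_r(x_0)$ for small $t>0$, ensuring $u^k\circ\Phi_t\in H^1_0(D)$ as required. Since only the direction $k\le 0$ enters the analysis of $\mu_-$, the second part of Lemma~\ref{l:u_n} applies without needing $|\Omega_u\cap D_r(x_0)|<|D_r(x_0)|$, and the rest of the argument is unchanged. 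The principal technical obstacle throughout is making the Taylor remainder uniform in $k$; this is what forces the use of a single fixed $\xi_0$ rather than the $k$-dependent fields $\xi_{u^k}$ produced by the individual Lagrange analyses, and it relies essentially on the strong $H^1$-convergence $u^k\to u$ and the $L^2$-convergence of the indicators furnished by Lemma~\ref{l:u_n}.
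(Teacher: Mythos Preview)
Your strategy is genuinely different from the paper's and is essentially correct, but one step needs more care than your sketch suggests.

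\textbf{Comparison with the paper.} The paper uses a penalization argument: for each $h$, it minimizes $J(v)+\alpha(|\Omega_v|-m)_+$ over $\mathcal A(u,x_0,r_0)$ with $\alpha$ just below $\mu_+(h)$, shows the minimizer $u_n$ has mass strictly above $m$, and then reads off $\Lambda_{u_n}\ge\alpha$ from the \emph{first-order} optimality condition of the penalized problem (using the $\xi_n$ from Lemma~\ref{l:u_n}(b)). The only limit taken is $\Lambda_{u_n}\to\Lambda_u$ from Lemma~\ref{l:u_n}(e); no second-order Taylor remainders enter. Your route instead fixes the constrained minimizers $u^k$ directly and deforms them back to mass $m$ via a single vector field $\xi_0$, comparing with $u$. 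This is conceptually more direct, but it trades the penalization machinery for a quantitative control of the deformation, and that is where the subtlety lies.

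\textbf{The gap.} Your claims $\delta J(u^k)[\xi_0]\to\Lambda_u$ and $\int_{\Omega_{u^k}}\dive\xi_0\to 1$ are not justified as written: Lemma~\ref{l:u_n}(c),(d) only gives convergence of $u^k$ and $\ind_{\Omega_{u^k}}$ to some solution $u_\infty$ of \eqref{e:uminJ}, which need not coincide with $u$ inside $B_r(x_0)$. Hence $a_k:=\int_{\Omega_{u^k}}\dive\xi_0$ converges to $\int_{\Omega_{u_\infty}}\dive\xi_0$, which a~priori may differ from~$1$, and could even vanish---in which case your implicit-function step $t_k=k(1+o(1))$ breaks down. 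What \emph{does} converge to $\Lambda_u$ is the ratio $\delta J(u^k)[\xi_0]/a_k=\Lambda_{u^k}$ (this is exactly Lemma~\ref{l:u_n}(e), since the Euler--Lagrange equation for $u^k$ holds for your $\xi_0\in C^\infty_c(D_r(x_0))$). So the cancellation
\[
J(u^k\circ\Phi_{t_k})-J(u^k)=t_k\,\Lambda_{u^k}\,a_k+O(t_k^2)=k\,\Lambda_{u^k}+O(t_k^2)
\]
saves the computation \emph{provided} $a_k$ stays bounded away from~$0$ so that $t_k=O(k)$. To guarantee this, argue by contradiction: along any bad sequence $k_n\to 0$, pass to a subsequence so that $u^{k_n}\to u_\infty$; since $|\Omega_{u_\infty}\cap D_r(x_0)|=m-|\Omega_u\setminus B_r(x_0)|=|\Omega_u\cap D_r(x_0)|\in(0,|D_r(x_0)|)$, Step~1 of Proposition~\ref{p:lagrange} applied to $u_\infty$ furnishes a vector field $\xi_0^\infty\in C^\infty_c(D_r(x_0))$ with $\int_{\Omega_{u_\infty}}\dive\xi_0^\infty=1$, and for large $n$ one has $\int_{\Omega_{u^{k_n}}}\dive\xi_0^\infty\ge\tfrac12$. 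Running your deformation argument with $\xi_0^\infty$ in place of $\xi_0$ then gives the desired contradiction. The paper's penalization route avoids this extra step entirely because it never needs to hit mass exactly $m$.
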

\begin{proof}[Proof of Theorem \ref{thm mu+-}:]
Let $x_0 \in \partial\O_u$ be such that \eqref{eq hypDr} holds and let $h>0$ be small. We set for simplicity $r=r_0$, $B_r(x_0) = B_r$, $\mu_+(h):=\mu_+(h,x_0,r)$ and  $\mu_-(h):=\mu_-(h,x_0,r)$. 
%We first notice that 
%%since $x_0\in\partial\Omega_u$, we have $0<|B_r\cap\Omega_u|<|B_r|$. Thus, 
%for $h$ small enough we have 
%\begin{equation}\label{eq hyp EL}
%\!\!0<|\O_v \cap D_r(x_0)| < |D_r(x_0)|, \quad \forall v \in \adm\quad\text{such that}\quad m - |\O_u \cap B_r| \leq |\O_v| \leq m+h.
%\end{equation}
We proceed in three steps.

\noindent \textbf{Step 1.} We first prove that $\mu_+(h)$ is finite.
Let, for any $n\in\N$, $v_n \in \adm$ be a solution of the variational problem
\begin{equation}\label{eq min vn}
\min \big\{J(v)+n(|\O_{v}| - m)_+ \ : \  v \in \adm, \ |\O_v| \leq m+h \big\}.
\end{equation}
If there exists $n$ such that $|\O_{v_n}| \leq m$, then $\mu_+(h) \leq n$ and hence $\mu_+(h)$ is finite. Indeed, by the minimality of $u$ and the definition of $v_n$, we have for every $v \in \adm$ such that $m\leq|\O_v|\leq m+h$
\begin{equation*}
J(u) + n|\O_u| \leq J(v_n)+n|\O_u| \leq J(v) + n|\O_v|,
\end{equation*}
so that $\mu_+(h) \leq n$ and the inequality   $\mu_+(h)<\infty$   holds. 

\noindent Suppose, by contradiction, that $|\O_{v_n}|>m$ for every $n$. First notice that since $J(v_n)$ is bounded from below (see Remark \ref{rem:J_bounded}) and $J(v_n)+n(|\Omega_{v_n}|-m)\le J(u)$, we have that $|\Omega_{v_n}|\to m$ as $n\to\infty$.
Since $v_n$ is  a solution of \eqref{e:fbpJn_le} with $m_n:=|\O_{v_n}|$, there is a Lagrange multiplier $\Lambda_{v_n}$ such that \eqref{e:lagrange} holds for $v_n$ and a vector field $\xi_n$ such that \eqref{e:u_n_derivatives} holds for $v_n^t(x) = v_n(x+ t\xi_n(x))$. For $t>0$ small enough, $v_n^t \in \adm$ and $m<|\O_{v_n^t}|<m+h$. Then, by the minimality of $v_n$ we have
\[ J(v_n) + n(|\O_{v_n}|-m) \leq J(v_n^t) + n(|\O_{v_n^t}|-m) =J(v_n) + t\Lambda_{v_n} + n(|\O_{v_n}| -t -m) +o(t), \]
which implies $n \leq \Lambda_{v_n}$, in contradiction with $\ds\lim_{n\rightarrow\infty} \Lambda_{u_n}= \Lambda_u$ from Lemma \ref{l:u_n}. 

%We now prove that $\mu_+(h)>0$. Indeed, if $\mu_+(h)=0$, then we get
%\[ J(u) \leq J(u+t\varphi), \quad \text{for every}\quad \varphi \in C^\infty_c(B_r)\quad\text{such that}\quad |\{\varphi>0\}|<h. \] 
%This implies that, for $s>0$ small enough, $u$ is a solution of the equation $-\dive(e^{-\Phi} \nabla u) = \lambda_m e^{-\Phi} u$ in $B_r$. Since $u\geq 0$ in $D$, by the strong maximum principle, we get that $u>0$ in $B_r$. This contradicts Lemma \ref{lem J bounded}.

\noindent \textbf{Step 2.} $\ds\lim_{h\rightarrow0}\mu_+(h) = \Lambda_u$.
%We now prove that $\ds\lim_{h\to0}\mu_+(h) = \Lambda_u$. 
Let   $\left(h_n\right)_{n\geq 1}$   be a decreasing sequence such that $h_n\to0$. Since $\Lambda_u \leq \mu_+(h)$ and $h \mapsto \mu_+(h)$ is non-decreasing, it is sufficient to prove that $\ds\lim_{n\to\infty}\mu_+(h_n)=\Lambda_u$. Fix $\varepsilon \in (0, \Lambda_u)$ and let $0<\alpha_n:=\mu_+(h_n) - \varepsilon < \mu_+(h_n).$
Let $u_n$ be the solution 
%(which exists due to \eqref{eq J bounded}) 
of the problem
\[ \min \big\{ J(v) + \alpha_n(|\O_v|-m)^+ \ : \ v \in \adm, \ |\O_v| \leq m+h_n \big\}. \]
Notice that $|\O_{u_n}|>m$, since otherwise we would have $J(u) \leq J(u_n) + \alpha_n(|\O_{u_n}|-m)^+$, which contradicts the definition of $\mu_+(h_n)$.
%The proof is now very similar to the one in the previous part. 
For $n$ large enough, \eqref{eq hypDr} holds with $u_n$, and since $u_n$ is solution of \eqref{e:fbpJn_le} with $m_n=|\Omega_{u_n}|$,
%\[ J(u_n) = \min \big\{ J(v) \ : \ v \in \adm,\ \ |\O_v| \leq |\O_{u_n}| \big\}, \]
by Proposition \ref{p:lagrange}, there is a Lagrange multiplier $\Lambda_{u_n}\geq0$ and a vector field $\xi_n$ such that \eqref{e:u_n_derivatives} holds for $u_n^t(x):=u_n(x+t\xi_n(x))$. By the minimality of $u_n$, for $t>0$ small enough, we have
\[ J(u_n) + \alpha_n(|\O_{u_n}|-m) \leq J(u_n^t) + \alpha_n(|\O_{u_n^t}|-m) =J(u_n) +t\Lambda_{u_n} + \alpha_n(|\O_{u_n}|-t-m) + o(t), \]
which shows that $\Lambda_{u_n}\geq \alpha_n$. By Lemma \ref{l:u_n} we have 
\[ 
\lim_{n\to\infty}\mu_+(h_n) - \varepsilon = \lim_{n\to\infty}\alpha_n \leq \lim_{n\to\infty}\Lambda_{u_n} = \Lambda_u,
\]
which proves the claim since $\eps>0$ is arbitrary.

\noindent \textbf{Step 3.} $\ds\lim_{h\rightarrow0}\mu_-(h)=\Lambda_u$.
We prove this result for any $x_0 \in \partial \O_u$, which will conclude the proof of the Theorem. Note that the smoothness of $D$ implies that there exists a constant $c_D>0$ such that $D_r(x_0)$ is connected for every $r\leq r_D$ and every $x_0 \in \partial\O_u\cap\partial D$. 

Let $\varepsilon>0$ and $(h_n)_{n\in\N}$ be a decreasing infinitesimal sequence. We will show that 
%Since $\mu_-(h) \leq \Lambda_u$ and $h\mapsto \mu_-(h)$ is non-increasing, it is sufficient to show that 
$\ds\Lambda_u - \varepsilon \leq \lim_{n\to\infty}\mu_-(h_n)$.
Let $u_n$ be a solution of the problem
\begin{equation}\label{eq min vn 2}
\min \big\{J(v) + (\mu_-(h_n)+\varepsilon)(|\O_{v}|-(m-h_n))^+ \ : v \in \adm, \ |\O_v| \leq m \big\}.
\end{equation}
Up to replacing $u_n$ by   $u_n^+$,   we can assume that $u_n \geq 0$ in $B_r$   (the argument is similar to the proof of Lemma \ref{lm:localminpb}).   We claim that 
\begin{equation}\label{e:bla}
m-h_n \leq |\O_{u_n}| <m.
\end{equation}
Suppose that $|\O_{u_n}|= m$. By the minimality of $u$ and $u_n$ we get
\[ J(u) + (\mu_-(h_n)+\varepsilon)|\O_u| \leq J(u_n) + (\mu_-(h_n)+\varepsilon)|\O_{u_n}| \leq J(v) + (\mu_-(h_n) + \varepsilon)|\O_v|, \]
for every $v \in \adm$ such that $m-h_n \leq |\O_v|\leq m$, which contradicts the definition of $\mu_-(h_n)$.
Now, if $|\O_{u_n}|<m-h_n$, we have $J(u_n) \leq J(u_n+t\varphi)$ for every $\varphi \in C^\infty_c(D_r(x_0))$ with sufficiently small compact support.
% such that $|\{ \varphi\neq0 \}|<a-h_n-|\O_{u_n}|$. 
Thus $u_n$ solves the PDE $-\dive(e^{-\Phi}\nabla u_n) = \lambda_m e^{-\Phi}u_n$ in $D_r(x_0)$. Since $u_n\geq 0$ in $D_r(x_0)$, by the strong maximum principle, we have that either $u_n\equiv0$ or $u_n >0$ in $D_r(x_0)$, in contradiction with \eqref{eq hypHLun}. Thus, we proved \eqref{e:bla}.

We have that $u_n$ is solution of \eqref{e:fbpJn_le} with $m_n:=|\Omega_{u_n}|$ which converges to $m$ as $n\to\infty$.
%\[ J(u_n) = \min \big\{ J(v)  : \ v \in \adm, \ |\O_v| \leq |\O_{u_n}| \big\}, \]
By Lemma \ref{l:u_n}, we have an Euler-Lagrange equation for $u_n$ in $B_r$ for some $\Lambda_{u_n}$. Let $\xi_n \in C^\infty_c(D_r(x_0);\R^d)$ be the vector field from Lemma \ref{l:u_n} (b) and let $u_n^t(x) = u_n(x+t\xi_n(x))$. For negative $t<0$ and $|t|$ small enough, $u_n^t \in \adm$ and  $|\O_{u_n}|\le |\O_{u_n^t}|<m$. Thus, by the minimality of $u_n$, we get
\[ J(u_n) + (\mu_-(h_n) + \eps)(|\O_{u_n}|-(m-h_n)) \leq J(u_n) +\Lambda_{u_n}t + (\mu_-(h_n) + \eps)(|\O_{u_n}|-t-(m-h_n)) + o(t), \]
which implies that $\Lambda_{u_n} \leq \mu_-(h_n) + \eps$. Now, by Lemma \ref{l:u_n}, we get
\[ \Lambda_u=\lim_{n\to\infty}\Lambda_{u_n} \leq \lim_{n\to\infty}\mu_-(h_n) + \varepsilon, \] 
which conclude the proof.
\end{proof}

\begin{oss}[Quasi-minimality at small scales]\label{rem:uniform} Suppose that $D\subset\R^d$ is just a bounded open set. By the monotonicity of $\mu_+$ and $\mu_-$ with respect to the inclusion (Remark \ref{rem:mono}) and a covering argument we get that for every compact set $\mathcal K\subset D$ there is $r(\mathcal K)>0$ such that: for every $\eps>0$ there is $h>0$ such that 
	$$\mu_+(h,x,r)-\eps \le\Lambda_u \le \mu_-(h,x,r)+\eps \quad\text{for every}\quad x \in \mathcal K\cap\partial\Omega_u\quad\text{and every}\quad 0<r \leq r(\mathcal K).$$
If, moreover, $D$ is of class $C^{1,1}$, then 
%that for every $x_0\in\overline D$ there is $r_0=r(x_0)>0$ such that for every $0<r\le r_0$ the set $D_r(x_0):=B_r(x_0)\cap D$ is connected, 
%\begin{align}
%\text{for every $x_0\in\partial\O_u$}&\text{ there is $r_0=r(x_0)>0$ such that}\notag\\
%&\text{for every $0<r\le r_0$ the set $D_r(x_0):=B_r(x_0)\cap D$ is connected},\label{e:rem:regD}
%\end{align}
then exists $r_D>0$ such that, for every $\eps>0$ there exists $h>0$ such that: for every $0<r\leq r_D$ and every $x_0\in\partial\O_u$ we have
\begin{align*}
\mu_+(h,x,r)-\eps \le \Lambda_u \le \mu_-(h,x,r)+\eps \quad&\text{if } |\O_u\cap D_r(x_0)|<|D_r(x_0)|, \\
\Lambda_u \le \mu_-(h,x,r)+\eps \quad&\text{otherwise.}
\end{align*}
\end{oss} 

\subsection{Lipschitz continuity of the eigenfunctions on the optimal sets}\label{sub:lipschitz}
In this subsection we prove that the solutions of \eqref{e:fbp} are (locally) Lipschitz continuous in $D$. For $\delta>0$ we set $D_\delta=\{ x \in D \ : \ d(x,\partial D)>\delta\}$ and let $\mu>0$ be fixed. By Theorem \ref{thm mu+-} and Remark \ref{rem:uniform} we get that if $u$ is a solution of \eqref{e:fbp} and $\mu>\Lambda_u$, then there is $r_0>0$ such that, for every $x_0\in\partial\O_u\cap D_\delta$, we have
\begin{equation}\label{e:supersolution}
J(u)+\mu|\Omega_u|\le J(v)+\mu |\Omega_v|\quad\text{for every}\quad v\in \mathcal{A}(u,x_0,r_0)\quad\text{such that}\quad |\Omega_v|\ge |\Omega_u|.
\end{equation}
Note that the condition $|\O_v|\leq |\O_u|+h$ can be dropped by choosing $r_0$ such that $|B_{r_0}|\leq h$.
We will prove that if $u\in H^1(B_{r_0})$ is bounded, nonnegative and satisfies \eqref{eq id moy} and \eqref{e:supersolution}, then $u$ is Lipschitz in $D_\delta$. 
%We also show that if $D$ is sufficiently regular, then the optimal eigenfunctions are globally Lipschitz continuous in $\overline D$. 
\begin{prop}[Lipschitz continuity of the eigenfunctions on the optimal sets]\label{p:lipschitz}
Let $D\subset\R^d$ be a bounded open set. Let $\tau\geq 0$, $m \in (0,|D|)$ and $\Phi \in W^{1,\infty}(D)$.
	Then, every solution of \eqref{e:fbp} is locally Lipschitz continuous in $D$. More precisely, it is Lipschitz in $D_{\delta}$ for all $\delta>0$. 
	Moreover, if the box $D$ is of class $C^{1,1}$, then $u$ (extended by $0$ outside $D$) is Lipschitz in $\R^d$.
	%Then the shape optimization problem \eqref{eq min lambda} has a solution $\O^\ast \subset D$ which is an open set. Moreover, every of $L$ in $\O^\ast$ associated to $\lambda_1(\O^\ast,v)$ is locally Lipschitz continuous in $D$.
\end{prop}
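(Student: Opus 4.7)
The plan is to establish Lipschitz continuity across the free boundary via an Alt--Caffarelli quasi-minimality argument adapted to the weighted operator $-\dive(e^{-\Phi}\nabla\cdot)$. The starting point is Remark \ref{rem:uniform}: for any compact $K\subset D_\delta$ one may fix $\mu>\Lambda_u$ and $r_0>0$ (and further shrink $r_0$ so that $|B_{r_0}|\le h$ for the scale $h$ associated to $\mu$) so that the quasi-minimality
\begin{equation*}
J(u)+\mu\,|\Omega_u|\le J(v)+\mu\,|\Omega_v|
\end{equation*}
holds for every $x_0\in\partial\Omega_u\cap K$, every $0<r\le r_0$, and every $v\in\adm$ with $|\Omega_v|\ge|\Omega_u|$. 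This relocates the analysis into the standard one-phase framework.

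Given such $x_0$ and $r$, I introduce the weighted-harmonic replacement $h\in H^1(B_r(x_0))$, solving $-\dive(e^{-\Phi}\nabla h)=\lambda_m e^{-\Phi}h$ in $B_r(x_0)$ with $h=u$ on $\partial B_r(x_0)$. For $r$ small, the equation is coercive and $h$ exists uniquely; since $u\ge 0$ is a distributional subsolution by Lemma \ref{l:radon}, the weak maximum principle yields $h\ge u\ge 0$ in $B_r$. Hence the extension $v=u$ on $D\setminus B_r(x_0)$ and $v=h$ on $B_r(x_0)$ is an admissible competitor with $|\Omega_v|\ge|\Omega_u|$. Expanding $J(u)-J(h)$ via integration by parts against $h-u\in H^1_0(B_r(x_0))$, the cross-terms cancel and one obtains
\begin{equation*}
J(u)-J(h)=\int_{B_r(x_0)}|\nabla(h-u)|^2\,e^{-\Phi}dx-\lambda_m\int_{B_r(x_0)}(h-u)^2\,e^{-\Phi}dx.
\end{equation*}
Absorbing the $L^2$ term via Poincar\'e for $r$ small, the quasi-minimality yields the energy estimate
\begin{equation*}
\int_{B_r(x_0)}|\nabla(h-u)|^2\,dx\le C\mu\,|B_r\setminus\Omega_u|\le C\mu\,|B_r|,
\end{equation*}
for a constant $C$ depending only on $\lambda_m$ and $\|\Phi\|_{W^{1,\infty}}$.

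The main step, and the principal obstacle, is to convert this energy estimate into the linear growth bound
\begin{equation*}
\aver{\partial B_r(x_0)}\!u\,d\mathcal{H}^{d-1}\le Cr\qquad\text{for every } x_0\in\partial\Omega_u\cap K \text{ and } 0<r\le r_0.
\end{equation*}
The Alt--Caffarelli dichotomy proceeds as follows: set $\alpha:=r^{-1}\aver{\partial B_r(x_0)}u\,d\mathcal{H}^{d-1}$; using the weighted mean-value formula \eqref{eq id moy} and Harnack applied to the positive solution $h$ in $B_r(x_0)$, one obtains $h\ge c\,\alpha r$ on a definite fraction of $B_{r/2}(x_0)$, and hence $h-u\ge c\,\alpha r$ on a set of positive density (relying on Lemma \ref{lem hypDr} to ensure $|\{u=0\}\cap B_r(x_0)|>0$). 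Since $h-u\in H^1_0(B_r(x_0))$, the Poincar\'e inequality then forces $\int_{B_r(x_0)}|\nabla(h-u)|^2\,dx\ge c\,\alpha^2 r^d$; comparing with the energy estimate above gives $\alpha\le C\sqrt{\mu}$, as desired. The weight $e^{-\Phi}$ and the eigenvalue term complicate the classical computation only through bounded multiplicative factors, and the remainder terms in \eqref{eq id moy} contribute at order $r$, which is compatible.

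Once the growth bound is established, the local Lipschitz estimate in $D_\delta$ follows by a standard reduction: for $x_0\in\Omega_u\cap D_\delta$ with $d:=\dist(x_0,\partial\Omega_u)<r_0$, pick $y_0\in\partial\Omega_u$ realizing $|x_0-y_0|=d$; combine interior gradient estimates on $B_{d/2}(x_0)\subset\Omega_u$ for the linear equation $-\dive(e^{-\Phi}\nabla u)=\lambda_m e^{-\Phi}u$ with subharmonicity, Harnack on $B_{2d}(y_0)$, and the mean-value bound at $y_0$ to conclude $\|u\|_{L^\infty(B_{d/2}(x_0))}\le Cd$, hence $|\nabla u(x_0)|\le C$. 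When $\partial D$ is $C^{1,1}$, the lower-$\mu_-$ portion of Remark \ref{rem:uniform} at points $x_0\in\partial\Omega_u\cap\partial D$ (which holds regardless of whether $|\Omega_u\cap D_r(x_0)|$ saturates $|D_r(x_0)|$) together with the extension $u\equiv 0$ outside $D$ and a standard barrier argument using the $C^{1,1}$ regularity of $\partial D$ yields the same linear growth at contact points, and hence the global Lipschitz bound on $\mathbb{R}^d$.
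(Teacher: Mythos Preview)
Your strategy (harmonic replacement and the Alt--Caffarelli dichotomy) is a legitimate alternative to the paper's route, but there is a genuine gap in the key step. You claim that ``$h-u\ge c\,\alpha r$ on a set of positive density (relying on Lemma \ref{lem hypDr})'' and then invoke Poincar\'e to obtain $\int_{B_r}|\nabla(h-u)|^2\ge c\,\alpha^2 r^d$. But Lemma \ref{lem hypDr} only gives $|\{u=0\}\cap B_r(x_0)|>0$; it provides \emph{no} uniform lower bound of the form $|\{u=0\}\cap B_{r/2}(x_0)|\ge c r^d$. Without that, Poincar\'e yields only $\int_{B_r}|\nabla(h-u)|^2\ge c\,\alpha^2\,|\{u=0\}\cap B_{r/2}|$, and comparing with the upper bound $\le C\mu\,|B_r|$ (which you relaxed from the sharper $\le C\mu\,|\{u=0\}\cap B_r|$) gives nothing. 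The density estimate you need is usually proved \emph{after} Lipschitz continuity and non-degeneracy, so you are arguing in a circle.

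The classical Alt--Caffarelli argument closes this gap differently: one keeps the sharp upper bound $\int_{B_r}|\nabla(h-u)|^2\le C\mu\,|\{u=0\}\cap B_r|$ and matches it with a lower bound of the \emph{same} form. The Poisson-kernel estimate for $h$ gives $h(x)\ge c\,\tfrac{r-|x-x_0|}{r}\,\alpha r$, and the Hardy inequality for $h-u\in H^1_0(B_r)$ (not Poincar\'e) then yields
\[
\int_{B_r}|\nabla(h-u)|^2\ \ge\ c\int_{B_r}\frac{(h-u)^2}{(r-|x-x_0|)^2}\ \ge\ c\int_{\{u=0\}\cap B_r}\frac{h^2}{(r-|x-x_0|)^2}\ \ge\ c\,\alpha^2\,|\{u=0\}\cap B_r|.
\]
Dividing both bounds by the strictly positive quantity $|\{u=0\}\cap B_r|$ gives $\alpha\le C\sqrt{\mu}$. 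In the weighted setting this requires adapting the Poisson and Hardy estimates to $-\dive(e^{-\Phi}\nabla\cdot)-\lambda_m e^{-\Phi}$, which is routine for $r$ small.

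By contrast, the paper avoids the harmonic replacement entirely. It tests \eqref{e:supersolution} with the simple outward perturbation $v=u+t\eta$ (a cutoff), optimizes in $t$, and obtains directly the Radon-measure bound $|\dive(e^{-\Phi}\nabla u)|(B_r)\le Cr^{d-1}$ (Lemma \ref{l:laplacian}). Feeding this into the mean-value identity \eqref{e:rate} gives the linear growth without any density input. This is shorter and sidesteps Hardy/Poisson entirely.

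A minor point on the boundary case: invoking the $\mu_-$ side of Remark \ref{rem:uniform} at contact points is misplaced---$\mu_-$ controls inward competitors and yields non-degeneracy, not Lipschitz continuity. The paper's argument there does not use quasi-minimality at all: it compares $u$ with a multiple of the torsion-type function $w\in H^1_0(D)$ solving $-\dive(e^{-\Phi}\nabla w)=1$, whose Lipschitz regularity up to $\partial D$ follows from the $C^{1,1}$ assumption; the bound $u\le Cw$ then gives linear growth near $\partial D$ directly. Your ``standard barrier argument'' is presumably this, but it should be decoupled from $\mu_-$.
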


%In the next lemma we will show that we can furtherly restrict our attention to the case when $\lambda_m=0$ at least when we consider the local problem \eqref{e:fbpJ}. Before we give the precise statement we introduce several notations.  First of all we recall that every $\tau$-Lipschitz function defined on $D$ can be extended to a $\tau$-Lipschitz function on $\R^d$. Thus, we may suppose that the potential $\Phi$ is defined on $\R^d$. Next, we notice that for every $x_0\in \R^d$, there is some $R=R(x_0)>0$ depending on $\Phi$ such that $\lambda_1(B_R(x_0),\nabla\Phi)=\lambda_m$. Also, the corresponding first eigenfunction $\varphi\in H^1_0(B_R(x_0))$ is strictly positive and $C^{2,\alpha}$ regular in $B_R(x_0)$. 

%Now, for $x_0$ and $R=R(x_0)$ as above and every $v\in H^1(B_{R/2}(x_0))$,  we set 
%$$\tilde J(v)=\int_{B_{R/2}(x_0)}|\nabla v|^2e^{-\Psi}\,dx,$$
%where $\Psi:B_R(x_0)\to\R$ is such that $e^{-\Psi}=\varphi^2e^{-\Phi}$. 

%Next, for every $v\in H^1(B_{R/2}(x_0))$ we denote by $\tilde v\in H^1(B_{R/2}(x_0))$ the function $\tilde v=v/\varphi$. A straightforward computation gives that 
%$$\tilde J(\tilde v)=J(v)\qquad\text{for every}\qquad v\in H^1(B_{R/2}(x_0)).$$

The proof is based on the following lemma, whose (more general) two-phase counterpart can be found for instance in \cite{alt-caffarelli-friedman}, \cite{briancon-hayouni-pierre} and \cite{bucur-mazzoleni-pratelli-velichkov}.
\begin{lm}[A bound on the measure $\text{\it div}\,(e^{-\Phi}\nabla u)$]\label{l:laplacian}
	Let $u$ be a solution of \eqref{e:uminJ} and let $r_0>0$ be such that $u$ satisfies \eqref{e:supersolution} for some $\mu>\Lambda_u$. Then, there is a constant $C>0$ such that for every $x_0 \in \partial\O_u\cap D_\delta$ we have
	%for every ball $B_{2r}(x) \subset B_{r_0}(x_0)$, we have
		\begin{equation}\label{eq esti div}
		|\dive(e^{-\Phi} \nabla u)|(B_r(x)) \leq Cr^{d-1}\qquad\text{for every ball}\qquad B_{2r}(x) \subset B_{r_0}(x_0).
		\end{equation}
\end{lm}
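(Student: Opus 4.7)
Let $\nu := \dive(e^{-\Phi}\nabla u) + \lambda_m u e^{-\Phi}$; by Lemma \ref{l:radon}(1) applied to the equation $-\dive(e^{-\Phi}\nabla u)= \lambda_m e^{-\Phi} u$ in $\Omega_u$, $\nu$ is a nonnegative Radon measure on $\R^d$. Since $\lambda_m u e^{-\Phi}$ is bounded (Lemma \ref{l:boundedness}),
$$|\dive(e^{-\Phi}\nabla u)|(B_r(x)) \leq \nu(B_r(x)) + C r^d,$$
so the estimate \eqref{eq esti div} is reduced to proving $\nu(B_r(x)) \leq C r^{d-1}$ for every ball with $B_{2r}(x) \subset B_{r_0}(x_0)$.

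The strategy is a weighted version of the classical Alt--Caffarelli comparison argument. First, I would construct a competitor $v \in H^1(B_{2r}(x))$ as the unique solution of
$$-\dive(e^{-\Phi}\nabla v) = \lambda_m e^{-\Phi} u \quad\text{in}\quad B_{2r}(x), \qquad v = u \quad\text{on}\quad \partial B_{2r}(x),$$
extended by $u$ outside $B_{2r}(x)$. Since $u$ is a distributional subsolution ($-\dive(e^{-\Phi}\nabla u) \leq \lambda_m u e^{-\Phi}$ on $\R^d$), the weak maximum principle gives $u \leq v$ on $B_{2r}(x)$. Setting $w := v - u \in H^1_0(B_{2r}(x))$ yields $w \geq 0$, the distributional identity $-\dive(e^{-\Phi}\nabla w) = \nu$ in $B_{2r}(x)$, and the admissibility $v \in \mathcal{A}(u,x_0,r_0)$ with $\Omega_v \supseteq \Omega_u$.

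Next, I would extract an energy estimate for $w$ from the quasi-minimality \eqref{e:supersolution}: since $|\Omega_v| \geq |\Omega_u|$,
$$J(u) - J(v) \leq \mu (|\Omega_v| - |\Omega_u|) \leq \mu |B_{2r}(x)| \leq C r^d.$$
On the other hand, writing $u = v - w$ and testing the equation for $v$ against $w$ (which gives $\int e^{-\Phi} \nabla v \cdot \nabla w \, dx = \lambda_m \int u w e^{-\Phi}\,dx$) produces the clean identity
$$J(u) - J(v) = \int_{B_{2r}(x)} |\nabla w|^2 e^{-\Phi}\,dx + \lambda_m \int_{B_{2r}(x)} w^2 e^{-\Phi}\,dx,$$
and hence $\|\nabla w\|_{L^2(B_{2r}(x))}^2 \leq C r^d$. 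To finish, I would pick a cutoff $\phi \in C_c^\infty(B_{2r}(x))$ with $0 \leq \phi \leq 1$, $\phi \equiv 1$ on $B_r(x)$, $|\nabla \phi| \leq C/r$, and test $-\dive(e^{-\Phi}\nabla w) = \nu$ against $\phi$:
$$\nu(B_r(x)) \leq \int \phi\, d\nu = \int_{B_{2r}(x)} e^{-\Phi}\nabla w \cdot \nabla \phi\, dx \leq C \|\nabla w\|_{L^2(B_{2r}(x))} \|\nabla \phi\|_{L^2(B_{2r}(x))} \leq C r^{d/2}\cdot r^{(d-2)/2} = C r^{d-1}.$$

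The only subtle point is designing the competitor so that $u \leq v$ pointwise, which guarantees $|\Omega_v| \geq |\Omega_u|$ and makes \eqref{e:supersolution} usable; this dictates the inhomogeneous right-hand side $\lambda_m e^{-\Phi} u$ in the definition of $v$, rather than $0$ (as in the classical Alt--Caffarelli setting). The algebraic identity for $J(u) - J(v)$ and the cutoff-Cauchy--Schwarz argument are routine once $w$ is at hand.
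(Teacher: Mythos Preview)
Your proof is correct and the identity $J(u)-J(v)=\int |\nabla w|^2 e^{-\Phi}+\lambda_m\int w^2 e^{-\Phi}$ is clean, but the route differs from the paper's. The paper avoids harmonic replacement entirely: it takes the elementary competitor $v=u+t\eta$ for a nonnegative cutoff $\eta\in C^\infty_c(B_{2r}(x))$ with $\eta\equiv 1$ on $B_r(x)$ and $|\nabla\eta|\le C/r$. Since $t>0$ and $\eta\ge 0$ one has $\Omega_v\supseteq\Omega_u$, so \eqref{e:supersolution} gives
\[
2t\langle \nu,\eta\rangle - t^2 J(\eta)=J(u)-J(u+t\eta)\le \mu|B_{2r}|,
\]
hence $2\langle\nu,\eta\rangle\le tJ(\eta)+\mu|B_{2r}|/t$; optimizing over $t>0$ and using $J(\eta)\le Cr^{d-2}$ yields $\nu(B_r(x))\le\langle\nu,\eta\rangle\le Cr^{d-1}$. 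This is shorter and needs neither the maximum principle nor the existence of $v$. Your approach, by contrast, is the weighted analogue of the Alt--Caffarelli harmonic-replacement lemma; it has the advantage of producing, as a by-product, the $H^1$-closeness estimate $\|\nabla(u-v)\|_{L^2(B_{2r})}^2\le Cr^d$ between $u$ and its replacement, which can be useful elsewhere (e.g.\ in proving H\"older or Lipschitz bounds directly).
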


\begin{proof}
	%Without loss of generality, we assume $x_0=0$. Let $\lambda>0$ and the functional $J_\lambda$ be as in Lemma \ref{lm min u relax}.
	Let $x=0$ and $\eta \in {C}^\infty_c(B_{2r})$ be such that
	\[ 0 \leq \eta \leq 1\ \text{ in }\ B_{2r}\ ,\qquad \eta = 1\ \text{ in }\ B_r\ , \qquad \|\nabla \eta\|_{L^\infty} \leq \frac{C_d}r\ .\]
	Using $u+t\eta$ as a test function for $J$, and setting $\ds\langle f,g\rangle:=\int_D fg\,dx$, we get
	\begin{align*}
	2 \langle \dive(e^{-\Phi} \nabla u) + \lambda_m u e^{-\Phi},\eta \rangle &\le tJ(\eta)+\frac{\mu}t |B_{2r}|
	%&\le 2\lambda_m \int_{B_{2r}} u \varphi e^{-\Phi} dx + t \int_{B_{2r}} |\nabla\varphi|^2 e^{-\Phi} dx + \frac{\lambda}{t}|\O_\varphi| \\
	 \le C\left( t\|\nabla\eta\|_{L^2}^2  + \frac{r^d}{t}\right)
	\end{align*}
	where the constant $C>0$ depends on $d$, $\Phi$ and $\mu$. Now, minimizing over $t>0$ and using the estimate $\|\nabla\eta\|_{L^2} \leq C_d r^{\frac{d}{2}-1}$, we get 
	\begin{align*}
	\langle \dive(e^{-\Phi} \nabla u) + \lambda_m u e^{-\Phi},\eta \rangle \le Cr^{d-1}.
	\end{align*}
	By Lemma \ref{l:radon}, we have that $\dive(e^{-\Phi} \nabla u) + \lambda_m u e^{-\Phi}$ is a positive Radon measure. Thus, the inequality $  \eta\ge \ind_{B_r}$   and the boundedness of $u$ imply
	\begin{align*}
	|\dive(e^{-\Phi} \nabla u)|(B_r)&\le \lambda_m \int_{B_r}u e^{-\Phi}\,dx+\langle \dive(e^{-\Phi} \nabla u) + \lambda_m u e^{-\Phi},\ind_{B_r} \rangle \le Cr^{d-1}.\qedhere
	\end{align*}
\end{proof}

The main ingredients of the proof of Proposition \ref{p:lipschitz} will be Lemma \ref{l:laplacian} and the classical gradient estimate that we recall in the lemma below. 

\begin{lm}[Gradient estimate]\label{l:grad_est}
Let $p>d$. Let $\mathcal U$ be an open subset of $\R^d$ and let $u\in W^{2,p}_{loc}(\mathcal U)\cap L^p(\mathcal U)$ be a (strong) solution to the equation 
$$\sum_{i,j=1}^d a_{ij}(x)\partial_{ij}u+\sum_{i=1}^d b_{i}(x)\partial_iu=f\qquad \text{in}\qquad \mathcal U,$$
where $f\in L^p(\mathcal U)$ and we suppose that: 
\begin{enumerate}[(a)]
\item the functions $a_{ij}:\mathcal U\to\R$ are H\"older continuous, that is, there are constants $C_a>0$ and $\delta_a>0$ such that 
$$|a_{ij}(x)-a_{ij}(y)|\le C_a|x-y|^{\delta_a}\qquad\text{for every}\qquad x,y\in\mathcal U\,;$$
\item there is a constant $M>0$ such that 
$$\|a_{ij}\|_{L^\infty(\mathcal U)}\le M\quad\text{and}\quad \|b_{i}\|_{L^\infty(\mathcal U)}\le M\qquad\text{for every}\qquad 1\le i,j\le d\,;$$
\item the matrix is $(a_{ij})_{ij}$ is uniformly elliptic, that is, there is a constant $c_a>0$ such that 
$$\sum_{i,j=1}^da_{ij}(x)\xi_i\xi_j\ge c_a|\xi|^2\qquad\text{for every}\qquad x\in\mathcal U\quad\text{and}\quad\xi=(\xi_1,\dots,\xi_d)\in\R^d\,.$$
\end{enumerate}
Then, for any domain $\mathcal U'\subset\subset\mathcal U$, we have 
$$\|u\|_{C^{1,\gamma}(\mathcal U')}\le C\left(\|u\|_{L^p(\mathcal U)}+\|f\|_{L^p(\mathcal U)}\right)\qquad\text{where}\qquad \gamma=1-\frac{d}{p},$$
and $C$ is a constant depending on $d$, $p$, $M$, $C_a$, $\delta_a$, $c_a$, $\mathcal U'$ and $\,\mathcal U$. 
\end{lm}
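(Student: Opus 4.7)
The plan is to combine the classical interior $W^{2,p}$ estimate for non-divergence form elliptic equations with H\"older continuous leading coefficients (Calder\'on--Zygmund theory for strong solutions, see e.g.\ Theorem 9.11 of Gilbarg--Trudinger) with Morrey's embedding $W^{2,p}\hookrightarrow C^{1,\gamma}$, valid for $p>d$ with exponent $\gamma=1-d/p$. The first-order term $\sum_i b_i\,\partial_i u$ is of lower order and will be absorbed into the right-hand side by an interpolation argument.

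First, I would move the first-order term to the right-hand side, setting $\widetilde f:=f-\sum_i b_i\,\partial_i u$, so that the equation becomes $\sum_{i,j}a_{ij}\partial_{ij}u=\widetilde f$ in $\mathcal U$. Since $u\in W^{2,p}_{loc}(\mathcal U)\cap L^p(\mathcal U)$ we have $\partial_i u\in L^p_{loc}(\mathcal U)$, and by assumption (b) on the $b_i$'s we have
\begin{equation*}
\|\widetilde f\|_{L^p(\mathcal U'')}\le \|f\|_{L^p(\mathcal U'')}+dM\,\|\nabla u\|_{L^p(\mathcal U'')}\qquad\text{for every }\mathcal U''\subset\subset\mathcal U.
\end{equation*}
In particular, $\widetilde f\in L^p_{loc}(\mathcal U)$ and the hypotheses (a)--(c) match precisely those of the standard interior $L^p$ theory for the reduced equation.

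Second, I would chain the interior $W^{2,p}$ estimate on a finite family of nested open sets $\mathcal U'\subset\subset\mathcal U'_1\subset\subset\mathcal U'_2\subset\subset\mathcal U$. On each pair, the Calder\'on--Zygmund estimate gives
\begin{equation*}
\|u\|_{W^{2,p}(\mathcal U'_k)}\le C\bigl(\|u\|_{L^p(\mathcal U'_{k+1})}+\|\widetilde f\|_{L^p(\mathcal U'_{k+1})}\bigr),
\end{equation*}
with $C=C(d,p,M,C_a,\delta_a,c_a,\mathcal U'_k,\mathcal U'_{k+1})$. Combining this with the standard interpolation inequality
\begin{equation*}
\|\nabla u\|_{L^p(\mathcal U'_{k+1})}\le \varepsilon\,\|u\|_{W^{2,p}(\mathcal U'_{k+1})}+C_\varepsilon\,\|u\|_{L^p(\mathcal U'_{k+1})},
\end{equation*}
choosing $\varepsilon$ sufficiently small and absorbing the $W^{2,p}$-term on the right after one further iteration (this is the standard Campanato/Gilbarg--Trudinger trick), I obtain
\begin{equation*}
\|u\|_{W^{2,p}(\mathcal U')}\le C\bigl(\|u\|_{L^p(\mathcal U)}+\|f\|_{L^p(\mathcal U)}\bigr).
\end{equation*}
Finally, since $p>d$, Morrey's embedding yields $W^{2,p}(\mathcal U')\hookrightarrow C^{1,\gamma}(\mathcal U')$ with $\gamma=1-d/p$ and a constant depending only on $d$, $p$ and $\mathcal U'$, from which the conclusion follows.

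\textbf{Main obstacle.} There is no genuine mathematical difficulty here, as every ingredient is classical. The only technical point is the bookkeeping required to pass from the form in which interior $L^p$ estimates are usually stated (on concentric balls) to an estimate on an arbitrary pair $\mathcal U'\subset\subset\mathcal U$: this is handled by a standard finite covering of $\overline{\mathcal U'}$ by balls whose concentric doublings lie in $\mathcal U$, followed by a summation. I would simply indicate this reduction and invoke Theorem 9.11 of Gilbarg--Trudinger together with the Morrey embedding, without reproducing the proofs.
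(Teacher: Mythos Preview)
Your approach is correct and matches the paper's: apply the interior $W^{2,p}$ estimate of Gilbarg--Trudinger (Theorem~9.11) followed by the Morrey/Sobolev embedding $W^{2,p}\hookrightarrow C^{1,\gamma}$. One small simplification: Theorem~9.11 of Gilbarg--Trudinger is already stated for operators with lower-order terms $b_i\partial_i u$, so your step of moving $\sum_i b_i\partial_i u$ to the right-hand side and then absorbing it via interpolation on nested domains is unnecessary; you may invoke the $W^{2,p}$ estimate directly on the full equation.
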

\begin{proof}
First notice that by \cite[Theorem 9.11]{GT}, there is a constant $C'$ such that 
$$\|u\|_{W^{2,p}(\mathcal U')}\le C'\left(\|u\|_{L^p(\mathcal U)}+\|f\|_{L^p(\mathcal U)}\right).$$
Now, the claim follows by the Sobolev inequality (see \cite[Section 5.6, Theorem 6]{evans}).
\end{proof}

\begin{proof}[Proof of Proposition \ref{p:lipschitz}]
Let $u$ be a solution of \eqref{e:fbp}. We proceed in four steps.

\noindent{\it Step 1. $\Omega_u$ is open.} Let $\bar x\in \partial\Omega_u\cap D$. We will prove that $u(\bar x)=0$. Let $r_1>0$ be such that $B_{r_1}(\bar x)\subset D$ and let $x_n\in B_{\sfrac{r_1}2}(\bar x)$ be a sequence converging to $\bar x$ such that $u(x_n)=0$ (such a sequence exists by Lemma \ref{lem hypDr}). By Lemma \ref{l:laplacian}   and Lemma \ref{l:radon} \eqref{e:rate},   for every $n$ 
%any point $\bar x\in B_{r_0/2}(x_0)$ 
and every $r\le r_1/2$ we have 
$$\aver{\partial B_r(x_n)}u\,d\HH^{d-1}\le u(x_n)+Cr=Cr,$$
where the constant $C$ does not depend on $n$.
Passing to the limit as $n\to\infty$, we get that 
$$\aver{\partial B_r(\bar x)}u\,d\HH^{d-1}\le Cr\quad\text{for every}\quad r\le r_1/2,$$
which, passing to the limit as $r\to 0$, proves that $u(\bar x)=0$.

\noindent{\it Step 2. Gradient estimate in $\Omega_u$.} We claim that, for every ball $B_r(\bar x)\subset \Omega_u$, there is a constant $C_2$, depending only on $\Phi$, $d$ and $\lambda_m$, such that 
\begin{equation}\label{e:gradest}
\|\nabla u\|_{L^\infty(B_{\sfrac{r}2}(\bar x))}\le \frac{C_2}{r}\|u\|_{L^\infty(B_r(\bar x))}.
\end{equation} 
Indeed, suppose that $\bar x=0$ and set $\Phi_r(x):=\Phi(rx)$ and $u_r(x)=u(rx)$. Then $u_r$ is a solution of 
$$\dive(e^{-\Phi_r}\nabla u_r)+\lambda_m e^{-\Phi_r}u_r=0\quad\text{in}\quad B_1,$$
which can be re-written as 
$$\Delta u_r-\nabla \Phi\cdot \nabla u_r=-\lambda_m u_r\quad\text{in}\quad B_1.$$
Applying Lemma \ref{l:grad_est} with $\mathcal U=B_1$, $\mathcal U'=B_{\sfrac12}$, $a_{ij}=\delta_{ij}$, $b=\nabla\Phi$, $M=1+\|\nabla\Phi\|_{L^\infty}$, $f=-\lambda_mu$ and any $p>d$, we get  
$$\|\nabla u_r\|_{L^\infty(B_{\sfrac{1}2})}\le\|u_r\|_{C^{1,\alpha}(B_{\sfrac12})}\le  C_2\|u_r\|_{L^\infty(B_{1})},$$ 
%and so, the interior Schauder estimate (see \cite[Theorem 6.2 and Theorem 9.19]{GT}) gives 
%$$\|\nabla u_r\|_{L^\infty(B_{\sfrac{1}2})}\le\|u_r\|_{C^{2,\alpha}(B_{\sfrac12})}\le  C_2\|u_r\|_{L^\infty(B_{1})},$$ 
which, after rescaling, is precisely \eqref{e:gradest}. 
	
\noindent{\it Step 3. Proof of the local Lipschitz continuity.} Let $\bar x\in \Omega_u\cap D_\delta$ and set $r:=\text{dist}(\bar x,\partial\Omega_u)$. Let $r_0 \in (0,\sfrac\delta2)$ be such that $u$ satisfies \eqref{e:supersolution} for every point $x_0$ on $\partial\O_u\cap D_{\sfrac\delta2}$ (such an $r_0$ exists by a standard covering argument). We now consider two cases 

Case 1. If $r\geq \sfrac{r_0}6$, then the estimate \eqref{e:gradest} gives $|\nabla u(\bar x)| \leq C_{r_0}$.

Case 2. If $r\leq \sfrac{r_0}6$, let $\bar y$ be the projection of $\bar x$ on $\partial\Omega_u$, that is, $\bar y\in\partial\Omega_u$ and $r=|\bar x-\bar y|$. Notice that in this case we have that $\bar y \in \partial\O_u\cap D_{\sfrac{\delta}2}$. Now, take any $\bar z \in B_r(\bar x)$. By Lemma \ref{l:laplacian} and by the estimate \eqref{e:rate} of Lemma \ref{l:radon}, we have 
$$u(\bar z) \leq \aver{\partial B_s(\bar z)} u\,d\mathcal{H}^{d-1} + Cs\qquad\text{for every}\qquad 0<s\le r,$$
where $C$ is the constant in the right-hand side of \eqref{e:rate}.
Now, multiplying by $s^{d-1}$ and then integrating from $0$ to $r$ the above inequality, we get
\begin{align*}
\nonumber u(\bar z) \leq \aver{B_r(\bar z)} u\,dx +Cr&\leq 3^d \aver{B_{3r}(\bar y)} u\,dx +Cr \\
&= \frac{d}{r^d}\int_0^{3r}s^{d-1}\,ds\,\aver{\partial B_s(\bar y)} u\,d\mathcal{H}^{d-1} +Cr.
\end{align*}
Using again \eqref{e:rate}, this time for $\bar y$ (at which $u(\bar y)=0$ by Step 1 of the proof), we get that 
$$u(\bar z)\le 3^{d+1}Cr\qquad\text{for every}\qquad \bar z \in B_r(\bar x).$$
Finally, using the estimate \eqref{e:gradest} this gives
\begin{equation}
|\nabla u(\bar x)| \leq \|\nabla \bar u\|_{L^\infty(B_{\sfrac{r}2}(\bar x))} \leq \frac{C_2}{r} \|u\|_{L^\infty(B_r(\bar x))} \leq 3^{d+1}C_2C.
\end{equation}
This proves that $|\nabla u|$ is bounded in $D_\delta$ without assuming any regularity of $D$.
	
\noindent{\it Step 4. Global Lipschitz estimate.} We first notice that since $D$ is $C^{1,1}$ regular, the radius $r_0$ for which \eqref{e:supersolution} holds does not depend on the point $x_0\in \partial \O_u$. Now, let $\bar x\in \O_u \setminus D_{r_0}$ and set $r:=\text{dist}(\bar x,\partial\Omega_u\cap D)$. We consider the projection $\bar y$ of $\bar x$ on $\partial\Omega_u$ and we distinguish two cases. If $6r\leq \text{dist}(\bar x,\partial D)$, then we apply the estimate from Step 3 and we get that $|\nabla u(\bar x)|\le C$. If $6r\ge \text{dist}(\bar x,\partial D)$, we consider the solution $w$ to the problem
$$-\dive(e^{-\Phi}\nabla w)=1\quad\text{in}\quad D,\qquad w\in H^1_0(D),$$
which is Lipschitz continuous in $\R^d$ since $D$ is of class $C^{1,1}$ (see for example \cite[Theorem 9.13]{GT}). Moreover, by the strong maximum principle, we have that $u\le Cw$ for some constant $C$ depending on $\lambda_m$, $d$ and $\Phi$. Therefore, setting $r_1=\text{dist}(\bar x,\partial D)$, we have for every $\bar z \in B_{r_1}(\bar x)$,
\[ u(\bar z) \leq Cw(\bar z) \leq C|\bar z -\bar y| \leq C r_1, \]
and we conclude by the gradient estimate \eqref{e:gradest}. 
	%Now the bound on $|\nabla u|$ on any compact set $K\subset D$ follows by a covering argument, Theorem \ref{thm mu+-} and the fact that \eqref{e:gradest} holds away from the free boundary. 
\end{proof}

\subsection{Non-degeneracy of the eigenfunctions and finiteness of the perimeter of $\Omega_u$}\label{sub:nondegeneracy}
Let $u$ be a solution of \eqref{e:fbp} in the bounded open set $D\subset\R^d$. Let $x_0\in\partial \Omega_u$ and $r_0(x_0)$ be such that for every $0<r\le r(x_0)$ the set $D_r(x_0):=B_r(x_0)\cap D$ is connected. Notice that such an $r(x_0)$ trivially exists if $x_0\in \partial\O_u \cap D$, while in the general case it is sufficient to assume some a priori regularity of the box $D$. Then, by Remark \ref{rem:uniform}, for every $\mu<\Lambda_u$ there is some $r_0>0$ such that, for every $x_0 \in \partial\O_u$, we have
\begin{equation}\label{e:subsolution}
J(u)+\mu|\Omega_u|\le J(v)+\mu|\Omega_v|\quad\text{for every}\quad v\in\mathcal{A}(u,x_0,r_0)\quad\text{such that}\quad |\Omega_v|\le|\Omega_u|. 
\end{equation}
This property was first exploited by Alt and Caffarelli to prove the non-degeneracy of the solutions. More recently, it was exploited by Bucur  who introduced the notion of a {\it shape subsolution}  which found application to several shape optimization problems (see for example \cite{bucur}  and \cite{bucur-velichkov}).  

Lemma \ref{l:nondegeneracy} below is a fundamental step in the proof of the regularity of the free boundary since it allows to prove that the blow-up limits (see Subsection \ref{sub:blowup}) are non trivial. It is the analogue of the non-degeneracy estimate from \cite{alt-caffarelli} and the proof is based on the same idea. Before we state it, we recall the following boundary estimate for solutions to elliptic PDEs. 
\begin{lm}[Boundary gradient estimate]\label{l:boundary_grad_est}
Let $p>d$. Let $\mathcal U$ be a bounded connected open subset of $\R^d$ with $C^{1,1}$ boundary. Let $T_1,\dots, T_k$ be the connected components of the boundary $\partial\mathcal U$ and let $c_1,\dots,c_k$ are given constants. Let $u\in W^{2,p}_{loc}(\mathcal U)\cap L^p(\mathcal U)$ be a (strong) solution to the problem 
$$\sum_{i,j=1}^d a_{ij}(x)\partial_{ij}u+\sum_{i=1}^d b_{i}(x)\partial_iu=f\quad \text{in}\quad \mathcal U,\qquad u=c_i\quad \text{in}\quad T_i\, ,\quad i=1,\dots,k\,,$$
where $f\in L^p(\mathcal U)$ and we suppose that $A=(a_{ij})_{ij}$ and $b=(b_1,\dots, b_d)$ satisfy the conditions (a), (b) and (c) of Lemma \ref{l:boundary_grad_est}. Then, we have 
$$\|u\|_{C^{1,\gamma}(\mathcal U)}\le C\left(\|u\|_{L^p(\mathcal U)}+\|f\|_{L^p(\mathcal U)}\right)\qquad\text{where}\qquad \gamma=1-\frac{d}{p},$$
and $C$ is a constant depending on $d$, $p$, $M$, $C_a$, $\delta_a$, $c_a$ (defined in Lemma \ref{l:grad_est}), and $\,\mathcal U$. 
\end{lm}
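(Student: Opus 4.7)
The plan is to reduce to homogeneous boundary conditions and then apply the standard global $W^{2,p}$ estimate on $C^{1,1}$ domains, followed by Sobolev embedding, exactly in the spirit of Lemma \ref{l:grad_est}. First I will exploit the $C^{1,1}$ regularity of $\partial\mathcal U$ together with the fact that the connected components $T_1,\ldots,T_k$ of the compact set $\partial\mathcal U$ are at positive distance from one another to construct a smooth function $g\in C^\infty(\overline{\mathcal U})$ satisfying $g\equiv c_i$ in some open neighborhood of $T_i$ for each $i$ (a trivial partition-of-unity construction on disjoint tubular neighborhoods). Setting $\tilde u:=u-g$, we obtain a function in $W^{2,p}_{loc}(\mathcal U)\cap W^{1,p}_0(\mathcal U)$ solving the same PDE with modified right-hand side
\[ \tilde f := f - \sum_{i,j=1}^d a_{ij}\,\partial_{ij}g - \sum_{i=1}^d b_i\,\partial_i g \in L^p(\mathcal U), \]
with $\|\tilde f\|_{L^p(\mathcal U)}\le \|f\|_{L^p(\mathcal U)}+C_1\|g\|_{C^2(\overline{\mathcal U})}$, where $C_1$ depends only on $d$, $M$, and $|\mathcal U|$.

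Next, I will invoke the global $W^{2,p}$ estimate of Gilbarg--Trudinger \cite[Theorem~9.13]{GT}, whose hypotheses (uniform ellipticity, H\"older continuous $a_{ij}$, bounded $b_i$, $C^{1,1}$ boundary, and zero trace for $\tilde u$) are precisely those ensured by assumptions (a)--(c) together with the construction of $g$. This yields
\[ \|\tilde u\|_{W^{2,p}(\mathcal U)} \le C_2 \bigl(\|\tilde u\|_{L^p(\mathcal U)}+\|\tilde f\|_{L^p(\mathcal U)}\bigr), \]
with $C_2$ depending on the data listed in the statement. Since $p>d$ and $\partial\mathcal U$ is Lipschitz, the Sobolev embedding $W^{2,p}(\mathcal U)\hookrightarrow C^{1,\gamma}(\overline{\mathcal U})$ with $\gamma=1-d/p$ (see \cite[Section 5.6, Theorem 6]{evans}) gives a bound on $\|\tilde u\|_{C^{1,\gamma}(\overline{\mathcal U})}$. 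Adding back $g$, which is smooth and therefore has finite $C^{1,\gamma}$ norm depending only on the construction data, produces the claimed estimate on $\|u\|_{C^{1,\gamma}(\mathcal U)}$.

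Given the preceding interior analogue (Lemma \ref{l:grad_est}), there is essentially no substantive difficulty here: the only new input needed beyond the arguments already used for interior regularity is the reduction to zero boundary trace, which is made possible by the constancy of $u$ on each boundary component combined with the separation and smoothness of the $T_i$. The ``obstacle'', such as it is, amounts to verifying that the classical global $W^{2,p}$ machinery applies with the coefficient assumptions (a)--(c); once $\tilde u$ has zero trace this is an immediate citation, and the rest of the argument is bookkeeping of norms.
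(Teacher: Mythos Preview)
Your proof is correct and follows essentially the same approach as the paper, which simply cites \cite[Theorem~9.13]{GT} for the global $W^{2,p}$ estimate and then the Sobolev embedding \cite[Section~5.6, Theorem~6]{evans}. Your version is merely more explicit in spelling out the reduction to homogeneous boundary data via the auxiliary function $g$, a step the paper leaves implicit in the citation.
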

\begin{proof}
By \cite[Theorem 9.13]{GT}, there is a constant $C'$ such that 
$$\|u\|_{W^{2,p}(\mathcal U)}\le C'\left(\|u\|_{L^p(\mathcal U)}+\|f\|_{L^p(\mathcal U)}\right).$$
The claim follows by the Sobolev inequality (see for instance \cite[Section 5.6, Theorem 6]{evans}).
\end{proof}

\begin{lm}[Non-degeneracy of the eigenfunctions on the optimal sets]\label{l:nondegeneracy}
Let $u$ be a solution of \eqref{e:fbp} in the bounded open set $D\subset\R^d$. Suppose that $x_0\in\partial\O_u$, $0<\mu<\Lambda_u$ and $r_0>0$ are such that \eqref{e:subsolution} holds. Then there are constants $c>0$ and $r_1>0$ which depend only on $\tau,\lambda_m,\mu$ and $d$, such that for every ball $B_{2r}(x)\subset B_{r_0}(x_0)$ with $r\leq r_1$, we have that:
$$\text{If}\quad \|u\|_{L^\infty(B_{2r}(x))}\le cr\, ,\quad\text{then}\quad u=0\quad\text{in} \quad B_{r}(x).$$    
     %Density estimates:
    %\[ c \leq \frac{|\O_u \cap B_r(x_0)|}{|B_r(x_0)|} \leq 1-c. \]
\end{lm}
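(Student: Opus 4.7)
The strategy is the classical Alt--Caffarelli non-degeneracy: I would test the subsolution inequality \eqref{e:subsolution} against a carefully chosen competitor $v \in H^1_0(D)$ that coincides with $u$ outside $B_{2r}(x)$ and vanishes on $\bar B_r(x)$. The choice I would make is a ``cutoff subtraction'': with $M:=\|u\|_{L^\infty(B_{2r}(x))}\le cr$ and a smooth radial cutoff $\zeta\in C_c^\infty(B_{2r}(x))$ satisfying $\zeta\equiv 1$ on $B_r(x)$, $0\le\zeta\le 1$ and $|\nabla\zeta|\le C/r$, set $v:=(u-M\zeta)_+$. Then $v=u$ outside $B_{2r}(x)$, $v\equiv 0$ on $B_r(x)$ (because $u\le M=M\zeta$ there), $0\le v\le u$, and hence $\Omega_v\subset\Omega_u$ with $\Omega_v\cap B_r(x)=\emptyset$. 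In particular $|\Omega_u|-|\Omega_v|\ge|\Omega_u\cap B_r(x)|$.

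Plugging $v$ into \eqref{e:subsolution} and rearranging, since $v\equiv 0$ on $B_r(x)$ and $v=u$ outside $B_{2r}(x)$, produces the inequality
$$
\int_{B_r(x)}|\nabla u|^2 e^{-\Phi}\,dx + \mu\,|\Omega_u\cap B_r(x)| \le \int_{B_{2r}\setminus B_r}\!\bigl(|\nabla v|^2-|\nabla u|^2\bigr)e^{-\Phi}\,dx + \lambda_m\int_{B_{2r}(x)}\!u^2 e^{-\Phi}\,dx.
$$
The right-hand side is then estimated by routine computation: on $\{u>M\zeta\}$ one has $|\nabla v|^2-|\nabla u|^2=-2M\nabla u\cdot\nabla\zeta+M^2|\nabla\zeta|^2$, and Young's inequality together with the Lipschitz bound $|\nabla u|\le L$ from Proposition \ref{p:lipschitz}, $M\le cr$, $|\nabla\zeta|\le C/r$ and $|B_{2r}(x)|\le C r^d$, yield an upper bound of the form $C(1+L^2)c\,r^d+C\lambda_m c^2 r^{d+2}$. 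For $r\le r_1$ small and $c$ small (both depending only on $\tau,\lambda_m,\mu,d$, via Proposition \ref{p:lipschitz} which fixes $L$ in terms of those quantities), this gives a density estimate
$$
|\Omega_u\cap B_r(x)|\le \eta(c)\,r^d,\qquad \eta(c)\xrightarrow[c\to 0]{}0.
$$

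The main obstacle is the final step: upgrading this density-type inequality to the strict conclusion $u\equiv 0$ on $B_r(x)$. The plan is to argue by contradiction. If $u(y)>0$ for some $y\in B_{r/2}(x)$, the Lipschitz bound from Proposition \ref{p:lipschitz} forces $B_{u(y)/L}(y)\subset\Omega_u$, which in turn produces a lower bound on $|\Omega_u\cap B_r(x)|$ of order $(u(y)/L)^d$. Combining this with the density estimate above and iterating the construction on a well-chosen sub-ball, using the mean-value identity of Lemma \ref{l:radon} to transfer pointwise information through the annular regions, should contradict the hypothesis $\|u\|_{L^\infty(B_{2r}(x))}\le cr$ once $c$ is chosen sufficiently small relative to $L$, $\mu$ and $d$. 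This closing comparison, which amounts to an Alt--Caffarelli style barrier argument in the presence of the zeroth-order term $\lambda_m u\,e^{-\Phi}$ and the variable coefficient $e^{-\Phi}$, is the delicate technical point where the measure penalty $\mu$ and the Lipschitz constant must be balanced carefully.
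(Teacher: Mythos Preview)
Your competitor $v=(u-M\zeta)_+$ and the resulting inequality are correct, but they yield only a density estimate $|\Omega_u\cap B_r(x)|\le \eta(c)\,r^d$, and your closing step does not work. The dominant term on your right-hand side is $2M|\nabla u||\nabla\zeta|$, which after using the Lipschitz bound gives $\eta(c)\sim CcL/\mu$, linear in $c$. Your proposed contradiction compares the Lipschitz-ball lower bound $|\Omega_u\cap B_r|\ge \omega_d(u(y)/L)^d$ against this; since $u(y)\le cr$, one obtains $\omega_d(c/L)^d\le CcL/\mu$, i.e.\ $c^{d-1}\le CL^{d+1}/(\mu\omega_d)$, which for $d\ge 2$ is \emph{satisfied} when $c$ is small, not violated. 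Iterating makes it worse: the improved bound on $\sup u$ is $c_1\sim L^{1+1/d}c^{1/d}$, and $c_1>c$ for small $c$. The structural problem is that a smooth cutoff places the cross-term $\nabla u\cdot\nabla\zeta$ on the annulus $B_{2r}\setminus B_r$, where it cannot be absorbed into the left-hand side (which lives on $B_r$); this is why you are forced to invoke $L$, which then enters the constants the wrong way and also breaks the stated dependence of $c,r_1$ on $\tau,\lambda_m,\mu,d$ alone.

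The paper's argument avoids this by replacing the cutoff with a PDE barrier: solve $-\dive(e^{-\Phi}\nabla\eta)=\beta e^{-\Phi}$ in the annulus with $\eta=0$ on $\partial B_r$, $\eta=cr$ on $\partial B_{2r}$, and take $\tilde u=u\wedge\eta$. Integrating by parts against the equation for $\eta$ converts the annulus contribution into the boundary term $\|\nabla\eta\|_{L^\infty(\partial B_r)}\int_{\partial B_r}u\,e^{-\Phi}$, and the $W^{1,1}$ trace inequality on $B_r$ then bounds $\int_{\partial B_r}u$ by the quantity $E(u,r):=\int_{B_r}|\nabla u|^2e^{-\Phi}+\mu|\Omega_u\cap B_r|$ itself. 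One obtains a self-improving inequality
\[
E(u,r)\le C\big(\|\nabla\eta\|_{L^\infty(\partial B_r)}+c\lambda_m r^2\big)\,E(u,r),\qquad \|\nabla\eta\|_{L^\infty(\partial B_r)}\le C(c+r\beta),
\]
so for $c$ and $r$ small the coefficient is $<1$, forcing $E(u,r)=0$ and hence $u\equiv 0$ on $B_r$. No Lipschitz bound on $u$ is used, and no iteration between scales is needed.
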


\begin{proof}
Let $r,x$ be such that $B_{2r}(x)\subset B_{r_0}(x_0)$ and $\|u\|_{L^\infty(B_{2r}(x))} < cr$.  Assume for simplicity that $x=0$. Let $\eta \in H^1(B_{2r})$ be the solution of the problem
$$-\dive(e^{-\Phi} \nabla\eta) = \beta e^{-\Phi}\quad\text{in}\quad B_{2r}\backslash B_{r},\qquad  
\eta = 0\quad\text{in}\quad B_{r},\qquad\eta = c r\quad\text{in}\quad D \backslash B_{2r},$$
where $\beta>0$ will be chosen later.
Consider the test function $\tilde u\in H^1_0(D)$ defined as
$$\tilde u=u \wedge \eta \quad\text{in}\quad B_{2r},\qquad  \tilde u=u\quad\text{in}\quad D \backslash B_{2r}.$$
By \eqref{e:subsolution}, we get 
\begin{equation}\label{e:ter}
\int_D|\nabla u|^2e^{-\Phi}dx - \lambda_m\int_Du^2e^{-\Phi}dx +\mu|\O_u| \leq \int_D|\nabla \tilde u|^2e^{-\Phi}dx - \lambda_m\int_D\tilde u^2e^{-\Phi}dx +\mu|\O_{\tilde u}| . 
\end{equation}
Let
$\ds E(u,r):=\int_{B_{r}}|\nabla u|^2 e^{-\Phi}dx + \mu|\O_u \cap B_{r}|$. Since $\tilde u\equiv0$ in $B_r$ we have $|\O_u| - |\O_{\tilde{u}}| = |\O_u \cap B_{r}|$ and $\ds\int_{B_r}\tilde u^2\,dx=\int_{B_r}|\nabla\tilde u|^2\,dx=0$. Thus, we can rewrite \eqref{e:ter} in the form 
\begin{equation}\label{eq 6}
 E(u,r) \leq \int_{B_{2r} \backslash B_{r}} \big(|\nabla\tilde{u}|^2 - |\nabla u|^2\big)e^{-\Phi}dx +4cr\lambda_m\int_{B_{2r} \backslash B_{r}} (u-\tilde{u})e^{-\Phi}dx + \lambda_m\int_{B_{r}} u^2 e^{-\Phi}dx,
\end{equation}
where in the estimate of the second term we used that in $B_r$
$$u^2-\tilde u^2=(u+\tilde u)(u-\tilde u)\le 2u(u-\tilde u)\le 2cr(u-\tilde u).$$
Next, we estimate the first term of the right-hand side of \eqref{eq 6}. We have
\begin{equation}\label{eq 7}
 |\nabla \tilde{u}|^2 - |\nabla u|^2 = -|\nabla(\tilde{u}-u)|^2 + 2\nabla\tilde{u} \cdot \nabla(\tilde{u}-u) \leq 2\nabla\tilde{u} \cdot \nabla(\tilde{u}-u).
\end{equation}
Integrating by parts and using that $(u-\eta)_+ =0$ on $\partial B_{2r}$, we get
\begin{align}
 \int_{B_{2r} \backslash B_{r}} \nabla \tilde{u} \cdot \nabla (\tilde{u} - u)e^{-\Phi}dx &= -\int_{B_{2r} \backslash B_{r}} \nabla \eta \cdot \nabla[(u-\eta)_+]e^{-\Phi}dx \label{eq 8}\\
&   \leq   -\beta \int_{B_{2r} \backslash B_{r}} (u-\eta)_+ e^{-\Phi}dx + \|\nabla\eta\|_{L^\infty(\partial B_{r})} \int _{\partial B_{r}} ue^{-\Phi}d\mathcal{H}^{d-1}. \notag
\end{align}
%where in the last inequality we have integrated by parts and used that $(u-\eta)_+ =0$ on $\partial B_{2r}$. 
We now set $\beta =2cr\lambda_m$ so that, combining \eqref{eq 6}, \eqref{eq 7} and \eqref{eq 8} we have
$$E(u,r) \leq 2\|\nabla\eta\|_{L^\infty(\partial B_{r})}\int _{\partial B_{r}} ue^{-\Phi} d\mathcal{H}^{d-1} + \lambda_m\int_{B_{r}} u^2 e^{-\Phi}dx.$$
Now, for every $s \in (0,r]$, we have by the $W^{1,1}$ trace inequality in $B_s$ 
\begin{align*}
\int_{\partial B_{s}} ue^{-\Phi} d\mathcal{H}^{d-1} &\leq  e^{-\min\Phi} C_d \left( \int _{B_{s}} |\nabla u|\,dx + \frac{1}{s}\int_{B_{s}} u\,dx \right) \\
& \leq  e^{-\min\Phi}C_d\left( \frac12 \int_{B_{s}} |\nabla u|^2 dx + \frac12 |\O_u \cap B_{s}| + c |\O_u \cap B_{s}| \right) \\
& \leq C \left( \int_{B_s} |\nabla u|^2 e^{-\Phi}dx + \mu |\O_u \cap B_s| \right) \leq C E(u,s)\leq C E(u,r),
\end{align*}
where we have set $C = e^{-\min\Phi} C_d \max \left\{ e^{\max\Phi}, \frac{1}{\mu}(1+2c) \right\}$.
Moreover, since the above inequality holds for every $s \in (0,r]$, we have
\[ \int_{B_{r}} ue^{-\Phi} dx = \int_0^{r} ds \int_{\partial B_s} ue^{-\Phi} d\mathcal{H}^{d-1} \leq r C E(u,r). \]
Finally, using the bound \eqref{eq bound grad eta}, we get
$$E(u,r) \leq \left(2\|\nabla\eta\|_{L^\infty(\partial B_{r})} + r^2 c \lambda_m\right)C E(u,r).$$
Thus, the claim will follow, if we can choose $c$ such that 
$$\left(2\|\nabla\eta\|_{L^\infty(\partial B_{r})} + r^2 c \lambda_m\right)C<1.$$
We  now estimate $\|\nabla\eta\|_{L^\infty(\partial B_{r})}$. Notice that in $B_{2r}\setminus B_r$
$$\eta(x)=r^2w(\sfrac{x}r)+crh(\sfrac{x}r),$$
where $w:B_2\setminus B_1\to\R$ and $h:B_2\setminus B_1\to\R$ are the solutions to 
$$-\Delta h+\nabla\Phi_r\cdot\nabla h = 0\quad\text{in}\quad B_{2}\backslash B_{1},\qquad  
h = 0\quad\text{on}\quad \partial B_{1},\qquad h = 1\quad\text{on}\quad \partial B_{2},$$
$$-\Delta w+\nabla\Phi_r\cdot\nabla w = \beta \quad\text{in}\quad B_{2}\backslash B_{1},\qquad  
w = 0\quad\text{on}\quad \partial B_1 \cup \partial B_2,$$
where $\Phi_r(x)=\Phi(rx)$. Thus, we have 
$$\|\nabla \eta\|_{L^\infty}\le r\|\nabla w\|_{L^\infty}+c\|\nabla h\|_{L^\infty},$$
and so, it is sufficient to estimate $\nabla w$ and $\nabla h$. First, applying Lemma \ref{l:boundary_grad_est} to $h$, we have 
$$\|\nabla h\|_{L^\infty(B_2\setminus B_1)}\le C\|h\|_{L^\infty(B_2\setminus B_1)}\le C,$$
where the last inequality follows by the maximum principle ($0\le h\le 1$ in $B_2\setminus B_1$). Next, applying Lemma \ref{l:boundary_grad_est} to $w$, we get 
$$\|\nabla w\|_{L^\infty(B_2\setminus B_1)}\le C\left(\|w\|_{L^\infty(B_2\setminus B_1)}+\beta\right)\le C\beta,$$
where the last inequality follows by Lemma \ref{infttybndflem}. Combining the above estimates, we get 
	\begin{equation}\label{eq bound grad eta}
	\|\nabla \eta \|_{L^\infty(B_{r})} \leq C\left(r\beta +c\right),
	\end{equation} 
which, for $c$ and $r$ small enough, implies that $E(u,r)=0$ and concludes the proof.
\end{proof}

Another consequence of   property   \eqref{e:subsolution} is that the optimal sets have finite perimeter. This fact is of independent interest but it can also be used to estimate the dimension of the singular set of the free boundary   (see Subsection \ref{sub:regularity}).   The local finiteness of the perimeter was also obtained in \cite{alt-caffarelli} in the case of the Laplacian by a different argument. Here we use the more direct approach from \cite{mazzoleni-terracini-velichkov-2}, which is also the local version of an estimate that was used in \cite{bucur} to prove that some optimal shapes have finite perimeter. 
{\begin{lm}[Local finiteness of the perimeter]\label{l:finite_perimeter}
Let $D\subset\R^d$ be a bounded open set and $u$ a solution of \eqref{e:fbp}. Then $\Omega_u$ is a set of locally finite perimeter in $D$. Moreover, if $D$ is of class $C^{1,1}$, then $\Omega_u$ is a set of finite perimeter. 
%\[ \mathcal{H}^{d-1}(\text{Sing}(\O^\ast)) = 0. \]
\end{lm}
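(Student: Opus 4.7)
The plan is to exploit the shape subsolution property \eqref{e:subsolution} with a sharply chosen competitor, following the strategy of \cite{bucur,mazzoleni-terracini-velichkov-2}. Fix $\mu\in(0,\Lambda_u)$ and let $r_0>0$ be given by Remark \ref{rem:uniform} so that \eqref{e:subsolution} holds at scale $r_0$ at every free-boundary point inside any prescribed compact set $K\subset D$.

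For $x_0\in\partial\Omega_u\cap K$ with $B_{r_0}(x_0)\subset D$, I would fix a cutoff $\phi\in C^\infty_c(B_{r_0}(x_0))$ with $0\le\phi\le 1$ and $\phi\equiv 1$ on $B_{r_0/2}(x_0)$, and for small $\epsilon>0$ test against $v:=(u-\epsilon\phi)_+$. Since $u-v=\min(u,\epsilon\phi)$ is supported in $\{\phi>0\}$, we have $v\in\mathcal{A}(u,x_0,r_0)$; moreover $\Omega_v=\{u>\epsilon\phi\}\subset\Omega_u$, so \eqref{e:subsolution} yields
\[
J(u)-J(v)\;\leq\;-\mu\,\bigl|\{0<u\leq\epsilon\phi\}\bigr|.
\]
Splitting the difference $J(u)-J(v)$ according to whether $u>\epsilon\phi$ or $0<u\leq\epsilon\phi$, one sees that the linear-in-$\epsilon$ cross-terms have the form $2\epsilon\int_{\{u>\epsilon\phi\}}(\lambda_m u\phi-\nabla u\cdot\nabla\phi)e^{-\Phi}\,dx$ and are of order $O(\epsilon)$ by the boundedness and Lipschitz continuity of $u$ (Proposition \ref{p:lipschitz}), while the quadratic terms are obviously $O(\epsilon^2)$. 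This should produce the key estimate
\[
\int_{\{0<u\leq\epsilon\phi\}}|\nabla u|^2\, e^{-\Phi}\,dx\;+\;\mu\,\bigl|\{0<u\leq\epsilon\phi\}\bigr|\;\leq\; C\,\epsilon,
\]
with $C$ depending on $\phi$, $\|u\|_{C^{0,1}}$, $\lambda_m$ and $\Phi$, but not on $\epsilon$.

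The next step is an AM--GM plus coarea argument: the pointwise inequality $|\nabla u|^2 e^{-\Phi}+\mu\ge 2\sqrt{\mu}\,|\nabla u|\,e^{-\Phi/2}$ together with the coarea formula, restricted to $B_{r_0/2}(x_0)$ where $\phi\equiv 1$, gives
\[
\frac{1}{\epsilon}\int_0^\epsilon \int_{\{u=t\}\cap B_{r_0/2}(x_0)} e^{-\Phi/2}\,d\HH^{d-1}\,dt\;\leq\; C'.
\]
Picking a Lebesgue point of the inner integral provides a sequence $t_n\to 0^+$ with $\HH^{d-1}(\{u=t_n\}\cap B_{r_0/2}(x_0))\le C''$. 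Since $u$ is continuous and $\Omega_u=\{u>0\}$, the super-level sets $\{u>t_n\}$ converge to $\Omega_u$ in $L^1_{loc}(D)$, and lower semi-continuity of the De Giorgi perimeter yields $\mathrm{Per}(\Omega_u;B_{r_0/2}(x_0))\le C''$. A finite covering of any compact subset of $D$ by such balls then gives local finiteness of the perimeter.

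When $\partial D$ is of class $C^{1,1}$, Remark \ref{rem:uniform} furnishes a single radius $r_D>0$ (independent of $x_0$) for which the same (possibly one-sided) subsolution inequality $\Lambda_u\le \mu_-(h,x_0,r)+\epsilon$ holds at every $x_0\in\partial\Omega_u$; the competitor $v=(u-\epsilon\phi)_+$ still satisfies $|\Omega_v|\le|\Omega_u|$ and only tests this lower bound. A finite covering of the compact set $\partial\Omega_u$ by balls of radius $r_D$ produces $\mathrm{Per}(\Omega_u;D)<\infty$, while the contribution to the total perimeter from $\partial\Omega_u\cap\partial D\subset\partial D$ is controlled by $\HH^{d-1}(\partial D)<\infty$, which is finite by the $C^{1,1}$ regularity of $D$. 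I expect the main technical obstacle to lie in the bookkeeping of the first step: carefully verifying that the cross-terms coming from the cutoff $\phi$ contribute only at order $O(\epsilon)$, which should ultimately rest on the Lipschitz bound of Proposition \ref{p:lipschitz} combined with the distributional inequality $-\dive(e^{-\Phi}\nabla u)\le\lambda_m u\,e^{-\Phi}$ from Lemma \ref{l:radon}(1).
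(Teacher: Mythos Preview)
Your proposal is correct and follows essentially the same strategy as the paper: test the subsolution inequality \eqref{e:subsolution} against a cut-down competitor, extract the estimate $\int_{\{0<u<\epsilon\}\cap B_{r/2}}(|\nabla u|^2+\mu)\,dx\le C\epsilon$, then apply coarea and lower semicontinuity of the perimeter. The only cosmetic differences are the competitor (the paper takes $u_t=\eta(u-t)_++(1-\eta)u$ instead of $(u-\epsilon\phi)_+$) and the elementary inequality used to pass to $\int|\nabla u|$; note also that the $O(\epsilon)$ bound on the cross-terms requires only $u\in H^1\cap L^\infty$, so you do not actually need the Lipschitz estimate of Proposition~\ref{p:lipschitz} here.
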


\begin{proof}
Let $x_0\in \partial\Omega_u$ and $0<\mu<\Lambda_u$ be fixed. Let $r>0$ be such that \eqref{e:subsolution} holds in $D_r(x_0):=B_r(x_0)\cap D$.   
Assume $x_0 = 0$ and $r_0=r$. In the sequel we denote by $C>0$ any constant, which does not depend on $t$ or $x_0$. 
Let $t\in(0,1)$ and $\eta \in C^\infty_c(B_r)$ be such that
\[ 0 \leq \eta \leq 1, \qquad \eta = 1 \quad\text{in}\quad B_{\sfrac{r}2}, \qquad \eta= 0 \quad\text{in}\quad \R^d \backslash B_r, \qquad |\nabla \eta | \leq \frac{C}{r}. \]
We set 
$$u_t := \eta (u-t)_+ + (1-\eta)u =\left\{
\begin{split}
(1-\eta)u,\quad \mbox{if }u<t,\\
u-t\eta,\quad \mbox{if }u\geq t.
\end{split}
\right.$$
We can now compute on $\{u\geq t\}$ 
\begin{align*}
|\nabla u|^2-|\nabla u_t|^2&=|\nabla u|^2-|\nabla (u-t\eta)|^2
=t\big(2 \nabla u\cdot\nabla\eta-t|\nabla \eta|^2\big);\\
u^2-u_t^2&=u^2-(u-t\eta)^2=t\big(2u\eta+t\eta^2\big).
\end{align*}
%where $C_1$ depends only on $\|\nabla\phi\|_{L^\infty}$ and $\|\nabla U\|_{L^\infty}$.
Next, on the set $\{u<t\},$ we compute

\begin{align*}
u^2-u_t^2&=(2\eta-\eta^2)u^2\le t^2(2\eta-\eta^2);\\
|\nabla u|^2-|\nabla u_t|^2&=|\nabla u|^2-|\nabla(u-u\eta)|^2=2\nabla u\cdot\nabla (u\eta)-|\nabla (u\eta)|^2\\
%&=2\eta|\nabla u|^2+2u\nabla u\cdot \nabla \eta - |\nabla u|^2\eta^2-|\nabla\eta|^2u^2-2u\eta\nabla u\cdot\nabla\eta\\
&=(2\eta-\eta^2)|\nabla u|^2+2u(1-\eta)\nabla u\cdot \nabla \eta -|\nabla\eta|^2u^2\\
&\ge\ind_{\{\eta=1\}}|\nabla u|^2-2t(1-\eta)|\nabla u\cdot \nabla \eta|.
\end{align*}
Notice that $u_t\in \adm$. Thus, by the optimality of $u$, we have
$$\int_{B_r} \left(|\nabla u|^2-\lambda_mu^2\right)e^{-\Phi}dx+\mu|\Omega_u\cap B_r|\le \int_{B_r} \left(|\nabla u_t|^2-\lambda_mu_t^2\right)e^{-\Phi}dx+\mu|\Omega_{u_t}\cap B_r|.$$
By the above estimates, there is a constant $C$, depending only on $\mu$, $r$, $\ds\lambda_m=\int_D|\nabla u|^2e^{-\Phi}\,dx$ and $\|\Phi\|_{L^\infty(D)}$ such that, for every $t\le 1$, we have 
\begin{align*}
\int_{\{0<u<t\} \cap B_{\sfrac{r}2}} |\nabla u|\,dx&\le \int_{\{0<u<t\} \cap B_{\sfrac{r}2}} \big(|\nabla u|^2+1\big)\,dx\le \max\{1,\sfrac1\mu\}\int_{\{0<u<t\} \cap B_{\sfrac{r}2}} \big(|\nabla u|^2+\mu\big)\,dx\\
&= \max\{1,\sfrac1\mu\}\Big(\int_{B_{\sfrac{r}2}} \big(|\nabla u|^2-|\nabla u_t|^2\big)\,dx+\mu\big(|\Omega_u\cap B_r|-|\Omega_{u_t}\cap B_r|\big)\Big)\le Ct.
\end{align*}
%$$\int_{\{0<u<t\} \cap B_{\sfrac{r}2}} |\nabla u|^2e^{-\Phi} dx + \mu|\{0<u<t\} \cap B_{\sfrac{r}2}|\leq C t.$$
%Therefore, using the inequality $2|\nabla u|\leq |\nabla u|^2+1$ and the boundedness of $\Phi$, we obtain
%$$\int_{\{0<u<t\} \cap B_{\sfrac{r}2}} |\nabla u|\,dx\le C t.$$
We now use the co-area formula to rewrite the above inequality as
\[ \frac1t\int_0^t Per\left(\{u>s\} ; B_{\sfrac{r}2}\right)\,ds\le C. \]
Hence, there is a sequence $t_n\to0$ such that $Per\left(\{u>t_n\} ; B_{\sfrac{r}2}\right)\,ds\le C$, which implies that $Per\left(\Omega_u ; B_{\sfrac{r}2}\right)\,ds\le C$. The last claim of the lemma follows by a standard covering argument. 
%Therefore, if $D_0$ is an open set strongly included in $D$, then the compact set $\partial \O_w \cap \overline{D}_0$ can be covered by a finite number of balls $B_r(x_0) \subset D$ centred at $x_0 \in \partial \O_w \cap \overline{D}_0$ such that $P(\O_w; B_r(x_0)) \leq C$, which shows that $\O_w$ is a set of finite perimeter in $D_0$. Finally, the last property is a consequence of the Federer Theorem (see e.g. \cite[Theorem 16.2]{Maggi}) and the fact that $\partial^\ast\O_w \subset \text{Reg}(\O_w)$, where $\partial^\ast \O_w$ stands for the reduced boundary of $\O_w$ (see \cite[Theorem 15.5]{Maggi}).
\end{proof}}

\subsection{Blow-up sequences and blow-up limits}\label{sub:blowup}
Let $u$ be a solution of \eqref{e:fbp} in the bounded open set $D\subset\R^d$. For $r>0$ and $x_0 \in \partial\O_u$, we define the rescaled function
$$u_{x_0,r}(x):= \frac{1}{r} u(x_0+rx).$$ 
Now since $u$ is Lipschitz continuous in some ball $B_{r_0}(x_0)$ (assume some regularity of the box if $x_0 \in \partial D$) we get that every sequence   $\left(u_{x_0,r_n}\right)_{n\geq 1}$   such that $r_n\to 0$ admits a subsequence (still denoted by $r_n$) that converges to a function $u_0:\R^d\to\R$ uniformly on every compact set $K\subset\R^d$. We say that $u_0$ is a blow-up limit of $u$ at $x_0$ and we use the notation $\mathcal{BU}_u(x_0)$ for the family of all blow-up limits of $u$ at $x_0$. 
We notice that, due to the non-degeneracy of $u$, the blow-up limits are non-trivial. Precisely, $u_0\neq 0$ and there is a constant $c>0$ such that $\|u_0\|_{L^\infty(B_r)}\ge cr$. 

The following proposition is standard. For a detailed proof we refer for example to  \cite[Proposition 4.5]{mazzoleni-terracini-velichkov}.  
\begin{prop}[Convergence of the blow-up sequences]\label{p:blowup}
Let $u$ be a solution of \eqref{e:fbp} and let $x_0 \in \partial\O_u$. Assume moreover that $D$ is of class $C^{1,1}$ if $x_0 \in \partial D$. Let $u_0\in\mathcal{BU}_u(x_0)$ and $u_n:=u_{x_0,r_n}$ be a blow-up sequence such that $u_n\to u_0$ locally uniformly in $\R^d$ as $n\to\infty$. Then 
\begin{enumerate}
	\item The sequence   $\left(u_n\right)_{n\geq 1}$   converges to $u_0$ strongly in $H^1_{\text{loc}}(\R^d)$.
	\item The sequence of characteristic functions  $\left(\ind_{\O_{u_n}}\right)_{n\geq 1}$   converges to $\ind_{\O_{u_0}}$ in $L^1_{\text{loc}}(\R^d)$.
	\item The sequences of closed sets   $\left(\overline{\O}_n\right)_{n\geq 1}$   and   $\left(\O_n^c\right)_{n\geq 1}$   Hausdorff converge locally in $\R^d$ to $\overline{\O}_0$ and $\O_0^c$, respectively.
	\item If $x_0 \in \partial\O_u\cap D$, then $u_0$ is a non-trivial global minimizer of the one-phase Alt-Caffarelli functional with $\Lambda=\Lambda_u e^{\Phi(x_0)}$ (see Definition \ref{def:one-phase} below). 
	
	\noindent If $x_0 \in \partial\O_u\cap\partial D$, then, up to a rotation, $u_0$ is a non-trivial global minimizer of the one-phase constrained Alt-Caffarelli functional with $\Lambda=\Lambda_u e^{\Phi(x_0)}$. 
	\end{enumerate}
\end{prop}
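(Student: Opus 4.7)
The plan is to combine three tools from Subsections 5.5--5.7: the Lipschitz continuity of $u$ near $x_0$ (Proposition \ref{p:lipschitz}), non-degeneracy (Lemma \ref{l:nondegeneracy}), and the quasi-minimality at small scales (Theorem \ref{thm mu+-} together with Remark \ref{rem:uniform}), passed through the natural parabolic rescaling of $J+\mu|\Omega_{\,\cdot\,}|$. First I would extract compactness: by Proposition \ref{p:lipschitz} the sequence $(u_n)$ is uniformly Lipschitz on every compact subset of $\R^d$, so after extracting a further subsequence $u_n\to u_0$ both weakly in $H^1_{\mathrm{loc}}(\R^d)$ and locally uniformly. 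Non-degeneracy applied at each scale $r_n$ yields $\|u_n\|_{L^\infty(B_1)}\ge c>0$, so $u_0\not\equiv 0$.

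Next I would pass the quasi-minimality to the limit to prove (4) and (1) simultaneously. Given a competitor $\tilde v$ with $\tilde v-u_0\in H^1_0(B_R)$, I would construct $v_n\in\mathcal{A}(u,x_0,r_nR)$ by rescaling $\tilde v$ back to $B_{r_nR}(x_0)$ and correcting on a thin annulus to match $u$ on $\partial B_{r_nR}(x_0)$; uniform convergence of $u_n$ makes this correction $o(1)$ in both energy and Lebesgue measure after division by $r_n^d$. Then
$$r_n^{-d}[J(v_n)-J(u)] = \int_{B_R}\bigl(|\nabla\tilde v|^2-|\nabla u_n|^2\bigr)e^{-\Phi(x_0+r_nx)}\,dx - r_n^2\lambda_m\int_{B_R}(\tilde v^2-u_n^2)e^{-\Phi(x_0+r_nx)}\,dx+o(1),$$
and combining with $e^{-\Phi(x_0+r_n\,\cdot\,)}\to e^{-\Phi(x_0)}$ uniformly on $B_R$, $r_n^2\lambda_m\to 0$, and $\mu_\pm\to\Lambda_u$ from Theorem \ref{thm mu+-} (using \eqref{e:supersolution} or \eqref{e:subsolution} according to the sign of $|\Omega_{\tilde v}|-|\Omega_{u_0}|$), one obtains
$$\int_{B_R}|\nabla u_0|^2\,dx+\Lambda_u e^{\Phi(x_0)}|\Omega_{u_0}\cap B_R| \le \int_{B_R}|\nabla\tilde v|^2\,dx+\Lambda_u e^{\Phi(x_0)}|\Omega_{\tilde v}\cap B_R|,$$
which is the Alt--Caffarelli minimality in (4) with $\Lambda=\Lambda_u e^{\Phi(x_0)}$. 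For $x_0\in\partial D$, the $C^{1,1}$-regularity implies $(D-x_0)/r_n$ converges locally Hausdorff to a half-space $H_{x_0}$ and only the one-sided bound $\mu_-\to\Lambda_u$ is available at boundary points, so the above inequality is obtained only for competitors supported in $H_{x_0}$, giving the constrained Alt--Caffarelli minimality. Choosing $\tilde v=u_0$ as a competitor will then yield $\limsup_n\int_{B_R}|\nabla u_n|^2\le\int_{B_R}|\nabla u_0|^2$, upgrading the weak convergence to strong and proving (1).

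Finally I would deduce (3) and (2). For (3): if $y\in\overline\Omega_0$, non-degeneracy of $u_0$ produces points arbitrarily close to $y$ where $u_0>0$, hence in $\Omega_n$ for $n$ large; conversely, $y_n\in\overline\Omega_n$ with $y_n\to y\notin\overline\Omega_0$ would force $\|u_n\|_{L^\infty(B_{2\rho}(y))}\to 0$ on some ball $B_{2\rho}(y)\subset\R^d\setminus\overline\Omega_0$, contradicting Lemma \ref{l:nondegeneracy} for large $n$; a symmetric argument handles $\Omega_n^c$. For (2): uniform convergence together with non-degeneracy gives $\ind_{\Omega_{u_n}}\to\ind_{\Omega_{u_0}}$ pointwise a.e.\ outside $\partial\Omega_0$, which has zero Lebesgue measure since $u_0$ is a non-degenerate one-phase minimizer, so dominated convergence yields the $L^1_{\mathrm{loc}}$-convergence. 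The hard part will be the competitor construction and error control in the passage to the limit: producing $v_n$ at scale $r_n$ that matches $u$ on $\partial B_{r_nR}(x_0)$ (so as to be admissible in $\mathcal{A}(u,x_0,r_nR)$) while converging to the prescribed $\tilde v$ with an energy and measure defect that is $o(r_n^d)$. This requires a careful transition-annulus construction combined with the two-sided use of $\mu_-\le\Lambda_u\le\mu_+$, since admissible competitors may either increase or decrease the Lebesgue measure relative to $u$.
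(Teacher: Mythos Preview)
Your outline for (1)--(3) and for the interior case of (4) is the standard route the paper defers to \cite{mazzoleni-terracini-velichkov}; the annulus-correction competitor works there and also delivers the strong $H^1$ convergence in (1) without circularity. For the boundary case $x_0\in\partial D$, however, the paper takes a different path that your sketch does not cover: it first straightens $\partial D$ near $x_0$ by a $C^{1,1}$ diffeomorphism (Lemmas \ref{l:coeff}--\ref{l:quasi-min}), so that the inclusion constraint becomes \emph{exactly} $\Omega_{\tilde v}\subset\{x_d>0\}$ at every scale, and then uses the simpler competitor $v_n=v+\tilde u_n-u_0$, which automatically agrees with $\tilde u_n$ outside $B_r$ and stays in the half-space.

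Your direct approach at the boundary has a real gap. A competitor $\tilde v$ supported in the tangent half-space $H_{x_0}$, once rescaled to $x\mapsto r_n\tilde v((x-x_0)/r_n)$, has support in $x_0+r_n H_{x_0}$, which is \emph{not} contained in $D$: a $C^{1,1}$ boundary deviates from its tangent plane by $O(r_n^2)$ on $B_{r_nR}(x_0)$, so the candidate fails to lie in $H^1_0(D)$ and is inadmissible in $\mathcal A(u,x_0,r_nR)$. Cutting off in the thin strip where $x_0+r_nH_{x_0}$ overshoots $D$ introduces a cutoff gradient of order $r_n^{-2}$, and controlling that error uniformly requires exactly the change-of-variables bookkeeping that the paper's straightening lemmas encapsulate. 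Separately, your assertion that ``only $\mu_-\to\Lambda_u$ is available at boundary points'' is the worst-case clause of Remark \ref{rem:uniform}; with only $\mu_-$ you would obtain the constrained inequality solely against competitors with $|\Omega_v|\le|\Omega_{u_0}|$, not the full minimality of Definition \ref{def:one-phase}. In the paper's argument both $\mu_\pm$ are used (Lemma \ref{l:coeff}), with the choice dictated by the sign of $|\Omega_{\tilde v}|-|\Omega_{\tilde u}|$.
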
	

\begin{definition}[Global minimizers of the one-phase problem]\label{def:one-phase}
 Let $\Lambda>0$ and $u\in H^1_{loc}(\R^d)$ be a non-negative function.
\begin{itemize}
\item We say that $u$ is a global minimizer of the one-phase Alt-Caffarelli functional with $\Lambda$, if  
\begin{equation}\label{e:OP AC func}
\int_{B} |\nabla u|^2 dx + \Lambda |\{u>0\}\cap B|\le \int_{B} |\nabla v|^2 dx + \Lambda |\{v>0\}\cap B|,
\end{equation}
for every ball $B\subset\R^d$ and every function $v \in H^1(B)$ such that $u-v\in H^1_0(B)$.
\item We say that $u$ is a global minimizer of the one-phase constrained Alt-Caffarelli functional with $\Lambda$, if $\Omega_{u} \subset \{x_d>0\}$ and \eqref{e:OP AC func} holds for every ball $B\subset\R^d$ and every function $v \in H^1(B)$ such that $u-v\in H^1_0(B)$ and $\Omega_v \subset \{x_d>0\}$. 
\end{itemize}
\end{definition} 

The optimality of the blow-up limit at points $x_0\in D$ (Proposition \ref{p:blowup}, claim (4)) follows by a standard argument based on our analysis in Subsection \ref{sub:small_scales}. Below, we give the proof in the case when $x_0$ lies on the boundary of $D$. The idea is to straighten out the boundary of the box and to show that the function $u$ in the new coordinates satisfies an almost-minimality condition. We only give the proof of Proposition \ref{p:blowup} (4) in order to show how to deal with the fact that on different scales $r$ the inclusion constraint on the set $\Omega_{u_r}$ changes and that at the limit the box $D$ becomes the half-space $\{x_d>0\}$.

Let $x_0=0 \in \partial\O_u \cap \partial D$.
Since $D$ is $C^{1,1}$ regular, there exist $\delta>0$ and a function $g : (-\delta,\delta)^{d-1} \rightarrow \R$ such that
\[ D \cap SQ_\delta = \{ (x',x_d) \in SQ_\delta \ : \ g(x')<x_d \}, \]
where $SQ_\delta = (-\delta,\delta)^d \subset B_{r_0}$. Moreover, up to a rotation, we can assume that the differential $Dg_0$ of $g$ at $0$ is zero. Let $\psi : SQ_\delta \subset \R^d \rightarrow \R^d$ be the function that straightens out the boundary of $D$ and let $\phi:=\psi^{-1} : \psi(SQ_\delta)\subset \R^d \rightarrow \R^d$ be its inverse:
\[ \psi(x',x_d) = (x',x_d-g(x')), \quad \phi(x',x_d) = (x',x_d+g(x')). \]
We define the matrix-valued function $A=(a_{ij})_{ij} : SQ_\delta \rightarrow \text{Sym}^+_d(\R)$ by
\[ A_x = (D\phi_x)^{-1} ({}^tD\phi_x)^{-1}, \quad \text{for every} \quad x\in SQ_\delta, \]
where ${}^tD\phi_x$ stands for the transpose of the Jacobian matrix of $\phi$ at $x$. Note that the coefficients $a_{ij}$ are Lipschitz continuous functions and that $A_x$ are symmetric positive definite matrices since they are small variations of $A_0=Id$. For $v \in H^1(\R^d)$ and $r>0$ we define the functional 
\[ \tilde{J}(v,r) = \int_{B_r} \Big(a_{ij}(x)\frac{\partial v}{\partial x_i}\frac{\partial v}{\partial x_j} -\lambda_m v^2\Big)e^{-\tilde{\Phi}}\,dx, \]
where we have set $\tilde{\Phi}=\Phi \circ \phi$. Moreover, we set $H=\{ (x',x_d) \in \R^d \ : \ x_d>0 \}$. With an elementary change of variables we get the following result.

\begin{lm}[Minimality of $u$ in the straightened coordinates]\label{l:coeff}
	Let $u$ be a solution of \eqref{e:fbp} and $x_0=0 \in \partial\O_u \cap \partial D$.  Let $h>0$. There exist $c>0$ and $r_0>0$ such that $B_{2r_0} \subset \psi(SQ_\delta)$ and the function $\tilde{u} = u \circ \Phi$ satisfies the minimality condition: for every  $r \in (0,r_0)$ we have
\[ \tilde{J}(\tilde{u},r) + \mu|\O_{\tilde{u}}\cap B_r| \leq \tilde{J}(\tilde{v},r) + \mu|\O_{\tilde{v}}\cap B_r| \]
for every $\tilde{v}\in H^1(B_{2r_0})$ such that $\tilde{u}=\tilde{v}$ on $B_{2r_0} \setminus B_r$, $\O_{\tilde{v}} \subset H$ and where 
\begin{equation}\label{e:defmu}
\mu=\begin{cases}\mu_+(h,0,cr) \quad\text{if}\quad |\O_{\tilde{u}}\cap B_r| \leq |\O_{\tilde{v}} \cap B_r| \leq |\O_{\tilde{u}}\cap B_r| + h, \\
\mu_-(h,0,cr) \quad\text{if}\quad |\O_{\tilde{u}}\cap B_r|-h \leq |\O_{\tilde{v}}\cap B_r| \leq |\O_{\tilde{u}}\cap B_r|.
\end{cases}
\end{equation}
\end{lm}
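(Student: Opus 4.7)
The strategy is a straightforward change of variables reducing the lemma to the almost-minimality property of $u$ established in Subsection \ref{sub:small_scales}. Since $D\phi_0 = Id$ (because $Dg_0 = 0$), for any fixed $c > 1$ we can pick $r_0$ small enough so that $\phi(B_r) \subset B_{cr}$ and $\psi(B_{cr}) \subset B_{c^2 r}$ for all $r \le r_0$, and so that $B_{2r_0} \subset \psi(SQ_\delta)$ and $c r_0$ is smaller than the radius appearing in Remark \ref{rem:uniform} for \eqref{e:supersolution} and \eqref{e:subsolution} at the point $x_0 = 0 \in \partial\O_u \cap \partial D$. Moreover, since $\phi(x',x_d) = (x', x_d + g(x'))$ has unit Jacobian, the change of variables $y = \phi(x)$ preserves Lebesgue measure.

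The key identity is that, for any $\tilde v \in H^1(B_{2r_0})$, setting $v := \tilde v \circ \psi$ on $\psi(SQ_\delta)$, one has $\nabla v(y) = ({}^tD\phi_x)^{-1} \nabla \tilde v(x)$ with $y = \phi(x)$, and therefore by the definition of $A_x$,
\begin{equation*}
\int_{\phi(B_r)} |\nabla v|^2 e^{-\Phi}\, dy = \int_{B_r} a_{ij}(x)\,\partial_i \tilde v\, \partial_j \tilde v\, e^{-\tilde\Phi(x)}\,dx,
\end{equation*}
and similarly $\int_{\phi(B_r)} v^2 e^{-\Phi}\, dy = \int_{B_r} \tilde v^2 e^{-\tilde\Phi}\,dx$ and $|\O_v \cap \phi(B_r)| = |\O_{\tilde v}\cap B_r|$. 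Applied to $\tilde u$ this gives $\tilde J(\tilde u, r) = \int_{\phi(B_r)} (|\nabla u|^2 - \lambda_m u^2) e^{-\Phi}\,dy$.

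Given a competitor $\tilde v$ in $B_r$ as in the statement, the condition $\O_{\tilde v}\subset H$ and $\tilde u = \tilde v$ on $B_{2r_0}\setminus B_r$ guarantees that $v$, extended by $u$ outside $\phi(B_r)$, lies in $H^1_0(D)$ and belongs to $\mathcal A(u,0,cr)$. Moreover, since $v = u$ outside $\phi(B_r)$,
\begin{equation*}
|\O_v| - |\O_u| = |\O_v \cap \phi(B_r)| - |\O_u \cap \phi(B_r)| = |\O_{\tilde v} \cap B_r| - |\O_{\tilde u} \cap B_r|.
\end{equation*}
Hence in the first case of \eqref{e:defmu} we have $|\O_u| \le |\O_v| \le |\O_u| + h$, so \eqref{e:supersolution} (whose defining constant is exactly $\mu_+(h,0,cr)$) yields $J(u) + \mu|\O_u| \le J(v) + \mu|\O_v|$ with $\mu = \mu_+(h,0,cr)$; in the second case, \eqref{e:subsolution} does the analogous job with $\mu_-(h,0,cr)$. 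Cancelling the contributions from $D\setminus \phi(B_r)$ (where $u = v$, hence $\tilde u = \tilde v$) and translating the $J$-integrals through the change-of-variables identity yields the announced inequality $\tilde J(\tilde u, r) + \mu|\O_{\tilde u}\cap B_r| \le \tilde J(\tilde v, r) + \mu|\O_{\tilde v}\cap B_r|$.

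The main (minor) obstacle is simply bookkeeping: one must choose $r_0$ small enough so that the inclusions $\phi(B_r)\subset B_{cr}\subset B_{r_0(\text{Remark }\ref{rem:uniform})}$ all hold uniformly in $r \in (0, r_0)$, and one must verify that the competitor $v$ produced by the change of variables is a genuine element of $\mathcal A(u,0,cr)$ with $\O_v \subset D$; this last point is exactly what the constraint $\O_{\tilde v}\subset H$ in the statement is designed to ensure.
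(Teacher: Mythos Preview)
Your proof is correct and follows essentially the same approach as the paper: define the competitor $v = \tilde v \circ \psi$ in $\phi(B_{2r_0})$ and $v = u$ elsewhere, use $\det D\phi = 1$ to transfer the measure constraint, invoke the inclusion $\phi(B_r)\subset B_{cr}$ so that $v\in \mathcal A(u,0,cr)$, apply the definition of $\mu_\pm(h,0,cr)$, and undo the change of variables after cancelling the common part on $D\setminus\phi(B_r)$. The only cosmetic difference is that the paper does not bother with the extra inclusion $\psi(B_{cr})\subset B_{c^2 r}$ (which you do not actually use either), and it phrases the choice of $c$ as a constant depending on $\phi$ rather than first fixing $c>1$ and then shrinking $r_0$.
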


\begin{proof}
Let $r_0>0$ be such that $B_{2r_0} \subset \psi(SQ_\delta)$, $r \in (0,r_0)$ and $\tilde{v}$ such that $\tilde{u}=\tilde{v}$ on $B_{2r_0} \setminus B_r$, $\O_{\tilde{v}} \subset H$. Assume that $|\O_{\tilde{u}}\cap B_r| \leq |\O_{\tilde{v}} \cap B_r| \leq |\O_{\tilde{u}}\cap B_r| + h$. We define $v\in H^1_0(D)$ by $v = \tilde{v} \circ \psi$ in $\phi(B_{2r_0})$ and $v=u$ otherwise. 
Let $c$ be a positive constant depending only on $\phi$ such that $\phi(B_r)\subset B_{c r}$. Then, it follows that $u=v$ on $D \setminus B_{c r}$. Moreover, since $\det(D\phi_x)=1$ we have $|\O_u| \leq |\O_v|\leq |\O_u|+h$. Therefore, up to chosing $r_0>0$ smaller (depending on $c$), we get
\[ J(u) + \mu_+(h,0,c r)|\O_u| \leq J(v) + \mu_+(h,0,c r)|\O_v|. \]
Since we have $u=v$ on $\phi(B_r(x_0))$, this can rewrite as
\[ \int_{\phi(B_r)}\big(|\nabla u|^2-\lambda_m u^2\big)e^{-\Phi}\,dx + \mu|\O_u\cap \phi(B_r)| 
\leq \int_{\phi(B_r)}\big(|\nabla v|^2-\lambda_m v^2\big)e^{-\Phi}\,dx + \mu|\O_v\cap \phi(B_r)|, \]
where we have set $\mu =\mu_+(h,0,cr)$.
Now, a change of variables gives
\begin{align*}
\tilde{J}(\tilde{u},r) + \mu|\O_{\tilde{u}}\cap B_r| &= \int_{\phi(B_r)} \big(|\nabla u|^2-\lambda_m u^2\big)e^{-\Phi}\,dx + \mu|\O_u\cap \phi(B_r)| \\
& \leq \int_{\phi(B_r)}\big(|\nabla v|^2-\lambda_m v^2\big)e^{-\Phi}\,dx + \mu|\O_v\cap \phi(B_r)| =\tilde{J}(\tilde{v},r) + \mu|\O_{\tilde{v}}\cap B_r|.
\end{align*}
This concludes the proof.
\end{proof}

The next Lemma states that $\tilde u$ is an almost-minimizer also of the functional $J$.

\begin{lm}\label{l:quasi-min}
		Let $u$ be a solution of \eqref{e:fbp} and $x_0=0 \in \partial\O_u \cap \partial D$.  
Let $h>0$ be a given constant and let $c>0$ and $r_0>0$ be as in Lemma \ref{l:coeff}. Then there exists a constant $C>0$ such that $\tilde{u} = u \circ \phi$ satisfies the following almost-minimality condition: for every  $r \in (0,r_0)$ we have
\[ J(\tilde{u},r) + \mu|\O_{\tilde{u}}\cap B_r| \leq (1+Cr)J(\tilde{v},r) + \mu|\O_{\tilde{v}}\cap B_r| +Cr\int_{B_r} \tilde{v}^2\,dx + Cr^{d+1} \]
for every $\tilde{v}\in H^1(B_{2r_0})$ such that $\tilde{u}=\tilde{v}$ on $B_{2r_0} \setminus B_r$, $\O_{\tilde{v}} \subset H$ and where $\mu$ is as in \eqref{e:defmu}.
%\begin{align*}
%&\mu=\mu_+(h,0,cr) \quad\text{if}\quad |\O_{\tilde{u}}\cap B_r| \leq |\O_{\tilde{v}} \cap B_r| \leq |\O_{\tilde{u}}\cap B_r| + h \\
%&\mu=\mu_-(h,0,cr) \quad\text{if}\quad |\O_{\tilde{u}}\cap B_r|-h \leq |\O_{\tilde{v}}\cap B_r| \leq |\O_{\tilde{u}}\cap B_r|.
%\end{align*}
\end{lm}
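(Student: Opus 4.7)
\textbf{Proof plan for Lemma \ref{l:quasi-min}.} The statement is a perturbative estimate: it transfers the exact minimality condition of Lemma \ref{l:coeff}, which involves the variable-coefficient functional $\tilde J$, to an almost-minimality condition for the constant-coefficient functional $J$, with the error controlled by the radius $r$. The crucial structural feature is that, since $Dg_0 = 0$, we have $D\phi_0 = \mathrm{Id}$ and hence $a_{ij}(0) = \delta_{ij}$. Combined with the Lipschitz continuity of the $a_{ij}$'s (they are built from $D\phi$, which is $C^{0,1}$ since $D$ is $C^{1,1}$) and of $\tilde\Phi = \Phi\circ\phi$, this gives pointwise bounds on $B_r$ of the form
\[ |a_{ij}(x) - \delta_{ij}| \leq Cr, \qquad |e^{-\tilde\Phi(x)} - e^{-\tilde\Phi(0)}| \leq Cr, \qquad \forall x \in B_r, \]
with $C$ depending only on $g$ and $\Phi$.

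From these two inequalities and the uniform ellipticity of $A$, a direct estimate yields, for every $w \in H^1(B_r)$,
\[ \bigl|\tilde J(w,r) - e^{-\tilde\Phi(0)}J(w,r)\bigr| \leq Cr\int_{B_r}\bigl(|\nabla w|^2 + \lambda_m w^2\bigr)\,dx, \]
where $J$ denotes the ``frozen-coefficient'' functional at $0$. I would then apply this inequality to both $\tilde u$ and $\tilde v$, insert both estimates into the inequality from Lemma \ref{l:coeff}, and divide through by the positive constant $e^{-\tilde\Phi(0)}$ (absorbing it into $C$, and noting that $\mu$ is the same on both sides so the measure terms survive unchanged after this normalization). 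This yields
\[ J(\tilde u,r) + \mu\,|\Omega_{\tilde u}\cap B_r| \leq J(\tilde v,r) + \mu\,|\Omega_{\tilde v}\cap B_r| + Cr\int_{B_r}\bigl(|\nabla\tilde u|^2 + \tilde u^2 + |\nabla\tilde v|^2 + \tilde v^2\bigr)\,dx. \]

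The final step is to dispose of the $\tilde u$-error and rewrite the $\tilde v$-error in the desired form. For $\tilde u$, the Lipschitz continuity of $u$ (Proposition \ref{p:lipschitz}) and the $L^\infty$ bound (Lemma \ref{l:boundedness}) give $\|\nabla\tilde u\|_{L^\infty(B_{r_0})} \leq L$ and $\tilde u(0)=0$, so that $\int_{B_r}(|\nabla\tilde u|^2+\tilde u^2)\,dx \leq Cr^d$, producing a term $Cr\cdot Cr^d = Cr^{d+1}$. For $\tilde v$, one rearranges the definition of $J$ to obtain
\[ \int_{B_r}|\nabla\tilde v|^2\,dx = J(\tilde v, r) + \lambda_m\int_{B_r}\tilde v^2\,dx, \]
so that $Cr\int_{B_r}|\nabla\tilde v|^2\,dx \leq CrJ(\tilde v,r) + Cr\int_{B_r}\tilde v^2\,dx$; after enlarging $C$ to absorb $\lambda_m$, this combines with the $Cr\int_{B_r}\tilde v^2\,dx$ term on the right to give exactly the $(1+Cr)J(\tilde v,r) + Cr\int_{B_r}\tilde v^2\,dx$ in the statement. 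The two sign-cases in \eqref{e:defmu} are handled identically since Lemma \ref{l:coeff} applies to both.

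The argument is essentially bookkeeping; the conceptual content is already contained in Lemma \ref{l:coeff}. The only mild subtlety is ensuring that $C$ depends only on fixed data ($g$, $\Phi$, $\lambda_m$, $d$, and the $L^\infty$ bound on $u$) and not on $r$, $\tilde v$, or $h$, so that the resulting estimate is uniform and can be combined freely with blow-up arguments in the subsequent sections.
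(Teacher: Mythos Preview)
Your proposal is correct and follows essentially the same approach as the paper: both compare $\tilde J$ to $J$ on $B_r$ using the Lipschitz continuity of the coefficients $a_{ij}$ and of the weight, bound the $\tilde u$-error by $Cr^{d+1}$ via the Lipschitz continuity of $u$, and reabsorb the $\tilde v$-gradient error into $J(\tilde v,r)$. The only cosmetic difference is that the paper keeps the variable weight $e^{-\Phi}$ in $J$ rather than freezing it at $x_0$ (which avoids your division by $e^{-\tilde\Phi(0)}$ and the attendant slight imprecision with the $\mu$-terms), but this is an $O(r)$ discrepancy that is absorbed the same way.
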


\begin{proof}
Using the Lipschitz continuity of $A$ and $\Phi$, we estimate 
\begin{multline*}
J(\tilde{u},r) = \tilde{J}(\tilde{u},r) + \int_{B_r}(a_{ij}(x_0)-a_{ij}(x))\frac{\partial\tilde{u}}{\partial x_i}\frac{\partial\tilde{u}}{\partial x_j} e^{-\Phi}\,dx  \\ 
+ \int_{B_r}(a_{ij}(x)\frac{\partial\tilde{u}}{\partial x_i}\frac{\partial\tilde{u}}{\partial x_j} - \lambda_m\tilde{u}^2)(e^{-\Phi}-e^{-\tilde{\Phi}})\,dx \leq \tilde{J}(\tilde{u},r) + Cr^{d+1},
\end{multline*}
for some positive constant $C$ that does not depend on $r$. Analogously, we get the following estimate from below
\begin{align*}
J(\tilde{v},r) &= \tilde{J}(\tilde{v},r) + \int_{B_r}(a_{ij}(x_0)-a_{ij}(x))\frac{\partial\tilde{v}}{\partial x_i}\frac{\partial\tilde{v}}{\partial x_j} e^{-\Phi}\,dx  \\ 
&\qquad\qquad\qquad\qquad\qquad\qquad + \int_{B_r}(a_{ij}(x)\frac{\partial\tilde{v}}{\partial x_i}\frac{\partial\tilde{v}}{\partial x_j} - \lambda_m\tilde{v}^2)(e^{-\Phi}-e^{-\tilde{\Phi}})\,dx \\
&\geq \tilde{J}(\tilde{v},r) - Cr\Big(\int_{B_r}a_{ij}(x)\frac{\partial\tilde{v}}{\partial x_i}\frac{\partial\tilde{v}}{\partial x_j}e^{-\Phi}\,dx + \int_{B_r} \tilde{v}^2\,dx \Big) \\
&\geq (1-Cr)\tilde{J}(\tilde{v},r) -Cr\int_{B_r} \tilde{v}^2\,dx.
\end{align*}
Now, using Lemma \ref{l:coeff} and then combining the above estimates we get
\begin{align*}
J(\tilde{u},r) + \mu|\O_{\tilde{u}}\cap B_r| &\leq \tilde{J}(\tilde{u},r) + \mu|\O_{\tilde{u}}\cap B_r| + Cr^{d+1} \leq \tilde{J}(\tilde{v},r) + \mu|\O_{\tilde{v}}\cap B_r| + Cr^{d+1} \\
&\leq \frac{1}{1-Cr} \Big(J(\tilde{v},r) + Cr\int_{B_r} \tilde{v}^2\,dx \Big) + \mu|\O_{\tilde{v}}\cap B_r| + Cr^{d+1},
\end{align*}
which concludes the proof.
\end{proof}

We are now in position to prove the claim (4) of Proposition \ref{p:blowup} in the constrained case.

\begin{proof}[Proof of Proposition \ref{p:blowup} (4)]
Let $x_0=0 \in \partial\O_u \cap \partial D$ and let $u_0 \in \mathcal{BU}_u(x_0)$ be the blow-up limit of the sequence $u_n(x)=r_n^{-1}u(r_nx)$, where $(r_n)_n$ is some fixed sequence decreasing to $0$. Let $v\in H^1(B_r)$ be such that $u_0-v \in H^1_0(B_r)$ and $\O_v \subset H$. We define $v_n=v+\tilde{u}_n-u_0 \in H^1(\R^d)$, where we have set $\tilde{u}_n(x)=r_n^{-1}\tilde{u}(r_nx)$. Note that the sequence $(u_n)_n$ and $(\tilde{u}_n)_n$ converge to the same limit $u_0$ since the function $\phi$ is $C^{1,1}$ regular. Moreover, since $u_0-v \in H^1_0(B_r)$ we have $\tilde{u}_n-v_n \in H^1_0(B_r)$ and hence $\tilde{u}-v_n^{r_n} \in H^1_0(B_{rr_n})$, where we write $v_n^{r_n}(x)=r_n v_n(x/r_n)$. Note that we have $\O_{v_n^{r_n}} \subset H$. We set $h_n=|B_{rr_n}|$ and assume that $|\O_{\tilde{u}}\cap B_{rr_n}| \leq |\O_{v_n^{r_n}} \cap B_{rr_n}| \leq |\O_{\tilde{u}}\cap B_{rr_n}| + h_n$. Now we set $\mu=\mu_+(h_n,0,r)$ and $\Phi_n(x)=\Phi(r_nx)$ and we apply Lemma \ref{l:quasi-min} to the test function $v_n^{r_n}$ to estimate
\begin{align*}
\int_{B_r}(&|\nabla \tilde{u}_n|^2-r_n^2\lambda_m\tilde{u}_n^2)e^{-\Phi_n(x)}\,dx + \mu|\O_{\tilde{u}_n} \cap B_r| = \frac{1}{r_n^d}\Big(J(\tilde{u},rr_n) + \mu|\O_{\tilde{u}} \cap B_{rr_n}|\Big) \\
& \leq\frac{1}{r_n^d}\Big( (1+Cr r_n)J(v_n^{r_n},r r_n) + \mu|\O_{v_n^{r_n}}\cap B_{r r_n}| +Cr r_n\int_{B_{r r_n}} (v_n^{r_n})^2\,dx + C(r r_n)^{d+1}  \Big) \\
&=(1+Cr r_n)\int_{B_r}(|\nabla v_n|^2-r_n^2\lambda_mv_n^2)e^{-\Phi_n(x)}\,dx + \mu|\O_{v_n}\cap B_r| + Cr r_n^3\int_{B_r}v_n^2\,dx + Cr^{d+1}r_n.
\end{align*}
By Proposition \ref{p:blowup} (1) the sequence $\tilde{u}_n$ (resp. $v_n$) strongly converges in $H^1(B_r)$ to $u_0$ (resp. $v$) and the sequence of characteristic functions $\left(\ind_{\O_{\tilde{u}_n}}\right)_{n\geq 1}$ (resp. $\left(\ind_{\O_{v_n}}\right)_{n\geq 1}$) converges to $\ind_{\O_{u_0}}$ (resp. $\ind_{\O_v}$). Moreover, $\mu=\mu_+(h_n,0,r)$ tends to $\Lambda_u$ as $r_n \rightarrow 0$ by Theorem \ref{thm mu+-}. Therefore, passing at the limit in the above inequality and then multiplying by $e^{\Phi(x_0)}$ gives the claim.
\end{proof}

%\begin{oss}[Optimality of the blow-up sequence]
%The last claim of Proposition \ref{p:blowup} follows by a standard argument and by Theorem \ref{thm mu+-}. 
%Notice that, Theorem \ref{thm mu+-} implies that if $x_0\in\partial\Omega_u \cap D$ and $B\subset\R^d$ are fixed, then for every $\eps>0$ there is $r_0>0$ such that for every $0<r\le r_0$ we have  (setting $\Phi_r(x)=\Phi(x_0+rx)$)
%$$\int_B\left(|\nabla u_r|^2-r^2\lambda_mu_r^2\right)e^{-\Phi_r}dx+(\Lambda_u-\eps)|\Omega_{u_r}\cap\,B|\le\int_B\left(|\nabla v|^2-r^2\lambda_m v^2\right)e^{-\Phi_r}dx+(\Lambda_u-\eps)|\Omega_{v}\cap\,B|,$$
%for every $v\in H^1(B)$ such that $u_r-v\in H^1_0(B)$ and $|\Omega_v\cap B|\le |\Omega_{u_r}\cap B|$;
%$$\int_B\left(|\nabla u_r|^2-r^2\lambda_mu_r^2\right)e^{-\Phi_r}dx+(\Lambda_u+\eps)|\Omega_{u_r}\cap\,B|\le\int_B\left(|\nabla v|^2-r^2\lambda_m v^2\right)e^{-\Phi_r}dx+(\Lambda_u+\eps)|\Omega_{v}\cap\,B|,$$
%for every $v\in H^1(B)$ such that $u_r-v\in H^1_0(B)$ and $|\Omega_v\cap B|\ge |\Omega_{u_r}\cap B|$.

%Passing (formally) to the limit (as $r\to 0$) the two conditions above, we get that any blow-up limit $u_0$ satisfies 
%$$e^{-\Phi(x_0)}\int_B|\nabla u_0|^2\,dx+\Lambda_u|\Omega_{u_0}\cap\,B|\le e^{-\Phi(x_0)}\int_B|\nabla v|^2\,dx+\Lambda_u|\Omega_{v}\cap\,B|,$$
%for every $v\in H^1(B)$ such that $u_0-v\in H^1_0(B)$. 

%If $D$ is $C^{1}$ and $x_0 \in \partial\O_u \cap \partial D$, then the same holds with test functions $v\in H^1(B)$ such that (up to a rotation of the coordinate axes) $\O_v \subset \{x_d>0\}$.
%\end{oss}

\begin{oss}[Lebesgue density on the free boundary]\label{rem:density}
For every 
%$\Omega\subset\R^d$ and every 
$\gamma\in[0,1]$ we define
$$\ds \Omega_u^{(\gamma)}:=\Big\{x\in\R^d\ :\ \lim_{r\to0}\frac{|\Omega_u\cap B_r(x)|}{|B_r|}=\gamma\Big\}.$$
We notice that, as a consequence of Proposition \ref{p:blowup}, we get that $$\partial\Omega_u\cap D\cap\Omega_u^{(0)}=\emptyset\qquad\text{and}\qquad \partial\Omega_u\cap D\cap\Omega_u^{(1)}=\emptyset.$$ 
The first equality follows by the non-degeneracy of $u$, while the second one follows from the fact that all the blow-up limits vanish in zero and are global solutions of the Alt-Caffarelli problem.
\end{oss}

\subsection{Regularity of the free boundary}\label{sub:regularity}
In this section we prove Theorem \ref{thm main} (4) and (6), and Theorem \ref{t:th2} (4) and (6). We first show that the optimality condition $|\nabla u|^2=\Lambda_u e^{\Phi}$ on the free boundary $\partial\Omega_u\cap D$ and $|\nabla u|^2\geq\Lambda_u e^{\Phi}$ on $\partial\O_u\cap\partial D$ holds in the viscosity sense (Lemma \ref{l:visc}). We will then decompose the free boundary into regular and singular parts (Definition \ref{def:reg-sing}) and we will show that the regular part is $C^{1,\alpha}$ regular (Proposition \ref{p:regularity}).

\begin{definition}[Optimality condition in viscosity sense]
	Let $D$ be an open set and $u:D\to\R$ be continuous, that is, $u \in C(D)$. 
	
	$\bullet$ We say that $\varphi \in C(D)$ touches $u$ by below (resp. by above) at $x_0 \in D$ if $\varphi(x_0) = u(x_0)$ and $\varphi \leq u$ (resp. $\varphi \geq u$) in a neighborhood of $x_0$. 
	
	%$$\varphi \leq w \quad (\text{resp. } \varphi \geq w) \quad \text{in a neighbourhood of } x_0. $$
	
	$\bullet$ Let $\Lambda$ be a non-negative function on $\overline D$ and assume that $u$ is non-negative. We say that $u$ satisfies the boundary condition
	\[ |\nabla u| = \sqrt{\Lambda}\quad \text{on}\quad \partial \O_u \cap D \]
	in viscosity sense if, for every $\varphi \in C^2(D)$ such that $\varphi^+$ touches $u$ by below (resp. by above) at some $x_0 \in \partial \O_u \cap D$, we have 
	$|\nabla \varphi|(x_0) \leq \sqrt{\Lambda}$ (resp. $|\nabla \varphi|(x_0) \geq \sqrt{\Lambda})$).
	
	\noindent Analogously, we say that $u$ satisfies the boundary condition
	\[ |\nabla u| \geq \sqrt{\Lambda}\quad \text{on}\quad \partial \O_u \cap \partial D \]
	in viscosity sense if, for every $\varphi \in C^2(D)$ such that $\varphi^+$ touches $u$ by above at some $x_0 \in \partial \O_u \cap \partial D$, we have
	$|\nabla \varphi|(x_0) \geq \sqrt{\Lambda}$. 
\end{definition}

\begin{lm}[Optimality condition on the free boundary]\label{l:visc}
	Let $D\subset \R^d$ be a bounded open set of class $C^{1,1}$ and let $u$ be a solution of \eqref{e:fbp}. Then $u$ is a solution of the problem
	\begin{equation}\label{eq opt cond}
	\left\{
	\begin{aligned}
	& -\dive(e^{-\Phi}u)=\lambda_m ue^{-\Phi}\ \text{ in }\ \O_u, \\
	& \ |\nabla u| = \sqrt{\Lambda_ue^{\Phi}}\ \text{ on }\ \partial \O_u \cap D, \\
	& \ |\nabla u| \geq \sqrt{\Lambda_ue^{\Phi}}\ \text{ on }\ \partial \O_u \cap \partial D, \\
	\end{aligned} 
	\right.
	\end{equation} 
	where the boundary conditions hold in viscosity sense.
\end{lm}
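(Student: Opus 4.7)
The PDE inside $\O_u$ is immediate from Lemma \ref{l:fbp}: since $\O_u$ is open by Proposition \ref{p:lipschitz}, testing the minimality of $u$ against $u + t\varphi$ with $\varphi\in C^\infty_c(\O_u)$ gives $-\dive(e^{-\Phi}\nabla u) = \lambda_m u e^{-\Phi}$ weakly in $\O_u$, and standard elliptic regularity upgrades this to a pointwise identity. The remaining two conditions will be proved by a blow-up argument, reducing the assertion at a free boundary point $x_0$ to the analogous comparison property for global minimizers of the one-phase Alt--Caffarelli functional with constant $\Lambda := \Lambda_u e^{\Phi(x_0)}$, which is provided by Proposition \ref{p:blowup} (4).

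Fix $x_0\in\partial\O_u\cap D$ and $\varphi\in C^2(D)$ with $\varphi^+$ touching $u$ from below at $x_0$. Then $\varphi(x_0)=u(x_0)=0$, and we may assume $\nabla\varphi(x_0)\neq 0$ (otherwise the inequality is trivial). Along a sequence $r_n\downarrow 0$ choose a blow-up limit $u_0 \in \mathcal{BU}_u(x_0)$ of $u_n(x):=r_n^{-1}u(x_0+r_n x)$; by Proposition \ref{p:blowup}, $u_0$ is a non-trivial global minimizer of the one-phase Alt--Caffarelli problem with constant $\Lambda$. A Taylor expansion of $\varphi$ at $x_0$ shows that $\varphi_n(x):=r_n^{-1}\varphi(x_0+r_nx)$ converges locally uniformly to the linear function $\ell(x)=\nabla\varphi(x_0)\cdot x$. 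Passing to the limit in $\varphi_n^+\le u_n$ (valid on any fixed ball for $n$ large) yields
\begin{equation*}
\ell^+(x) \le u_0(x) \quad \text{for every } x\in\R^d.
\end{equation*}
The classical comparison principle for global minimizers of the one-phase problem (see \cite{alt-caffarelli}) asserts that if $(\nu\cdot x)^+\le u_0$ with $\nu\neq 0$, then $|\nu|^2\le \Lambda$, so $|\nabla\varphi(x_0)|^2\le \Lambda_u e^{\Phi(x_0)}$, as required.

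The case of touching from above at $x_0\in\partial\O_u\cap D$ is dual: the non-degeneracy estimate (Lemma \ref{l:nondegeneracy}) forces $\nabla\varphi(x_0)\neq 0$, since otherwise $u\le\varphi^+$ near $x_0$ would give sublinear growth of $u$ from $x_0$, contradicting $\sup_{B_r(x_0)}u\ge cr$. The same rescaling now yields $\ell^+\ge u_0$ on $\R^d$, and the dual form of the comparison principle for global one-phase minimizers gives $|\nabla\varphi(x_0)|^2\ge \Lambda_u e^{\Phi(x_0)}$. For a contact point $x_0\in\partial\O_u\cap\partial D$ we only need to consider touching from above; Proposition \ref{p:blowup} (4) supplies a blow-up limit $u_0$ that is a non-trivial global minimizer of the \emph{constrained} one-phase Alt--Caffarelli problem on a half-space $H=\{x_d>0\}$ (up to rotation) with the same constant $\Lambda$, and the corresponding constrained half-plane comparison principle yields $|\nabla\varphi(x_0)|\ge \sqrt{\Lambda_u e^{\Phi(x_0)}}$.

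The main point of the argument is the compactness, non-degeneracy, and optimality of blow-up limits, which have already been established in Proposition \ref{p:blowup} and Lemma \ref{l:nondegeneracy}. The subtlest step is the assertion that $\nabla\varphi(x_0)\neq 0$ in the touch-from-above scenario, whose failure is excluded precisely by the Lipschitz-and-non-degenerate behaviour of $u$ at free boundary points; once this is in place, the passage from the viscosity condition on $u$ to the half-plane comparison for $u_0$ is routine.
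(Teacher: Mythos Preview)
Your proof is correct and follows the same blow-up strategy as the paper, but the packaging is different. The paper does not invoke a ready-made ``comparison principle for global minimizers'' from \cite{alt-caffarelli}; instead, after passing to the blow-up limit $u_0$ it uses two in-house ingredients: the one-homogeneity of $u_0$ (Lemma \ref{l:homogeneity}, via the Weiss quasi-monotonicity formula Lemma \ref{l:weiss}) and the classification Lemma \ref{l:u0>0=xd0>0} for nonnegative one-homogeneous harmonic functions. These force $\O_{u_0}=\{x_d>0\}$, and then the classical Alt--Caffarelli optimality condition on a flat boundary (plus unique continuation) identifies $u_0=\sqrt{\Lambda}\,x_d^+$, from which the inequality for $|\nabla\varphi(x_0)|$ is read off directly. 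Your appeal to the viscosity condition for global one-phase minimizers amounts to the same thing---that condition is itself usually proved via Weiss monotonicity and half-plane classification---so the two approaches are equivalent in depth; yours is just less self-contained.

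For the boundary case $x_0\in\partial\O_u\cap\partial D$ there is a genuine difference worth flagging. The paper does \emph{not} have a Weiss formula centred at $x_0\in\partial D$ for $u$ itself, so it cannot assert one-homogeneity of the first blow-up $u_0$; instead it blows up \emph{twice}, applying Weiss monotonicity to the constrained Alt--Caffarelli minimizer $u_0$ (for which the formula is standard, cf.\ Remark \ref{r:homogconstr} and \cite{spolaor-trey-velichkov}) to obtain a one-homogeneous $u_{00}$, then classifies $u_{00}=\alpha x_d^+$ with $\alpha\ge\sqrt{\Lambda}$. Your single-blow-up argument invoking a ``constrained half-plane comparison principle'' is valid, but that principle is less standard than the unconstrained one and you give no reference; if you want to keep your route, you should either cite \cite[Proposition 4.3]{spolaor-trey-velichkov} or supply the short double-blow-up argument the paper uses.
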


To prove the optimality condition we will need the following result.

\begin{lm}\label{l:u0>0=xd0>0}
Let $u \in H^1(\R^d)$ be a non-trivial, continuous and one-homogeneous function (in the sense that $u(tx)=tu(x)$ for every $t>0$) such that $u(0)=0$. Assume moreover that $u$ is harmonic in the set $\O_u$. If $\O_u\subset \{x_d>0\}$ then $\O_u=\{x_d>0\}$, while if $\O_u\supset\{x_d>0\}$ then either $\O_u=\{x_d>0\}$ or $\O_u = \{x_d\neq 0\}$.
\end{lm}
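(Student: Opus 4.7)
The strategy is to reduce the problem to a spherical Dirichlet eigenvalue problem via the one-homogeneity of $u$, and then to apply Sperner's isoperimetric inequality on the sphere.

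First I would write $u(x) = r\phi(\theta)$, where $r = |x|$, $\theta = x/|x| \in S^{d-1}$, and $\phi := u|_{S^{d-1}}$. By continuity of $u$, $\phi$ is continuous, and by $u \ge 0$, $\phi \ge 0$. A direct computation (separating radial and spherical parts of the Laplacian for a 1-homogeneous function) shows that $u$ is harmonic in $\Omega_u$ if and only if
\begin{equation*}
-\Delta_{S^{d-1}} \phi = (d-1)\,\phi \quad\text{in}\quad U := \Omega_u \cap S^{d-1},
\end{equation*}
with $\phi = 0$ on $\partial U$ (in $S^{d-1}$). Since $\phi \geq 0$ and non-trivial, $\phi$ is a first Dirichlet eigenfunction of $-\Delta_{S^{d-1}}$ on each connected component of $U$, with first eigenvalue equal to $d-1$.

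Next I would recall that $d-1$ is precisely the first Dirichlet eigenvalue of the open half-sphere $S^{d-1}_+ := S^{d-1} \cap \{x_d > 0\}$, with (positive) eigenfunction $\theta \mapsto \theta_d$. By the Sperner (Faber--Krahn on the sphere) inequality, for any open set $V \subset S^{d-1}$, $\lambda_1(V) \geq \lambda_1(V^*)$, where $V^*$ is the spherical cap of the same $\mathcal{H}^{d-1}$-measure, with equality if and only if $V$ is itself a spherical cap. Moreover, $V \mapsto \lambda_1(V^*)$ is strictly decreasing in $|V|$.

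With these tools, the proof splits into the two cases in the statement. In Case $1$ ($\Omega_u \subset \{x_d>0\}$), every connected component $U_i$ of $U$ is contained in $S^{d-1}_+$, so $|U_i| \leq |S^{d-1}_+|$, while Sperner gives $d-1 = \lambda_1(U_i) \geq \lambda_1(U_i^*) \geq \lambda_1(S^{d-1}_+) = d-1$; equality in both inequalities forces $U_i = S^{d-1}_+$, whence $U = S^{d-1}_+$ and $\Omega_u = \{x_d > 0\}$ by homogeneity. In Case $2$ ($\Omega_u \supset \{x_d>0\}$), restricting $\phi$ to $S^{d-1}_+$ and using that $\phi$ is positive and a first eigenfunction there with eigenvalue $d-1$, one gets $\phi = c\,\theta_d$ on $S^{d-1}_+$ for some $c>0$; in particular $\phi \equiv 0$ on the equator $\{\theta_d = 0\}$. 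Now look at $U^- := U \cap \{x_d < 0\}$: if $U^- = \emptyset$, then $U = S^{d-1}_+$ and $\Omega_u = \{x_d > 0\}$; otherwise, any connected component $U^-_i$ of $U^-$ satisfies $U^-_i \subset S^{d-1}_-$ with $\phi = 0$ on $\partial U^-_i$ (boundary pieces either lie on the equator or on $\partial U$), so $\phi$ is a positive first eigenfunction on $U^-_i$ with eigenvalue $d-1$, and the same Sperner argument forces $U^-_i = S^{d-1}_-$. Hence $U = S^{d-1}_+ \cup S^{d-1}_-$ and $\Omega_u = \{x_d \neq 0\}$.

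The only mildly delicate point is the bookkeeping of the boundary values of $\phi$ on $\partial U^-_i$ in Case $2$ (making sure one can invoke Sperner on that connected component), but since $\phi$ vanishes continuously both on the equator and on $\partial U$, this is routine. The main substantive input is the Sperner inequality together with the equality case, which is classical.
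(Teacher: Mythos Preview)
Your reduction to the spherical eigenvalue problem is the same as the paper's. In Case~1 your Sperner argument works (granting the equality case), but the paper gets by with less: since $\phi\ge 0$ vanishes outside $U\subset S^{d-1}_+$, one has $\phi\in H^1_0(S^{d-1}_+)$, and its Rayleigh quotient equals $d-1=\lambda_1(S^{d-1}_+)$; hence $\phi$ is a first eigenfunction on the \emph{connected} half-sphere, and simplicity gives $\phi=c\,\theta_d$. No isoperimetry is needed.

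The real issue is your Case~2. You assert that ``$\phi$ is positive and a first eigenfunction [on $S^{d-1}_+$] with eigenvalue $d-1$'' and conclude $\phi=c\,\theta_d$ there. But in this case $U\supset S^{d-1}_+$, so $\phi$ only satisfies $-\Delta_{S^{d-1}}\phi=(d-1)\phi$ in $S^{d-1}_+$ with \emph{no} boundary condition on the equator; it is not a priori a Dirichlet eigenfunction on $S^{d-1}_+$, and the conclusion $\phi=c\,\theta_d$ is precisely the vanishing on the equator you are trying to establish. This step can in fact be salvaged (integrate $\phi(-\Delta\theta_d)-\theta_d(-\Delta\phi)$ over $S^{d-1}_+$: both bulk terms equal $(d-1)\int\phi\,\theta_d$, and the only surviving boundary term is $-\int_{\{\theta_d=0\}}\phi\,\partial_\nu\theta_d\ge 0$ by Hopf, forcing $\phi=0$ on the equator), but as written it is a gap.

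The paper's argument in Case~2 avoids this by reversing the roles of the two functions. Let $U_0$ be the connected component of $U$ containing $S^{d-1}_+$; then $\theta_d^+\in H^1_0(S^{d-1}_+)\subset H^1_0(U_0)$ and its Rayleigh quotient is $d-1=\lambda_1(U_0)$. Thus $\theta_d^+$ is itself a first Dirichlet eigenfunction on the connected set $U_0$, hence strictly positive there, which forces $U_0\subset\{\theta_d>0\}=S^{d-1}_+$, i.e.\ $U_0=S^{d-1}_+$. Any remaining component of $U$ then lies in $S^{d-1}_-$ and is handled by Case~1. This is both simpler and sidesteps Sperner and Hopf.
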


\begin{proof}
Set $S=\O_u\cap\partial B_1$ and denote by $C_S=\{ r\theta \ : \ \theta\in S,\ r>0\}$ the cone generated by $S$. Since $u$ is a one-homogeneous function and is solution of
\[ \Delta u = 0 \quad\text{in}\quad C_S,\qquad u=0\quad\text{on}\quad \partial C_S, \]
it follows that the trace $\varphi=u_{|_{\partial B_1}}$ is a solution of
\[ -\Delta_{\mathbb{S}^{d-1}}\varphi = (d-1)\varphi\quad\text{in}\quad S,\qquad\varphi=0\quad\text{on}\quad\partial S. \]
Therefore $\varphi$ is a first eigenfunction of $-\Delta_{\mathbb{S}^{d-1}}$ in $S$ (because $\varphi>0$ in $S$) and hence $\lambda_1(S)=d-1$. Note also that $\varphi_1=(x_d)_+$ is the first eigenfunction on the set $S_+=\{x_d>0\}\cap\partial B_1$ with eigenvalue $\lambda_1(S_+)=d-1$.

Firstly, assume that $S\subset S_+$. Then $u\in H^1_0(S_+)$ and by the variational characterization of $\lambda_1(S_+)$ we have
\[ \frac{\int_{\partial B_1}|\nabla \varphi|^2\,d\mathcal{H}^{d-1}}{\int_{\partial B_1}\varphi^2\,d\mathcal{H}^{d-1}}=\lambda_1(S)=\lambda_1(S_+)\leq \frac{\int_{\partial B_1}|\nabla (\varphi+t\psi)|^2\,d\mathcal{H}^{d-1}}{\int_{\partial B_1}(\varphi+t\psi)^2\,d\mathcal{H}^{d-1}} \]
for every $\psi \in H^1_0(S_+)$ and $t\in\R$. This gives that $\varphi$ is solution of 
\[ -\Delta_{\mathbb{S}^{d-1}}\varphi=\lambda_1(S_+)\varphi\quad\text{in}\quad S_+,\qquad \varphi=0\quad\text{on}\quad \partial S_+, \]
that is, $\varphi$ is the first eigenfunction on $S_+$. Since $\lambda_1(S_+)$ is simple (because $S_+$ is connected) is follows that $\varphi=c(x_d)_+$ for some $c>0$. In particular, $\{\varphi>0\}=S_+$ and hence $\O_u=\{x_d>0\}$ by one-homogeneity of $u$.

Assume now that $S\supset S_+$ and write $S=S_0\sqcup S_1$, where $S_0$ is the connected component of $S$ which contains $S_+$. If $S_1\neq\emptyset$, then it follows by the preceding step that $S_1 = S_-:= \{x_d<0\}\cap\partial B_1$; hence $S= S_+\cup S_-$ and $\O_u= \{x_d\neq 0\}$. Now, if $S_1=\emptyset$, then $S=S_0$ is connected. Moreover, $\varphi_1 \in H^1_0(S)$ and using the variational characterization of $\lambda_1(S)$ it follows that $\varphi_1$ is the first eigenfunction in $S$. Then $\varphi_1>0$ in $S$ (since $S$ is connected) which proves that $S=S_+$.
\end{proof}

\begin{proof}[Proof of Lemma \ref{l:visc}]
	From Proposition \ref{p:lipschitz} it follows that $u$ is continuous in $D$. We only have to prove that $u$ satisfies the two boundary conditions in the viscosity sense. We first show that $|\nabla u| = \sqrt{\Lambda_ue^{\Phi}}$ holds on $\partial \O_u \cap D$. Let $\varphi \in C^2(D)$ a function such that $\varphi^+$ touches $u$ by below at $x_0\in \partial \O_u \cap D$. Let $r_n$ be an infinitesimal sequence and 
	\begin{equation}\label{e:u_nphi_n}
	u_n(x) = \frac{1}{r_n}u(x_0 +r_nx) \quad \text{and} \quad \varphi_n(x) = \frac{1}{r_n}\varphi(x_0 + r_n x).
	\end{equation}
	Up to a subsequence, $u_n$ converges locally uniformly to some $u_0\in \mathcal{BU}_{u}(x_0)$, while $\varphi_n$ converges to $\varphi_0(x):=x\cdot \nabla\varphi(x_0)$. Up to a change of coordinates, we may suppose that  $\nabla\varphi(x_0)=|\nabla\varphi(x_0)|e_d$. If $|\nabla\varphi(x_0)|=0$, then $|\nabla\varphi(x_0)| \leq \sqrt{\Lambda}$, where we have set $\Lambda = \Lambda_u e^{\Phi(x_0)}$, and we are done. Otherwise, we have $u_0>0$ in the half-space $\{x_d>0\}$  since $u_0\geq \varphi_0$. Moreover, $u_0$ is a one-homogeneous function by Lemma \ref{l:homogeneity} and it follows that $\O_{u_0}=\{x_d>0\}$ by Lemma \ref{l:u0>0=xd0>0}, because the case $\O_{u_0}=\{x_d\neq 0\}$ is ruled out (by \textit{(4)} in Proposition \ref{p:blowup} or Remark \ref{rem:density}).	
	Moreover, $u_0$ is a local minimizer of the Alt-Caffarelli functional for $\Lambda$  by Proposition \ref{p:blowup} and hence satisfies (in the classical sense) the optimality condition 
	\[ |\nabla u_0| = \sqrt{\Lambda}\quad\text{on}\quad \{x_d=0\} \]
	(see \cite[Theorem 2.5]{alt-caffarelli}). This implies that $u_0=\sqrt{\Lambda}\,x_d^+$. To see this, note that the boundary condition implies that $v$ defined by $v=u_0$ in $\{x_d>0\}$ and $v=\sqrt{\Lambda}\,x_d$ in $\{x_d\leq 0\}$ is harmonic in $\R^d$; hence $v=\sqrt{\Lambda}\,x_d$ by uniqueness of the solution to Cauchy problem for the Laplacian. Finally, since $u_0\geq \varphi_0$ we have $\sqrt\Lambda\ge |\nabla \varphi_0|(0)=|\nabla \varphi|(x_0)$.
	The proof if now $\varphi^+$ touches $u$ from above is similar. In this case we have $\O_{u_0}\subset\{x_d>0\}$ since $u_0\leq\varphi_0^+$, and hence $\O_u=\{x_d>0\}$ by Lemma \ref{l:u0>0=xd0>0}. Notice that $u_0$ is a non trivial function by the non-degeneracy property in Lemma \ref{l:nondegeneracy}.
	
	Suppose now that $\varphi^+$ touches $u$ from above at $x_0\in \partial \Omega_u\cap\partial D$ and consider $u_n$ and $\varphi_n$ defined in \eqref{e:u_nphi_n}. By Proposition \ref{p:blowup}, $u_n$ converges to a local minimizer $u_0$ of the Alt-Caffarelli functional with $\Lambda=\Lambda_ue^{\Phi(x_0)}$ in the half-space $\{x_d>0\}$ (up to a change of coordinates). Therefore, the sequence $u_{0n}:=r_n^{-1}u_0(r_nx)$ converges to a limit $u_{00}$ which is a solution of the constrained Alt-Caffarelli problem in $\{x_d>0\}$ and is one-homogenous (see Remark \ref{r:homogconstr}). Therefore, $\O_{u_{00}}=\{x_d>0\}$ by Lemma \ref{l:u0>0=xd0>0} and $u_{00}$ satisfies the optimality condition 
		\[ |\nabla u_{00}| \geq \sqrt{\Lambda}\quad\text{on}\quad \{x_d=0\}. \]
	 This implies that $u_{00}(x)=\alpha x_d^+$, for some $\alpha\ge \sqrt\Lambda$, and thus that  $\sqrt\Lambda\le \alpha\le |\nabla \varphi_0|(0)=|\nabla \varphi|(x_0)$ (since $u_{00}\le\varphi_0^+$).
\end{proof}

\begin{oss}\label{r:homogconstr}
The homogeneity of the blow-up limits of the (local) minimizers of the Alt-Caffarelli functional was first obtained by Weiss in \cite{weiss}. In the case of the constrained problem, when the solution $u$ is optimal only among the functions with support in $\{x_d>0\}$, the Weiss formula can still be applied because the one-homogeneous extensions of $u$ are admissible competitors. Thus, the blow-up limits in this case are still one-homogeneous. We refer for instance to \cite[Proposition 4.3]{spolaor-trey-velichkov} and Lemma \ref{l:weiss} below. 
\end{oss}

\begin{oss}[On the Alt-Caffarelli optimality condition]
	Using an argument based on an internal variation of the boundary as in \cite[Theorem 2.5]{alt-caffarelli} we can get in a weak sense the optimality boundary condition given in Lemma \ref{l:visc}, namely: for every $x_0 \in \partial\O_u \cap D$, $r>0$ such that $B_r(x_0)\subset D$ and $\xi \in C_0^\infty(B_r(x_0),\R^d)$ we have
	\begin{equation*}
	\lim_{\varepsilon\downarrow 0} \int_{\partial\{u>\varepsilon\}}\big(|\nabla u|^2 - \Lambda_u e^\Phi\big)\,e^{-\Phi}\,\xi\cdot\nu\,d\mathcal{H}^{d-1} = 0,
	\end{equation*}
	while for every $x_0 \in \partial\O_u\cap\partial D$, $r>0$ such that $D_r(x_0)$ is connected and every $\xi \in C_0^\infty(B_r(x_0),\R^d)$ such that $(Id+\xi)^{-1}(D_r(x_0))\subset D_r(x_0)$ we have
	\begin{equation*}
	\lim_{\varepsilon\downarrow 0} \int_{\partial\{u>\varepsilon\}}\big(|\nabla u|^2 - \Lambda_u e^\Phi\big)\,e^{-\Phi}\,\xi\cdot\nu\,d\mathcal{H}^{d-1} \geq 0.
	\end{equation*}
\end{oss} 

\begin{definition}[Regular and singular parts of the free boundary]\label{def:reg-sing}
We say that $x_0\in\partial\Omega_u$ is a {\emph regular point}
	if there exists a blow-up $u_0\in\mathcal{BU}_u(x_0)$ of the form 
	\begin{align}
	&u_0(x)=\sqrt{\Lambda_ue^{\Phi(x_0)}}\,(x\cdot \nu)_+ &\text{if}\quad &x_0 \in \partial\O_u\cap D, \label{e:blowup1}\\
	&u_0(x)=q\,(x\cdot \nu)_+ &\text{if}\quad &x_0 \in \partial\O_u\cap \partial D, \label{e:blowup2}
	\end{align}
	where $\nu\in\partial B_1$ is some unit vector and $q$ is a constant such that $q\ge \sqrt{\Lambda_ue^{\Phi(x_0)}}$. 
	
\noindent We denote by $Reg(\partial\Omega_u\cap D)$ the set of regular points (the regular part of the free boundary) in $D$, and by $Reg(\partial\Omega_u)$ the set of all regular points of $\partial\Omega_u$. 
We define the singular part of the boundary as $\singu:=\partial\Omega_u\setminus\regu$ and $Sing(\partial\Omega_u\cap D)=\partial\Omega_u\setminus Reg(\partial\Omega_u\cap D)$.
\end{definition} 

%\begin{oss}Notice that by definition we have 
%$$Reg(\partial\Omega_u\cap D)\subset  Reg(\partial\Omega_u)\qquad\text{and}\qquad Sing(\partial\Omega_u)\subset  Sing(\partial\Omega_u\cap D).$$\end{oss}

\begin{prop}[Regularity of the free boundary]\label{p:regularity}
	Suppose that $u$ is a solution of \eqref{e:fbp} in the bounded open set $D\subset\R^d$. Then, we have:
	\begin{enumerate}
	\item $Reg(\partial\Omega_u\cap D)$ is locally the graph of a $C^{1,\alpha}$ function for any $\alpha<1$;
	\item the reduced boundary $\partial^\ast\Omega_u\cap D$ is contained in $Reg(\partial\Omega_u\cap D)$; 
	\item $\HH^{d-1}(Sing(\partial\Omega_u\cap D))=0$; moreover,   if $d\le 4$, then $Reg(\partial\Omega_u\cap D)=\emptyset$. 		\end{enumerate}
If $D$ is a $C^{1,1}$ regular domain, then:
\begin{enumerate}
\item[(4)] $Reg(\partial\Omega_u)$ is locally the graph of a $C^{1,\sfrac12}$ regular function;
\item[(5)] $\partial\Omega_u\cap \partial D\subset Reg(\partial\Omega_u)$; 
\item[(6)] $Reg(\partial\Omega_u\cap D)\subset  Reg(\partial\Omega_u)$ and $Sing(\partial\Omega_u)=  Sing(\partial\Omega_u\cap D).$
\end{enumerate}
\end{prop}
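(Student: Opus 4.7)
\medskip

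\noindent\textbf{Proof plan.}

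The plan is to package the preparatory results of this section into the standard flatness$\,\Rightarrow\,$regularity framework of De Silva~\cite{de-silva} for the interior and Chang-Lara--Savin~\cite{chang-lara-savin} at contact with $\partial D$. By now $u$ is Lipschitz (Proposition~\ref{p:lipschitz}), non-degenerate (Lemma~\ref{l:nondegeneracy}), $\Omega_u$ has finite perimeter (Lemma~\ref{l:finite_perimeter}), and $u$ satisfies the viscosity problem~\eqref{eq opt cond} (Lemma~\ref{l:visc}); moreover blow-ups are one-homogeneous global minimizers of the (possibly constrained) Alt--Caffarelli functional with constant $\Lambda=\Lambda_u e^{\Phi(x_0)}$ (Proposition~\ref{p:blowup} together with the Weiss monotonicity formula of Subsection~\ref{sub:weiss}). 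Since these ingredients are purely local, I would freeze the coefficients at the reference point $x_0$: after the change of variables $u_{x_0,r}(x)=r^{-1}u(x_0+rx)$, the equation $-\Delta u_{x_0,r}+\nabla\Phi(x_0+r\cdot)\cdot\nabla u_{x_0,r}=\lambda_m r u_{x_0,r}$ becomes an $O(r)$-perturbation of the Alt--Caffarelli problem with constant $\Lambda$, placing us exactly in De Silva's flat-free-boundary setting.

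For (2), given $x_0\in \partial^*\O_u\cap D$ one has $\ind_{\O_{u_{x_0,r}}}\to \ind_{\{x\cdot\nu>0\}}$ in $L^1_{\mathrm{loc}}$, so by Proposition~\ref{p:blowup} every blow-up limit $u_0$ is one-homogeneous with $\O_{u_0}\subset\{x\cdot\nu>0\}$; Lemma~\ref{l:u0>0=xd0>0} then forces $\O_{u_0}=\{x\cdot\nu>0\}$ and $u_0=c(x\cdot\nu)_+$, and the viscosity boundary condition from Lemma~\ref{l:visc} pins down $c=\sqrt{\Lambda_u e^{\Phi(x_0)}}$, so $x_0$ is regular. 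For (1), at any regular point the blow-up $\sqrt{\Lambda}(x\cdot\nu)_+$ plus Hausdorff convergence of $\overline{\Omega}_{u_n}$ and $\Omega_{u_n}^c$ (Proposition~\ref{p:blowup}~(3)) together with the uniform convergence of $u_n$ gives full $\varepsilon$-flatness in some $B_r(x_0)$ for arbitrarily small $\varepsilon$; De Silva's theorem then produces a $C^{1,\alpha}$ parametrization of $\partial\O_u$ in $B_{r/2}(x_0)$ for every $\alpha<1$. Openness of the set of regular points follows from stability of $\varepsilon$-flatness. For (3), Federer's theorem combined with Lemma~\ref{l:finite_perimeter} yields $\HH^{d-1}(\partial\O_u\setminus\partial^*\O_u)=0$, which together with (2) gives $\HH^{d-1}(Sing(\partial\O_u\cap D))=0$; the sharp dimension statement (empty for $d<d^\ast$) will be obtained in Subsection~\ref{sub:weiss} by dimension reduction based on the Weiss formula, so here I only need to observe that by Proposition~\ref{p:blowup} every singular point produces a non-flat one-homogeneous global minimizer, which cannot exist below $d^\ast$.

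For the boundary part, the $C^{1,1}$ assumption on $\partial D$ enters through the straightening map $\phi$ of Lemma~\ref{l:coeff} and~\ref{l:quasi-min}, which realizes $\tilde u=u\circ\phi$ as an almost-minimizer of the constrained Alt--Caffarelli problem in the half-space $\{x_d>0\}$ with $\Lambda=\Lambda_u e^{\Phi(x_0)}$. At any $x_0\in \partial\O_u\cap\partial D$, Proposition~\ref{p:blowup}~(4) and a double blow-up (first in $\tilde u$, then in its blow-up limit, using the Weiss formula in its constrained version as in Remark~\ref{r:homogconstr}) yield a one-homogeneous blow-up $u_0$ with $\O_{u_0}\subset\{x_d>0\}$; Lemma~\ref{l:u0>0=xd0>0} gives $\O_{u_0}=\{x_d>0\}$, and combined with the viscosity inequality $|\nabla u_0|\ge \sqrt\Lambda$ on $\{x_d=0\}$ we conclude $u_0=\alpha x_d^+$ with $\alpha\ge\sqrt\Lambda$. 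Thus every boundary contact point is regular, which is (5); the Chang-Lara--Savin theorem, which is designed for precisely this one-phase problem with a flat constraint, then yields the $C^{1,\sfrac12}$ parametrization claimed in (4). Finally (6) is a tautology from the definitions, the interior regular points fitting automatically into the less restrictive boundary notion (note that $C^{1,\alpha}$ for every $\alpha<1$ in particular gives $C^{1,\sfrac12}$).

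The main obstacle will be checking cleanly that the $O(r)$ error created by the variable weight $e^{-\Phi}$, the zeroth-order term $\lambda_m u$, and (near $\partial D$) the straightened coefficients $a_{ij}(x)-a_{ij}(x_0)$ can all be absorbed into the flatness hypothesis of the two quoted theorems; equivalently, that~\eqref{eq opt cond} viewed after blow-up is a small perturbation of the model problem at every sufficiently small scale. This is routine once the quasi-minimality of $\tilde u$ (Lemma~\ref{l:quasi-min}) is in hand, but it is the one place where the variable coefficients force more than a cosmetic adaptation of the classical arguments.
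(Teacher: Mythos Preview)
Your proposal is correct and follows essentially the same route as the paper: the viscosity formulation from Lemma~\ref{l:visc} feeds into De Silva/Chang-Lara--Savin for the flat regularity, reduced-boundary points are shown regular via blow-up analysis, and Federer handles the Hausdorff-measure claim. One small point to tighten: in step~(3), Federer's theorem by itself only gives $\HH^{d-1}(\partial^M\Omega_u\setminus\partial^\ast\Omega_u)=0$ for the \emph{measure-theoretic} boundary, so to conclude $\HH^{d-1}(\partial\Omega_u\cap D\setminus\partial^\ast\Omega_u)=0$ you must also invoke the density facts $\partial\Omega_u\cap D\cap\Omega_u^{(0)}=\partial\Omega_u\cap D\cap\Omega_u^{(1)}=\emptyset$ (Remark~\ref{rem:density}, which follows from non-degeneracy and the fact that blow-ups are nontrivial Alt--Caffarelli minimizers vanishing at the origin)---you have these ingredients listed, just cite them at that step.
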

\begin{proof}
By Lemma \ref{l:visc}, $u$ is a viscosity solution of \eqref{eq opt cond}. Let $x_0\in\partial \Omega_u$ be a regular point. Then, for some $r>0$ small enough the function $u_{r,x_0}=\frac1ru(x_0+rx)$ is also a viscosity solution and is $\eps$-flat in the sense of \cite{de-silva}. Applying the results of De Silva \cite{de-silva} (in the case when $x_0\in D$) and Chang-Lara-Savin \cite{chang-lara-savin} (if $x_0\in\partial D$), we get the claims (1) and (4). 

We next prove (2) and (3). Let $x_0\in \partial \Omega_u\cap D$ and $u_n:=u_{x_0,r_n}$ be a blow-up sequence at $x_0$ converging to some $v\in \mathcal{BU}_u(x_0)$ such that    $\ind_{\Omega_{u_n}}$   converges in $L^1_{loc}(\R^d)$ to   $\ind_{\Omega_{v}}$.   If $x_0\in \partial^\ast \Omega_u\cap D$, then $\Omega_{v}$ is a half-plane of the form $H=\{x\in\R^d\ :\ x\cdot\nu>0\}$ for some $\nu\in\partial B_1$. Without loss of generality, we assume $\nu=e_d$. On the other, hand $v$ is a solution to the Alt-Caffarelli problem with $\Lambda=\Lambda_ue^{\Phi(x_0)}$. Thus $v$ is harmonic in $H$ and zero on $\partial H$, so it is smooth up to the boundary of $H$. Now, the optimality condition $|\nabla v|=\sqrt\Lambda$ on $\partial H$ and the unique continuation of harmonic functions in the half-plane imply that $v(x)=\sqrt{\Lambda} x_d^+$, which  proves (2). Now, since $\Omega_u$ has (locally) finite perimeter in $D$, the Federer Theorem and Remark \ref{rem:density} give that
	$$\HH^{d-1}\big(\partial\Omega_u\cap D\setminus(\partial^\ast \Omega_u)\big)=\HH^{d-1}\left(\partial\Omega_u\cap D\setminus\big(\partial^\ast \Omega_u\cup \Omega_u^{(0)}\cup \Omega_u^{(1)}\big)\right)=0,$$
	which proves that $\HH^{d-1}(Sing(\partial\Omega_u\cap D))=0$. We now prove the second claim of (3). As above, let $x_0\in \partial \Omega_u\cap D$ and $v=\lim_{n\to\infty} u_{r_n,x_0}$ be a blow-up limit of $u$ at $x_0$. Then $v$ is a solution of the Alt-Caffarelli problem problem. Let $\rho_n\to 0$ and $v_{\rho_n}(x)=\frac1{\rho_n}v(x\rho_n)$ be a sequence that converges locally uniformly to a function $v_0$. Since $d\le 4$, we have that the free boundary $\partial\Omega_v$ is $C^{1,\alpha}$ and $v_0$ is of the form $v_0(x)=\sqrt{\Lambda} (x\cdot\nu)_+$ for some $\nu\in\partial B_1$ (see \cite{alt-caffarelli} for $d=2$, \cite{caffarelli-jerison-kenig} for $d=3$ and \cite{jerison-savin} for $d=4$). Now since, for fixed $n>0$, we have that $v_{\rho_n}=\lim_{m\to\infty}u_{\rho_nr_m,x_0}$, we can choose a diagonal sequence $u_{R_n,x_0}$, where $R_n=\rho_n r_{m(n)}$, such that $v_0=\lim_{n\to\infty}u_{R_n,x_0}$. This proves that $x_0$ is a regular point. 
	
	The claim (5) follows by the same diagonal sequence argument. This time $x_0\in\partial D$ and the blow-up $v$ is a solution of the constrained Alt-Caffarelli problem in $\{x_d>0\}$. Thus, the blow-up $v_0$ of $v$ is one-homogeneous solution of the constrained problem for $\Lambda=\Lambda_ue^{\Phi(x_0)}$. This implies (in any dimension) that $v_0(x)=q x_d^+$ for some $q\ge \sqrt{\Lambda}$ (see \cite[Proposition 4.3]{spolaor-trey-velichkov}).   
	
	Finally, (6) follows by the definition of the regular part and claim (5). 	
%	In order to prove the last claim we recall that every blow-up $u_0\in \mathcal{BU}_u(x_0)$ is a solution of the one-phase Alt-Caffarelli problem. Thus, by \cite{alt-caffarelli} (for $d=2$), \cite{caffarelli-jerison-kenig} (for $d=3$) and \cite{jerison-savin} (for $d=4$), the free boundary $\partial \Omega_{u_0}$ is locally a graph of a smooth function and so the blow-up $u_{00}$ of $u_0$ in $0$ is of the form \eqref{e:blowup1}. Now since $u_{00}\in \mathcal{BU}_u(x_0)$ we get that $x_0\in\regu$.  
%		We first notice that, since $D$ is smooth, every point $x_0 \in \partial\O_u\cap \partial D$ is flat and hence every blow-up at $x_0 \in \partial\O_u\cap \partial D$ is of the form \eqref{e:blowup2}; in particular, $\partial\O_u \cap \partial D \subset \regu$.  
%	Now, the regularity of the regular part of the free boundary $\regu\cap D$ follows by Lemma \ref{l:visc} and the improvement of flatness Theorem from \cite{de-silva}, while the regularity of $\regu\cap \partial D$ follows from \cite{chang-lara-savin}. Thus, we only need to prove the estimate on $\singu$. 
\end{proof}

\begin{oss}[On the higher regularity of the free boundary]
	The smoothness of the free boundary can be improved under an additional regularity assumption on $\Phi$. Indeed, if $\nabla \Phi \in C^{k+1,\alpha}(D;\R^d)$ for some $k\geq 1$ and $\alpha \in (0,1)$, then by \cite[Theorem 1]{kinderlehrer-nirenberg}, $\reg\cap D$ is locally a graph of a $C^{k+1,\alpha}$ function.
\end{oss}

\subsection{Monotonicity formula and some further estimates on the dimension of the singular set}\label{sub:weiss}
This section is dedicated to the estimates on the dimension of the singular set (Theorem \ref{thm main} (5) and Theorem \ref{t:th2} (5)). The main ingredient is a monotonicity formula that implies the homogeneity of the blow-up limits at any free boundary point $x_0\in\partial\Omega_u\cap D$. 

%We prove in this section a monotonicity formula for a Weiss-type boundary adjusted energy of the solution $u$ to \eqref{e:fbp} at every point of the free boundary $\partial\O_u\cap D$. 
Let $u$ be a solution \eqref{e:fbp} and $\Lambda_u$ be the constant given by Theorem \ref{p:lagrange}.
We define the Weiss-type boundary adjusted energy as
\[ W(u,\Phi,x_0,r) = \frac{1}{r^d}\int_{B_r(x_0)}|\nabla u|^2e^{-\Phi}\,dx - \frac{1}{r^{d+1}}\int_{\partial B_r(x_0)}u^2e^{-\Phi}\,d\HH^{d-1} + \frac{\Lambda_u}{r^d}|\Omega_u\cap B_r(x_0)|.\] 

\begin{lm}[Weiss monotonicity formula]\label{l:weiss}
Let $u$ be a solution \eqref{e:fbp} in the bounded open set $D$. Then, for every $x_0\in\partial\Omega_u\cap D$ and every $0<r<\text{dist}(x_0,\partial D)$, the function $W$ satisfies the differential inequality 
\begin{equation}\label{e:weiss}
\frac{d}{dr}W(u,\Phi,x_0,r)\ge \frac{2e^{-\max\Phi}}{r^{d+2}}\int_{\partial B_r(x_0)}\left|\nabla u\cdot x -u\right|^2\,d\HH^{d-1}-C,
\end{equation} 
where $C>0$ is a constant depending only on $\lambda_m,\Phi,L:=\|\nabla u\|_{L^\infty}$ and the dimension $d$. 
%\noindent If, moreover, $D$ is of class $C^{1,1}$, then there exists a constant $r_0>0$ depending only on $D$, such that, for every $x_0\in\partial\O_u\cap\partial D$ and every $0<r<r_0$ the inequality \eqref{e:weiss} holds for some constant $C>0$ which  depends also on $D$.
\end{lm}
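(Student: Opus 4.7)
Throughout we may assume $x_0=0$ and write $W(r)$ for $W(u,\Phi,0,r)$. Decompose
$$W(r)= E(r)-H(r)+M(r),\quad E(r)=\frac1{r^d}\!\int_{B_r}\!|\nabla u|^2 e^{-\Phi},\quad H(r)=\frac1{r^{d+1}}\!\int_{\partial B_r}\!u^2 e^{-\Phi},\quad M(r)=\frac{\Lambda_u}{r^d}|\Omega_u\cap B_r|,$$
all differentiable for a.e.\ $r$. A direct differentiation using coarea and the identity $H(r)=r^{-2}\!\int_{\partial B_1}u(r\theta)^2 e^{-\Phi(r\theta)}\,d\mathcal H^{d-1}(\theta)$ yields an expression for $W'(r)$ involving bulk terms $r^{-(d+1)}\!\int_{B_r}|\nabla u|^2 e^{-\Phi}$, $r^{-(d+1)}\Lambda_u|\Omega_u\cap B_r|$ together with boundary terms at $\partial B_r$; the only ``new'' boundary contribution compared to the classical Weiss computation is $r^{-(d+1)}\!\int_{\partial B_r}u^2(\nabla\Phi\cdot\nu)e^{-\Phi}\,d\mathcal H^{d-1}$, coming from differentiating $e^{-\Phi}$. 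This step is purely mechanical.

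Next, the crucial Pohozaev-type identity follows from the internal-variation optimality condition of Proposition \ref{p:lagrange}. Choosing the test field $\xi_\eps(x)=x\,\eta_\eps(|x|)$ with $\eta_\eps\in C^\infty_c([0,r))$ converging to $\chi_{[0,r)}$ (note $\overline{B_r}\subset D$ by hypothesis), one has $D\xi_\eps(\nabla u)\cdot\nabla u=\eta_\eps|\nabla u|^2+\eta_\eps'(|x|)(x\cdot\nabla u)^2/|x|$ and $\dive\xi_\eps=d\eta_\eps+\eta_\eps'(|x|)|x|$. Plugging into \eqref{e:first_variation} and passing to the limit (the $\eta_\eps'$-terms produce surface integrals on $\partial B_r$) gives, after dividing by $r^{d+1}$:
\begin{align*}
\frac{d\Lambda_u|\Omega_u\cap B_r|}{r^{d+1}}-\frac{\Lambda_u\mathcal H^{d-1}(\Omega_u\cap\partial B_r)}{r^d}
&=\frac{(2-d)}{r^{d+1}}\!\int_{B_r}\!|\nabla u|^2 e^{-\Phi}+\frac{1}{r^d}\!\int_{\partial B_r}\!|\nabla u|^2 e^{-\Phi}\\
&\quad-\frac{2}{r^{d+2}}\!\int_{\partial B_r}\!(x\cdot\nabla u)^2 e^{-\Phi}-\frac{\lambda_m}{r^d}\!\int_{\partial B_r}\!u^2 e^{-\Phi}\\
&\quad+\frac{d\lambda_m}{r^{d+1}}\!\int_{B_r}\!u^2 e^{-\Phi}+\frac{1}{r^{d+1}}\!\int_{B_r}\!(x\cdot\nabla\Phi)(|\nabla u|^2{-}\lambda_m u^2)e^{-\Phi}.
\end{align*}
Adding this identity to the expression for $W'(r)$ cancels exactly the terms $r^{-d}\!\int_{\partial B_r}|\nabla u|^2 e^{-\Phi}$ and reduces the coefficient of $r^{-(d+1)}\!\int_{B_r}|\nabla u|^2 e^{-\Phi}$ to $-2$.

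The second key identity is the weak integration by parts on $\Omega_u$: using $u\eta \in H^1_0(\Omega_u)$ for $\eta\in C^\infty_c(B_r)$ approximating $\chi_{B_r}$ and the PDE $-\dive(e^{-\Phi}\nabla u)=\lambda_m u e^{-\Phi}$ on $\Omega_u$ (recalling $u=0$ on $\partial\Omega_u\cap B_r$) gives
$$\int_{B_r}|\nabla u|^2 e^{-\Phi}dx=\lambda_m\!\int_{B_r}u^2 e^{-\Phi}dx+\int_{\partial B_r}u(\nabla u\cdot\nu)e^{-\Phi}d\mathcal H^{d-1}.$$
Substituting this into the $-2r^{-(d+1)}\!\int_{B_r}|\nabla u|^2 e^{-\Phi}$ term produced above, the surviving surface terms combine into
$$\tfrac{2}{r^{d+2}}\!\int_{\partial B_r}\!\bigl[(x\cdot\nabla u)^2-2u(x\cdot\nabla u)+u^2\bigr]e^{-\Phi}d\mathcal H^{d-1}=\tfrac{2}{r^{d+2}}\!\int_{\partial B_r}\!|x\cdot\nabla u-u|^2 e^{-\Phi}d\mathcal H^{d-1},$$
which is the desired main term (bounded below by $2e^{-\max\Phi}r^{-(d+2)}\!\int_{\partial B_r}|x\cdot\nabla u-u|^2$).

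Finally, all remaining error terms involve $\lambda_m$ or $\nabla\Phi$, together with either $u^2$ on $B_r$/$\partial B_r$ or $|\nabla u|^2$ multiplied by $(x\cdot\nabla\Phi)$. Since $u(0)=0$ (as $0\in\partial\Omega_u$ and $\Omega_u$ is open by Proposition \ref{p:lipschitz}), Lipschitz continuity gives $|u|\le Lr$ on $B_r$, so $r^{-(d+1)}\!\int_{B_r}u^2\le CL^2 r$, and similarly for the surface integrals; the $(x\cdot\nabla\Phi)|\nabla u|^2$-term is bounded by $\|\nabla\Phi\|_\infty L^2\omega_d$. All these contributions are uniformly bounded by a constant $C=C(d,\lambda_m,\Phi,L)$, yielding \eqref{e:weiss}. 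The main conceptual obstacle is the derivation of the Pohozaev identity via Proposition \ref{p:lagrange} with non-compact radial vector fields (handled by the $\eta_\eps$-approximation); the rest is careful bookkeeping.
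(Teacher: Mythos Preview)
Your proof is correct and follows essentially the same approach as the paper: direct differentiation of $W$, a Pohozaev identity obtained by testing the Euler--Lagrange condition of Proposition~\ref{p:lagrange} with the radial field $x\phi_\eps(x)$, the integration-by-parts formula coming from the PDE on $\Omega_u$ (which the paper embeds into its computation of $H'(r)$ rather than stating separately), and a bound on the lower-order terms via the Lipschitz estimate. The organization differs slightly, but the ingredients and the logic are the same.
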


\begin{proof}
We first prove the claim when $x_0 \in \partial\O_u\cap D$. Assume $x_0=0$. We set 
$$H(r):=\int_{\partial B_r}u^2e^{-\Phi}d\HH^{d-1}\qquad\text{and}\qquad D(r):=\int_{B_r}|\nabla u|^2e^{-\Phi}dx,$$
$$H_\Phi(r):=\int_{\partial B_r}(\nu\cdot\nabla\Phi)\,u^2e^{-\Phi}d\HH^{d-1}\qquad\text{and}\qquad D_\Phi(r):=\int_{B_r}\left(|\nabla u|^2-\lambda_mu^2\right)(x\cdot\nabla\Phi)e^{-\Phi}dx,$$
where $\nu(x)=\sfrac{x}r$ is the exterior normal to the sphere $\partial B_r$ at $x$.
  As in Proposition   \ref{p:density} (notice that in Proposition \ref{p:density}  $D_\Phi$ is defined differently) we have  
$$D'(r)=\int_{\partial B_r}|\nabla u|^2e^{-\Phi}d\HH^{d-1}\quad\text{and}\quad H'(r)=\frac{d-1}{r}H(r)+2D(r)-2\lambda_m\int_{B_r}u^2e^{-\Phi}dx-H_\Phi(r).$$
 Let $\phi_\eps$ be a radially decreasing function such that 
 \begin{equation}\label{e:phi_eps}
 0\le \phi_\eps\le 1\ \, \text{in}\ \,  B_r,\quad  \phi_\eps=1\ \,  \text{in}\ \,  B_{r(1-\eps)},\quad \phi_\eps=0\ \,  \text{on}\ \, \partial B_r\quad \text{and}\quad|\nabla\phi_\eps|\leq C(r\eps)^{-1}. 
 \end{equation}
As in Step 2 of the proof of Proposition \ref{p:density}, the optimality condition   $\ \ds\delta J(u)[\xi]=\Lambda_u\int_{\Omega_u}\dive\xi\,dx\ $,   applied to the vector field $\xi(x)=x\phi_\eps(x)$, gives that 
\begin{align*}
\Lambda_u\left(d|\Omega_u\cap B_r|-r\HH^{d-1}(\Omega_u\cap\partial B_r)\right)&=-(d-2)D(r)+rD'(r)-2r\int_{\partial B_r}\left(\partial_\nu u\right)^2e^{-\Phi} d\HH^{d-1}\\
&\quad+\lambda_m\left(d\int_{B_r}u^2e^{-\Phi}dx-r\int_{\partial B_r}u^2e^{-\Phi}d\HH^{d-1}\right)+ rD_\Phi(r),
\end{align*}
where $\partial_\nu u:=\nu\cdot\nabla u$. We now calculate 
\begin{align*}
\frac{d}{dr}W(u,\Phi,x_0,r)&
=\frac{1}{r^d}D'(r)-\frac{d}{r^{d+1}}D(r)-\frac{1}{r^{d+1}}H'(r)+\frac{d+1}{r^{d+2}}H(r)\\
&\qquad+\frac{\Lambda_u}{r^{d+1}}\left(r\HH^{d-1}(\Omega_u\cap \partial B_r)-d|\Omega_u\cap B_r|\right)\\
%&=-\frac{4}{r^{d+1}}D(r)+\frac{2}{r^d}\int_{\partial B_r}\!\left(\frac{\partial u}{\partial n}\right)^2\!e^{-\Phi} d\HH^{d-1}+\frac{2}{r^{d+1}}H(r)+\frac{D_\Phi(r)+H_\Phi(r)}{r^{d+1}}\\
%&\qquad+\frac{\lambda_m}{r^{d+1}}\left((d+2)\int_{B_r}u^2e^{-\Phi}dx-r\int_{\partial B_r}u^2e^{-\Phi}d\HH^{d-1}\right)\\
& =\frac{2}{r^{d+2}}\int_{\partial B_r}\left|\nabla u\cdot x -u\right|^2 e^{-\Phi}d\HH^{d-1}+\frac{1}{r^{d+1}}H_\Phi(r)-\frac{1}{r^d}D_\Phi(r) \\
&\qquad-\frac{\lambda_m}{r^{d+1}}\left((d+2)\int_{B_r}u^2e^{-\Phi}dx-r\int_{\partial B_r}u^2e^{-\Phi}d\HH^{d-1}\right) \\
& \ge\frac{2e^{-\max\Phi}}{r^{d+2}}\int_{\partial B_r}\left|\nabla u\cdot x-u\right|^2d\HH^{d-1}-C,
\end{align*} 
which gives the claim if $x_0 \in \partial\O_u\cap D$ and $r<\text{dist}(x_0,\partial D)$. 
\end{proof}

\begin{lm}[Homogeneity of the blow-up limits]\label{l:homogeneity}
Let $u$ be a solution \eqref{e:fbp} in the bounded open set $D$ and let  $x_0\in\partial\Omega_u$. Then every blow-up limit $u_0\in \mathcal{BU}_u(x_0)$ is one-homogeneous. 
\end{lm}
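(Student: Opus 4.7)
The plan is to exploit the Weiss-type monotonicity formula of Lemma~\ref{l:weiss} in a standard way: monotonicity produces a limit, the blow-up is tested against the formula at scales $sr_n$, and constancy of the limiting Weiss energy forces the Euler identity for one-homogeneous functions. I split into two cases. For $x_0\in\partial\Omega_u\cap\partial D$, Proposition~\ref{p:blowup}(4) says that (up to a rotation) $u_0$ is a global minimizer of the constrained Alt--Caffarelli functional with support in $\{x_d>0\}$, and the one-homogeneity follows from the constrained version of the Weiss monotonicity formula recalled in Remark~\ref{r:homogconstr} (cf.\ \cite[Proposition~4.3]{spolaor-trey-velichkov}). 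The substantive work is in the interior case $x_0\in\partial\Omega_u\cap D$, which I now describe.

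By integrating the differential inequality \eqref{e:weiss}, the map $r\mapsto W(u,\Phi,x_0,r)+Cr$ is non-decreasing on $(0,\dist(x_0,\partial D))$. Using the Lipschitz estimate of Proposition~\ref{p:lipschitz} to bound $W$ from above, the limit $\ell:=\lim_{r\to 0^+}W(u,\Phi,x_0,r)\in\R$ exists. Next, given a blow-up sequence $u_n(x):=r_n^{-1}u(x_0+r_nx)\to u_0$, a direct change of variables $y=x_0+r_n x$ yields, for every $s>0$, the scaling identity
\[
W(u,\Phi,x_0,sr_n)=\frac{1}{s^d}\int_{B_s}|\nabla u_n|^2 e^{-\Phi_n}\,dx-\frac{1}{s^{d+1}}\int_{\partial B_s}u_n^2\,e^{-\Phi_n}\,d\HH^{d-1}+\frac{\Lambda_u}{s^d}|\Omega_{u_n}\cap B_s|,
\]
where $\Phi_n(x):=\Phi(x_0+r_nx)$. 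The right-hand side is precisely the natural Weiss energy of $u_n$ with respect to the rescaled potential $\Phi_n$, at radius $s$.

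I then pass to the limit $n\to\infty$ for each fixed $s>0$. Proposition~\ref{p:blowup} provides strong $H^1_{\mathrm{loc}}$ convergence of $u_n$ to $u_0$ and $L^1_{\mathrm{loc}}$ convergence of $\ind_{\Omega_{u_n}}$ to $\ind_{\Omega_{u_0}}$, while $\Phi_n\to\Phi(x_0)$ uniformly on $\overline{B_s}$. The bulk terms pass to the limit immediately; for the spherical term I use Fubini to convert $H^1_{\mathrm{loc}}$ convergence into $\int_{\partial B_s}|u_n-u_0|^2\,d\HH^{d-1}\to 0$ for almost every $s$. Therefore, for a.e.\ $s>0$,
\[
\ell\;=\;e^{-\Phi(x_0)}\Bigl(\frac{1}{s^d}\!\int_{B_s}\!|\nabla u_0|^2\,dx-\frac{1}{s^{d+1}}\!\int_{\partial B_s}\!u_0^2\,d\HH^{d-1}+\frac{\Lambda_u e^{\Phi(x_0)}}{s^d}|\Omega_{u_0}\cap B_s|\Bigr)=:e^{-\Phi(x_0)}W_0(u_0,s),
\]
so $s\mapsto W_0(u_0,s)$ is constant. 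Finally, since $u_0$ is a global minimizer of the Alt--Caffarelli functional with $\Lambda=\Lambda_u e^{\Phi(x_0)}$ by Proposition~\ref{p:blowup}(4), and the ambient potential is now constant with no lower-order $\lambda_m$-correction, the computation of Lemma~\ref{l:weiss} carried out for $u_0$ gives the \emph{exact} identity $\frac{d}{ds}W_0(u_0,s)=\frac{2}{s^{d+2}}\int_{\partial B_s}|\nabla u_0\cdot x-u_0|^2\,d\HH^{d-1}$. Constancy of $W_0(u_0,\cdot)$ then forces $x\cdot\nabla u_0(x)=u_0(x)$ a.e., which is the Euler identity characterizing one-homogeneous functions.

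The main technical obstacle is the second passage to the limit: I need enough regularity of $u_0$ and enough compactness of $u_n$ to pass from the almost-monotonicity of $W$ for $u$ to the \emph{exact} monotonicity of $W_0$ for $u_0$, and in particular to ensure that the error term $-C$ in \eqref{e:weiss} disappears for $u_0$. This is precisely what is supplied by the strong $H^1_{\mathrm{loc}}$ convergence of Proposition~\ref{p:blowup}, by uniform convergence of $\Phi_n$ to the constant $\Phi(x_0)$, and by the scaling that kills the $\lambda_m$-term (it contributes $O(r_n^2)$ in the blow-up).
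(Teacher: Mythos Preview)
Your argument is correct and follows essentially the same route as the paper: use the almost-monotonicity of $W$ from Lemma~\ref{l:weiss} to get a finite limit, exploit the scaling identity together with the convergences of Proposition~\ref{p:blowup} to identify that limit with the (constant-in-$s$) classical Weiss energy of $u_0$, and then read off one-homogeneity from the exact Weiss identity for the Alt--Caffarelli problem. Two small remarks: first, the finiteness of $\ell$ requires a \emph{lower} bound on $W$ (since $W+Cr$ is non-decreasing), which the Lipschitz estimate indeed supplies via $u\le Lr$ on $B_r(x_0)$; second, you are more explicit than the paper in treating $x_0\in\partial D$ separately via Remark~\ref{r:homogconstr}, whereas the paper's proof of Lemma~\ref{l:homogeneity} is written only for interior points and defers the boundary case to that remark.
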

\begin{proof}
Let $x_0=0$ and $W(u,\Phi,r):=W(u,\Phi,x_0,r)$. Recall that $u_r(x)=\frac1ru(rx)$ and $\Phi_r(x)=\Phi(rx)$. We first notice that for every $r>0$ and $s>0$ such that $rs\le \text{dist}(x_0,\partial D)$ we have 
$$W(u_r,\Phi_r,s)=W(u,\Phi,rs).$$ 
Moreover, since the function $r\mapsto W(u,\Phi,t)+Cr$ is monotone, the limit $$W(u,\Phi,0):=\lim_{r\to0^+}W(u,\Phi,r)$$
exists (and is finite due to the Lipschtz continuity of $u$). On the other hand, for every blow-up sequence $u_{r_n}$ with blow-up limit $u_0$, we have 
$$W(u_0,\Phi(0),s)=\lim_{n\to\infty}W(u_{r_n},\Phi_{r_n}(0),s)=\lim_{n\to\infty}W(u,\Phi(0),r_ns)=W(u,\Phi(0),0).$$ 
Thus, the function 
$$\ds s\mapsto \frac1{s^d}\int_{B_s}|\nabla u_0|^2\,dx-\frac{1}{s^{d+1}}\int_{\partial B_s}u_0^2\,d\HH^{d-1}+\frac{\Lambda_ue^{\Phi(0)}}{s^d}|\Omega_{u_0}\cap B_s|,$$
is constant. Now, by \cite{weiss} (or, simply by applying \eqref{e:weiss} to $u=u_0$, $\lambda_m=0$ and $\Phi=0$), we have that $u_0$ is one-homogeneous. 
\end{proof}

%\subsection{Further estimates on the dimension of the singular set}\label{sub:singular}
%This subsection is dedicated to some more refined estimates on the dimension of the singular set. 
%Precisely, the optimal sets for the problem \eqref{e:optO} are $d^\ast$-regular, where $d^\ast$ is defined below.

\begin{definition}\label{def:dstar}
We define $d^*$ as the smallest dimension which admits one-homogeneous global minimizers of the one-phase Alt-Caffarelli problem with (isolated) singularity in zero.
\end{definition}
By \cite{jerison-savin} and \cite{de-silva-jerison} we know that $d^*\in\{5,6,7\}$. Weiss was first to prove that the monotonicity formula implies the dimension estimate 
$$\dim_\HH \singu:=\inf\{\alpha\ge0\ :\ \HH^\alpha(\singu)=0\}\le d-d^*,$$ 
for every $d>d^*$ (see also \cite{mazzoleni-terracini-velichkov} for an argument using only the monotonicity of $W$). 
%Recently, using the innovative approach of Naber and Valtorta \cite{naber-valtorta}, Edelen and Engelstein \cite{edelen-engelstein} showed the the monotonicity formula implies the stronger estimate $\HH^{d-d^\ast}(\singu)<\infty$. 
Thus, as a consequence of Lemma \ref{l:weiss}, Lemma \ref{l:homogeneity} and the results from \cite{weiss} and \cite{mazzoleni-terracini-velichkov}, we get 
\begin{prop}[On the dimension of the singular set]\label{p:singular}
Let $u$ be a solution of \eqref{e:fbp} in the bounded open set $D\subset\R^d$. Then 

$\bullet$ $\singu=\emptyset$ if $d<d^\ast$; 

$\bullet$ $\singu$ is a discrete (locally finite) set if $d=d^\ast$;

$\bullet$ $\dim_\HH \singu<d-d^\ast$ if $d>d^\ast$.
\end{prop}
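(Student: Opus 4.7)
The plan is to follow the classical Federer dimension reduction, using as essential inputs the monotonicity formula (Lemma \ref{l:weiss}), the one-homogeneity of blow-up limits (Lemma \ref{l:homogeneity}), and the fact from Proposition \ref{p:blowup}(4) that every blow-up limit $u_0$ at an interior free boundary point $x_0\in\partial\Omega_u\cap D$ is a non-trivial global minimizer of the one-phase Alt--Caffarelli functional with constant $\Lambda=\Lambda_u e^{\Phi(x_0)}$. Note that for this argument we only need to treat interior points, since if $D$ is $C^{1,1}$ we have $Sing(\partial\Omega_u)=Sing(\partial\Omega_u\cap D)$ by Proposition \ref{p:regularity}(6), and for a merely bounded open $D$ the singular set is defined inside $D$.

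First, I would characterize the singular set in terms of blow-ups. If $x_0\in Sing(\partial\Omega_u\cap D)$, then, combining the previous three ingredients, every blow-up $u_0\in\mathcal{BU}_u(x_0)$ is a one-homogeneous global minimizer of Alt--Caffarelli for some fixed $\Lambda=\Lambda_u e^{\Phi(x_0)}>0$, and $u_0$ cannot be of the half-plane form $\sqrt{\Lambda}(x\cdot\nu)_+$ (otherwise $x_0$ would be regular by Definition \ref{def:reg-sing}). In particular, the origin is a singular free boundary point of $u_0$ in the classical Alt--Caffarelli sense. The case $d<d^\ast$ now follows immediately from Definition \ref{def:dstar}: no such $u_0$ exists, hence $Sing(\partial\Omega_u\cap D)=\emptyset$.

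For $d\ge d^\ast$ I would run the Federer dimension reduction exactly as in \cite{weiss} and \cite{mazzoleni-terracini-velichkov}. The two standard ingredients to verify are: (i) upper semicontinuity of the density $x_0\mapsto W(u,\Phi,x_0,0^+)$, which follows from Lemma \ref{l:weiss} (monotonicity of $r\mapsto W(u,\Phi,x_0,r)+Cr$) together with the local uniform Lipschitz bound from Proposition \ref{p:lipschitz}; and (ii) the fact that the blow-up of a blow-up is still a one-homogeneous global Alt--Caffarelli minimizer, so that iterated blow-ups stay inside the class, and singular points are preserved. With these in hand, the standard Federer--Almgren argument yields $\dim_{\mathcal H}Sing(\partial\Omega_u\cap D)\le d-d^\ast$ when $d>d^\ast$, and discreteness of $Sing(\partial\Omega_u\cap D)$ when $d=d^\ast$ (since a non-discrete singular set would, via dimension reduction, produce a one-homogeneous singular Alt--Caffarelli minimizer in dimension $d^\ast-1$, contradicting Definition \ref{def:dstar}).

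I do not expect a major obstacle here: all the PDE-level work has been done in the previous subsections. The only genuinely new technical point is the extra constant $C$ in \eqref{e:weiss} coming from the drift and the variable coefficient $e^{-\Phi}$, but since $W(u,\Phi,x_0,r)+Cr$ is still monotone non-decreasing in $r$, the scale-invariant density $W(u,\Phi,x_0,0^+)$ exists and is upper semicontinuous in $x_0$, which is all that the dimension reduction scheme requires. Thus the conclusion reduces verbatim to the statement of Proposition \ref{p:singular}.
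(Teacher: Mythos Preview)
Your proposal is correct and follows essentially the same approach as the paper: the paper's own proof simply states that the result is standard once one has the Weiss (quasi-)monotonicity formula (Lemma \ref{l:weiss}) and the homogeneity of blow-up limits (Lemma \ref{l:homogeneity}), and refers to \cite[Section 4]{weiss} and \cite[Section 5.5]{mazzoleni-terracini-velichkov} for the Federer dimension reduction you outline. Your expansion of the ingredients (upper semicontinuity of the density via monotonicity of $r\mapsto W+Cr$, stability of the class under iterated blow-ups, and the characterization of singular points via non-flat one-homogeneous cones) is exactly what those references contain; note also that the standard argument gives $\dim_{\mathcal H}\singu\le d-d^\ast$, consistent with the main theorems, so the strict inequality in the statement of the proposition is a typo.
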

\begin{proof}
The proof of this proposition is standard, once we have the monotonicity of $W$ (Lemma \ref{l:weiss}) and the consequent homogeneity of the blow-up limits (Lemma \ref{l:homogeneity}). We refer to \cite[Section 4]{weiss} and \cite[Section 5.5]{mazzoleni-terracini-velichkov}.  
\end{proof}

\begin{oss}
Recently, using the innovative approach of Naber and Valtorta \cite{naber-valtorta}, Edelen and Engelstein \cite{edelen-engelstein} showed the monotonicity formula of Weiss can be used to obtain the (local) estimate $\HH^{d-d^\ast}(\singu)<\infty$, which in particular implies that $\dim_\HH \singu<d-d^\ast$.
\end{oss}

\appendix
\section{Extremality conditions and Lebesgue density }
In this section we prove Proposition \ref{p:density}, which we use in Proposition \ref{p:lagrange} to show that the Lagrange multiplier $\Lambda_u$ is strictly positive, but the result is of independent interest. For instance, it applies to optimal partition problems (see, for example, \cite{conti-terracini-verzini} and \cite{caffarelli-lin}). We first show that a function which is critical for the functional 
\begin{equation}\label{e:app:defJ}
J(u):=\int_D|\nabla u|^2e^{-\Phi}\,dx-\lambda \int_Du^2e^{-\Phi}\,dx,
\end{equation}
with respect to internal variations that is 
$$\ds\delta J(u)[\xi]:=\lim_{t\to0}J(u(x+t\xi(x)))=0\quad\text{for every vector field}\quad \xi\in C^\infty_c(D;\R^d),$$
satisfies a monotonicity formula for the associated Almgren frequency function $N(r)$. Now, by the argument of Garofalo and Lin (see \cite{garofalo-lin}) the monotonicity of the frequency function implies that $u$ cannot decay too fast around the free boundary points. If, in addition, $u$ is a solution of $-\dive(e^{-\Phi}\nabla u)=\lambda u e^{-\Phi}$ on the positivity set $\Omega_u=\{u>0\}$, we can use a Caccioppoli inequality to show that if the Lebesgue density of $\Omega_u$ is too small, then the decay of $u$ on the balls of radius $r$ should be very fast. This, in combination with the monotonicity of the Almgren's frequency function, shows that the Lebesgue density of $\Omega_u$ should be bounded from below \emph{everywhere} (and not only on the boundary of $\Omega_u$). In particular, there cannot be points of zero Lebesgue density for $\Omega_u$ in $D$.  
%\begin{equation}\label{e:first_variation_app}
%	\delta J(u)[\xi] := \int_D \Big[ 2 D\xi(\nabla u)\cdot\nabla u + \big(|\nabla u|^2 - \lambda u^2\big) (\nabla \Phi \cdot \xi - \dive \xi) \Big] e^{-\Phi} dx.
%\end{equation}
\begin{prop}\label{p:density}
Let $D\subset\R^d$ be a bounded open set and $\Phi\in W^{1,\infty} (D)$. Suppose that $\lambda\ge0$ and $u\in H^1(D)$ is a nonnegative (non-identically-zero) function such that
\begin{enumerate}[(a)]
\item $u$ is a solution of the equation 
\begin{equation}\label{e:app:equ}
-\dive(e^{-\Phi}\nabla u)=\lambda e^{-\Phi}u\quad\text{in}\quad \Omega_u=\{u>0\};
\end{equation}
\item $u$ satisfies the extremality condition 
$$\delta J(u)[\xi]=0\quad\text{for every}\quad \xi\in C^\infty_c(D;\R^d),$$
where $J$ is given by \eqref{e:app:defJ}
%$$J(u)=\int_D|\nabla u|^2e^{-\Phi}\,dx-\lambda \int_Du^2e^{-\Phi}\,dx,$$
and its first variation in the direction $\xi$ is given by 
\begin{equation}\label{e:first_variation_app}
\delta J(u)[\xi] := \int_D \Big[ 2 D\xi(\nabla u)\cdot\nabla u + \big(|\nabla u|^2 - \lambda u^2\big) (\nabla \Phi \cdot \xi - \dive \xi) \Big] e^{-\Phi} dx.
\end{equation}
\end{enumerate}
Then, $|D\setminus\Omega_u|=0$. 
\end{prop}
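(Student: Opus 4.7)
The plan is to argue by contradiction, assuming $|D\setminus\Omega_u|>0$. The heart of the argument combines an Almgren-type monotonicity formula derived from the stationarity (b) with a Caccioppoli-plus-Sobolev density estimate derived from the equation (a). Together, these force the Lebesgue density of $\Omega_u$ to be uniformly positive at every interior point where $u$ is not locally identically zero; a Lebesgue density argument then reduces matters to ruling out an open component of $\{u=0\}$, which I would handle by a blow-up at its boundary.

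First, fix $x_0\in D$ and set $D(r)=\int_{B_r(x_0)}|\nabla u|^2 e^{-\Phi}\,dx$, $H(r)=\int_{\partial B_r(x_0)}u^2 e^{-\Phi}\,d\mathcal{H}^{d-1}$, and $N(r)=rD(r)/H(r)$. Plugging the vector field $\xi(x)=(x-x_0)\phi_\varepsilon(x)$ into (b), with $\phi_\varepsilon$ a radial cutoff approximating $\mathbf{1}_{B_r(x_0)}$ in the sense of \eqref{e:phi_eps}, and passing to the limit $\varepsilon\to 0$ produces a Pohozaev-type identity relating $D(r)$, $rD'(r)$, $r\int_{\partial B_r}(\partial_\nu u)^2 e^{-\Phi}\,d\mathcal{H}^{d-1}$, and $H(r)$, modulo lower-order terms in $\nabla\Phi$ and $\lambda$. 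Testing (a) against $u\eta\in H^1_0(\Omega_u)$ for a smooth cutoff $\eta$ gives $D(r)=\lambda\int_{B_r}u^2 e^{-\Phi}\,dx+\int_{\partial B_r}u\,\partial_\nu u\,e^{-\Phi}\,d\mathcal{H}^{d-1}$, and differentiating yields a formula for $H'(r)$. Combining these two identities via Cauchy-Schwarz on the cross term $\int u\,\partial_\nu u$ should give an Almgren-type monotonicity of the form $N(r)\le N(r_0)+Cr_0$ for $0<r\le r_0$. Integrating the resulting ODE for $H$ then gives the polynomial lower bound $H(r)\ge cr^{d-1+2N_0}$ for all $r\in(0,r_0]$, whenever $H(r_0)>0$.

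Separately, testing (a) against $u\eta^2$ yields the Caccioppoli inequality $\int\eta^2|\nabla u|^2 e^{-\Phi}\,dx\le C(\lambda+r^{-2})\int u^2 e^{-\Phi}\,dx$; combining it with Sobolev for $u\eta\in H^1_0(B_r)$ and the H\"older bound $\int_{B_r}u^2\,dx\le |\Omega_u\cap B_r|^{2/d}\|u\eta\|_{L^{2d/(d-2)}(B_r)}^2$ (using $u=0$ on $B_r\setminus\Omega_u$) will yield the key decay estimate
\[\int_{B_{r/4}(x_0)}u^2 e^{-\Phi}\,dx\le C\,\omega(r)^{2/d}\int_{B_r(x_0)}u^2 e^{-\Phi}\,dx,\qquad \omega(r):=\frac{|\Omega_u\cap B_r(x_0)|}{|B_r|}.\]
If $\omega(r)\to 0$ along $r\to 0$, iterating this on dyadic scales forces $\int_{B_r}u^2$ to decay faster than any polynomial in $r$, contradicting the polynomial lower bound on $H$ from the previous step, unless $u\equiv 0$ in some neighborhood of $x_0$. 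Consequently, at every $x_0\in D$ where $u$ is not locally identically zero, $\liminf_{r\to 0}\omega(r)>0$. By the Lebesgue density theorem almost every point of $D\setminus\Omega_u$ satisfies $\omega(r)\to 0$, so such a point must lie in $U:=\mathrm{int}(\{u=0\})$; hence $|D\setminus\Omega_u|>0$ forces $U\neq\emptyset$.

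The hard part will be ruling out a non-empty open $U$. I would pick $x_1\in\partial U\cap D$ (at which $u$ is still not locally zero, so the Almgren monotonicity and non-degeneracy apply), and blow up $u_n(x):=u(x_1+r_nx)/\sqrt{H(r_n;x_1)/r_n^{d-1}}$ along a subsequence $r_n\to 0$. Using Lipschitz regularity of $u$ (itself a byproduct of the preceding steps) together with Almgren monotonicity at $x_1$, I expect to obtain a non-trivial homogeneous limit $u_0$ on $\R^d$ satisfying the rescaled versions of (a) and (b), namely $\Delta u_0=0$ on $\Omega_{u_0}$ and $\delta J_0(u_0)[\xi]=0$ for $J_0(v)=\int|\nabla v|^2\,dx$ (the $\nabla\Phi$ and $\lambda$ contributions drop out under the rescaling). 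Since $x_1\in\partial U$, the tangent cone of $U$ at $x_1$ sits inside $\{u_0=0\}$ and has non-empty interior, so at an interior point $y$ of this cone $\omega(\,\cdot\,,y)\to 0$ for $u_0$, and applying the decay step to $u_0$ at $y$ forces $u_0\equiv 0$ in a neighborhood of $y$; standard unique continuation for harmonic functions then propagates this to $u_0\equiv 0$ on $\R^d$, contradicting the non-triviality of $u_0$. Making this last blow-up step rigorous --- in particular identifying the tangent cone and passing (a) and (b) cleanly to the limit without assuming any regularity of $\partial U$ --- is the main obstacle.
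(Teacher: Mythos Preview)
Your main strategy matches the paper's: derive an Almgren-type monotonicity for $N(r)=rD(r)/H(r)$ from the stationarity condition (b), combine it with a Caccioppoli inequality from (a) and a Sobolev--H\"older density estimate to obtain a uniform lower bound on $|\Omega_u\cap B_r(x_0)|/|B_r|$ at every point where the frequency function is defined, and then invoke the Lebesgue density theorem. Two remarks are worth making.

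\textbf{A simplification you missed.} The paper first reduces to the case $\lambda=0$ by the substitution $\tilde u=u/\varphi$, where $\varphi$ is a positive eigenfunction of $-\dive(e^{-\Phi}\nabla\cdot)$ on a slightly larger ball (Lemma~\ref{l:app:Ja}). After this substitution, $\tilde u$ satisfies (a) and (b) with $\lambda=0$ and a new Lipschitz weight $\tilde a=\varphi^2 e^{-\Phi}$, so the Almgren computation becomes cleaner: no $\lambda$-terms to carry around and the doubling inequality for $H$ follows by a direct integration of $(\log(H(r)/r^{d-1}))'$.

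\textbf{Your ``hard part'' is unnecessary.} The blow-up argument you propose to rule out a non-empty open $U=\mathrm{int}(\{u=0\})$ is not needed, and as you suspect it is delicate (you invoke Lipschitz continuity of $u$, which is \emph{not} a byproduct of the preceding steps, and the non-empty interior of the tangent cone of $U$ is unclear without any regularity of $\partial U$). The Almgren monotonicity itself already kills $U$. Indeed, your own monotonicity argument shows that whenever $H(r_0;y)>0$ for some $r_0$, then $H(r;y)>0$ for \emph{every} $r\in(0,r_0]$ (this is the ``strict positivity of $H$'' step in the paper). Now suppose $U\neq\emptyset$; since $D$ is connected and $u\not\equiv0$, there is $z\in\partial U\cap D$. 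Pick $y\in U$ close to $z$, so that $\rho:=\mathrm{dist}(y,\partial U)<\mathrm{dist}(y,\partial D)$. On the one hand $H(r;y)=0$ for $r<\rho$. On the other hand, for some $r_0\in(\rho,\mathrm{dist}(y,\partial D))$ the ball $B_{r_0}(y)$ meets $D\setminus U$ in an open set, hence $\int_{B_{r_0}(y)}u^2>0$ and $H(s;y)>0$ for some $s\in(\rho,r_0)$. The strict positivity then forces $H(r;y)>0$ for all $r\le s$, in particular for $r<\rho$, a contradiction. No blow-up, no regularity, no tangent cones.
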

\subsection{Reduction to the case $\lambda=0$}
In this section we will show that it is sufficient to prove Proposition \ref{p:density} for $\lambda=0$. The general case will then follow by an elementary substitution argument. In the next lemma we deal with the first variation of the functional $J$. 
\begin{lm}\label{l:app:Ja}
Suppose that $D\subset\R^d$ is a bounded open set, $a:D\to\R$ is a given Lipschitz function such that $0<\eps\le a\le \eps^{-1}$ on $D$. Let $\lambda>0$ and let $\varphi\in H^2(D)$ be such that 
$$-\dive(a\nabla\varphi)=\lambda a\varphi\quad\text{in}\quad D,\qquad{\varphi\ge \eps>0}\quad\text{on}\quad D.$$  
For any $u\in H^1(D)$, we set  $\tilde a(x):=\varphi^2(x)a(x)$, $\tilde u:=\sfrac{u}\varphi$, 
%For any $v\in H^1(D)$ and any  $\xi\in C^\infty_c(D;\R^d)$, we set   
$$ J(u):=\int_{D}\left(|\nabla u|^2-\lambda u^2\right)a(x)\,dx\qquad\text{and}\qquad\tilde J(u):=\int_{D}|\nabla u|^2\tilde a(x)\,dx,$$
$$\delta J(u)[\xi]:=\int_D \Big[ 2a D\xi(\nabla u)\cdot\nabla u - \left(|\nabla u|^2-\lambda u^2\right) \dive (a\xi ) \Big] dx,$$
$$\delta \tilde J(u)[\xi]:=\int_D \Big[ 2\tilde a D\xi(\nabla u)\cdot\nabla u - |\nabla u|^2 \dive (\tilde a\xi ) \Big]  dx\quad\text{for any}\quad\xi\in C^\infty_c(D;\R^d).$$
Then, for every $u\in H^1(D)$ and every $\xi\in C^\infty_c(D;\R^d)$, we have  
\begin{equation}\label{e:app:Ja}
\delta \tilde J (\tilde u)[\xi]=\delta J(u)[\xi]-2\int_D\nabla \left(u \xi\cdot\nabla (\ln\varphi) \right)\cdot\nabla u\,a\,dx+2\int_D\left(u \xi\cdot\nabla (\ln\varphi) \right)\lambda a u\,dx.
\end{equation}
\end{lm}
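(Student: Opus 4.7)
The identity \eqref{e:app:Ja} is purely algebraic: it expresses $\delta\tilde J(\tilde u)[\xi]$ in terms of $u$, $\nabla u$ and $\varphi$ via the substitution $\tilde u=u/\varphi$, $\tilde a=\varphi^2 a$, and uses \emph{only} the equation $-\dive(a\nabla\varphi)=\lambda a\varphi$ (the PDE for $u$ plays no role here). The plan is to compute $\delta\tilde J(\tilde u)[\xi]$ by a direct change of variables, then rearrange the resulting expression using integration by parts together with the PDE for $\varphi$.

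First, I will use the chain-rule identities $\varphi\nabla\tilde u=\nabla u-u\nabla\ln\varphi$ and $\dive(\tilde a\xi)=\varphi^2\dive(a\xi)+2\varphi a\,\xi\cdot\nabla\varphi$. Setting $g:=\nabla u-u\nabla\ln\varphi=\varphi\nabla\tilde u$ and $w:=u\nabla\ln\varphi$, one checks that $\tilde a D\xi(\nabla\tilde u)\cdot\nabla\tilde u=aD\xi(g)\cdot g$ and $\tilde a|\nabla\tilde u|^2=a|g|^2$, so substituting into the definition of $\delta\tilde J(\tilde u)[\xi]$ gives
\begin{equation*}
\delta\tilde J(\tilde u)[\xi]=\int_D\!\bigl[2a\,D\xi(g)\cdot g-|g|^2\dive(a\xi)-2|g|^2\,a\,\xi\cdot\nabla\ln\varphi\bigr]\,dx.
\end{equation*}

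Next, I will expand $g=\nabla u-w$ and isolate the $\delta J(u)[\xi]$ piece. The $-|g|^2\dive(a\xi)$ contribution produces $-(|\nabla u|^2-2\nabla u\cdot w+|w|^2)\dive(a\xi)$; rewriting this as $-(|\nabla u|^2-\lambda u^2)\dive(a\xi)-\lambda u^2\dive(a\xi)+2\nabla u\cdot w\,\dive(a\xi)-|w|^2\dive(a\xi)$ recovers $\delta J(u)[\xi]$ together with four kinds of remainders: those with $D\xi(w)$, those with $\nabla u\cdot w$ and $|w|^2$ against $\dive(a\xi)$, a $-\lambda\int u^2\dive(a\xi)\,dx$ term, and the terms from $-2|g|^2 a\,\xi\cdot\nabla\ln\varphi$. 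The key algebraic step is then to use the consequence
\begin{equation*}
\dive\bigl(a\nabla\ln\varphi\bigr)=\frac{\dive(a\nabla\varphi)}{\varphi}-\frac{a|\nabla\varphi|^2}{\varphi^2}=-\lambda a-a|\nabla\ln\varphi|^2
\end{equation*}
of the hypothesis on $\varphi$, and to integrate by parts the occurrences of $\nabla\ln\varphi$ paired with $\nabla u$, $u$, $\xi$ or $D\xi$. A sequence of cancellations (every $a|w|^2$ and $a\,w\cdot\nabla\ln\varphi$ factor disappears) reduces the remainder to $-2\int_D a\,\nabla(w\cdot\xi)\cdot\nabla u\,dx+2\lambda\int_D a\,u\,(w\cdot\xi)\,dx$, which is exactly the claimed right-hand side since $w\cdot\xi=u\,\xi\cdot\nabla\ln\varphi$.

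The main obstacle will be bookkeeping: the expansion of $2aD\xi(g)\cdot g$ produces four bilinear pieces and $|g|^2$ adds three more, so after multiplying by the two weights $\dive(a\xi)$ and $a\,\xi\cdot\nabla\ln\varphi$ one must track on the order of a dozen integrands. Choosing the shorthand $w=u\nabla\ln\varphi$ and $\eta=w\cdot\xi$, and performing the integrations by parts \emph{once} via $\dive(a\nabla\ln\varphi)=-\lambda a-a|\nabla\ln\varphi|^2$ before expanding $D\xi$ in coordinates, keeps the algebra manageable and makes the cancellations transparent.
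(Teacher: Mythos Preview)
Your substitution step is correct: with $g=\varphi\nabla\tilde u=\nabla u-u\nabla\ln\varphi$, the identity
\[
\delta\tilde J(\tilde u)[\xi]=\int_D\bigl[2a\,D\xi(g)\cdot g-|g|^2\dive(a\xi)-2|g|^2\,a\,\xi\cdot\nabla\ln\varphi\bigr]\,dx
\]
holds, and expanding $g=\nabla u-w$ together with the relation $\dive(a\nabla\ln\varphi)=-\lambda a-a|\nabla\ln\varphi|^2$ will indeed produce the claimed cancellations. So the approach is valid, though your write-up stops short of actually exhibiting them.

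The paper takes a cleaner route that avoids the bookkeeping entirely. It first observes (for smooth $u$, then by density) the Pohozaev-type reduction
\[
\delta\tilde J(\tilde u)[\xi]=-2\int_D(\xi\cdot\nabla\tilde u)\,\dive(\tilde a\nabla\tilde u)\,dx,\qquad
\delta J(u)[\xi]=-2\int_D(\xi\cdot\nabla u)\,\dive(a\nabla u)\,dx+\lambda\int_D u^2\dive(a\xi)\,dx,
\]
obtained by one integration by parts on the $D\xi$ term. Then it uses the single algebraic identity
\[
\dive(\tilde a\nabla\tilde u)=\varphi\,\dive(a\nabla u)-u\,\dive(a\nabla\varphi)=\varphi\bigl(\dive(a\nabla u)+\lambda a u\bigr),
\]
which, combined with $\varphi\,\xi\cdot\nabla\tilde u=\xi\cdot\nabla u-u\,\xi\cdot\nabla\ln\varphi$, immediately gives \eqref{e:app:Ja} after one further integration by parts of the term $\int(u\,\xi\cdot\nabla\ln\varphi)\dive(a\nabla u)$. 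The trade-off: the paper's argument is a few lines with no hidden cancellations, but it passes through second derivatives of $u$ (hence the smooth-approximation step); your direct expansion stays at the first-derivative level and only differentiates $\varphi$ twice, which is arguably more natural given $\varphi\in H^2$, at the cost of tracking a dozen cross terms.
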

\begin{proof}
Notice that we may assume $u\in C^\infty(D)$. First we notice that an integration by parts gives
\begin{align*}
\delta \tilde J(\tilde u)[\xi] &=\int_D 2\,\partial_i\xi_j\,\partial_i \tilde u\, \partial_j\tilde u\,\tilde a\,dx-\int_D|\nabla \tilde u|^2\dive(\tilde a\xi)\,dx\\
&=-\int_D 2\,\xi_j\,\partial_i(\tilde a\,\partial_i \tilde u)\,\partial_j\tilde u\,dx-\int_D 2\,\xi_j\,\partial_i \tilde u\,\partial_{ij}\tilde u\,\tilde a\,dx-\int_D|\nabla \tilde u|^2\dive(\tilde a\xi)\,dx\\
&=-\int_D 2\,\xi_j\,\partial_i(\tilde a\,\partial_i \tilde u)\,\partial_j\tilde u\,dx-\int_D \dive(\tilde a|\nabla \tilde u|^2\xi)\,dx=-\int_D 2\,\xi_j\,\partial_i(\tilde a\,\partial_i \tilde u)\,\partial_j\tilde u\,dx\\
&=-\int_D 2(\xi\cdot\nabla\tilde u)\dive(\tilde a\nabla\tilde u)\,dx .
\end{align*}
and, analogously,  
\begin{align*}
\delta J(u)[\xi] =-\int_D 2(\xi\cdot\nabla u)\dive( a\nabla u)\,dx+\lambda\int_Du^2\dive(a\xi)\,dx.
\end{align*}
Now, since 
$$\dive(\tilde a\nabla\tilde u)=\dive(a(\varphi\nabla u-u\nabla\varphi))=\varphi\dive(a\nabla u)-u\dive(a\nabla\varphi)=\varphi(\dive(a\nabla u)+\lambda a u),$$ 
we get 
\begin{align*}
\delta \tilde J(\tilde u)[\xi]&=-2\int_D\xi\cdot(\nabla u-\frac{u}\varphi\nabla\varphi)\big(\dive(a\nabla u)+\lambda a u\big)\,dx\\
&=2\int_D\xi\cdot\nabla \varphi \frac{u}\varphi\big(\dive(a\nabla u)+\lambda a u\big)\,dx-2\int_D(\xi\cdot\nabla u)\big(\dive(a\nabla u)+\lambda a u\big)\,dx\\
&=-2\int_D\nabla \left(\frac{\xi\cdot\nabla \varphi}\varphi u\right)\cdot\nabla u\,a\,dx+2\int_D\left(\frac{\xi\cdot\nabla \varphi}\varphi u\right)\lambda a u\,dx+\delta J(u)[\xi],
\end{align*}
which is precisely \eqref{e:app:Ja}.\end{proof}
Let now $D\subset\R^d$ and $u\in H^1(D)$ be as in Proposition \ref{p:density} for some $\lambda>0$. In order to prove that $|D\setminus \Omega_u|=0$, it is sufficient to prove that $|(D\cap B)\setminus \Omega_u|=0$ for any (small) ball $B\subset D$. Let now $x_0\in D$ and let $R>0$ be such that $\lambda_1(B_R(x_0),\nabla\Phi)=\lambda$. Such a radius exists, since the map $f(r):= \lambda_1(B_r(x_0),\nabla\Phi)$ is continuous, $f(0)=+\infty$ and $f(+\infty)=0$. Notice also that we may assume $\Phi$ to be defined on the entire space $\R^d$. Let $\varphi$ be the first eigenfunction on $B_R(x_0)$ and let $r=R/2$. Then, we can apply Lemma \ref{l:app:Ja} in the set $D\cap B_r(x_0)$ with $a=e^{-\Phi}$. Moreover, since $u$ satisfies \eqref{e:app:equ}, we get that 
$$\delta \tilde J(\tilde u)[\xi]=\delta J(u)[\xi]=0,\quad\text{for every}\quad \xi\in C^\infty_c(D\cap B_r(x_0);\R^d),$$
which proves that $\tilde u=\sfrac{u}\varphi$ satisfies hypothesis (b) for $\lambda=0$. 
Finally, in order to prove that  $\tilde u$ satisfies hypothesis (a), we notice that on $\Omega_u=\Omega_{\tilde u}$ we have (in a weak sense)
$$\dive(\tilde a\nabla\tilde u)=\varphi\dive(a\nabla u)-u\dive(a\nabla\varphi)=\varphi\left(\dive(a\nabla u)+\lambda a u\right)=0.$$

\subsection{Proof of Proposition \ref{p:density} in the case $\lambda=0$}
%\begin{proof}
Let $\lambda=0$. Then we have 
\begin{equation}\label{e:app:defJ0}
J(u):=\int_D|\nabla u|^2e^{-\Phi}\,dx,
\end{equation}
\begin{equation}\label{e:first_variation_app0}
\delta J(u)[\xi] := \int_D \Big[ 2 D\xi(\nabla u)\cdot\nabla u + |\nabla u|^2 (\nabla \Phi \cdot \xi - \dive \xi) \Big] e^{-\Phi} dx.
\end{equation}
Let $x_0=0\in D$ and $\tau=\|\nabla \Phi\|_{L^\infty(D)}$. We set 
$$H(r):=\int_{\partial B_r}u^2e^{-\Phi}d\HH^{d-1},\qquad D(r):=\int_{B_r}|\nabla u|^2e^{-\Phi}dx\qquad\text{and}\qquad N(r):=\frac{rD(r)}{H(r)}.$$
{\it Step 1. Derivative of $H$.} We calculate 
\begin{align*}
H'(r)&=\frac{d-1}{r}H(r)+r^{d-1}\frac{d}{dr}\int_{\partial B_1}u^2(rx)e^{-\Phi(rx)}d\HH^{d-1}(x)\\
&=\frac{d-1}{r}H(r)+2\int_{\partial B_r}u\frac{\partial u}{\partial n}e^{-\Phi}d\HH^{d-1}-\int_{\partial B_r}u^2(n\cdot\nabla\Phi)e^{-\Phi}d\HH^{d-1}\\
&=\frac{d-1}{r}H(r)+2\int_{B_r}|\nabla u|^2e^{-\Phi}dx-\int_{\partial B_r}u^2(n\cdot\nabla\Phi)e^{-\Phi}d\HH^{d-1},
\end{align*}
which we rewrite as
\begin{equation}\label{e:app:derH}
H'(r)=\frac{d-1}{r}H(r)+2 D(r)-H_\Phi(r).
\end{equation}
where we have set
$$H_\Phi(r):=\int_{\partial B_r}u^2(n\cdot\nabla\Phi)e^{-\Phi}d\HH^{d-1}\qquad\text{and}\qquad|H_\Phi(r)|\le \tau H(r).$$
{\it Step 2. Equidistribution of the energy.} Let $\phi_\eps$ be a radially decreasing function such that $0\le \phi_\eps\le 1$ on $B_r$, $\phi_\eps=1$ on $B_{r(1-\eps)}$, $\phi_\eps=0$ on $\partial B_r$ and $|\nabla\phi_\eps|\leq C(r\eps)^{-1}$. The vector field $\xi(x):=x\phi_\eps(x)$ satisfies $\dive\xi(x)=d\phi_\eps(x)+x\cdot\nabla\phi_\eps$ and $\partial_i\xi_j=\delta_{ij}\phi_\eps(x)+x_j\partial_i\phi_\eps(x)$. Since $\lambda=0$ we have
\begin{align*}
\delta J(u)[\xi]&= \int_D \Big[ 2 D\xi(\nabla u)\cdot\nabla u + |\nabla u|^2 (\nabla \Phi \cdot \xi - \dive \xi) \Big] e^{-\Phi} dx\\
&= \int_D \Big[ 2 |\nabla u|^2\phi_\eps+2(x\cdot\nabla u)(\nabla\phi_\eps\cdot\nabla u) - |\nabla u|^2  (d\phi_\eps(x)+x\cdot\nabla\phi_\eps) \Big] e^{-\Phi} dx\\
&\qquad+\int_D|\nabla u|^2(\nabla \Phi \cdot x)\phi_\eps e^{-\Phi} dx,
\end{align*}
and passing to the limit as $\eps\to0$, rearraging the terms and using the property (b), we get
\begin{align*}
0&=-(d-2)\int_{B_r}|\nabla u|^2e^{-\Phi}dx+r\int_{\partial B_r}|\nabla u|^2e^{-\Phi} d\HH^{d-1}\\
&\qquad-2r\int_{\partial B_r}\left(\frac{\partial u}{\partial n}\right)^2e^{-\Phi} d\HH^{d-1}+\int_{B_r}|\nabla u|^2 (\nabla \Phi \cdot x) e^{-\Phi} dx,
\end{align*}
which we rewrite as 
\begin{align*}
-(d-2)D(r)&+rD'(r)= 2r\int_{\partial B_r}\left(\frac{\partial u}{\partial n}\right)^2e^{-\Phi} d\HH^{d-1}-rD_\Phi(r),
\end{align*}
where 
$$D_\Phi(r):=\frac1r\int_{B_r}|\nabla u|^2 (\nabla \Phi \cdot x) e^{-\Phi} dx\qquad\text{and}\qquad|D_\Phi(r)|\le \tau D(r).$$

\noindent {\it Step 3. The derivative of $N$.} We notice that $N(r)$ is only defined for $r$ such that $H(r)>0$. In what follows we fix $r_0>0$ such that $B_{r_0}(x_0)\subset D$ and $H(r_0)>0$. Since $u\in H^1(D)$, there is an interval $(a,b)\ni r_0$, on which $H>0$.   
\begin{align}
N'(r)&=\frac{D(r)H(r)+rD'(r)H(r)-rD(r)H'(r)}{H^2(r)}\notag\\
&=  \frac{D(r)H(r)+rD'(r)H(r)-r D(r)\left(\frac{d-1}{r}H(r)+2D(r)- H_\Phi(r)\right)}{H^2(r)}\notag\\
&=\frac{-(d-2)D(r)H(r)+rD'(r)H(r)-2rD^2(r)+r D(r)H_\Phi(r)}{H^2(r)}\notag\\
&=\frac{2r}{H^2(r)}\left(H(r)\int_{\partial B_r}\left(\frac{\partial u}{\partial n}\right)^2e^{-\Phi} d\HH^{d-1}-D^2(r)\right)+\frac{r \left(D(r)H_\Phi(r)-D_\Phi(r)H(r)\right)}{H^2(r)}\label{e:app:derN}
\end{align}
Now we notice that, since $u$ solves \eqref{e:app:equ} on $\Omega_u$, we have
\begin{align*}
D(r)&=\int_{B_r}|\nabla u|^2e^{-\Phi}dx=\int_{\partial B_r}u\frac{\partial u}{\partial n}e^{-\Phi} d\HH^{d-1},
\end{align*}
and so, by the Cauchy-Schwarz inequality and \eqref{e:app:derN} we obtain
\begin{equation}\label{e:app:der_N_final_est}
N'(r)\ge\frac{r \left(D(r)H_\Phi(r)-D_\Phi(r)H(r)\right)}{H^2(r)}\ge -2\tau N(r).
\end{equation}

\noindent {\it Step 4. A bound on $N(r)$.} Using the estimate \eqref{e:app:der_N_final_est} from the previous step we get that the function 
$\,r\mapsto e^{2\tau r}N(r)\,$
is non-decreasing in $r$ and so
$$N(r)\le e^{2\tau (r_0-r)}N(r_0)\le e^{2\tau r_0}N(r_0)\quad\text{for every}\quad a<r\le r_0.$$

\noindent {\it Step 5. Strict positivity and doubling inequality for $H(r)$.} By the step $4$ we have 
\begin{equation}\label{e:der logH}
\frac{d}{dr}\left[\log\left(\frac{H(r)}{r^{d-1}}\right)\right]=2\frac{N(r)}r-\frac{H_\Phi(r)}{H(r)}\le \frac{2e^{2\tau r_0}N(r_0)}{r}+\tau,
\end{equation}
and integrating we get 
$$\log\left(\frac{H(r_0)}{r_0^{d-1}}\right)-\log\left(\frac{H(r)}{r^{d-1}}\right)\le \log\Big(\frac{r_0}{r}\Big)\,2e^{2\tau r_0}N(r_0)+\tau r_0,\quad\text{for every}\quad a<r\le r_0.$$
In particular, $H>0$ on every interval $[\eps r_0,r_0]$ and so, $H>0$ on $(0,r_0]$ and we might take $a=0$. Moreover, integrating once again the inequality \eqref{e:der logH} from $r<\sfrac{r_0}2$ to $2r$, we get
\begin{equation*}
\log\left(\frac{H(2r)}{H(r)}\right)\le ((d-1)\log 2+\tau r_0)+2\log 2\,e^{2\tau r_0}N(r_0)\qquad\text{for every}\qquad 0<r\le \frac{r_0}2.
\end{equation*}
Taking $r_0\le 1$, there is a constant $C$, depending only on $d$ and $\tau$, such that
\begin{equation}\label{e:app:doubling0}
H(2r)\le C\exp (CN(r_0))H(r)\qquad\text{for every}\qquad 0<r\le \frac{r_0}2.
\end{equation}
Integrating once more in $r$ we get 
\begin{equation}\label{e:app:doubling}
\int_{B_{2r}}u^2e^{-\Phi}\,dx\le C\exp (CN(r_0))\int_{B_{r}}u^2e^{-\Phi}\,dx\qquad\text{for every}\qquad 0<r\le \frac{r_0}2.
\end{equation}
\noindent {\it Step 6. Caccioppoli inequality and conclusion.} 
Let $r\in(0,\sfrac{r_0}2]$ and let $\phi\in C^\infty_0(B_{2r})$ be such that $\phi=1$ in $B_r$, $\phi=0$ on $\partial B_{2r}$, $0\le\phi\le 1$ and $|\nabla \phi|\le \sfrac{2}{r}$ on $B_{2r}\setminus B_r$. Using the fact that $u$ is a solution of $-\dive(e^{-\Phi}\nabla u)=0$ in $\Omega_u$, we get the following Caccioppoli inequality: 
\begin{align}
\int_{B_r}|\nabla u|^2e^{-\Phi}\,dx&\le \int_{B_{2r}}|\nabla (u\phi)|^2e^{-\Phi}\,dx=\int_{B_{2r}}\left(u^2|\nabla\phi|^2+\nabla u\cdot\nabla (u\phi^2)\right)e^{-\Phi}\,dx\notag\\
&= \int_{B_{2r}}u^2|\nabla\phi|^2e^{-\Phi}\,dx-\int_{B_{2r}} u\phi^2\dive \left(e^{-\Phi}\nabla u)\right)\,dx= \int_{B_{2r}}u^2|\nabla\phi|^2e^{-\Phi}\,dx.\notag\\
&\le \frac4{r^2}\int_{B_{2r}}u^2e^{-\Phi}\,dx.\label{e:app:caccioppoli}
\end{align}
On the other hand, there are dimensional constants $C_d$ and $\eps_d>0$ such that, if $|\Omega_u\cap B_r|\le \eps_d|B_r|$, then the following inequality does hold (see \cite[Lemma 4.4]{buttazzo-velichkov-stoch}) 
\begin{equation*}
\int_{B_r}u^2\,dx\le C_d r^2\bigg(\frac{|\Omega_u\cap B_r|}{|B_r|}\bigg)^{2/d}\int_{B_r}|\nabla u|^2\,dx,
\end{equation*}
which, taking $C:=C_d\exp(\max\Phi-\min\Phi)$, implies 
\begin{equation*}
\int_{B_r}u^2e^{-\Phi}\,dx\le C  r^2\bigg(\frac{|\Omega_u\cap B_r|}{|B_r|}\bigg)^{2/d}\int_{B_r}|\nabla u|^2e^{-\Phi}\,dx.
\end{equation*}
This, together with \eqref{e:app:caccioppoli} and the doubling inequality \eqref{e:app:doubling}, gives that there are constants $C_1$ and $C_2$, depending only on $d$ and $\tau$ such that 
$$\min\left\{\eps_d,C_1\exp (-C_2N(r_0))\right\}\le \frac{|\Omega_u\cap B_r|}{|B_r|}\qquad\text{for every}\qquad 0<r\le\frac{r_0}{2},$$
where to be precise we recall that we assumed $r_0\le 1$. In particular, we have a lower density bound for $\Omega_u$ at \emph{every} point of $D$, which implies that $|D\setminus\Omega_u|=0$ and concludes the proof. \qed

\bigskip\bigskip
\noindent {\bf Acknowledgments.} 
The authors have been partially supported by Agence Nationale de la Recherche (ANR) by the projects GeoSpec (LabEx PERSYVAL-Lab, ANR-11-LABX-0025-01). The third author was also partially supported by the project ANR CoMeDiC (ANR-15-CE40-0006).

\bibliographystyle{plain}
\bibliography{bib-rtv}

\end{document}